\documentclass[11pt]{amsart}
\usepackage{amssymb}
\usepackage{xcolor} % A package to add color.
\usepackage{fullpage} % Sets all margins to 1 in.  
\usepackage{amsmath}

\usepackage{amsthm}
\usepackage{graphicx}
\usepackage{placeins}
\usepackage{yhmath}
\usepackage{mathrsfs}
\usepackage{subcaption}

\usepackage[numbers,sort&compress]{natbib}
\usepackage[colorlinks=true,citecolor=blue,linkcolor=blue, urlcolor=blue]{hyperref}

\usepackage{enumitem}

\setlist[enumerate]{leftmargin=1.8em}
\setlist[itemize]{leftmargin=1.8em}
\usepackage{mathtools}
\graphicspath{{figures/}}

\newtheorem{theorem}{Theorem}[section]

\newtheorem{lemma}[theorem]{Lemma}

\newtheorem{proposition}[theorem]{Proposition}

\theoremstyle{definition}

\theoremstyle{remark}
\newtheorem{remark}[theorem]{Remark}

\numberwithin{equation}{section}
\newcommand{\nrm}[1]{\Vert#1\Vert}

\newcommand{\nnrm}[1]{{\vert\kern-0.25ex\vert\kern-0.25ex\vert #1 
		\vert\kern-0.25ex\vert\kern-0.25ex\vert}}

\newcommand{\lmb}{\lambda}

\newcommand{\zt}{\zeta}

\newcommand{\bbR}{\mathbb R}

\begin{document}
	\title[Stability of dipoles]
    {A Variational Family of Traveling Vortex Dipoles: Transition, Uniqueness, and Stability}
    
    %{Variational uniqueness and stability of traveling  vortex dipoles for the 2D Euler Equations}

	\author{Ken Abe}
	\address{Department of Mathematics, Graduate School of Science, Osaka Metropolitan University, 3-3-138 Sugimoto, Sumiyoshi-ku Osaka, 558-8585, Japan.}
	\email{kabe@omu.ac.jp}
	
	\author{In-Jee Jeong}
	\address{School of Mathematics, Korea Institute for Advanced Study, Hoegi-ro 85, Seoul 02445, Republic of Korea.}
	\email{ijeong@kias.re.kr}
	
	\author{Guolin Qin}
	\address{State Key Laboratory of Mathematical Sciences, Academy of Mathematics and Systems Science, Chinese Academy of Sciences, 100190 Beijing, P.R. China}
	\email{qinguolin18@mails.ucas.ac.cn}
	
	\author{Weicheng Zhan}
	\address{School of Mathematical Sciences, Xiamen University, Xiamen, Fujian, 361005, P.R. China }
	\email{zhanweicheng@amss.ac.cn}	
	\keywords{Incompressible Euler equation, vortex dipole, Lamb dipole, mass transition, uniqueness, orbital stability}
	\subjclass[2020]{Primary 76B47; Secondary 35Q35, 35B40}

\begin{abstract}
We provide a unified description of a stable variational family of traveling vortex dipoles for the two-dimensional incompressible Euler equations, connecting the classical Lamb dipole to asymptotically radial dipoles in the large-impulse regime. We show that this family undergoes a sharp transition from the Lamb dipole, where the mass constraint is inactive, to non-explicit traveling waves for which the mass constraint becomes active, and determine the critical value exactly. We further prove that the variational solutions are unique up to translation both near the Lamb dipole and in the large-impulse regime. As a consequence, the corresponding individual traveling waves are orbitally stable. Our results provide a unified picture of these two distinct asymptotic regimes within a single variational family of traveling vortex dipoles.
\end{abstract}

	\maketitle

	\tableofcontents

\section{Introduction}

Radial and shear flows constitute two fundamental classes of steady solutions to the two-dimensional incompressible Euler equations. Starting from these canonical structures, a substantial body of recent work has investigated the existence, rigidity, and flexibility of steady Euler flows in various settings; see, for instance, \cite{CS12,CDG21,GPSY21,Ruiz23,DG24,EFR24,EHSX26,EH26}. Another fundamental class of coherent structures, particularly from the physical viewpoint, is given by steadily translating \textit{vortex dipoles}, 
consisting of two vortices of opposite signs moving at a constant speed. They are observed in laboratory and numerical studies \cite{VV98,FV94,VF89,Afan,VAF} and provide a model for the dynamics of interacting vortex pairs \cite{Leweke16}. These vortex dipoles are effectively described by traveling wave solutions of the two-dimensional vorticity equation

	%A fundamental problem in the study of two-dimensional incompressible inviscid flows is the existence and stability of coherent vortex structures. Among the most important examples are steadily translating \textit{vortex dipoles}, consisting of two vortices of opposite signs moving at a constant speed. They are observed in laboratory and numerical studies \cite{VV98,FV94,VF89,Afan,VAF} and provide a model for dynamics of interacting vortex pairs \cite{Leweke16}. These vortex dipoles are effectively described by traveling wave solutions of the two-dimensional vorticity equation

	\begin{equation}\label{eq:2D-Euler}
		\partial_t\omega+u\cdot\nabla\omega=0,
		\qquad u=\nabla^\perp\Delta^{-1}_{\mathbb{R}^{2}}\omega,
	\end{equation}  
	where $\nabla^\perp=(-\partial_{x_2},\partial_{x_1})^\top$.
	
	The Chaplygin--Lamb (or simply Lamb) dipole is the classical example with explicit formula
	\cite{Lamb,Chap1903,MV94}. Other steadily translating pairs were found
	through various approaches including free-boundary methods, numerical continuation, and
	bifurcation; see for instance, \cite{Norbury75,Pierrehumbert80,Tu83a,Tu83b,GarciaHaziot23}. 
	A separate line of work, beginning with ideas of Kelvin, Arnold, and Benjamin
	\cite{Kelvin1875,Arnold66,Benjamin76}, employs the variational approach of finding the vorticity configuration which maximizes the kinetic energy under several constraints including the hydrodynamic impulse. This approach has the significant advantage that once suitable compactness of the set of maximizers is established, nonlinear stability follows together with existence of traveling waves \cite{BNL13}. However, as already remarked in \cite{BNL13}, this gives stability for the whole set of maximizers, rather than for a specific traveling wave. In other words, \textit{uniqueness} of the maximizer for the variational problem is essential for deducing the stability of a traveling wave. In the special case of the Lamb dipole, uniqueness is
	available for several variational formulations
	\cite{Burton96,Burton05b,AC2019,ACJ}. Unfortunately, for traveling waves such uniqueness is largely open beyond this explicit solution, except for the concentrated patch regime
	\cite{CQZZ25}.

    %The purpose of this paper is to prove this rigidity for a family of traveling waves in two limiting regimes, where neither the maximizer nor its vortex-free boundary is explicit.
	 
\subsection{Existence of traveling waves}

    In this paper, we describe a variational family of traveling vortex dipoles connecting the classical Lamb dipole to asymptotically radial vortex dipoles appearing in the large-impulse regime. Our variational problem may be viewed as a coarse-grained version of the classical variational viewpoint of Kelvin, Arnold, and Benjamin: instead of fixing the full vorticity distribution, we retain only a few integral constraints. A central question is whether such coarse information is nevertheless sufficient to select a unique traveling wave, so that stability of the maximizing set can be upgraded to orbital stability of an individual wave. We establish this rigidity in two limiting regimes, where neither the maximizer nor its vortex-free boundary is explicit.

Specifically, we consider the kinetic energy penalized by enstrophy (energy-Casimir functional) with parameter $\lambda>0$,
    \begin{equation}\label{eq:PEF-2}
		E_{2,\lambda}[\omega]
		=E[\omega]-\frac{1}{2\lambda}
		\|\omega\|_{L^2(\mathbb R^2_+)}^2
		=\frac12\int_{\mathbb R^2_+}\omega\mathcal G\omega\,dx
		-\frac{1}{2\lambda}\int_{\mathbb R^2_+}
\omega^{2}\!dx,
	\end{equation}
where 
	\begin{equation*}
		\mathcal{G}\omega(x) = \int_{\mathbb{R}^2_+} G(x,y)\omega(y)\,dy, \qquad G(x,y) = \frac{1}{4\pi} \log\!\left( 1+\frac{4x_2y_2}{|x-y|^2} \right),
	\end{equation*}
	denotes the Dirichlet Laplacian Green operator in the upper half-plane. The impulse is given by 
	\begin{equation}\label{eq:impulse}
		\int_{\mathbb{R}^2_+} x_2\omega(x)\,dx.
\end{equation}
For nonnegative vortices in $\mathbb{R}^{2}_{+}$, we identify their impulse with the $x_2$-weighted $L^{1}$ norm 
	\begin{equation*}
		L^1_*(\mathbb R^2_+)
		=\left\{\omega:\int_{\mathbb R^2_+}x_2|\omega(x)|\,dx<\infty\right\},
		\qquad
		\|\omega\|_{L^1_*(\mathbb R^2_+)}
		=\int_{\mathbb R^2_+}x_2|\omega(x)|\,dx .
\end{equation*}
We consider maximization of the energy-Casimir \eqref{eq:PEF-2} in the class of vorticities odd in $x_{2}$, nonnegative in the upper half-plane $\mathbb{R}^2_+ = \{x=(x_1,x_2)\in\mathbb{R}^2:\ x_2>0\}$ under the constraints on their mass and impulse:
\begin{align}
		I_{\mu,\nu,\lambda}
		&=\sup_{\omega\in  K_{\mu,\nu}}E_{2,\lambda}[\omega],\label{eq: VP-2}\\
        			 K_{\mu,\nu}&=\left\{\omega\in L^2\cap L^1\cap L^1_*(\mathbb R^2_+):
			\omega\geq0,\ \|\omega\|_{L^1(\mathbb R^2_+)}\leq\nu,\
			\|\omega\|_{L^1_*(\mathbb R^2_+)}=\mu\right\}. \label{eq:adm-mass-2}
\end{align}
We denote by
\begin{align}
S_{\mu,\nu,\lambda}
=
\left\{
\omega\in K_{\mu,\nu}:
E_{2,\lambda}[\omega]=I_{\mu,\nu,\lambda}
\right\}  \label{eq:maximizerset}
\end{align}
the set of maximizers of \eqref{eq: VP-2}. Here, we use the norm $\nrm{f}_{X \cap Y} = \max\{\nrm{f}_{X},\nrm{f}_{Y}\}$ for two spaces $X$ and $Y$. Maximizers of the variational problem \eqref{eq: VP-2} provide symmetric traveling wave solutions to \eqref{eq:2D-Euler} in $\mathbb{R}^{2}$ with some velocity $(W,0)$, cf. \cite{Norbury75}. The existence part of the following result can be found in \cite{AC2019}.

\begin{theorem}[Existence]\label{thm:nearly Lamb}
				Let $\lambda,\mu,\nu>0$. Then the variational problem
\eqref{eq: VP-2} admits a maximizer. Moreover, every $\omega\in S_{\mu,\nu,\lambda}$ has compact support and is Steiner symmetric with respect to the
				$x_2$-axis up to horizontal translations and satisfies 
\begin{equation}\label{eq:near-Lamb-fixed-EL}
\omega(x)=\lambda \bigl(\mathcal G\omega(x)
						-W x_2-\gamma \bigr)_+,
						\qquad x\in\mathbb R^2_+,
\end{equation}
for some constants $W>0$ and $\gamma\geq 0$.
\end{theorem}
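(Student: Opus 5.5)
We follow the concentration-compactness approach of \cite{AC2019} for the existence part, and then derive the Euler--Lagrange equation, compact support and symmetry from first variations and Steiner rearrangement.

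\textbf{Step 1: a priori bounds and finiteness.} For $\omega\in K_{\mu,\nu}$ we bound $E[\omega]$ by interpolation. Using $G(x,y)\le \frac{1}{4\pi}\log(1+4x_2y_2/|x-y|^2)$, split the kernel into near and far parts. This gives
\[
E[\omega]\le C\,\|\omega\|_{L^1_*}^{a}\|\omega\|_{L^1}^{b}\|\omega\|_{L^2}^{c}
\]
with $c<2$, so that $E_{2,\lambda}$ is bounded above on $K_{\mu,\nu}$ and maximizing sequences are bounded in $L^2\cap L^1\cap L^1_*$. Testing with a small, far-away-scaled patch shows $I_{\mu,\nu,\lambda}>0$.

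\textbf{Step 2: compactness of maximizing sequences up to $x_1$-translation.} First replace each $\omega_n$ by its Steiner symmetrization in $x_1$. This preserves the constraints and the $L^2$ norm, and it does not decrease $E$ by the Riesz rearrangement inequality for the half-plane kernel, which is symmetric decreasing in $|x_1-y_1|$. Next, rule out vanishing: if $\omega_n$ spreads out, the energy tends to $0$, because the $L^2$ bound controls short-range interactions and spreading kills long-range ones. Rule out escape to $x_2\to\infty$ or $x_2\to0$ by using the impulse constraint together with the decay $G\lesssim x_2y_2/|x-y|^2$. Rule out dichotomy via strict subadditivity of $\mu\mapsto I_{\mu,\nu,\lambda}$, obtained from a scaling $\omega\mapsto\omega(\cdot/s)$ type argument adapted to the penalization. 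Then extract a weak $L^2$ limit. Since $E$ is weakly continuous on such tight sequences and $\|\cdot\|_{L^2}$ is weakly lsc, the limit $\omega$ satisfies $E_{2,\lambda}[\omega]\ge I_{\mu,\nu,\lambda}$. Its impulse is $\mu$ by tightness, and its mass is at most $\nu$ by lsc. This dichotomy exclusion is the main obstacle; I expect to need the homogeneity of the problem under $x\mapsto sx$, which scales impulse, mass and energy differently, to get strict subadditivity.

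\textbf{Step 3: Euler--Lagrange equation and properties.} Take admissible variations $\omega+\epsilon\eta$ with $\eta\ge0$ where $\omega=0$, together with Lagrange multipliers $W$ for the impulse and $\gamma\ge0$ for the inequality mass constraint, where $\gamma=0$ if the mass is slack. The first variation then gives $\omega/\lambda=\mathcal G\omega-Wx_2-\gamma$ on $\{\omega>0\}$ and $\le0$ elsewhere, which is \eqref{eq:near-Lamb-fixed-EL}.

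To see $W>0$, use the scaling/Pohozaev-type identity: differentiate $E_{2,\lambda}$ along the dilation family $\omega(x/s)$ renormalized to keep the impulse fixed. This expresses $W\mu$ as a positive combination of $E$, $\|\omega\|_2^2$ and $\gamma\|\omega\|_1$, and positivity of $I$ gives $W>0$.

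Compact support then follows. Since $\mathcal G\omega(x)\to0$ as $|x|\to\infty$ and is $O(x_2)$ near the boundary with a bound smaller than $Wx_2$ for large $|x|$, the set $\{\mathcal G\omega>Wx_2+\gamma\}$ is bounded; near $x_2=0$ one uses the decay $\mathcal G\omega\le Cx_2/(1+|x|^2)$ at infinity.

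Finally, Steiner symmetry of every maximizer comes from the equality case of the Riesz inequality: equality forces $\omega$ to be a translate of its symmetrization, by strict monotonicity of $G$ in $|x_1-y_1|$ (Lieb's strict rearrangement result).
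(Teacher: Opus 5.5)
Your outline follows the same architecture as the paper's proof. The paper takes existence, compact support and \eqref{eq:near-Lamb-fixed-EL} from \cite{AC2019}, and itself proves only that every maximizer is symmetric (Proposition~\ref{prop:Steiner}(ii)). Two of your steps do not go through as written.

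\textbf{Dichotomy.} You plan to use strict super-additivity of $\mu\mapsto I_{\mu,\nu,\lambda}$ at fixed $\nu$; for a supremum the needed direction is $I_{\mu,\nu,\lambda}>I_{\alpha,\nu,\lambda}+I_{\mu-\alpha,\nu,\lambda}$. This is false for large $\kappa=\mu\sqrt{\lambda}/\nu$. Iterating it would give $I_{n\mu,\nu,\lambda}>n\,I_{\mu,\nu,\lambda}>0$, i.e.\ at least linear growth in the impulse, whereas \eqref{eq:KEE} gives sublinear growth, cf.\ \eqref{eq:P3}. A version that also splits the mass would need a joint inequality in $(\mu,\nu)$, which you do not provide.

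The remedy is that after Steiner symmetrization there is no genuine dichotomy: whatever leaves a fixed box carries asymptotically no energy.
\begin{itemize}
\item Symmetric decreasing slices satisfy $\omega(y_1,y_2)\le m(y_2)/(2|y_1|)$, where $m(y_2)=\int_{\mathbb R}\omega(s,y_2)\,ds$. The slice kernel $K_{x_2,y_2}(t)=\frac{1}{4\pi}\log\bigl(1+\frac{4x_2y_2}{t^2+(x_2-y_2)^2}\bigr)$ satisfies $\int_{\mathbb R}K_{x_2,y_2}\,dt=\min\{x_2,y_2\}$. Hence $\iint G(x,y)\omega(x)\omega(y)\mathbf 1_{\{|y_1|>d\}}\,dx\,dy\le \nu\mu/(2d)$.
\item The part in $\{x_2<\varepsilon\}$ has impulse at most $\varepsilon\nu$, and the part in $\{x_2>h\}$ has mass at most $\|\omega_n\|_{L^1_*}/h$. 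Both energies are therefore small by \eqref{eq:KEE}.
\item Cross terms follow from Cauchy--Schwarz for the positive kernel $G$.
\end{itemize}
Hence $E[\omega_n]\to E[\omega]$ for the weak limit $\omega$. This limit has mass at most $\nu$, impulse $\alpha\le\mu$, and $E_{2,\lambda}[\omega]\ge I_{\mu,\nu,\lambda}$.

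All that is then needed is strict monotonicity, $I_{\alpha,\nu,\lambda}<I_{\mu,\nu,\lambda}$ for $\alpha<\mu$, cf.\ \eqref{eq:P1}. This follows from the dilation $\tau^2\omega(\tau x)$ of Lemma~\ref{l:Lip}, which preserves $E$ and the mass, multiplies the impulse by $\tau^{-1}$, and multiplies $\|\omega\|_{L^2}^2$ by $\tau^2$. Combine it with a positive lower bound on $\|\cdot\|_{L^2}$ along maximizing sequences for $I_{\alpha,\nu,\lambda}$, which comes from \eqref{eq:KEE} and $I_{\alpha,\nu,\lambda}>0$.

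\textbf{Symmetry of every maximizer.} The strict inequality \cite[Theorem 3.9]{LL01} concerns full symmetric decreasing rearrangement against a translation-invariant kernel. Here the rearrangement acts only in $x_1$, and the kernel depends on $(x_2,y_2)$. So it can only be applied to pairs of one-dimensional slices.

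Write $\omega$ by the layer-cake formula in $x_1$, with superlevel slices $A_p$, $p=(t,x_2)$. Maximality gives $E[\omega]=E[\omega^*]$, since $\|\omega^*\|_{L^2}=\|\omega\|_{L^2}$. This forces equality in the one-dimensional Riesz inequality for a.e.\ pair $(p,q)$. Strictness then says only that, for a.e.\ pair, $A_p$ and $A_q$ are translates of the intervals $A_p^*$ and $A_q^*$ by one common amount.

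Turning this into a single shift for a.e.\ $p$ is the heart of the paper's proof. By Fubini, choose $q_0$ such that equality holds for a.e.\ $p$. Since $A_{q_0}^*$ is an interval of positive finite length, the shift matching $A_{q_0}$ to it is unique, so all $A_p$ share it. Your one-line appeal to Lieb does not supply this step, and without it nothing rules out a shift that depends on the height $x_2$.

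Two smaller points.
\begin{itemize}
\item $I_{\mu,\nu,\lambda}>0$ does not give $W>0$ when $\gamma>0$, since $I_{\mu,\nu,\lambda}=\frac12W\mu+\frac{\gamma}{2}\|\omega\|_{L^1}$. Your dilation computation should use \eqref{eq:near-Lamb-fixed-EL} rather than maximality, because $s^{-3}\omega(x/s)$ with $s<1$ increases the mass. Done that way it gives $\|\omega\|_{L^2}^2=\lambda W\mu$, as in \eqref{eq:pohozaev-L2}, and hence $W>0$.
\item The bound $\mathcal G\omega\le Cx_2/(1+|x|^2)$ presupposes compact support. Instead, note $\omega\in L^\infty$ by \eqref{eq:near-Lamb-fixed-EL} and \eqref{eq:SFE}. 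Applying \eqref{eq:sinfty} to the tail $\omega\mathbf 1_{\{|x|>\rho\}}$ then shows $\mathcal G\omega/x_2\to0$ at infinity, which together with $W>0$ bounds the vortex core.
\end{itemize}
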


We refer to the set
\begin{align}
    \Omega=\{x\in\mathbb R^2_+:
    \mathcal G\omega(x)-Wx_2-\gamma>0\}  \label{eq:vortexcore}
\end{align}
as the vortex core of $\omega$. Besides existence, it is proved in \cite{AC2019} that maximizing sequences for \eqref{eq: VP-2} are compact, and the set $S_{\mu,\nu,\lambda}$ is stable in the two-dimensional Euler equations \eqref{eq:2D-Euler}. However, this stability is not equivalent to the stability of an individual traveling wave unless the maximizer is unique. If the impulse is relatively small in the sense that $\kappa=\mu\sqrt{\lambda}/\nu\leq \kappa_0$ for some constant $\kappa_0>0$, the maximizer is the Lamb dipole \cite{AC2019}. Here, the Lamb dipole is a two-parameter family of traveling waves with $\lmb, W>0$, defined by    
	\begin{equation}\label{eq:Lamb} 
		\omega_{\lambda, W}^L(x)= 
		-\frac{2W\sqrt{\lambda}}{J_0(j_{1,1})} J_1(\sqrt{\lambda} r)\mathbf{1}_{[0,  {j_{1,1}}{ \lambda^{-\frac12}}]}(r) \sin\theta,
	\end{equation}
	in polar coordinates $(r,\theta)$, where $J_{m}(r)$ denotes the $m$-th order Bessel function of the first kind and $j_{1,1}=3.8317\cdots$ is the first positive zero of $J_1$.

\subsection{Main results: transition, uniqueness, and stability}           

The variational problem \eqref{eq: VP-2}, after scaling, generates a one-parameter family of traveling vortex dipoles parametrized by \(\kappa\). Our goal is to describe the structure of this family, from the Lamb dipole to the large-impulse regime. Remarkably, this family is selected by a variational principle involving only a few conserved quantities, without prescribing the full vorticity distribution. A central theme of this work is that substantial rigidity and geometric structure survive this coarse-graining. We first identify the exact transition threshold $\kappa_*$ and establish
a sharp dichotomy between the mass-unsaturated and mass-saturated regimes.

\begin{theorem}[Mass transition]\label{thm:threshold}
Let $\lambda,\mu,\nu>0$ and set
\begin{align}
\kappa=\frac{\mu\sqrt{\lambda}}{\nu}.  \label{eq:m}
\end{align}
Then, the following statements hold for the constant 
\begin{align}
\kappa_*
=
-\frac{\pi j_{1,1}^{2}J_{0}(j_{1,1})}
{4\displaystyle\int_{0}^{j_{1,1}} rJ_{1}(r)\,dr}= 1.7622\cdots.  \label{eq:threshold}
\end{align}

\begin{itemize}
\item[(\textrm{A})] (Mass-unsaturated regime) 
If $0<\kappa\leq  \kappa_*$, then $\gamma=0$ and $S_{\mu,\nu,\lambda}=\{\omega^{L}_{\lambda,W_L}(\cdot +(a,0)): a\in \mathbb{R}\}$ for 
\begin{align}
W_L=W_L(\lambda,\mu)=\frac{\mu\lambda}{j_{1,1}^{2}\pi}.  \label{eq:Lambspeed}
\end{align}
Moreover, 
\begin{align}
\|\omega_{\lambda,W_L}^{L}\|_{L^1(\mathbb{R}^{2}_{+})}
\begin{cases}
<\nu, &  0<\kappa<\kappa_*,\\
=\nu, & \kappa=\kappa_*.
\end{cases}
\label{eq:massproperty}
\end{align}
\item[(\textrm{B})] (Mass-saturated regime) If $\kappa_*<\kappa$, then $\gamma>0$ and every $\omega\in S_{\mu,\nu,\lambda}$ satisfies
\begin{equation}\label{eq:supercritical-fixed-mass-properties}
\|\omega\|_{L^1(\mathbb R^2_+)}=\nu,
\qquad
\operatorname{dist}(\operatorname{spt}\omega,
\partial\mathbb R^2_+)>0.
\end{equation}
\end{itemize}
\end{theorem}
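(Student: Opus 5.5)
The plan is to compare \eqref{eq: VP-2} with the \emph{mass-relaxed} problem, i.e.\ the same problem with the constraint $\|\omega\|_{L^1}\le\nu$ dropped, and then to settle both regimes by one explicit computation of the Lamb dipole's mass.

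\textbf{Step 1: the Lamb mass as a function of $\kappa$.} Using \eqref{eq:Lamb} in polar coordinates, integrating over the upper half-disk of radius $j_{1,1}\lambda^{-1/2}$, and substituting $s=\sqrt\lambda r$, I expect
\[
\|\omega^L_{\lambda,W}\|_{L^1(\mathbb R^2_+)}=\frac{4W}{\sqrt\lambda\,|J_0(j_{1,1})|}\int_0^{j_{1,1}}sJ_1(s)\,ds .
\]
The analogous computation with the extra weight $x_2=r\sin\theta$ gives the impulse. From it one reads off that the impulse equals $\mu$ exactly when $W=W_L$ as in \eqref{eq:Lambspeed}; I will verify this through the identity $\int_0^{j}s^2J_1(s)\,ds=j^2J_2(j)=-j^2J_0(j)$ at $j=j_{1,1}$. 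Substituting $W=W_L$ gives
\[
\|\omega^L_{\lambda,W_L}\|_{L^1}=\frac{\mu\sqrt\lambda}{\kappa_*}.
\]
Hence this mass is $<\nu$, $=\nu$, or $>\nu$ according as $\kappa<\kappa_*$, $\kappa=\kappa_*$, or $\kappa>\kappa_*$. This yields \eqref{eq:massproperty} and explains the formula \eqref{eq:threshold}.

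\textbf{Step 2: the rigidity input.} I will use the uniqueness result for the Lamb dipole \cite{Burton96,AC2019}. It says that a nonnegative, compactly supported solution in $\mathbb R^2_+$ of
\[
\omega=\lambda(\mathcal G\omega-Wx_2)_+
\]
is a horizontal translate of $\omega^L_{\lambda,W}$. The same results show that the mass-relaxed problem with impulse $\mu$ has maximizer set exactly $\{\omega^L_{\lambda,W_L}(\cdot+(a,0))\}$.

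\textbf{Step 3: proof of (A).} Let $\kappa\le\kappa_*$. Then $K_{\mu,\nu}$ is contained in the relaxed admissible class, so $I_{\mu,\nu,\lambda}$ is at most the relaxed supremum. By Step 1, $\omega^L_{\lambda,W_L}$ lies in $K_{\mu,\nu}$ and attains that relaxed supremum. Therefore it is a maximizer of \eqref{eq: VP-2}, and every maximizer of \eqref{eq: VP-2} is also a relaxed maximizer, hence a translate of the Lamb dipole. Comparing with \eqref{eq:near-Lamb-fixed-EL} then forces $\gamma=0$ and $W=W_L$.

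\textbf{Step 4: proof of (B).} Let $\kappa>\kappa_*$ and take $\omega\in S_{\mu,\nu,\lambda}$ with constants $W,\gamma$ from Theorem~\ref{thm:nearly Lamb}.
\begin{itemize}
\item \emph{$\gamma>0$.} If $\gamma=0$, Step 2 makes $\omega$ a Lamb translate. The impulse constraint then forces $W=W_L$. By Step 1 its mass is $\mu\sqrt\lambda/\kappa_*>\nu$, contradicting $\omega\in K_{\mu,\nu}$.
\item \emph{Saturation $\|\omega\|_{L^1}=\nu$.} I will show the complementary-slackness relation $\gamma\,(\nu-\|\omega\|_{L^1})=0$. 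If $\|\omega\|_{L^1}<\nu$, the mass constraint is inactive locally. Perturbations $\omega+\varepsilon h$ that preserve the impulse and nonnegativity stay admissible for small $\varepsilon$. Repeating the Lagrange-multiplier derivation of \eqref{eq:near-Lamb-fixed-EL} with only the impulse constraint produces $\gamma=0$, contradicting the previous bullet.
\item \emph{Support away from $\partial\mathbb R^2_+$.} Since $\omega$ is bounded and compactly supported, $\mathcal G\omega$ is Lipschitz and vanishes on $\{x_2=0\}$, so $\mathcal G\omega(x)\le Cx_2$. On the vortex core \eqref{eq:vortexcore} we have $Cx_2\ge\mathcal G\omega(x)>Wx_2+\gamma\ge\gamma$. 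Hence $x_2>\gamma/C$ on $\operatorname{spt}\omega$, and the distance is positive.
\end{itemize}

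\textbf{Expected difficulties.} The main obstacle is Step 2: the uniqueness theorem for the semilinear problem with $\gamma=0$ must apply to arbitrary maximizers, including that the relaxed problem is attained and has no non-Lamb maximizers. I will rely on the cited results for this. A secondary point is making the complementary-slackness argument rigorous, which means checking that the admissible perturbations in the proof of Theorem~\ref{thm:nearly Lamb} remain available when the mass constraint is not active.
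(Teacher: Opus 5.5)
Your proposal is correct and is essentially the paper's proof. Step~1 is Proposition~\ref{p:Lambmass}, and Step~3 is exactly the paper's argument for (A) via $K_{\mu}\subset\tilde K_{\mu}$ and uniqueness for the mass-relaxed problem. Step~4 uses the paper's two inputs, only in the opposite order: the $\gamma=0$ rigidity of Lemma~\ref{l:uniquemass}, and the fact that $\gamma>0$ forces $\|\omega\|_{L^1}=\nu$, which the paper cites from \cite[Remark 2.6 (ii)]{AC2019} and you re-derive by complementary slackness.

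One citation is off. The paper takes the uniqueness of the mass-relaxed maximizer (Lemma~\ref{p:uniquenesswithoutmass}) from \cite[Theorem 2.5]{ACJ}. The rigidity in \cite{Burton96,AC2019} identifies compactly supported solutions with $\gamma=0$, but does not show that the Lamb dipole attains the relaxed supremum, and that attainment is what gives the sharp threshold $\kappa_*$.
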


\begin{figure}[ht]
		\centering
\includegraphics[width=.50\textwidth]{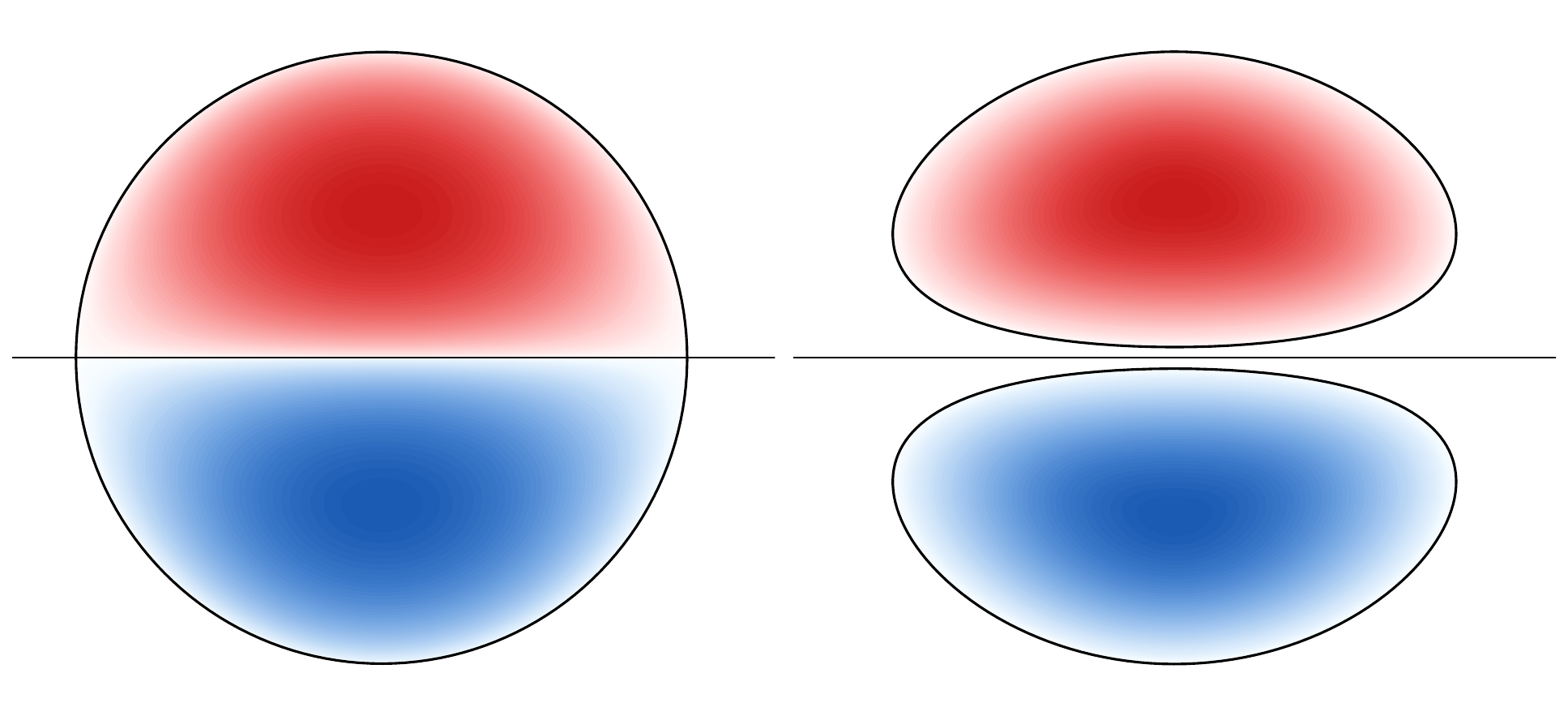}
		\caption{Mass transition from the Lamb dipole (left) to a mass-saturated traveling vortex dipole (right)}
		\label{fig:near-lamb-regime}
\end{figure}

Thus, $\kappa_*$ marks the sharp transition at which the mass constraint
becomes “active” and the explicit Lamb dipole regime gives way to
non-explicit traveling dipoles; see Figure \ref{fig:near-lamb-regime}. In contrast to the mass-unsaturated regime (A), the uniqueness of variational solutions in the mass-saturated regime (B) is considerably more delicate since the resulting vortex profile is no longer radially symmetric. Nevertheless, we show that the variational solution to \eqref{eq: VP-2} is unique in two different regimes: the near-Lamb dipole regime 
\begin{align}
\kappa_*<\kappa\leq \kappa_*+\delta_{*},  \label{eq:near-Lamb-normalized-range}
\end{align}
and the large-impulse regime
\begin{equation}
\kappa_{**}\leq \kappa.  \label{eq:largeimpulse}
\end{equation}
We now formulate the main results of this paper. 

\begin{theorem}[Uniqueness and continuity]\label{thm:unique-near-lamb}
There exist constants $\delta_{*}>0$ and $\kappa_{**}>\kappa_*+\delta_{*}$ such that
\begin{align}
S_{\mu,\nu,\lambda}=\{\omega_{\kappa}(\cdot +(a,0)): a\in \mathbb{R}\}  \label{eq:unique}
\end{align}
for some Steiner symmetric $\omega_\kappa$ and $\kappa\in (0,\kappa_{*}+\delta_{*}]\cup [\kappa_{**},\infty)$. Moreover, the map $\kappa\longmapsto \omega_{\kappa}$ is continuous in $L^{2}\cap L^{1}_{*}\cap L^{\infty}(\mathbb{R}^{2}_{+})$ on each of the intervals $(0,\kappa_{*}+\delta_{*}]$ and $[\kappa_{**},\infty)$.
\end{theorem}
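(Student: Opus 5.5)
We first normalize by scaling. The dilation $\omega\mapsto \alpha\,\omega(\beta\,\cdot)$ sends $K_{\mu,\nu}$ with parameter $\lambda$ to an equivalent problem. This lets us fix $\lambda=1$ and $\nu=1$, so that $S_{\mu,\nu,\lambda}$ depends only on $\kappa$. For $\kappa\le\kappa_*$, uniqueness is exactly Theorem \ref{thm:threshold}(A), with $\omega_\kappa=\omega^L_{1,W_L}$. Continuity there is explicit from \eqref{eq:Lamb} and \eqref{eq:Lambspeed}. It remains to treat $\kappa\in(\kappa_*,\kappa_*+\delta_*]$ and $\kappa\ge\kappa_{**}$. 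In both ranges we will argue by perturbation from a nondegenerate limiting profile, combined with the compactness of maximizing sequences from \cite{AC2019}.

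\textbf{Near-Lamb regime.} Take $\kappa_n\downarrow\kappa_*$ and maximizers $\omega_n\in S_{\mu_n,1,1}$, translated so that they are centred. Compactness of maximizing sequences, together with continuity of $I$ in $\kappa$, gives $\omega_n\to\omega^L$ in $L^2\cap L^1_*$ along subsequences. Bootstrapping through \eqref{eq:near-Lamb-fixed-EL} upgrades this to convergence in $L^\infty$ and $C^{1,\alpha}$ of the stream functions. Since the maximizer at $\kappa_*$ is unique, the whole family converges, and moreover $(W_n,\gamma_n)\to(W_L,0)$.

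We then write the Euler--Lagrange system as $F(\psi,W,\gamma;\kappa)=0$, where $\psi=\mathcal G\omega$, $-\Delta\psi=(\psi-Wx_2-\gamma)_+$ in $\mathbb R^2_+$ (with $\lambda=1$), and the mass and impulse constraints are appended. Uniqueness near the Lamb dipole reduces to a local uniqueness statement (implicit-function type) at $(\psi^L,W_L,0)$. The key input is the nondegeneracy of the linearized operator at the Lamb dipole, modulo the translation mode $\partial_{x_1}\psi^L$. For Lamb's dipole this kernel result is known (Burton's nondegeneracy, also used in \cite{ACJ}). The extra unknown $\gamma$ is compensated by the added mass equation. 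Its transversality amounts to $\frac{d}{d\gamma}\|\omega\|_{L^1}\neq 0$ at $\gamma=0$, which should be computable from Bessel identities behind \eqref{eq:threshold}.

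The main obstacle is that the nonlinearity $(\cdot)_+$ is only Lipschitz. The linearization therefore involves a boundary term on $\partial\Omega$, and the free boundary of the Lamb core meets $\partial\mathbb R^2_+$ at corners, since for $\gamma=0$ the core touches the axis. I would work with $\psi$ in a weighted $C^{1,\alpha}$ space and use the one-sided Gateaux derivative $(s)_+'=\mathbf 1_{s>0}$. The argument needs the measure of $\{|\psi^L-W_Lx_2|<\epsilon\}$ to be $O(\epsilon)$, which holds because $\nabla(\psi^L-W_Lx_2)\neq0$ on $\partial\Omega$ away from the two corners. Then a Lipschitz implicit function theorem applies. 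Uniqueness of this local branch together with the compactness step gives \eqref{eq:unique}. Continuity in $\kappa$ follows from the branch being continuous, or alternatively from uniqueness plus compactness.

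\textbf{Large-impulse regime.} We rescale so that the core shrinks relative to its distance from the axis: set $\varepsilon=1/\kappa$ and blow up around the core center $x_2=h\sim\kappa$. Maximizers then converge to the radial energy-Casimir maximizer in $\mathbb R^2$, namely $-\Delta\phi=(\phi-c)_+$ with unit mass. Its profile is $J_0$-type, and uniqueness of this radial limit holds by the rearrangement characterization. We then rerun the same perturbative scheme around this limit. The needed inputs are the nondegeneracy of the radial profile (kernel spanned by $\partial_{x_1}\phi,\partial_{x_2}\phi$), with the $x_2$-mode fixed by the impulse constraint, and an expansion of the Green function $G$ near the core showing that the image vortex and the $Wx_2$ term are perturbations of size $O(\varepsilon\log\varepsilon)$. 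Choosing $\kappa_{**}$ large makes the contraction work and yields uniqueness and continuity on $[\kappa_{**},\infty)$.
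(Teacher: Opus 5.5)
\paragraph{Large-impulse regime: the localization step is missing.} You assert that maximizers, recentred at $x_2=\mu$, converge to the radial profile, and then you run a local contraction around it. A perturbative scheme only applies once every maximizer is known to lie in a small neighbourhood of $\omega^R_{1,\gamma_R}$. Nothing in your argument rules out a Steiner symmetric maximizer spread over a horizontal length $\gg 1$ (crude bounds only give horizontal extent $O(\mu^{a})$), or one split into vertically separated pieces.

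Without localization you cannot even show that $\hat\omega(x)=\omega(x+(0,\mu))$ is a maximizing sequence for \eqref{eq:MFE}. In $E_2[\omega]=\mathcal E_2[\hat\omega]+\frac1{4\pi}\iint\log|x-\bar y+(0,2\mu)|\,\hat\omega(x)\hat\omega(y)\,dx\,dy$, the interaction term equals $\frac1{4\pi}\log(2\mu)+o(1)$ only for localized $\hat\omega$. Moreover, maximizing sequences of \eqref{eq:MFE} need not be compact in any case.

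Supplying this localization is the main work of Section~\ref{s:5}:
\begin{enumerate}
\item the lower bound $\gamma\ge\frac1{2\pi}\log(2\mu)-C$, obtained by comparison with a disc;
\item a lower bound on the mass in balls of radius $AQ^{1+\delta}$ around any support point, derived from $\mathcal G\omega>\gamma$ there;
\item a packing argument along horizontal lines, using Steiner symmetry, to bound the horizontal extent;
\item the two-sided estimate $-C_1\le\mathcal E_2[\hat\omega]\le C_2-\frac{m_0^2}{2\pi}\log H$, which bounds the vertical extent $H$.
\end{enumerate}
Only once $\operatorname{spt}\hat\omega\subset B_0(R)$ holds uniformly do Proposition~\ref{p:convergencemax} and Theorem~\ref{thm: lim VP} give convergence to $\omega^R_{1,\gamma_R}$. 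After that, the linear step you anticipate does go through: the kernel of $L$ at $R=j_{0,1}$ is removed by the zero-moment condition, and the multiplier terms are $O(\mu^{-1})$ after recentring.

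\paragraph{Near-Lamb regime: the decisive linear input is missing.} Your implicit-function route is a reasonable alternative to the paper's contradiction argument, but injectivity of $L_+$ on $x_1$-even functions is not enough. With two multipliers and two constraints, invertibility of the bordered linearization is equivalent to $c+b^2/a\neq0$, where
$a=\int x_2\omega^L_{1,1}\,dx$, $b=\int\omega^L_{1,1}\,dx$ and $c=(\mathbf 1_{B_+},L_+^{-1}\mathbf 1_{B_+})$.
This is your condition $\frac{d}{d\gamma}\|\omega\|_{L^1}\neq0$, taken at fixed impulse. On the $x_1$-even class $L_+$ has the negative eigenvalue $\lambda_{1,1}$, and $\mathbf 1_{B_+}$ excites every odd angular mode. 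Hence $c$ is an infinite Bessel series whose sign does not follow from \eqref{eq:threshold}.

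The paper obtains what is needed from the constrained coercivity in Theorem~\ref{t:halfspacecoercive2}. The Lamb dipole itself carries the negative direction: $L_+\omega^L_{1,1}=-x_2\mathbf 1_{B_+}$, so $(L_+\omega^L_{1,1},\omega^L_{1,1})<0$. A Cauchy--Schwarz argument on the positive subspace then yields $(L_+f,f)\ge\delta\|f\|^2$ for every $x_1$-even $f$ with $\int x_2 f=0$. Testing this on $f=L_+^{-1}\mathbf 1_{B_+}+\frac ba\,\omega^L_{1,1}$ gives $c+b^2/a>0$.

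The paper in fact avoids the bordered system altogether. Since both maximizers have energy $I_\mu$, the Poho\v{z}aev identities express $W^1-W^2$ and $\gamma^1-\gamma^2$ through $(\omega^1-\omega^2,\omega^1+\omega^2)$. The normalized limit then satisfies $(L_+\zeta,\zeta)=0$ with $\int\zeta=\int x_2\zeta=0$, so only the constrained coercivity is required.

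Finally, your measure claim is not right as stated. The set $\{|\mathcal G\omega^L_{1,W_L}-W_Lx_2|<\epsilon\}$ contains an infinite strip along $\partial\mathbb R^2_+$, and the function vanishes to second order at the corners. What you actually need is that this set has small measure inside a fixed bounded set containing all cores. That requires a uniform core bound such as Lemma~\ref{lem:bddsupport}, which also underlies your claimed upgrade to uniform convergence.
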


\begin{theorem}[Stability]
\label{thm:stability-near-lamb}
Let $\lambda,\mu,\nu>0$ satisfy \eqref{eq:near-Lamb-normalized-range} or \eqref{eq:largeimpulse}. Then, the traveling wave given by $\omega_{\kappa} \in S_{\mu,\nu,\lambda}$ is stable in the sense that for every $\varepsilon>0$, there exists $\delta>0$ such that, whenever $\zeta_0\in L^{\infty}\cap L^1  \cap L^1_*(\mathbb R^2_+)$ is nonnegative and satisfies $\| \zt_0\|_{L^1}\le\nu$ and 
\begin{equation*}
\inf_{a\in\mathbb R} \nrm{\zeta_0-\omega_{\kappa}(\cdot+(a,0))}_{L^2(\bbR^2_+)} 
+\left|\int_{\mathbb R^2_+}x_2\zeta_0\,dx-\mu\right|
\leq\delta,
\end{equation*}
the unique global-in-time solution $\zeta(t)$ of \eqref{eq:2D-Euler} for initial data $\zeta_0$ satisfies
\begin{equation*}
\inf_{a\in\mathbb R} \nrm{\zeta(t)-\omega_{\kappa}(\cdot+(a,0))}_{L^2\cap L^1_*(\bbR^2_+)} 
\leq\varepsilon,
\qquad t \in \bbR. 
\end{equation*}
\end{theorem}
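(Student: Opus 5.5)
We deduce Theorem~\ref{thm:stability-near-lamb} from the compactness of maximizing sequences for \eqref{eq: VP-2} established in \cite{AC2019}, combined with the uniqueness statement \eqref{eq:unique} of Theorem~\ref{thm:unique-near-lamb}. Throughout, $\lambda,\mu,\nu$ are fixed with $\kappa$ in one of the two ranges, so that by Theorem~\ref{thm:unique-near-lamb} we have $S_{\mu,\nu,\lambda}=\{\omega_\kappa(\cdot+(a,0)):a\in\bbR\}$. The argument is by contradiction, in the standard Cazenave--Lions/Burton style. Suppose there are $\varepsilon_0>0$, nonnegative data $\zeta_0^n\in L^\infty\cap L^1\cap L^1_*$ with $\|\zeta^n_0\|_{L^1}\le\nu$, and times $t_n$ such that
\[
\inf_a\|\zeta_0^n-\omega_\kappa(\cdot+(a,0))\|_{L^2}+\Bigl|\int x_2\zeta_0^n\,dx-\mu\Bigr|\to0,
\]
while
\[
\inf_a\|\zeta^n(t_n)-\omega_\kappa(\cdot+(a,0))\|_{L^2\cap L^1_*}\ge\varepsilon_0 .
\]

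The first step is to record the conserved quantities. For the odd-symmetric Euler flow, the restriction of the solution to $\mathbb R^2_+$ is transported by a divergence-free field tangent to $\partial\mathbb R^2_+$. Hence nonnegativity, every $L^p$ norm (in particular $\|\cdot\|_{L^1}\le\nu$ and $\|\cdot\|_{L^2}$), the impulse $\int x_2\zeta\,dx$, and the energy $E[\zeta]$ are all conserved along the evolution. These conservation laws are standard for $L^1\cap L^\infty\cap L^1_*$ data.

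The second step shows that $\zeta^n(t_n)$ is a maximizing sequence. Write $\mu_n=\int x_2\zeta_0^n$, so that $\mu_n\to\mu$. From the $L^2$-closeness to a translate of $\omega_\kappa$, together with the $L^1$ bound, we need continuity of $E$. Here we use the estimate $|E[\zeta]-E[\omega]|\lesssim \|\zeta-\omega\|_{L^2}\cdot(\text{norms in }L^1\cap L^2\cap L^1_*)$, which follows from the Green function bound $G(x,y)\lesssim \min\{x_2y_2|x-y|^{-2},\,1+|\log|x-y||+\ldots\}$ as used in \cite{AC2019}. With this, $E_{2,\lambda}[\zeta_0^n]\to I_{\mu,\nu,\lambda}$. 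The impulse is only approximately $\mu$, so we rescale: set $\tilde\zeta^n=\zeta^n(t_n)(\cdot)$ dilated, or multiplied by $\mu/\mu_n$, so that the impulse is exactly $\mu$. The simplest choice is to multiply by a factor $c_n\to1$ and restrict to $c_n\le1$, which preserves the mass constraint; if instead $\mu_n<\mu$, we translate vertically by $h_n\to0$ to raise the impulse. Either modification changes $E_{2,\lambda}$ and the norms by $o(1)$, and $\tilde\zeta^n\in K_{\mu,\nu}$. Conservation then gives $E_{2,\lambda}[\tilde\zeta^n]\to I_{\mu,\nu,\lambda}$.

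The third step applies the compactness theorem of \cite{AC2019}. Every maximizing sequence in $K_{\mu,\nu}$ converges, up to a subsequence and horizontal translations, in $L^2\cap L^1_*$ to an element of $S_{\mu,\nu,\lambda}$. By \eqref{eq:unique}, that limit is a translate of $\omega_\kappa$. Undoing the $o(1)$ modification, $\zeta^n(t_n)$ is then $L^2\cap L^1_*$-close to a translate of $\omega_\kappa$, which contradicts the choice of $\varepsilon_0$. The main obstacle is the $L^1_*$ part of the convergence: weak convergence plus convergence of norms is needed there, and it relies on the compactness result of \cite{AC2019} excluding vanishing and escape of impulse to $x_2\to\infty$ or to the boundary. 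Since we are allowed to invoke it, the remaining care lies in the impulse adjustment in step two. To ensure that adjustment stays within $K_{\mu,\nu}$ without increasing the mass, we use the vertical shift, which preserves both nonnegativity and mass.
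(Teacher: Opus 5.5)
Your argument is correct and is essentially the paper's route. The paper reduces to $\lambda=\nu=1$, quotes the stability of the whole maximizer set $S_{\mu}$ from \cite[Theorem 1.4]{AC2019} (Proposition~\ref{p:stability}), and then uses the uniqueness \eqref{eq:unique} to replace $\inf_{\omega\in S_{\mu}}$ by $\inf_{a\in\bbR}$; you instead re-derive that set stability from the compactness of maximizing sequences by the standard contradiction argument, which is the usual way such set stability follows from compactness.

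Two small points in your re-derivation do not affect the conclusion. First, the energy has only a H\"older-type modulus of continuity in $L^2$ under the $L^1\cap L^1_*$ bounds (as in the proof of \eqref{eq:KEE}), not the linear bound you wrote. Second, the upward vertical shift needs $\|\zeta^n(t_n)\|_{L^1}$ bounded below, which holds by $L^2$-closeness to $\omega_\kappa$ on a bounded set, and it can only be undone after compactness has been applied, not as an a priori $o(1)$ perturbation.
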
 

\begin{remark}
		The $L^{\infty}$ assumption for $\zt_0$ is used only to guarantee uniqueness of the solution by Yudovich's theorem. Even without this $L^{\infty}$ assumption, one can still get the existence of a global-in-time solution satisfying the same estimate. 
\end{remark}

\begin{figure}[!hb]
\centering
\includegraphics[width=0.93\textwidth]{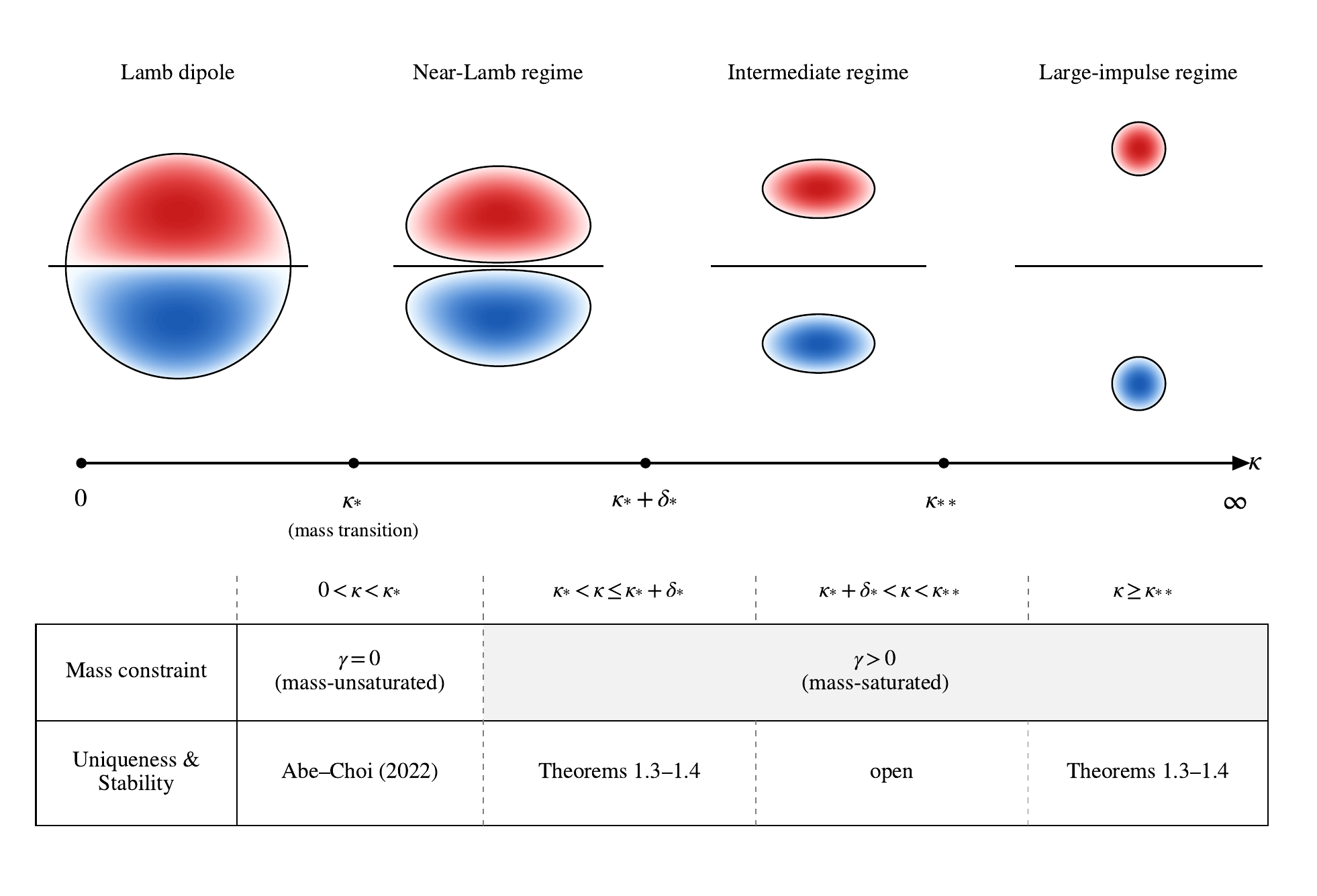}
%{global_branch.pdf}
\caption{
Global picture of the variational family as the dimensionless impulse \(\kappa\) increases. At \(\kappa=\kappa_*\), the mass constraint becomes active and the explicit Lamb-dipole regime gives way to non-explicit traveling dipoles (Theorem \ref{thm:threshold}). Uniqueness and orbital stability hold in the near-Lamb and large-impulse regimes (Theorems \ref{thm:unique-near-lamb} and   \ref{thm:stability-near-lamb}), while uniqueness remains open in the intermediate regime.}
\label{fig:global-branch}
\end{figure}

Figure \ref{fig:global-branch} summarizes the global picture of the variational family and the regimes covered by Theorems \ref{thm:threshold}--\ref{thm:stability-near-lamb}. The two uniqueness regimes lie near the opposite ends of the mass-saturated family, where two explicit limiting solutions to \eqref{eq:near-Lamb-fixed-EL} arise: the Lamb dipole with $\gamma=0$ in $\mathbb R^2_+$ and the radially symmetric solution with $W=0$ in $\mathbb R^2$.

The radially symmetric solutions in $\mathbb{R}^{2}$ form a one-parameter family parametrized by $\lambda>0$ and are given by 
\begin{align}
\omega^{R}_{\lambda,\gamma_R}(x)= \frac{\lambda}{2\pi j_{0,1} J_1(j_{0,1})} J_{0}(\sqrt{\lambda}r)\mathbf{1}_{[0,  {j_{0,1}}{ \lambda^{-\frac12}}]}(r),   \label{eq:radiallysymmetric}
\end{align}
where $j_{0,1}=2.4048\cdots$ is the first positive zero of $J_0$. The radial solution has unit mass   $||\omega_{\lambda,\gamma_R}^{R}||_{L^{1}}=1$ and $\psi=(-\Delta_{\mathbb{R}^{2}})^{-1}\omega^R_{\lambda,\gamma_R}$ satisfies \eqref{eq:near-Lamb-fixed-EL} in $\mathbb{R}^{2}$ for $W=0$ and 
\begin{align}
\gamma_R=\gamma_R(\lambda)=-\frac{1}{2\pi}\log\left(\frac{j_{0,1}}{\sqrt{\lambda}}\right).  \label{eq:radialflux}
\end{align}
See Figure \ref{fig:well-separated-regime}.

We show that the profiles $\omega_{\kappa}$ in Theorems \ref{thm:unique-near-lamb} and \ref{thm:stability-near-lamb} converge to those explicit solutions \eqref{eq:Lamb} and \eqref{eq:radiallysymmetric} as $\kappa\to \kappa_*^{+}$ and $\infty$.

\begin{theorem}[Limiting profiles]\label{thm:well separated dipole}
Let $\lambda,\mu,\nu>0$ satisfy \eqref{eq:near-Lamb-normalized-range} or \eqref{eq:largeimpulse}. Set $W_L=W_L(1,\kappa_{*})>0$ and $\gamma_R=\gamma_R(1)$ by \eqref{eq:Lambspeed} and \eqref{eq:radialflux}. Let $\omega_\kappa$ be the Steiner symmetric maximizer given in
Theorem~\ref{thm:unique-near-lamb}, and let $W>0$ and $\gamma>0$
be the corresponding constants in \eqref{eq:near-Lamb-fixed-EL}.

\noindent
(i) If \eqref{eq:near-Lamb-normalized-range} holds, there exist constants $C, R>0$ such that 
\begin{align}
&C^{-1}\frac{\sqrt{\lambda}\gamma}{\nu}\leq
\operatorname{dist}(\operatorname{spt}\omega_{\kappa},
\partial\mathbb R^2_+)
\leq C\frac{\sqrt{\lambda}\gamma}{\nu}, \label{eq:near-Lamb-fixed-gap}\\
&\operatorname{spt} \omega_{\kappa}\left(\frac{\cdot}{\sqrt{\lambda}}\right)\subset B_0(R)_{+}=B_0(R)\cap \mathbb{R}^{2}_{+}. \label{eq:supportboundhalfplane}
\end{align}
Moreover, as $\kappa\to \kappa_*^+$,
\begin{align}\label{eq:near-Lamb-fixed-convergence}
\frac{1}{\lambda \nu}\omega_{\kappa}\left(\frac{\cdot}{\sqrt{\lambda}}\right)-\omega^L_{1,W_L}&\to 0\quad \textrm{uniformly in}\ \mathbb{R}^{2}_{+},\\
\frac{W}{\nu\sqrt{\lambda}} -W_L&\to 0,\quad 
\frac{\gamma}{\nu}\to 0.
\end{align}

\noindent
(ii) If \eqref{eq:largeimpulse} holds, there exists $R>0$ such that 
\begin{equation}
\operatorname{spt} \omega_{\kappa}\left(\frac{\cdot+\left(0,\kappa\right)}{\sqrt{\lambda}}\right) \subset B_{0}(R).  \label{eq:uniformvortexcorebound}
\end{equation}
Moreover, as $\kappa\to\infty$,
\begin{align}
&\frac{1}{\lambda\nu}\omega_{\kappa}\!\left(\frac{\cdot+\left(0,\kappa\right)}{\sqrt{\lambda}}\right)-\omega^{R}_{1,\gamma_R}
\to 0
\quad \textrm{uniformly in}\ \mathbb{R}^{2},  \label{eq:uniformconvergenceR2}\\
&\frac{W\mu}{\nu^{2}}-\frac1{4\pi}\to0,
\quad
\frac{\gamma}{\nu} - \frac{1}{2\pi}\log\left(2\kappa\right)
+\frac1{4\pi}-\gamma_R\to 0.\label{eq:convergenceWg} 
\end{align}
\end{theorem}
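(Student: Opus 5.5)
We first use the scaling symmetries of \eqref{eq: VP-2} to reduce to the normalized problem $\lambda=\nu=1$, $\mu=\kappa$. If $\omega\in S_{\mu,\nu,\lambda}$, then $\tilde\omega(x)=(\lambda\nu)^{-1}\omega(x/\sqrt\lambda)$ lies in $S_{\kappa,1,1}$. Under this map the Euler--Lagrange relation \eqref{eq:near-Lamb-fixed-EL} becomes
\begin{equation*}
\tilde\omega=(\mathcal G\tilde\omega-\tilde W x_2-\tilde\gamma)_+,\qquad \tilde W=\frac{W}{\nu\sqrt\lambda},\quad \tilde\gamma=\frac{\gamma}{\nu},
\end{equation*}
which explains every normalization appearing in the statement. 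For the rest of the argument we work with a family of normalized maximizers $\tilde\omega_\kappa$ (Steiner symmetric, centered on the $x_2$-axis), with $\kappa$ in one of the two regimes.

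\emph{Part (i).} The starting point is compactness. Maximizing sequences are compact by \cite{AC2019}, and the Lamb dipole is the unique maximizer at $\kappa=\kappa_*$ by Theorem~\ref{thm:threshold}(A). Since $I_{\kappa,1,1}$ is continuous in $\kappa$ (scale competitors), the maximizers $\tilde\omega_\kappa$ form a maximizing sequence for the limiting problem as $\kappa\to\kappa_*^+$. Hence $\tilde\omega_\kappa\to\omega^L_{1,W_L}$ in $L^2\cap L^1_*$ after translation, and Steiner symmetry pins the translation to $a=0$.

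Next we identify the limits of the multipliers. Test the Euler--Lagrange equation against $\tilde\omega_\kappa$ and against $x_2$-weighted quantities; this yields linear identities relating $\tilde W,\tilde\gamma$ to the energy, the enstrophy, the impulse $\kappa$ and the mass $1$. Passing to the limit gives $\tilde W\to W_L$ and $\tilde\gamma\to\gamma_\infty\ge0$. The limit then solves $\omega^L=(\mathcal G\omega^L-W_Lx_2-\gamma_\infty)_+$, and comparing with the Lamb equation near the boundary, where $\omega^L>0$, forces $\gamma_\infty=0$.

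With this in hand, uniform convergence follows by elliptic bootstrapping. Boundedness in $L^2\cap L^1$ gives $\mathcal G\tilde\omega_\kappa$ bounded in $C^{1,\alpha}$, so $\tilde\omega_\kappa=(\cdot)_+$ converges uniformly. The support bound \eqref{eq:supportboundhalfplane} comes from the decay $\mathcal G\tilde\omega(x)\lesssim x_2/|x|^2$ against $\tilde Wx_2$ with $\tilde W\ge W_L/2$.

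For the gap \eqref{eq:near-Lamb-fixed-gap} we linearize near the boundary. Write $\psi=\mathcal G\tilde\omega_\kappa-\tilde Wx_2$. On the part of $\partial\Omega$ near the axis we have $\psi(x)\approx \partial_{x_2}\psi(x_1,0)\,x_2$. The uniform $C^1$ convergence to the Lamb stream function gives $\partial_{x_2}\psi(x_1,0)\approx c(x_1)$, where $c>0$ on the Lamb chord $|x_1|<j_{1,1}$ (a Hopf-type fact for the Lamb dipole). Thus $\{\psi=\tilde\gamma\}$ sits at height comparable to $\tilde\gamma$, which yields both the upper and lower bounds. Some care is needed at the corners $|x_1|\approx j_{1,1}$: there $c$ degenerates, but the distance to the boundary is attained near the axis, so the two-sided bound is unaffected.

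\emph{Part (ii).} We shift by $(0,\kappa)$. As $\kappa\to\infty$ we expect the vortex to sit at height $\approx\kappa$ (impulse $\kappa$, mass $1$). Decompose $G(x,y)=-\frac1{2\pi}\log|x-y|+\frac1{2\pi}\log|x-\bar y|$. The image term equals $\frac1{2\pi}\log(2\kappa)$ plus $O(1/\kappa)$ variations across the core, and this constant is precisely the source of the $\log(2\kappa)$ shift in $\tilde\gamma$. The energy-Casimir functional thus reduces, up to $o(1)$, to the whole-plane functional $\frac12\int\omega(-\Delta)^{-1}\omega-\frac12\int\omega^2$ at unit mass, plus a constant. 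That reduced functional is maximized by the radial profile $\omega^R_{1,\gamma_R}$, uniquely up to translation, via rearrangement (Riesz) and uniqueness of the radial Bessel solution.

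Two steps are needed to turn this into convergence. First, a concentration-compactness argument rules out splitting and vanishing: concentration near height $\kappa$ follows from energy comparison, and the core diameter is bounded uniformly, giving \eqref{eq:uniformconvergenceR2} and \eqref{eq:uniformvortexcorebound} via a uniform support estimate. Second, we obtain the constants from Pohozaev/virial-type identities. Multiplying the Euler--Lagrange equation by $\partial_{x_2}$-derivatives relates $\tilde W\kappa$ to the horizontal force exerted by the image vortex, $\int\!\!\int\tilde\omega\tilde\omega\,\partial_{x_2}(\text{image kernel})\approx\frac1{4\pi\kappa}$. This gives $\tilde W\kappa\to\frac1{4\pi}$, which is the statement $W\mu/\nu^2\to\frac1{4\pi}$. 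Evaluating the Euler--Lagrange relation on the boundary of the core gives $\tilde\gamma=\frac1{2\pi}\log(2\kappa)-\tilde W\kappa+\gamma_R+o(1)$, which is \eqref{eq:convergenceWg}.

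The main obstacle I expect is part (ii): obtaining uniform (rather than merely $L^2$) control of the core and the precise $o(1)$ asymptotics of $\tilde\gamma$. This requires sharp expansion of the image-term contributions and a bootstrap to ensure $\partial\Omega$ converges. I would first try to derive $\tilde\gamma$ from the energy identity $\int\tilde\omega\,\mathcal G\tilde\omega=\int\tilde\omega^2+\tilde W\kappa+\tilde\gamma$, since each term can be computed in the limit.
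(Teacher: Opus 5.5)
\textbf{Part (i).} This follows the paper's route: compactness of maximizing sequences, continuity of $I_\mu$ and uniqueness of the Lamb dipole at $\kappa_*$; multipliers from the Poho\v{z}aev identities; and the gap from the linear behaviour of $\mathcal G\tilde\omega_\kappa-\tilde Wx_2$ near $x_2=0$.

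However, your support bound is circular as written. The decay $\mathcal G\tilde\omega_\kappa(x)\lesssim x_2/|x|^2$ holds uniformly only once you know the supports are localized. A small distant piece of vorticity produces its own stream function precisely at the support points you want to exclude. The step is easily repaired:
\begin{itemize}
\item Use $\tilde\omega_\kappa\le\|\mathcal G\tilde\omega_\kappa\|_{L^\infty}\le C$ together with \eqref{eq:sinfty} for $\alpha=1/2$ to get $\|\nabla\mathcal G(\tilde\omega_\kappa-\omega^L_{1,W_L})\|_{L^\infty}\to0$.
\item Since $\mathcal G f$ vanishes on $x_2=0$, this gives $\mathcal G\tilde\omega_\kappa(x)\le W_Lj_{1,1}^2x_2/|x|^2+o(1)\,x_2$ for $|x|\ge j_{1,1}$.
\item Comparing with $\tilde Wx_2$ on the support then bounds the support.
\end{itemize}
The paper instead splits off a vertical strip and uses Steiner symmetry. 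The same $L^\infty$ bound is also what you need before any $C^{1,\alpha}$ bootstrap; $L^2\cap L^1$ alone does not give $C^{1,\alpha}$ in two dimensions.

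\textbf{Part (ii): the genuine gap.} Replacing $E_2$ by the whole-plane free energy plus $\frac{1}{4\pi}\log(2\kappa)$ is legitimate only when the vorticity already lies within $O(1)$ of $(0,\kappa)$. Only there is the image kernel $\frac{1}{2\pi}\log|x-\bar y|$ nearly constant. So this reduction cannot be used to \emph{prove} localization, and your sentence invoking concentration-compactness and ``a uniform support estimate'' is precisely the step that needs an argument. Three obstacles stand in the way:
\begin{itemize}
\item Nothing in the constraints controls horizontal spreading. Only the mass and the vertical moment are fixed, and the stream-function estimates give merely $D_1\lesssim\mu^a$.
\item $L^1$-tightness would not yield the uniform support bound \eqref{eq:uniformvortexcorebound} or uniform convergence.
\item Maximizing sequences of \eqref{eq:MFE} are not compact in general.
\end{itemize}

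The paper supplies the localization through a chain of steps you do not have:
\begin{enumerate}
\item A lower bound $\gamma\ge\frac{1}{2\pi}\log(2\mu)-C$, from the Poho\v{z}aev identities and a unit-area disk at height $\mu$ as competitor.
\item Rough bounds $D_2\lesssim\mu$ and $D_1\lesssim\mu^a$.
\item A lower bound on the mass of $\omega$ in $B_x(AQ^{1+\delta})$ at every $x\in\operatorname{spt}\omega$. This comes from comparing $\frac{1}{2\pi}\int\log\frac{1}{|x-y|}\omega(y)\,dy\ge-\frac{1}{2\pi}\log Q-C$ with the upper bound $C'-\frac{\log R}{2\pi}\int_{|x-y|\ge R}\omega(y)\,dy$.
\item A packing argument along a horizontal chord, using Steiner symmetry, that bounds $D_1$ and upgrades the previous step to mass $m_0$ in $B_x(R_0)$.
\item A free-energy lower bound $\mathcal E_2[\hat\omega]\ge-C_1$, by comparison with the translated radial profile, played against $\mathcal E_2[\hat\omega]\le C_2-\frac{m_0^2}{2\pi}\log H$ from two separated balls of mass $m_0$. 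This bounds the vertical extent $H$.
\end{enumerate}

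Only then is $\hat\omega$ a maximizing sequence for \eqref{eq:MFE} supported in a fixed ball. At that point weak $L^2$ compactness and the Carrillo--Castorina--Volzone uniqueness give convergence. Your multiplier asymptotics, via \eqref{eq:pohozaev-W} and $\gamma=2I_\mu-W\mu$, are fine and match the paper, but they all rest on this localization.
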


Theorem~\ref{thm:well separated dipole} also describes the behavior of the
traveling speed and the position of the vortex core along the variational
family. As $\kappa\to\kappa_*^+$, the flux constant $\gamma$ vanishes
and the normalized traveling speed $W/(\nu\sqrt{\lambda})$ converges to the Lamb dipole speed $W_L$.
For $\kappa\leq\kappa_*$, the family consists exactly of Lamb dipoles,
whose speed is proportional to the impulse by \eqref{eq:Lambspeed}.
At the opposite end, as $\kappa\to\infty$, the vortex core recedes from
the boundary while $\gamma$ diverges logarithmically and the traveling
speed tends to zero. Thus the variational family interpolates between
the Lamb-dipole regime and a pair of increasingly separated vortex cores, each approaching a stationary radial profile while the translation speed tends to zero.

Despite the weak constraints in \eqref{eq: VP-2},
Theorems~\ref{thm:threshold}--\ref{thm:well separated dipole} reveal a
remarkably rigid structure of the resulting one-parameter family, including
its mass transition, uniqueness, orbital stability, and limiting profiles. They also provide detailed geometric information on the vortex cores, including their localization, separation from the boundary, and asymptotic shape. Thus, substantial rigidity and geometric structure survive the coarse-graining of the classical variational principle.

\subsection{Related works}
    
	\subsubsection{Previous works on the Lamb dipole}
	The variational study of the Lamb dipole is the closest precedent for the
present paper. Burton proved uniqueness in two rearrangement classes
\cite{Burton96,Burton05b}; the work \cite{AC2019} established a variational
characterization and orbital stability. More recently, quantitative orbital
stability was studied in \cite{LSZ26}, while the linearized dynamics around
the Lamb dipole were investigated in \cite{Pro24,PV}. Related variational formulations were studied in \cite{ACJ,Wang.2024}. The Lamb dipole
	profile has since played a role in multi-dipole stability
problems \cite{AJY,CJY}, examples of vorticity-gradient growth
\cite{CJ-Lamb,JYZ}, and convex integration constructions of weak solutions
to the two-dimensional Euler equations \cite{BCK26}. These developments
exploit, in different ways, the explicit Bessel formula for the Lamb dipole. In contrast, the present work concerns a variational family beyond the Lamb dipole, where neither the maximizer nor the geometry of its vortex core is explicit. Thus, the main issue is not the stability of the Lamb dipole itself, but whether its variational rigidity persists after the mass constraint becomes active.

%The Lamb dipole profile has since been used in multi-dipole stability problems \cite{AJY,CJY} and in examples of vorticity-gradient growth \cite{CJ-Lamb,JYZ}, and, more recently, as a building block in convex integration constructions of weak solutions to the two-dimensional Euler equations \cite{BCK26}. These arguments benefit from the explicit Bessel formula for the Lamb dipole.

	\subsubsection{Existence results for vortex dipoles}
	Existence theory for vortex dipoles is much broader. Various types of traveling vortex dipoles were
	constructed by free-boundary, variational, bifurcation, and
	desingularization methods
	\cite{Norbury75,Pierrehumbert80,Tu83a,Tu83b,Burton88,BNL13,CLZ21,SV10,CLW14,HaHm21,GarciaHaziot23,DDMP24}. Traveling vortex pairs have also been used as building blocks in the
construction of genuinely time-dependent global solutions to the
two-dimensional Euler equations \cite{DDMP26}. More recently, touching traveling vortex pairs, including the Sadovskii vortex patch, have been constructed by variational and free-boundary methods \cite{HT25, CJS,CSW,ACJSW}. The nondegeneracy of the Sadovskii vortex patch was recently established in \cite{DMW26}. Most of these results concern existence or stability of a maximizing set, rather than uniqueness along a variational family.
    
    %These results
	%establish existence, compactness, and in several cases stability of the entire maximizing set.
    
    % The issue addressed here is different: we identify that set with a single translation orbit in limits where the free boundary either degenerates at the symmetry axis or moves with the solution.
	
	For vortex patches, uniqueness and orbital stability of a
	well-separated pair were obtained in \cite{CQZZ25}. While \cite{CQZZ25} concerns the concentration regime of vortex patches, the large-impulse regime considered here has a different asymptotic structure: after recentering, each vortex core remains of order one and converges to a nontrivial radially symmetric profile, while the two cores separate. Accordingly, our uniqueness argument does not rely on a point-vortex reduction or local Poho\v{z}aev analysis, but instead on uniform localization of the vortex core, logarithmic free-energy maximization, and coercivity of the linearized operator around the limiting radial profile; see \S \ref{ss:1.4}.
    
    The near-Lamb regime considered here has no counterpart in the concentrated-patch setting and involves a different limiting mechanism associated with the activation of the mass constraint. 
	
	\subsubsection{Analogy with uniformly rotating solutions}
	
	The study of uniformly rotating vorticity solutions for \eqref{eq:2D-Euler}, so-called $V$-states, is another broad subject. As in the case of traveling vortex dipoles, the variational principle of maximizing the energy gives the existence as well as stability of uniformly rotating states. The simplest among such solutions is arguably the Rankine vortex, which is simply the stationary vortex patch supported on a disc. It can be shown that it is the unique energy maximizer under mass constraint in the patch class. Upon fixing the radius of a Rankine vortex, by imposing an \textit{additional} constraint on the radial impulse, the maximizer becomes the Kirchhoff ellipses, at least locally near the Rankine vortex \cite{Tang,WP}. In this analogy, the Lamb dipole (which is indeed a radial object in $\bbR^4$) corresponds to the Rankine vortex, with slightly deformed Lamb dipoles to the Kirchhoff ellipses with small eccentricity. In the present problem, however, the deformation from the Lamb dipole is not generated by imposing an additional geometric constraint, but emerges from the activation of the mass constraint in the variational problem.
	 
	\subsubsection{Fraenkel--Norbury family of vortex rings}

The family of two-dimensional vortex dipoles considered in this paper may be viewed as an analogue of the Fraenkel--Norbury family of vortex rings for the three-dimensional axisymmetric Euler equations without swirl \cite{Fra70,No73}; see also \cite[7.2.3]{WMZ15}. The Fraenkel--Norbury solutions can also be constructed through a variational principle for the vorticity, similar in spirit to \eqref{eq: VP-2} \cite{FT81}. The variational problem there, however, has the special form of maximizing the kinetic energy under an $L^\infty$ constraint on the relative vorticity, and its maximizers are vortex patches. Uniqueness is known for Hill's spherical vortex and a family of Norbury rings \cite{AF86,AF88}. Corresponding orbital stability results are also available \cite{Choi24,CQZZ23}, and both uniqueness and stability have been established in the thin-cored regime \cite{CQYZZ22, CLQZZ26}. However, a unified theory of uniqueness and stability along the Fraenkel--Norbury family is still lacking. The results of the present paper suggest that a picture analogous to Figure \ref{fig:global-branch} may also hold for the Fraenkel--Norbury family.

	\subsubsection{Related equations}
	
	A closely related variational problem appears in the Euler--Poisson and Vlasov--Poisson setting: Jang and Seok used uniqueness of McCann's constrained minimizers to prove orbital stability of uniformly rotating binary stars
	and galaxies \cite{JangSeok22}.
	
	We finally mention that a different question, the long-time viscous
	evolution of dipoles, has recently been studied by Dolce and Gallay
	\cite{DG26}.
	
\subsection{Main ideas and difficulties}\label{ss:1.4}

The key step in the proof is the uniqueness result in
Theorem \ref{thm:unique-near-lamb}. Its proof relies on the limiting profiles in Theorem \ref{thm:well separated dipole} and on coercive estimates for the corresponding linearized operators. The two limiting regimes require substantially different compactness and rigidity mechanisms, which we outline below.

\subsubsection{Large-impulse regime}

A distinctive feature of the large-impulse regime is that the vortex cores do not concentrate: their size remains of order one while their separation diverges. This leads, after recentering and subtracting the leading interaction energy, to a logarithmic free-energy maximization problem on $\mathbb {R}^2$. More precisely, we consider the free energy
\begin{align}
	{\mathcal{E}}_{2}[\omega]
	=\mathcal{E}[\omega]-\frac{1}{2}
		\|\omega\|_{L^2(\mathbb R^2)}^2=
	\frac{1}{4\pi}
	\iint_{\mathbb R^2\times\mathbb R^2}
	\log\frac{1}{|x-y|}	\omega(x)\omega(y)
	\,dx\,dy
	-
	\frac{1}{2}
	\int_{\mathbb R^2}\omega^2\,dx
	\label{eq:FE}
\end{align}
and the maximization problem
\begin{align}
	\mathcal I
	&=
	\sup_{\omega\in\mathcal K}
	\mathcal E_2[\omega],
	\label{eq:MFE}\\
	\mathcal K
	&=
	\left\{
	\omega\in L^2\cap L^1(\mathbb R^2):
	\ \omega\geq 0,\ 
	|x|\omega\in L^1(\mathbb R^2),\
	\|\omega\|_{L^1(\mathbb R^2)}=1,\
	\int_{\mathbb R^2}x\,\omega(x)\,dx=0
	\right\}.
	\label{eq:AFE}
\end{align}
This variational problem arises in aggregation--diffusion models, and in particular in the two-dimensional Keller--Segel equation with quadratic diffusion. Carrillo, Castorina, and Volzone~\cite{Car} proved that the maximizer of \eqref{eq:MFE} is uniquely given by the radially symmetric profile \eqref{eq:radiallysymmetric} for $\lambda=1$.
For a more detailed classification of stationary states and the long-time behavior of solutions to aggregation--diffusion equations, we refer to Carrillo, Hittmeir, Volzone, and Yao~\cite{CHVY19}; see also Bailo, Carrillo, and G\'{o}mez-Castro \cite{BCG26} for the recent survey.

\begin{figure}[ht]
\centering
\includegraphics[width=.50\textwidth]{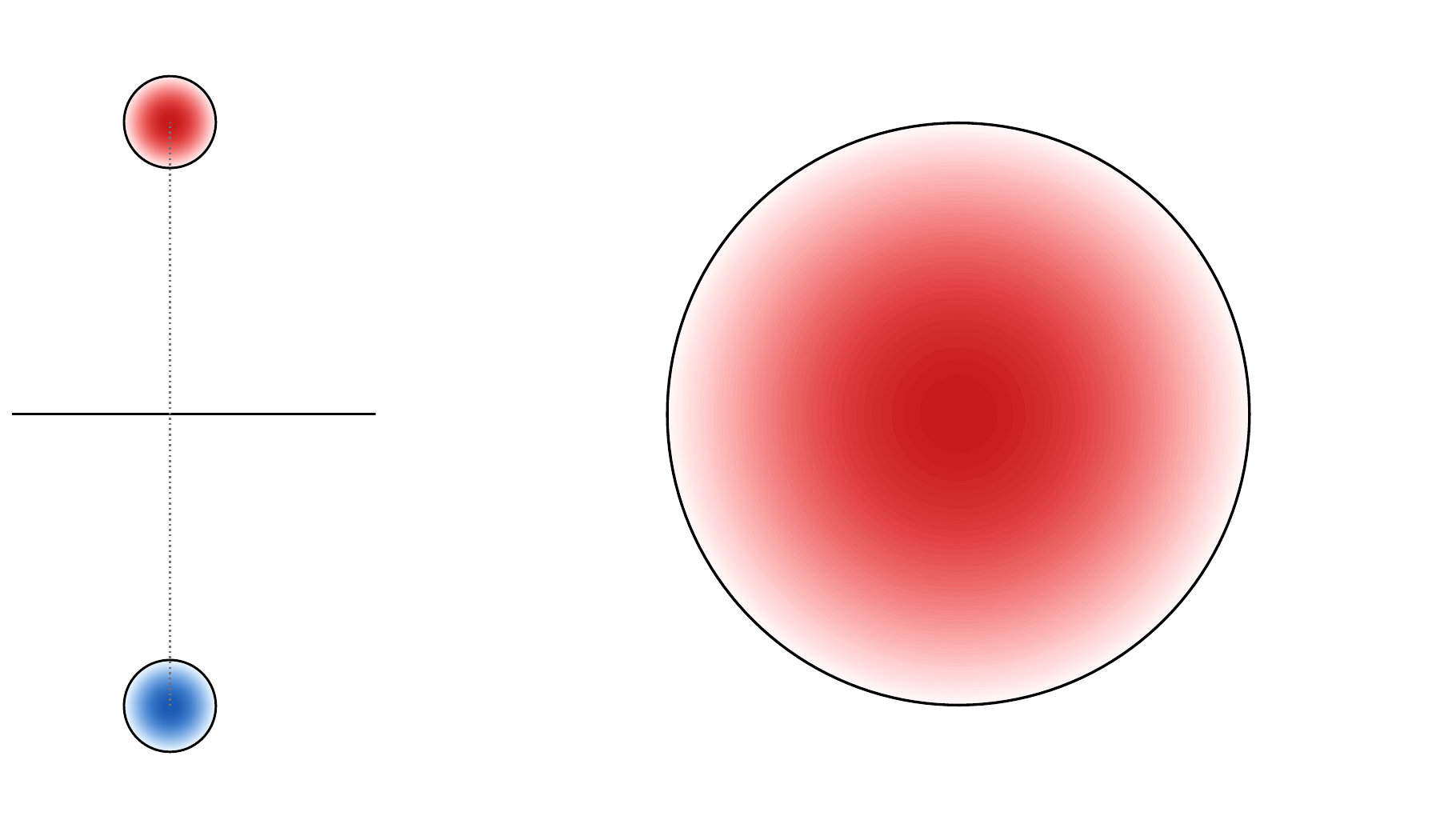}
\caption{A large-impulse vortex dipole in Theorem \ref{thm:well separated dipole} (ii) and the radially symmetric solution \eqref{eq:radiallysymmetric}.}
\label{fig:well-separated-regime}
\end{figure}

The maximization problem \eqref{eq: VP-2} in the mass-saturated regime $\kappa>\kappa_*$ can be viewed as a half-plane analogue of the problem \eqref{eq:MFE} on $\mathbb R^2$.
Indeed, we will see the link between the two problems by showing the asymptotics of the maximum
\begin{align*}
	I_{\mu,1,1}
	-
	\frac{1}{4\pi}\log(2\mu)
	=
	\mathcal I
	+
	O(\mu^{-2})\quad \textrm{as}\ \mu\to\infty.
\end{align*}
Namely, after subtracting the leading term $(4\pi)^{-1}\log(2\mu)$, the maximal value converges to the maximum of the logarithmic energy maximization problem on $\mathbb R^2$. More precisely, for a Steiner symmetric maximizer $\omega$ of $I_{\mu,1,1}$, we show that its recentered vortex 
\begin{align*}
	\widehat{\omega}(x)
	=
	\omega\bigl(x+(0,\mu)\bigr),
\end{align*}
forms a maximizing sequence for the logarithmic energy maximization problem \eqref{eq:MFE}.

Without radial symmetry, maximizing sequences for \eqref{eq:MFE} need not in general be compact; see \cite{Car}. In the present problem, however, we can exploit the additional compact-support structure of the vortex core.
More precisely, once the uniform bound on the vortex core in \eqref{eq:uniformvortexcorebound} is established, we obtain the convergence toward the radially symmetric solution \eqref{eq:uniformconvergenceR2} along some sequence $\{\mu_n\}$ satisfying $\mu_n\to\infty$ and a corresponding sequence of Steiner symmetric maximizers $\{\omega_n\}$. This convergence is the starting point for proving the uniqueness of the maximizer in the large-impulse regime.

We now outline the uniqueness argument in the large-impulse regime $\mu\geq \kappa_{**}$ (with $\lambda=\nu=1$). We will argue by contradiction. Suppose that there are two Steiner symmetric maximizers $\omega^{1}_{n}$ and $\omega_n^{2}$ of \eqref{eq: VP-2} along a sequence $\mu_n\to\infty$. We will show the uniform bound 
\begin{align}
\operatorname{spt} \hat{\omega}_n^{i}\subset B=B_0(R) \label{eq:compactsptbound}
\end{align}
with some $R>0$ for the recentered $\hat{\omega}^{i}_{n}$, extended by zero to $\mathbb{R}^{2}$. We deduce from this uniform vortex core bound that the recentered functions $\hat{\omega}_n^i$ converge to the radially symmetric solution $\omega^{R}_{1,\gamma_R}$. We then normalize the difference of the two maximizers. Using the Euler--Lagrange equation \eqref{eq:near-Lamb-fixed-EL} and the Poho\v{z}aev identities, we show that, along a subsequence, the normalized difference converges strongly to a nontrivial limit \(\zeta\) satisfying
\begin{align*}
L\zeta=0,\quad L=I-\mathbf{1}_{B_0(j_{0,1})}(-\Delta_{\mathbb{R}^{2}})^{-1}\mathbf{1}_{B_0(j_{0,1})}.
\end{align*}
The zero-moment condition, together with the coercivity of \(L\), forces \(\zeta=0\), contradicting the normalization.

The main difficulty in the large-impulse regime is to obtain a uniform bound on the recentered vortex core without any a priori localization. We first derive a uniform local lower bound on the vorticity mass and combine it with Steiner symmetry to control the horizontal extent of the core. The vertical extent is then controlled through the logarithmic free energy: if the vertical diameter were large, two portions carrying a definite amount of mass would be widely separated, forcing the free energy to tend to \(-\infty\), in contradiction with a uniform lower bound. This yields the uniform localization \eqref{eq:compactsptbound}, which in turn gives compactness and convergence to the radial limiting profile. We will outline the idea in more detail in \S \ref{ss:5.1}.

\subsubsection{Near-Lamb dipole regime}

The uniqueness in the near-Lamb dipole regime is proved by a similar
contradiction argument, but the rigidity mechanism is different from that
in the large-impulse regime. Suppose that $\mu_n\to\kappa_*^+$ and that
there exist two distinct Steiner symmetric maximizers
$\omega_n^1,\omega_n^2\in S_{\mu_n}$. Since these are maximizing
sequences for $I_{\kappa_*}$, compactness and the uniqueness at
$\kappa_*$ imply that both sequences converge to the Lamb dipole.
Together with the Euler--Lagrange equation, this convergence yields a uniform bound on their vortex cores.

We then normalize the difference $\omega_n^1-\omega_n^2$ and pass to a
nontrivial limit $\zeta$. Unlike in the large-impulse regime, the limiting
equation is inhomogeneous because the Lagrange multipliers also vary.
Moreover, the linearized operator around the Lamb dipole is indefinite:
it possesses a negative eigenvalue. Nevertheless, the Steiner symmetry
and the mass and impulse constraints allow us to establish coercivity of
the quadratic form on the relevant constrained subspace. Applying this
coercive estimate to $\zeta$ forces $\zeta=0$, contradicting the
normalization.

The two uniqueness arguments are compared in Table~\ref{tab:1}.
Although they have the same contradiction structure, the rigidity
mechanisms at the two endpoints are fundamentally different.

\begin{table}[h]
\begin{tabular}{|c|c|c|}
\hline
& Near-Lamb dipole                                                                           & Large-impulse                                                                    \\ \hline
Impulse    & $\kappa_*<\kappa\leq \kappa_*+\delta_{*}$                                               & $\kappa_{**}\leq \kappa$                                                                                 \\ \hline
\begin{tabular}[c]{@{}c@{}}Endpoint\\ variational \\ principle\end{tabular} & \begin{tabular}[c]{@{}c@{}}The energy-Casimir \\ maximization \eqref{eq: VP-2} in $\mathbb{R}^{2}_{+}$\end{tabular}    & \begin{tabular}[c]{@{}c@{}}The free-energy \\ maximization \eqref{eq:MFE} in $\mathbb{R}^{2}$\end{tabular}                          \\ \hline
\begin{tabular}[c]{@{}c@{}}Explicit \\ solutions\end{tabular}    & The Lamb dipole \eqref{eq:Lamb}                                                                          & \begin{tabular}[c]{@{}c@{}}The radially symmetric \\ solution \eqref{eq:radiallysymmetric}\end{tabular}                                \\ \hline
Compactness                                                      & \begin{tabular}[c]{@{}c@{}}Compactness of \\ maximizing sequence\end{tabular}                                              & \begin{tabular}[c]{@{}c@{}}Uniform localization of \\ vortex cores\end{tabular}                                                         \\ \hline
\begin{tabular}[c]{@{}c@{}}Linearized \\ operator\end{tabular}   & \begin{tabular}[c]{@{}c@{}}$L_+=I-\mathbf{1}_{B_0(j_{1,1})_+}(-\Delta_D)^{-1}\mathbf{1}_{B_0(j_{1,1})_+}$\\ (Indefinite)\end{tabular} & \begin{tabular}[c]{@{}c@{}}$L=I-\mathbf{1}_{B_0(j_{0,1})}(-\Delta_{\mathbb{R}^{2}})^{-1}\mathbf{1}_{B_0(j_{0,1})}$\\ (Nonnegative)\end{tabular} \\ \hline
\begin{tabular}[c]{@{}c@{}}Limit \\ equations\end{tabular}       & $L_+\zeta=-2\left(\frac{x_2}{\kappa_*}-1\right)(\zeta,\omega^L_{1,W_L})_{L^2}$                                         & $L\zeta=0$                                                                                                                     \\ \hline
\end{tabular}
\vspace{10pt}
\caption{Comparison of the two uniqueness arguments in Theorem \ref{thm:unique-near-lamb}.}
\label{tab:1}
\end{table}

Recently, Li, Song, and Zhou \cite{LSZ26} established a coercive estimate for the operator arising from the linearization around the Lamb dipole, under suitable orthogonality conditions, in their study of its quantitative orbital stability. In contrast, in the present paper, we establish the corresponding coercivity under the symmetry and zero-impulse conditions naturally arising from our variational problem, and use it to prove the uniqueness of the maximizer.

\subsubsection{Intermediate regime}

Our uniqueness arguments rely essentially on detailed information about the two explicit solutions arising in the limiting regimes. Consequently, the uniqueness of maximizers for the variational problem \eqref{eq: VP-2} remains open in the intermediate regime
\begin{align*}
\kappa_*+\delta_{*}<\kappa<\kappa_{**}.
\end{align*}
A closely related question concerns the spectral properties of the operator obtained by linearizing the Euler--Lagrange equation \eqref{eq:near-Lamb-fixed-EL} around a general maximizer $\omega$:
\begin{align*}
L_+=I-\mathbf{1}_{\Omega}(-\Delta_{D})^{-1}\mathbf{1}_{\Omega},
\end{align*}
where $\Omega$ is the vortex core of $\omega$. At the radially symmetric limiting profile \eqref{eq:radiallysymmetric}, $L$ is nonnegative with a finite-dimensional kernel, whereas at the opposite limiting profile, the Lamb dipole \eqref{eq:Lamb}, $L_+$ possesses a negative direction. Related spectral instability questions
for steady Euler flows have recently been studied in
\cite{Pro24,ZPS24,CCDV26}. In both limiting cases, the explicit geometry of the vortex core allows us to obtain the spectral information required for the uniqueness argument through Fourier decomposition in the angular variable. For a general maximizer in the intermediate regime, however, the geometry of the vortex core $\Omega$ is not explicitly known, and such a separation of variables is no longer available. Whether the variational family remains unique, and hence whether individual orbital stability persists, throughout the intermediate regime is an interesting open problem.

\subsection{Organization of the paper} 
This paper is organized as follows. In \S\ref{s:2}, we establish the Steiner symmetry and the mass transition of the variational solutions, proving Theorems \ref{thm:nearly Lamb} and \ref{thm:threshold}. In \S\ref{s:3}, we establish coercive estimates for the linearized operators. In \S\ref{s:4} and \S\ref{s:5}, we study the uniqueness and limiting profiles of the variational solutions in the near-Lamb dipole and large-impulse regimes, respectively, and prove Theorems \ref{thm:unique-near-lamb} and \ref{thm:well separated dipole}. Finally, in \S\ref{s:6}, we deduce the orbital stability from the uniqueness results, thereby proving Theorem \ref{thm:stability-near-lamb}.

\section{Properties of variational solutions}\label{s:2}

In this section, we prove Theorems \ref{thm:nearly Lamb} and \ref{thm:threshold}. We first collect estimates for the stream function and kinetic energy derived from pointwise bounds on the Green function. We then introduce Steiner symmetrization and prove Theorem \ref{thm:nearly Lamb}. Using known uniqueness results for the variational problem, we next establish the mass transition in Theorem \ref{thm:threshold}. In the latter part of the section, we derive the Pohožaev identities and uniform estimates for the maximizers that will be used in the analysis of uniqueness and limiting profiles in the subsequent sections.\\

Let $\mu,\lambda,\nu>0$. Observe that the rescaled function  
	\begin{align*}
		\tilde{\omega}(x)=\frac{1}{\lambda \nu}\omega\left(\frac{x}{\sqrt{\lambda}}\right),  %\label{eq:rescaling}
	\end{align*}
satisfies 
	\begin{align*}
		E_{2,1}[\tilde{\omega}]=\frac{1}{\nu^{2}}E_{2,\lambda}[{\omega}],\quad \nrm{\tilde{\omega}}_{L^{1}(\mathbb{R}^{2}_{+})}=\frac{1}{\nu}\nrm{\omega}_{L^{1}(\mathbb{R}^{2}_{+})}\quad \nrm{\tilde{\omega}}_{L^{1}_*(\mathbb{R}^{2}_{+})}=\frac{\sqrt{\lambda}}{\nu}\nrm{\omega}_{L^{1}_{*}(\mathbb{R}^{2}_{+})}. 
	\end{align*}
This induces the scaling property of the maximum \eqref{eq: VP-2}  
	\begin{align*}
		I_{\kappa,1,1}=\frac{1}{\nu^{2}}I_{\mu,\nu,\lambda},\quad\mbox{where}\quad  \kappa=\frac{\sqrt{\lambda}\mu}{\nu}.  %\label{eq:scalingmaximum}
\end{align*}
Moreover, if $\omega$ satisfies the Euler--Lagrange equation \eqref{eq:near-Lamb-fixed-EL} for constants $W>0$ and $\gamma\geq 0$, $\tilde{\omega}$ satisfies \eqref{eq:near-Lamb-fixed-EL} for 
\begin{align*}
\tilde{W}=\frac{W}{\nu\sqrt{\lambda}},\quad \tilde{\gamma}=\frac{\gamma}{\nu}.
\end{align*}
We thus reduce the variational problem \eqref{eq: VP-2} to the case $\mu>0$ and $\lambda=\nu=1$ and denote the energy, the maximum, and the set of maximizers by $E_{2} = E_{2,1}$, $I_{\mu}=I_{\mu,1,1}$, and $S_{\mu}=S_{\mu,1,1}$. In the sequel, it suffices to prove Theorems \ref{thm:threshold}--\ref{thm:well separated dipole} in the case $\mu>0$ and $\lambda=\nu=1$.

\subsection{Stream function estimates}

We use the Green function expressions 
\begin{align}
G(x,y)=N(x-y)-N(x-\bar{y})=\frac{1}{4\pi}\log\left(1+\frac{4x_2y_2}{|x-y|^{2}}\right),\quad N(x)=-\frac{1}{2\pi}\log|x|, \label{eq:Green}
\end{align}
where $\bar y=(y_1,-y_2)$ is the reflection point of $y=(y_1,y_2)$ across the $x_1$-axis. By the pointwise estimate $\log(1+t)\lesssim t^{\alpha}$ for $t>0$ and $\alpha\in (0,1]$, the Green function satisfies 
\begin{align}
G(x,y)
\lesssim  \frac{x_2^{\alpha}y_2^{\alpha}}{|x-y|^{2\alpha}},\quad x,y\in \mathbb{R}^{2}_{+}.  \label{eq:GFE}
\end{align}
The following energy inequality \eqref{eq:KEE} was partially obtained in \cite{AC2019} for $\alpha=1/4$ and in \cite{ACJ} for $\alpha=1/2$. We also collect basic estimates for stream functions.

\begin{proposition}
Let $0<\alpha\leq 1/2$ and $0<\theta<1$, and $2<r<\infty$. The following estimates hold whenever the norms appearing on the
right-hand sides are finite:
\begin{align}
||\mathcal{G}\omega||_{L^{\infty}(\mathbb{R}^{2}_{+})}&\lesssim  ||\omega||_{L^{1}_{*}(\mathbb{R}^{2}_{+})}^{\frac{1}{2}}||\omega||_{L^{2}(\mathbb{R}^{2}_{+})}^{\frac{1}{2}}, \label{eq:SFE} \\
||\nabla \mathcal{G}\omega||_{L^{\infty}(\mathbb{R}^{2}_{+})}&\lesssim ||\omega||_{L^{1}_{*}(\mathbb{R}^{2}_{+})}^{\frac{\alpha}{2}}||\omega||_{L^{1}(\mathbb{R}^{2}_{+})}^{\frac{1}{2}-\alpha}||\omega||_{L^{2}(\mathbb{R}^{2}_{+})}^{\frac{\alpha}{2}}||\omega||_{L^{\infty}(\mathbb{R}^{2}_{+})}^{\frac{1}{2}},  \label{eq:sinfty}\\
\mathcal{G}\omega(x) &\lesssim \frac{1}{x_2}\big(\log x_2 + 1\big)\|\omega\|_{L^1_*(\mathbb{R}^{2}_{+})} + \frac{1}{x_2^{\theta}} \|\omega\|_{L^1_*(\mathbb{R}^{2}_{+})}^\theta \|\omega\|_{L^2(\mathbb{R}^{2}_{+})}^{1-\theta}, \qquad x_2\ge 1,\label{eq:inequality sf 2}\\
E[\omega]
&\lesssim \nrm{\omega}_{L^1_*(\mathbb R^2_+)}^{2\alpha}\nrm{\omega}_{L^{1}(\mathbb{R}^{2}_{+})}^{2(1-2\alpha)}\nrm{\omega}_{L^{2}(\mathbb{R}^{2}_{+})}^{2\alpha}.   \label{eq:KEE}
\end{align}
If $\omega$ is Steiner symmetric with respect to the $x_2$-axis, then 
\begin{equation}\label{eq:inequality sf 4}
\mathcal{G}\omega(x) \lesssim \big( \|\omega\|_{L^1_*(\mathbb{R}^{2}_{+})} + \|\omega\|_{L^1(\mathbb{R}^{2}_{+})} + \|\omega\|_{L^r(\mathbb{R}^{2}_{+})}\big) \, x_2 \, \min\left\{1,\,\frac{1}{|x_1|^{\frac{1}{2r}}}\right\}.
\end{equation}
\end{proposition}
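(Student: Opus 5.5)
All five estimates will come from the pointwise Green bound \eqref{eq:GFE} with a suitable choice of $\alpha$, combined with splitting the integration into near and far regions. We may assume $\omega\ge0$, since $|\mathcal G\omega|\le\mathcal G|\omega|$.

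\emph{Bound \eqref{eq:SFE}.} Take $\alpha=1/2$ in \eqref{eq:GFE}, which gives $G(x,y)\lesssim \sqrt{x_2y_2}/|x-y|$. The factor $\sqrt{x_2}$ has to be removed. Use the alternative bound
\[
G(x,y)\lesssim \min\Bigl\{\log\Bigl(2+\tfrac{y_2}{|x-y|}\Bigr),\ \tfrac{x_2y_2}{|x-y|^2}\Bigr\},
\]
and split into $|x-y|<d$ and $|x-y|\ge d$.
\begin{itemize}
\item Near part: for $|x-y|\ge y_2$ we use $y_2/|x-y|$; for $|x-y|<y_2$ we use $\log(1+4x_2y_2/|x-y|^2)\lesssim 1+\log(y_2/|x-y|)$ when $x_2\sim y_2$. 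Cauchy--Schwarz then bounds the near part by $\|\omega\|_{L^2}$ times $\|\sqrt{y_2}|x-y|^{-1/2}\mathbf 1_{|x-y|<d}\|$-type weights.
\item Far part: this is bounded by $d^{-1}\int y_2\omega$.
\end{itemize}
Optimizing in $d$ should give $\|\omega\|_{L^1_*}^{1/2}\|\omega\|_{L^2}^{1/2}$. Alternatively, one can cite \cite{AC2019,ACJ}, where this estimate already appears.

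\emph{Bound \eqref{eq:KEE}.} Write $E=\frac12\int\omega\,\mathcal G\omega$ and apply \eqref{eq:GFE} with exponent $\alpha$. Hölder in the form $x_2^\alpha y_2^\alpha\,\omega(x)\omega(y)=(x_2\omega)^\alpha\omega^{1-\alpha}\cdots$ reduces the problem to a Hardy--Littlewood--Sobolev estimate for the kernel $|x-y|^{-2\alpha}$, with $\omega$ interpolated between $L^1$ and $L^2$. The exponents then balance by scaling: under $\omega\mapsto\omega(\cdot/s)$, the energy $E$ scales like $s^4$, the $L^1_*$ norm like $s^3$, the $L^1$ norm like $s^2$, and the $L^2$ norm like $s$. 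This dictates the powers $2\alpha$, $2(1-2\alpha)$, $2\alpha$.

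\emph{Bound \eqref{eq:sinfty}.} The velocity kernel satisfies $|\nabla_xG|\lesssim |x-y|^{-1}$, and also $|\nabla_xG|\lesssim y_2|x-y|^{-2}$ in the far field. Split at a radius $d$: the near part is bounded by $d\,\|\omega\|_{L^\infty}$, and the far part by an interpolation of the $L^1$, $L^1_*$ and $L^2$ norms via \eqref{eq:GFE}-type decay. Optimizing in $d$ gives the stated exponents. Again scaling checks the powers, and this is consistent because $\nabla\mathcal G\omega$ scales like $s$.

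\emph{Bound \eqref{eq:inequality sf 2}.} For $x_2\ge1$, split into three regions:
\begin{itemize}
\item $y_2<x_2/2$: here $G\lesssim x_2y_2/|x-y|^2\lesssim y_2/x_2$. This gives $x_2^{-1}\|\omega\|_{L^1_*}$, and the log factor enters from the intermediate annulus where $|x-y|\sim x_2$ but the log kernel is not small.
\item $y_2>2x_2$: here $G\lesssim x_2y_2|x-y|^{-2}\le x_2/y_2\lesssim 1/x_2\cdot(x_2/y_2)\,y_2\ldots$, which is likewise bounded by $x_2^{-1}\|\omega\|_{L^1_*}$.
\item $y_2\sim x_2$: use \eqref{eq:GFE} with $\alpha=\theta$ together with the local log singularity. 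Hölder with $\omega=(y_2\omega)^\theta\omega^{1-\theta}y_2^{-\theta}$ and $y_2\sim x_2$ gives $x_2^{-\theta}\|\omega\|_{L^1_*}^\theta\|\omega\|_{L^2}^{1-\theta}$.
\end{itemize}

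\emph{Bound \eqref{eq:inequality sf 4}.} This is the main obstacle. By Steiner symmetry, $\omega(\cdot,y_2)$ is even and nonincreasing in $|y_1|$. Hence $\omega(y)\le (2|y_1|)^{-1}\int_{\mathbb R}\omega(s,y_2)\,ds$, and for $L^r$ we get the analogous bound $\omega(y)\le |y_1|^{-1/r}\|\omega(\cdot,y_2)\|_{L^r}$. The factor $x_2$ comes from $G(x,y)\le x_2y_2/|x-y|^2$ when $|x-y|\ge\sqrt{x_2y_2}$, and otherwise from $\log(1+t)\le t^{1/2}\cdots$ combined with $\partial_{x_2}$-Lipschitz bounds. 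The latter uses $G(x,y)\le x_2\sup|\partial_{x_2}G|$ and is justified because $\mathcal G\omega$ vanishes on the boundary with bounded normal derivative.

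For $|x_1|$ large, split $y$ into $|y_1|>|x_1|/2$ and $|y_1|\le |x_1|/2$.
\begin{itemize}
\item On $|y_1|>|x_1|/2$, the monotonicity gives $\omega\lesssim|x_1|^{-1/r}(\ldots)$ in an averaged sense. Interpolating the $L^r$ bound yields the decay $|x_1|^{-1/(2r)}$, the square root arising from Hölder.
\item On $|y_1|\le|x_1|/2$, we have $|x-y|\ge|x_1|/2$, so the contribution is bounded by $x_2|x_1|^{-2}\|\omega\|_{L^1_*}$.
\end{itemize}
I expect the bookkeeping for this term, namely getting precisely the exponent $1/(2r)$ with the combination of $L^1_*$, $L^1$ and $L^r$ norms, to be the delicate step. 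My first attempt will be to bound $\partial_{x_2}\mathcal G\omega$ at boundary points and use the monotone rearrangement decay as above.
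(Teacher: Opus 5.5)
Your proof of \eqref{eq:KEE} is the paper's: the Green bound \eqref{eq:GFE}, Hardy--Littlewood--Sobolev for $\eta=x_2^\alpha\omega$ with $1/q=1-\alpha/2$, then H\"older and $L^1$--$L^2$ interpolation. For \eqref{eq:sinfty} the routes differ. The paper splits no kernel: it reads \eqref{eq:KEE} as a bound on $\|\nabla\mathcal G\omega\|_{L^2}^2=2E[\omega]$ and applies $\|\nabla\mathcal G\omega\|_{L^\infty}\lesssim\|\nabla\mathcal G\omega\|_{L^2}^{1/2}\|\omega\|_{L^\infty}^{1/2}$. Your direct splitting is a valid alternative once you use the exact identity $|\nabla_xG(x,y)|=\frac{y_2}{\pi|x-y|\,|x-\bar y|}\le\frac1\pi\min\{|x-y|^{-1},\,y_2|x-y|^{-2}\}$, which holds everywhere, not only in the far field. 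On $\{|x-y|\ge d\}$, interpolate this to $y_2^\alpha|x-y|^{-1-\alpha}$ and apply H\"older with $(y_2\omega)^\alpha\omega^{1-\alpha}$ and $\omega\in L^p$, $1/p=(2-3\alpha)/(2-2\alpha)$. Your route is self-contained; the paper's is shorter because it recycles \eqref{eq:KEE}.

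The other three estimates are not proved in the paper: \eqref{eq:SFE} is cited from \cite[Lemma 2.3]{ACJSW}, and \eqref{eq:inequality sf 2}, \eqref{eq:inequality sf 4} from \cite[Lemmas 1 and 5]{Burton21}. Your own arguments for them have problems.

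\emph{\eqref{eq:SFE}.} Your near-field step is not uniform in $x$. If $x_2\gg d$, then $y_2\approx x_2$ on $B_x(d)$, so your weights have $L^2(B_x(d))$-norm of order $\sqrt{x_2d}$ (for $\sqrt{y_2}|x-y|^{-1/2}$) or $d\log(x_2/d)$ (for $1+\log(y_2/|x-y|)$). Optimizing in $d$ then leaves an $x_2$-dependent factor. The fix is to move all $y_2$-dependence into the impulse. Since $|x-\bar y|\le|x-y|+2y_2$,
\[
G(x,y)\le\frac1{2\pi}\log\Bigl(1+\frac{2y_2}{|x-y|}\Bigr)\le\frac1{2\pi}\Bigl[\log\Bigl(1+\frac{2y_2}{d}\Bigr)+\log\Bigl(1+\frac{d}{|x-y|}\Bigr)\Bigr].
\]
Use the factored form on $B_x(d)$ and $\log(1+2y_2/|x-y|)\le 2y_2/d$ outside it; this gives $\mathcal G\omega(x)\lesssim d^{-1}\|\omega\|_{L^1_*}+d\|\omega\|_{L^2}$.

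\emph{\eqref{eq:inequality sf 2}.} Your three regions work, but you have placed the logarithm in the wrong region. On $\{y_2<x_2/2\}$ one has $G\le 4y_2/(\pi x_2)$ with no logarithm. The logarithm comes from $\{x_2/2\le y_2\le 2x_2,\ 1\le|x-y|\le x_2\}$, where $G\le\frac1{2\pi}\log(5x_2)$ and $\int_{\{y_2\ge x_2/2\}}\omega\le 2x_2^{-1}\|\omega\|_{L^1_*}$. In that region, \eqref{eq:GFE} with $\alpha=\theta$ produces a positive power of $x_2$. The H\"older trick $\omega\le(2/x_2)^\theta(y_2\omega)^\theta\omega^{1-\theta}$ should be applied only to $\log(1/|x-y|)$ on $B_x(1)$.

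\textbf{The genuine gap is \eqref{eq:inequality sf 4}.} You leave it open, and none of the mechanisms you list gives the linear factor $x_2$ together with decay in $x_1$:
\begin{itemize}
\item $\log(1+t)\lesssim t^{1/2}$ only yields $x_2^{1/2}$;
\item the region $|x-y|<\sqrt{x_2y_2}$ is left untreated;
\item bounds on $\partial_{x_2}\mathcal G\omega$ at boundary points do not control $\mathcal G\omega(x)=\int_0^{x_2}\partial_2\mathcal G\omega(x_1,s)\,ds$, which needs the derivative along the whole segment.
\end{itemize}
Your splitting does close, as follows. Set $\omega_1=\omega\mathbf 1_{\{|y_1|\le|x_1|/2\}}$ and $\omega_2=\omega-\omega_1$.
\begin{itemize}
\item Your bound $\mathcal G\omega_1(x)\le\frac{4x_2}{\pi|x_1|^2}\|\omega\|_{L^1_*}$ is correct.
\item For $\omega_2$, use $\mathcal G\omega_2(x)\le x_2\|\nabla\mathcal G\omega_2\|_{L^\infty}$ and $|\nabla_xG|\le\frac1\pi|x-y|^{-1}$. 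At any $z$, the region $|z-y|\ge\rho$ contributes $\lesssim\rho^{-1}\|\omega\|_{L^1}$. Smallness of the mass of $\omega_2$ is not available, e.g.\ for a long thin horizontal strip.
\item On $|z-y|<\rho$, use the Steiner bound $\omega_2(y)\le|x_1|^{-1/r}\|\omega(\cdot,y_2)\|_{L^r(\mathbb R)}$. Integrate $\int_{|s|<\rho}(s^2+t^2)^{-1/2}\,ds\le 2\log(1+2\rho/|t|)$ in $y_1$, then apply H\"older in $y_2$. This gives $\lesssim|x_1|^{-1/r}\rho^{1-1/r}\|\omega\|_{L^r}$.
\item Choosing $\rho=|x_1|^{1/(2r)}$ gives $\|\nabla\mathcal G\omega_2\|_{L^\infty}\lesssim|x_1|^{-1/(2r)}(\|\omega\|_{L^1}+\|\omega\|_{L^r})$ for $|x_1|\ge1$.
\item For $|x_1|\le 1$, the bound $\|\nabla\mathcal G\omega\|_{L^\infty}\lesssim\|\omega\|_{L^1}+\|\omega\|_{L^r}$ supplies the factor $1$; this is where $r>2$ is needed.
\end{itemize}
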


\begin{proof}
The estimates \eqref{eq:inequality sf 2} and \eqref{eq:inequality sf 4} are due to \cite[Lemmas 1 and 5]{Burton21}. The estimate \eqref{eq:SFE} follows from the Green function estimate \eqref{eq:GFE} \cite[Lemma 2.3]{ACJSW}. The estimate \eqref{eq:sinfty} follows from \eqref{eq:KEE} for $E[\omega]=\frac{1}{2}||\nabla \mathcal{G}\omega||_{L^{2}}^{2}$ and the interpolation inequality
\begin{align*}
||\nabla \mathcal{G}\omega||_{L^{\infty}(\mathbb{R}^{2}_{+})}\lesssim ||\nabla \mathcal{G}\omega||_{L^{2}(\mathbb{R}^{2}_{+})}^{\frac{1}{2}}
||\omega||_{L^{\infty}(\mathbb{R}^{2}_{+})}^{\frac{1}{2}}.
\end{align*}
We show the estimate \eqref{eq:KEE}. We may assume that $\omega$ is nonnegative. We take $0<\alpha\leq 1/2$ and apply the Green function estimate \eqref{eq:GFE} to get 
\begin{align*}
E[\omega]\lesssim \iint_{\mathbb{R}^{2}_{+}\times \mathbb{R}^{2}_{+}}\frac{x_2^{\alpha}\omega(x)y_2^{\alpha}\omega(y)}{|x-y|^{2\alpha}}dxdy.
\end{align*}
We consider the zero extension of $\omega$ to $\mathbb{R}^{2}$ and set $\eta=x^{\alpha}_2\omega$. By the Hardy--Littlewood--Sobolev inequality, the map $\eta\longmapsto |x|^{-2+\beta}*\eta$ is a bounded operator from $L^{q}(\mathbb{R}^{2})$ to $L^{p}(\mathbb{R}^{2})$ for $1/p=1/q-\beta/2$. We set $\beta=2-2\alpha$ and choose $p$ so that $1/p+1/q=1$. Namely, $1/p=\alpha/2$ and $1/q=1-\alpha/2$. Then, $E[\omega]\lesssim ||\eta||_{L^{q}}^{2}$. Applying H\"older's inequality, we find that 
\begin{align*}
\int_{\mathbb{R}^{2}_{+}}\eta^{q}dx
=\int_{\mathbb{R}^{2}_{+}}x_2^{2 q-2}\omega^{q}dx
=\int_{\mathbb{R}^{2}_{+}}(x_2\omega)^{2 q-2}\omega^{2-q}dx
\leq ||x_2\omega||_{L^{1}}^{2q-2}||\omega||_{L^{(2-q)l}}^{2-q},
\end{align*}
for $l$ satisfying $2q-2+1/l=1$. By H\"older's inequality,
\begin{align*}
||\omega||_{L^{(2-q)l}}\leq ||\omega||_{L^{1}}^{\frac{4-3q}{2-q}}||\omega||_{L^{2}}^{\frac{2q-2}{2-q}}.
\end{align*}
Using $\alpha/2=1-1/q$, we obtain the desired estimate.
\end{proof}

\subsection{Steiner symmetry}

The existence and basic properties of maximizers stated in
Theorem~\ref{thm:nearly Lamb} were established in \cite{AC2019}.
In this subsection, we complete the proof of Theorem~\ref{thm:nearly Lamb}
by establishing the Steiner symmetry of maximizers up to horizontal
translations.

\begin{proposition}[Steiner symmetry]\label{prop:Steiner}
(i) For $\omega\geq 0$ satisfying $\omega\in L^{2}\cap L^{1} \cap L^1_*(\mathbb{R}^{2}_{+})$, there exists $\omega^{*}\geq 0$ such that 
		\begin{equation}
			\begin{aligned}
				&\omega^{*}(x_1,x_2)=\omega^{*}(-x_1,x_2), \\
				&\omega^{*}(x_1,x_2)\ \textrm{is non-increasing for}\ x_1>0. 
			\end{aligned}
			\label{eq:S1}
		\end{equation}
		Moreover, 
		\begin{equation}
			\begin{aligned}
				&\|\omega^{*}\|_{L^q(\mathbb R^2_+)}
				=\|\omega\|_{L^q(\mathbb R^2_+)}
				\quad 1\leq q\leq 2,    \\
				&\nrm{\omega^*}_{L^1_*(\mathbb R^2_+)}
				=\nrm{\omega}_{L^1_*(\mathbb R^2_+)},   \ \\
				&E(\omega^{*})\geq E(\omega).  
			\end{aligned}
			\label{eq:S2}
\end{equation}
(ii) If a nonzero $\omega$ satisfies $E[\omega^*]=E[\omega]$, then there exists $a\in\mathbb R$ such that $\omega(x_1,x_2)=\omega^*(x_1+a,x_2)$ almost everywhere.
\end{proposition}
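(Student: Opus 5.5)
We define $\omega^*$ as the Steiner symmetrization in $x_1$: for a.e. fixed $x_2>0$, let $\omega^*(\cdot,x_2)$ be the symmetric decreasing rearrangement on $\mathbb R$ of the one-variable function $\omega(\cdot,x_2)$. Equivalently, $\{\omega^*(\cdot,x_2)>t\}$ is the centered interval with the same length as $\{\omega(\cdot,x_2)>t\}$. Then \eqref{eq:S1} holds by construction.

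Equimeasurability on each horizontal line gives $\int\omega^*(x_1,x_2)^q\,dx_1=\int\omega(x_1,x_2)^q\,dx_1$. Multiplying by $1$ or by $x_2$ and integrating in $x_2$ (Fubini) yields the $L^q$ and $L^1_*$ identities in \eqref{eq:S2}. Measurability of $\omega^*$ follows from the layer-cake representation $\omega^*=\int_0^\infty \mathbf 1_{\{\omega>t\}^*}\,dt$.

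For the energy inequality, write $E[\omega]=\tfrac12\iint G(x,y)\omega(x)\omega(y)\,dx\,dy$ and fix $x_2,y_2>0$. The kernel $s\mapsto G((s,x_2),(0,y_2))=\frac1{4\pi}\log\bigl(1+\frac{4x_2y_2}{s^2+(x_2-y_2)^2}\bigr)$ is a symmetric, strictly decreasing function of $|s|$, and the inner double integral in $(x_1,y_1)$ is $\iint f(x_1)g(y_1)K(x_1-y_1)\,dx_1\,dy_1$ with $f=\omega(\cdot,x_2)$ and $g=\omega(\cdot,y_2)$. The one-dimensional Riesz rearrangement inequality therefore gives this inner integral is at most the same expression with $f^*,g^*$. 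Integrating over $(x_2,y_2)$ gives $E[\omega^*]\ge E[\omega]$. Finiteness of the energy is not an issue because of \eqref{eq:KEE}.

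Part (ii) is the main obstacle. Suppose $E[\omega^*]=E[\omega]<\infty$ and $\omega\ne0$. Since the pointwise (in $(x_2,y_2)$) inequalities integrate to an equality, equality must hold in the Riesz inequality for a.e. pair $(x_2,y_2)$. Here we invoke the strict form of Riesz's inequality (Lieb's theorem, or Burchard's characterization in one dimension): if $K$ is symmetric and strictly decreasing and $f,g\ge0$ are nonzero, equality forces $f=f^*(\cdot+a)$ and $g=g^*(\cdot+a)$ with the \emph{same} shift $a=a(x_2,y_2)$. Let $A=\{x_2:\omega(\cdot,x_2)\not\equiv0\}$, which has positive measure. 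For a.e. $x_2\in A$, the line function is a translate $\omega^*(\cdot+a(x_2),x_2)$, where $a(x_2)$ is uniquely determined because a nonzero symmetric decreasing $L^1$ function is not invariant under any nontrivial translation. Applying equality to the pairs $(x_2,y_2)$ and $(x_2,z_2)$ gives $a(y_2)=a(x_2)=a(z_2)$ for a.e. $y_2,z_2\in A$. Hence $a$ is a.e. constant on $A$, so $\omega(x_1,x_2)=\omega^*(x_1+a,x_2)$ a.e.

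One technical point remains: the strict Riesz inequality needs $f,g$ with finite energy-type integrals. This is guaranteed for a.e. pair $(x_2,y_2)$, since the total integral is finite and $\omega(\cdot,x_2)\in L^1\cap L^2(\mathbb R)$ for a.e. $x_2$.
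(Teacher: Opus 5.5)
Your proof is correct and follows the same route as the paper's proof of (ii). Both arguments obtain a.e. equality in a sliced Riesz functional (using finiteness of $E[\omega^*]$), apply the strict rearrangement inequality \cite[Theorem 3.9]{LL01} to get a common translation for each good pair, and then use uniqueness of that translation and a Fubini argument to make the shift independent of the slice; the only difference is the slicing. The paper applies the set version to pairs of level sets $A_{t,x_2},A_{s,y_2}$ from the layer-cake representation, with uniqueness coming from $A_{q_0}^*$ being an interval of positive finite length, while you apply the function version directly to $\omega(\cdot,x_2),\omega(\cdot,y_2)$, which skips the layer-cake bookkeeping but is otherwise the same argument.
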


\begin{proof}
The assertion (i) can be found in \cite[Appendix I]{FB74}, \cite[p.1053]{Tu83b}, \cite[Proposition 3.1]{AC2019}. We prove (ii). We express $\omega$ by the layer-cake representation  
\begin{align*}
\omega(x_1,x_2)=\int_{0}^{\infty}\mathbf{1}_{A_{t,x_2}}(x_1)dt,\quad A_{t,x_2}=\{z_1\in \mathbb{R}:\ \omega(z_1,x_2)>t\},
\end{align*}
and the kinetic energy by the Riesz functional
\begin{align*}
2E[\omega]&=\iint_{(0,\infty)^{4}}I(A_{t,x_2},A_{s,y_2})dtdsdx_2dy_2,\\
I(A_{t,x_2},A_{s,y_2})&=\iint_{\mathbb{R}^{2}}K_{x_2,y_2}(x_1-y_1)\mathbf{1}_{A_{t,x_2}}(x_1)\mathbf{1}_{A_{s,y_2}}(y_1)dx_1dy_1,
\end{align*}
where 
\[
K_{x_2,y_2}(t)=\frac1{4\pi}\log\left(1+\frac{4x_2y_2}{t^2+(x_2-y_2)^2}\right)
\]
is even and strictly decreasing in $|t|$. By Riesz's rearrangement inequality, $I(A_{t,x_2},A_{s,y_2})$ is bounded by $I(A_{t,x_2}^{*},A_{s,y_2}^{*})$ for the superlevel set $A^{*}_{t,x_2}$ of the Steiner symmetrization $\omega^{*}$. Using $E[\omega]=E[\omega^{*}]$, we find that
\begin{align}
I(A_{t,x_2},A_{s,y_2})=I(A_{t,x_2}^{*},A_{s,y_2}^{*}),\quad \textrm{a.e.}\ (t,x_2), (s,y_2)\in (0,\infty)^{2}.  \label{eq:I=I^{*}}
\end{align}
We show existence of a constant $a\in \mathbb{R}$ such that 
\begin{align}
\mathbf{1}_{A_{t,x_2}}(z_1)=\mathbf{1}_{A_{t,x_2}^{*}}(z_1+a),\quad \textrm{a.e.}\ (z_1,t,x_2)\in \mathbb{R}\times (0,\infty)^{2}. \label{eq:A=A^{*}+a}
\end{align}
This implies 
\begin{align*}
\omega(x_1,x_2)=\int_{0}^{\infty}\mathbf{1}_{A_{t,x_2}}(x_1)dt=\int_{0}^{\infty}\mathbf{1}_{A_{t,x_2}^{*}}(x_1+a)dt=\omega^{*}(x_1+a,x_2),
\end{align*}
and the desired result follows.

Let
\[
X=\{p=(t,x_2)\in (0,\infty)^2:0<|A_p|<\infty\},
\]
and set  
\begin{align*}
N&=\{(p,q)\in X^{2}: I(A_p,A_q)<I(A_p^{*},A_q^{*})\},\\
N_q&=\{p\in X: (p,q)\in N\}.
\end{align*}
By \eqref{eq:I=I^{*}}, we have $|N|=0$. By Fubini's theorem,
\begin{align*}
0=|N|=\int_{X}dq\int_{N_q}dp=\int_{X}|N_q|dq.
\end{align*}
Thus $|N_q|=0$ for a.e. $q\in X$. We choose $q_0\in X$ such that $|N_{q_0}|=0$. Then
\begin{align*}
I(A_p,A_{q_0})=I(A_p^{*},A_{q_0}^{*}),\quad \textrm{a.e.}\ p\in X.
\end{align*}
By the strict rearrangement inequality \cite[Theorem 3.9]{LL01}, for a.e. $p\in X$, there exists a constant $c_p\in\mathbb{R}$ such that
\begin{align*}
\mathbf{1}_{A_p}(z_1)
&=\mathbf{1}_{A_p^{*}}(z_1+c_p),\\
\mathbf{1}_{A_{q_0}}(z_1)
&=\mathbf{1}_{A_{q_0}^{*}}(z_1+c_p),
\end{align*}
for a.e. $z_1\in\mathbb{R}$.
Since $A_{q_0}^{*}$ is an interval of positive finite length,
the translation parameter in the second identity is unique.
Therefore $c_p$ is independent of $p$ for a.e. $p\in X$.
Denoting this common value by $a$, we obtain
\[
\mathbf{1}_{A_p}(z_1)
=
\mathbf{1}_{A_p^{*}}(z_1+a)
\]
for a.e. $(z_1,p)\in \mathbb{R}\times X$.

For $p$ such that $|A_p|=0$, both sides vanish for a.e. $z_1$.
Since $|A_p|<\infty$ for a.e. $p\in(0,\infty)^2$, 
\eqref{eq:A=A^{*}+a} follows.
\end{proof}

\begin{proof}[Proof of Theorem \ref{thm:nearly Lamb}]
It suffices to show that the maximizers are Steiner symmetric. If $\omega$ is a maximizer of \eqref{eq: VP-2}, its Steiner symmetry $\omega^{*}$ is also a maximizer by Proposition \ref{prop:Steiner} (i). Thus $I_{\mu}=E_{2}[\omega]=E_{2}[\omega^{*}]$. By Proposition \ref{prop:Steiner} (ii), $\omega$ is Steiner symmetric up to $x_1$-translation.
\end{proof}

\subsection{The mass transition}

We show the mass transition (Theorem \ref{thm:threshold}) by using two variational characterizations of the Lamb dipole: uniqueness of maximizers of \eqref{eq: VP-2} with mass condition for small impulse \cite[Theorems 1.5 and 6.1]{AC2019} and uniqueness of maximizers for $E_2$ without mass condition \cite[Theorem 2.5]{ACJ}. The conservation of the Lamb dipole follows from the explicit form \eqref{eq:Lamb}, e.g., \cite[(1.6)]{ACJ}.

\begin{lemma}[Uniqueness with mass]\label{l:uniquemass}
Let $\omega\in S_{\mu}$ be a maximizer satisfying \eqref{eq:near-Lamb-fixed-EL} for some $W>0$ and $\gamma\geq 0$. If $\gamma=0$, then, up to translation in $x_1$-direction,  $\omega=\omega^{L}_{1,W_L}$ for $W_L$ given by \eqref{eq:Lambspeed}.  
\end{lemma}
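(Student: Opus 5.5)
The plan is to reduce the statement to the known rigidity of the Lamb dipole as a solution of the semilinear elliptic free-boundary problem, and then fix the speed through the impulse constraint. By Theorem \ref{thm:nearly Lamb}, after an $x_1$-translation we may assume $\omega$ is Steiner symmetric with respect to the $x_2$-axis and has compact support. Since $\gamma=0$, the stream function $\psi=\mathcal G\omega$ satisfies
\begin{equation*}
-\Delta\psi=(\psi-Wx_2)_+\quad\text{in }\mathbb R^2_+,\qquad \psi=0\ \text{on }\partial\mathbb R^2_+,\qquad \psi\to0\ \text{as }|x|\to\infty,
\end{equation*}
with $W>0$. I first record the regularity and decay of $\psi$ needed to apply an external uniqueness theorem. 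Because $\omega\in L^2\cap L^1\cap L^1_*$ is compactly supported, $\psi$ is $C^{1,\alpha}$ by \eqref{eq:SFE}--\eqref{eq:sinfty} and elliptic regularity, and $\psi\lesssim x_2$ with the decay \eqref{eq:inequality sf 4}.

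The main step is to invoke the uniqueness of the Lamb dipole among such solutions; this is the $\gamma=0$ case of \cite[Theorem 6.1]{AC2019}, going back to Burton \cite{Burton96}. A positive, Steiner symmetric, compactly supported vorticity $\omega=(\mathcal G\omega-Wx_2)_+$ with $W>0$ must then coincide with $\omega^L_{1,W}$, i.e.\ the Lamb dipole with $\lambda=1$ and the same speed $W$.

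If one prefers to stay within the variational framework, there is an alternative route. Use that $E_2$ maximizers under the impulse constraint alone are Lamb dipoles \cite[Theorem 2.5]{ACJ}. Since $\gamma=0$ means the mass multiplier vanishes, $\omega$ is a critical point of the unconstrained problem. Comparing $I_\mu$ with the unconstrained supremum would then identify $\omega$ as that maximizer. However, this requires $\|\omega^L\|_{L^1}\le 1$, which is circular at this stage, so I would rely on the elliptic uniqueness instead.

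Finally, I determine $W$. The impulse constraint $\|\omega\|_{L^1_*}=\mu$ together with the explicit formula \eqref{eq:Lamb} gives
\begin{equation*}
\int_{\mathbb R^2_+}x_2\,\omega^L_{1,W}\,dx=\pi j_{1,1}^2W .
\end{equation*}
This follows from integrating $x_2J_1(r)\sin\theta$ in polar coordinates and using Bessel identities (the radial integral reduces to $\int_0^{j_{1,1}}r^2J_1\,dr=j_{1,1}^2J_2(j_{1,1})=-j_{1,1}^2J_0(j_{1,1})$). Hence $W=\mu/(\pi j_{1,1}^2)=W_L$ as in \eqref{eq:Lambspeed}.

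The main obstacle is checking that the hypotheses of the cited elliptic uniqueness result (the function class, decay at infinity, and the sign $W>0$) are met by $\psi$. I would handle this with the stream function estimates above and the compact support from Theorem \ref{thm:nearly Lamb}.
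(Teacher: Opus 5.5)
Your proposal is correct and follows the paper, which obtains this lemma by citing \cite[Theorems 1.5 and 6.1]{AC2019}; that result rests on Burton's rigidity of the Lamb dipole for $\omega=\lambda(\mathcal G\omega-Wx_2)_+$ with zero flux constant, and the speed is then fixed by the impulse as in Proposition~\ref{p:Lambmass}. Your computation $\int_{\mathbb R^2_+}x_2\,\omega^L_{1,W}\,dx=\pi j_{1,1}^2W$ agrees with \eqref{eq:massLamb}, and you are right that the variational route through Lemma~\ref{p:uniquenesswithoutmass} would be circular at this point.
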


\begin{lemma}[Uniqueness without mass]\label{p:uniquenesswithoutmass}
For $\mu>0$, set 
\begin{align}
\tilde{I}_{\mu}&=\sup_{\omega\in \tilde{K}_{\mu}}E_{2}[\omega],\\
\tilde{K}_{\mu}&=\{\omega\in L^{2}\cap L^{1}_{*}(\mathbb{R}^{2}_{+})\ :\ \omega\geq 0,\ ||\omega||_{L^{1}_{*}(\mathbb{R}^{2}_{+})}=\mu \}.
\end{align}
Then, the Lamb dipole $\omega^{L}_{1,W_L}$ for $W_{L}$ given by \eqref{eq:Lambspeed} is the unique maximizer of $\tilde{I}_{\mu}$ up to $x_1$-translations.
\end{lemma}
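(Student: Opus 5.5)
The plan is to reduce the problem without a mass constraint to the problem \eqref{eq: VP-2} with a mass constraint, in the regime where the mass constraint is inactive. There the Lamb dipole is already known to be the unique maximizer \cite[Theorems 1.5 and 6.1]{AC2019}: for $\kappa=\mu/\nu\le\kappa_0$ one has $S_{\mu,\nu,1}=\{\omega^L_{1,W_L}(\cdot+(a,0))\}$. The steps are: (a) identify $\tilde I_\mu$ with a limit of the constrained maxima; (b) show that any maximizer of $\tilde I_\mu$ has finite mass; (c) view such a maximizer as a maximizer of $I_{\mu,\nu,1}$ for some large $\nu$.

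\emph{Step 1: comparison of the maxima.} Since $K_{\mu,\nu}\subset\tilde K_\mu$, we have $I_{\mu,\nu,1}\le\tilde I_\mu$ for every $\nu$. For the reverse inequality, take $\omega\in\tilde K_\mu$ and truncate it to $\omega\mathbf 1_{\{|x|<R,\,x_2>1/R\}}$, which lies in $L^1$. Rescale this truncation by a constant factor to restore the impulse $\mu$. By \eqref{eq:KEE} and the $L^2$-continuity of $E_2$ on sets of bounded impulse, $E_2$ of these approximants converges to $E_2[\omega]$. Hence $\sup_\nu I_{\mu,\nu,1}=\tilde I_\mu$. For $\nu\ge\mu/\kappa_0$ the constrained maximum is attained only by the Lamb dipole, so $I_{\mu,\nu,1}=E_2[\omega^L_{1,W_L}]$ for all such $\nu$. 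Since $I_{\mu,\nu,1}$ is nondecreasing in $\nu$, this gives $\tilde I_\mu=E_2[\omega^L_{1,W_L}]$. In particular the Lamb dipole is a maximizer of $\tilde I_\mu$; its membership in $\tilde K_\mu$ is clear from \eqref{eq:Lamb}.

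\emph{Step 2: any maximizer has finite mass.} Let $\omega$ be an arbitrary maximizer of $\tilde I_\mu$. By Proposition~\ref{prop:Steiner}, after an $x_1$-translation we may assume $\omega$ is Steiner symmetric. Consider perturbations $\omega+\varepsilon\eta$ with $\eta$ bounded and compactly supported in $\mathbb R^2_+$, nonnegative wherever $\omega=0$, and renormalized to keep the impulse equal to $\mu$. The first variation then gives the Euler--Lagrange relation
\[
\omega=(\mathcal G\omega-Wx_2)_+
\]
for a single multiplier $W$. There is no $\gamma$ because no mass constraint is present. Testing the relation against $\omega$ and using $\mu>0$ shows $W>0$.

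Next, the Steiner symmetric bound \eqref{eq:inequality sf 4} gives $\mathcal G\omega(x)\le Cx_2|x_1|^{-1/(2r)}$, and \eqref{eq:inequality sf 2} gives decay for $x_2\ge 1$. Together these show $\mathcal G\omega-Wx_2<0$ outside a bounded set, so $\operatorname{spt}\omega$ is bounded. Since also $\omega\in L^2$, it follows that $\omega\in L^1$.

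The delicate point in this step is that \eqref{eq:inequality sf 4} involves $\|\omega\|_{L^1}$, which is not known a priori. I would first apply it to the truncations $\omega\mathbf 1_{\{|x_1|<R\}}$, or use the $L^1_*$ and $L^2$ bounds to control the mass in the region $\{x_2>\delta\}$. The near-boundary region is then handled using $\mathcal G\omega\lesssim x_2\|\nabla\mathcal G\omega\|_\infty$ together with \eqref{eq:sinfty}. This gives $\mathcal G\omega<Wx_2$ for small $x_2$ and large $|x_1|$, closing the bootstrap. I expect this finite-mass/compact-support argument to be the main obstacle.

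\emph{Step 3: conclusion.} Set $\nu=\max\{\|\omega\|_{L^1},\,\mu/\kappa_0\}$. Then $\omega\in K_{\mu,\nu}$, and by Step 1
\[
E_2[\omega]=\tilde I_\mu\ge I_{\mu,\nu,1}\ge E_2[\omega],
\]
so $\omega\in S_{\mu,\nu,1}$. Since $\kappa=\mu/\nu\le\kappa_0$, the uniqueness result of \cite{AC2019} gives $\omega=\omega^L_{1,W_L}$ up to $x_1$-translation, with $W_L$ given by \eqref{eq:Lambspeed}. This proves the lemma.
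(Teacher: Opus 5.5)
Your argument is correct, but it takes a different route from the paper. The paper does not prove this lemma; it imports it from \cite[Theorem 2.5]{ACJ}.

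You instead derive it from the constrained small-impulse uniqueness of \cite{AC2019} by letting the mass bound $\nu\to\infty$. Step 1 gives $\tilde I_\mu=\sup_\nu I_{\mu,\nu,1}=E_2[\omega^L_{1,W_L}]$, using truncation and the $L^2$-continuity of $E$ on $L^1_*$-bounded sets (from \eqref{eq:KEE} with $\alpha=1/2$ and positive definiteness of $E$). Step 3 then places any maximizer in $S_{\mu,\nu,1}$ with $\mu/\nu\le\kappa_0$. Only the unspecified $\kappa_0$ of \cite{AC2019} enters, so there is no circularity with Theorem \ref{thm:threshold}(A), which itself uses this lemma.

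The cost is Step 2, which the citation avoids, but it closes more easily than you expect.
\begin{itemize}
\item Testing the Euler--Lagrange relation against $\omega$ gives $W\mu=2\tilde I_\mu>0$. Then \eqref{eq:SFE} yields $\omega\le\|\mathcal G\omega\|_{L^\infty}<\infty$, and $Wx_2<\|\mathcal G\omega\|_{L^\infty}$ on $\operatorname{spt}\omega$ gives the vertical bound directly.
\item For the horizontal bound, note that \eqref{eq:sinfty} with $\alpha=1/2$ has no $L^1$ factor. The strip decomposition in the proof of Lemma \ref{lem:bddsupport} uses only the $L^1_*$, $L^2$, $L^\infty$ norms and the monotonicity of $\omega(\cdot,x_2)$ in $|x_1|$. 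It therefore applies verbatim and gives $\mathcal G\omega(x)/x_2\to0$ as $|x_1|\to\infty$, uniformly in $x_2$.
\item Your $\delta$-splitting also works if stated correctly. Apply the gradient bound to $\omega\mathbf 1_{\{x_2<\delta\}}$, whose $L^1_*$ and $L^2$ norms are small, and apply \eqref{eq:inequality sf 4} to $\omega\mathbf 1_{\{x_2\ge\delta\}}$, whose mass is at most $\mu/\delta$. This yields $\mathcal G\omega<Wx_2$ for all $x_2$ (not only small $x_2$) once $|x_1|$ is large.
\item Proposition \ref{prop:Steiner} is stated for $L^1$ data. Its proof only needs the superlevel sets of $\omega(\cdot,x_2)$ to have finite measure, which holds for a.e.\ $x_2$ since $\omega\in L^2$.
\end{itemize}

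What your route buys is a self-contained reduction that exhibits the unconstrained problem as the $\nu\to\infty$ end of the constrained family. This is the same mechanism behind $I_\mu=\tilde I_\mu$ in the proof of Theorem \ref{thm:threshold}(A). The citation is shorter but treats the unconstrained characterization as a separate black box.
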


\begin{proposition}[Conservation]\label{p:Lambmass}
The following hold for the Lamb dipole $\omega^{L}_{\lambda,W}$ with $\lambda,W>0$:   
\begin{align}
E_{2,\lambda}[\omega^{L}_{\lambda,W}]=\frac{j_{1,1}^{2}\pi W^{2}}{2\lambda},\quad
||\omega^{L}_{\lambda,W}||_{L^{1}_{*}(\mathbb{R}^{2}_{+})}=\frac{j_{1,1}^{2}\pi W}{\lambda},
\quad
||\omega^{L}_{\lambda,W}||_{L^{1}(\mathbb{R}^{2}_{+})}=\frac{j_{1,1}^{2}\pi W }{\kappa_*\sqrt{\lambda}}.  \label{eq:massLamb}
\end{align}
If $||\omega^{L}_{\lambda,W}||_{L^{1}_{*}}=\mu$, then
\begin{align}
||\omega^{L}_{\lambda,W}||_{L^{1}(\mathbb{R}^{2}_{+})}=\frac{\mu\sqrt{\lambda}}{\kappa_*},
\end{align}
for $W=W_L$ given by \eqref{eq:Lambspeed}.
\end{proposition}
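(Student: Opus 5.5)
The plan is to compute the three quantities in \eqref{eq:massLamb} directly from the explicit formula \eqref{eq:Lamb}, working in polar coordinates on the upper half-disk $B_0(j_{1,1}\lambda^{-1/2})_+$. On this half-disk $\sin\theta>0$ and $J_1(\sqrt\lambda r)>0$ for $0<r<j_{1,1}\lambda^{-1/2}$, while $J_0(j_{1,1})<0$. Hence $\omega^L_{\lambda,W}\ge 0$ there, and the $L^1$ and $L^1_*$ norms are just the integrals of $\omega$ and $x_2\omega$. I will substitute $s=\sqrt\lambda r$ in each integral.

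\emph{Mass.} The mass factorizes as
\[
\|\omega^L_{\lambda,W}\|_{L^1}=-\frac{2W\sqrt\lambda}{J_0(j_{1,1})}\int_0^\pi\sin\theta\,d\theta\;\frac{1}{\lambda}\int_0^{j_{1,1}}sJ_1(s)\,ds=-\frac{4W}{\sqrt\lambda J_0(j_{1,1})}\int_0^{j_{1,1}}sJ_1(s)\,ds .
\]
Comparing with the definition \eqref{eq:threshold} of $\kappa_*$, this equals $j_{1,1}^2\pi W/(\kappa_*\sqrt\lambda)$.

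\emph{Impulse.} Using $x_2=r\sin\theta$ and $\int_0^\pi\sin^2\theta\,d\theta=\pi/2$, the impulse is
\[
-\frac{\pi W}{\lambda J_0(j_{1,1})}\int_0^{j_{1,1}}s^2J_1(s)\,ds .
\]
The identity $(s^2J_2)'=s^2J_1$ gives $\int_0^{j_{1,1}}s^2J_1\,ds=j_{1,1}^2J_2(j_{1,1})$. The recurrence $J_2(s)=\tfrac{2}{s}J_1(s)-J_0(s)$ then gives $J_2(j_{1,1})=-J_0(j_{1,1})$. The impulse is therefore $\pi j_{1,1}^2W/\lambda$.

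\emph{Energy.} Here I use the Euler--Lagrange structure instead of integrating the Green function. Set $\psi=\mathcal G\omega$ and $\omega=\omega^L_{\lambda,W}$. The standard fact for the Lamb dipole is $\omega=\lambda(\psi-Wx_2)_+$, with $\psi-Wx_2=\omega/\lambda$ inside the disk. This is verified from $-\Delta(J_1(\sqrt\lambda r)\sin\theta)=\lambda J_1\sin\theta$, the $C^1$ matching at $r=j_{1,1}\lambda^{-1/2}$ with the exterior potential flow, and the vanishing at $x_2=0$. Integrating against $\omega$ gives
\[
\int\omega\psi=\frac1\lambda\int\omega^2+W\mu .
\]
Hence
\[
E_{2,\lambda}[\omega]=\tfrac12\int\omega\psi-\tfrac1{2\lambda}\int\omega^2=\tfrac12W\mu=\frac{j_{1,1}^2\pi W^2}{2\lambda}.
\]

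\emph{Last claim.} Setting $\mu=j_{1,1}^2\pi W/\lambda$ gives exactly $W=W_L$ from \eqref{eq:Lambspeed}. Substituting into the mass formula yields $\|\omega^L_{\lambda,W_L}\|_{L^1}=\mu\sqrt\lambda/\kappa_*$.

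The only real obstacle is confirming that $\omega^L_{\lambda,W}$ satisfies the Euler--Lagrange relation with $\gamma=0$ and the stated normalization. In other words, one must check that the constant $-2W\sqrt\lambda/J_0(j_{1,1})$ makes the stream function match the uniform flow $Wx_2$ at the boundary of the disk. I would cite this from \cite{ACJ} (see \cite[(1.6)]{ACJ}) rather than rederive it. If it has to be checked by hand, I would match $\psi=\tfrac{2W}{J_0(j_{1,1})}\cdot(-J_1(\sqrt\lambda r))\sin\theta\cdot\lambda^{-1/2}+Wr\sin\theta$ inside the disk against $W(r-a^2/r)\sin\theta$ outside, where $a=j_{1,1}\lambda^{-1/2}$. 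The check uses $J_1'(j_{1,1})=J_0(j_{1,1})$.
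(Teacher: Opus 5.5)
Your proof is correct and follows the route the paper takes. The paper only asserts that these identities follow from the explicit Bessel formula \eqref{eq:Lamb}, citing \cite[(1.6)]{ACJ}. Your mass and impulse integrals check out, and your energy step is the identity \eqref{eq:pohozaev-penalized} with $\gamma=0$, applied directly to the Lamb dipole.

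There is one slip in your optional hand check. As written, your interior $\psi$ equals $Wa\sin\theta$ at $r=a$, while $W(r-a^2/r)\sin\theta$ vanishes there, so the two pieces do not match. The exterior of $\psi=\mathcal G\omega$ is $Wa^2\sin\theta/r$; equivalently, match $\psi-Wx_2=C_LJ_1(\sqrt\lambda r)\sin\theta$ with $-W(r-a^2/r)\sin\theta$, as in \eqref{eq:Lambstream}. With this correction, $J_1'(j_{1,1})=J_0(j_{1,1})$ gives the $C^1$ matching. Boundedness of both functions, odd reflection, and Liouville then identify the piecewise function with $\mathcal G\omega$.
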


\begin{proof}[Proof of Theorem \ref{thm:threshold}]
We give a proof for the case $\mu>0$ and $\lambda=\nu=1$. We first observe that if some $\omega\in S_{\mu}$ satisfies  
\begin{align}
\int_{\mathbb{R}^{2}_+}\omega dx<1, \label{eq:masssmall}
\end{align}
then $\mu<\kappa_*$. We take an arbitrary $\omega \in S_{\mu}$ satisfying \eqref{eq:masssmall} and denote its Lagrange multipliers in Theorem \ref{thm:nearly Lamb} by $W>0$ and $\gamma\geq 0$. If $\gamma>0$, we have $\int_{\mathbb{R}^{2}_{+}}\omega dx=1$ by \cite[Remark 2.6 (ii)]{AC2019}. Thus $\gamma=0$. By the uniqueness in Lemma \ref{l:uniquemass}, $\omega$ is a horizontal translation of the Lamb dipole $\omega^{L}_{1,W_L}$. By Proposition \ref{p:Lambmass} and the assumption \eqref{eq:masssmall}, the condition $\mu<\kappa_*$ follows.

We now prove (B). Let $\mu>\kappa_*$. We take an arbitrary $\omega\in S_{\mu}$. If $\omega$ satisfies \eqref{eq:masssmall}, we have $\mu<\kappa_*$. This is a contradiction. Thus $\int_{\mathbb{R}^{2}_{+}}\omega dx=1$. If $\gamma=0$, $\omega$ is a horizontal translation of $\omega^{L}_{1,W_L}$ by Lemma \ref{l:uniquemass}. Applying Proposition \ref{p:Lambmass} yields $||\omega^{L}_{1,W_L}||_{L^{1}}=\mu/\kappa_*>1$. This is a contradiction. Thus $\gamma>0$. The property $\textrm{dist}(\operatorname{spt} \omega, \partial \mathbb{R}^{2}_{+})>0$ follows from $\gamma>0$.

It remains to show (A). Let $\mu\leq \kappa_*$. We set the Lamb dipole $\omega^{L}_{1,W_L}$ with the speed $W_L$ in \eqref{eq:Lambspeed}. Then $|| \omega^{L}_{1,W_L}||_{L^{1}_{*}}=\mu$ and $|| \omega^{L}_{1,W_L}||_{L^{1}}\leq 1$ by Proposition \ref{p:Lambmass}. Since $\omega^{L}_{1,W_L}\in K_{\mu}\subset \tilde{K}_{\mu}$, it follows that 
\begin{align*}
E_{2}[\omega^{L}_{1,W_L}]\leq I_{\mu}\leq \tilde{I}_{\mu}=E_{2}[\omega^{L}_{1,W_L}].
\end{align*}
Hence $I_{\mu}=\tilde{I}_{\mu}$. If  $\omega\in K_{\mu}$ is a maximizer of $I_{\mu}$, it is also a maximizer of $\tilde{I}_{\mu}$. By the uniqueness of the maximizer of $\tilde{I}_{\mu}$ in Lemma \ref{p:uniquenesswithoutmass}, $\omega$ is a horizontal translation of $\omega^{L}_{1,W_L}$. Thus $\gamma=0$. By Proposition \ref{p:Lambmass}, the mass property \eqref{eq:massproperty} follows.
\end{proof}

\begin{remark}\label{r:properties}  
The maximum satisfies the following properties: 
\begin{align} 
&I_{0} = 0< I_{\mu}< I_{\alpha}<\infty,\quad 0<\mu<\alpha,  \label{eq:P1}\\
&I_\mu=\frac{1}{2j_{1,1}^{2}\pi}\mu^{2},\quad 0\leq \mu\leq \kappa_*, \label{eq:P2}\\
&I_\mu\lesssim \mu^{\varepsilon},\quad \kappa_*\leq \mu,\quad 0<\varepsilon\leq 1, \label{eq:P3}
\end{align}
The property \eqref{eq:P1} is shown in \cite[Lemma 2.3]{AC2019}. The property \eqref{eq:P2} follows from Theorem \ref{thm:threshold}. The growth bound \eqref{eq:P3} follows from the estimate \eqref{eq:KEE}. We will show in Lemma \ref{l:Lip} and Proposition \ref{p:convergencemax} that $I_{\mu}$ is globally Lipschitz continuous on $[0,\infty)$ and the leading term of $I_{\mu}$ is $(4\pi)^{-1}\log{(2 \mu)}$ as $\mu\to\infty$.  
\end{remark}

\begin{remark}
The stream function of the Lamb dipole \eqref{eq:Lamb} is given by
\begin{align}
\mathcal{G}\omega^{L}_{\lambda,W}-Wx_2
=\begin{cases}
\ &C_L J_1(\sqrt{\lambda}r)\sin\theta,\quad r\leq a,\\
\ &-W \displaystyle\left(r-\frac{a^{2}}{r}\right),\quad r>a,
\end{cases}
\label{eq:Lambstream}
\end{align}
with the constants 
\begin{align}
C_L=-\frac{2W}{\sqrt{\lambda}J_0(j_{1,1})},\quad a=\frac{j_{1,1}}{\sqrt{\lambda}}.  \label{eq:Lambstreamconst}
\end{align}
\end{remark}

\subsection{Uniform bound for maximizers}

We derive a uniform estimate for maximizers in the mass-saturated regime $\mu>\kappa_*$. The following formulas can be found in \cite[p.1062]{Tu83b}, \cite[Remark 2.6 (i)]{AC2019}, \cite{ACJSW}.
    
\begin{lemma}[Integral identities]\label{lemma:pohozaev}
Let $\mu,\lambda,\nu>0$. The following statements hold for $\omega\in S_{\mu,\nu,\lambda}$ and constants $W>0$ and $\gamma\ge0$ satisfying \eqref{eq:near-Lamb-fixed-EL}: 
\begin{align}
I_{\mu,\nu,\lambda}&= \frac12W\mu + \frac{\gamma}{2}\|\omega\|_{L^1(\mathbb R^2_+)}, \label{eq:pohozaev-penalized}\\
\|\omega\|_{L^2(\mathbb R^2_+)}^2 &= \lambda W\mu, \label{eq:pohozaev-L2} \\
E[\omega] &= W\mu + \frac{\gamma}{2}\|\omega\|_{L^1(\mathbb R^2_+)}, \label{eq:pohozaev-energy} \\
W &= \frac{1}{2\pi\|\omega\|_{L^1(\mathbb R^2_+)}} \iint_{\mathbb R^2_+\times\mathbb R^2_+} \frac{x_2+y_2}{|x-\bar y|^2} \omega(x)\omega(y)\,dx\,dy.\label{eq:pohozaev-W}
\end{align}
\end{lemma}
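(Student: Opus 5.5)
The plan is to derive all four identities by testing the semilinear elliptic equation behind \eqref{eq:near-Lamb-fixed-EL} against three multipliers: $\psi$ itself, the dilation field $x\cdot\nabla\psi$, and $\partial_{x_2}\psi$. By the scaling reduction at the start of this section, I work with $\lambda=\nu=1$ for the derivation and restore $\lambda$ at the end by bookkeeping.

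\textbf{Setup.} Write $\psi=\mathcal G\omega$, $\Phi=\psi-Wx_2-\gamma$, $m=\|\omega\|_{L^1(\mathbb R^2_+)}$, $f(s)=\lambda s_+$ and $F(s)=\tfrac{\lambda}{2}s_+^2$. Then
\[
-\Delta\psi=f(\Phi)=\omega \text{ in } \mathbb R^2_+,\qquad \psi=0 \text{ on } \partial\mathbb R^2_+ .
\]
Since $\omega$ has compact support (Theorem \ref{thm:nearly Lamb}) and lies in $L^\infty$ (because $\psi$ is bounded by \eqref{eq:SFE}), elliptic regularity gives $\psi\in W^{2,p}_{loc}\cap C^{1,\alpha}$ up to the boundary. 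Moreover $\psi$ is a half-plane dipole field, so $\psi=O(x_2/|x|^2)$ and $\nabla\psi=O(|x|^{-2})$ at infinity. These bounds justify the integrations by parts below: I integrate over $B_0(R)_+$ and let $R\to\infty$, and every boundary term on the semicircle is $O(R^{-2})$ or better.

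\textbf{Multiplier $\psi$.} Multiplying by $\omega$ and using $\Phi=\omega/\lambda$ on $\{\omega>0\}$ gives
\[
2E[\omega]-W\mu-\gamma m=\int\omega\Phi\,dx=\tfrac1\lambda\|\omega\|_{L^2}^2 .
\]

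\textbf{Multiplier $x\cdot\nabla\psi$.} In two dimensions, $\int(-\Delta\psi)\,x\cdot\nabla\psi\,dx$ reduces to boundary terms. On $\{x_2=0\}$ we have $x\cdot\nu=0$ and $\nabla\psi=\partial_2\psi\,e_2$, so that boundary term vanishes. On the far semicircle the terms tend to zero by the decay above. Hence the left side is $0$. On the right side, write $\nabla\psi=\nabla\Phi+We_2$ to get
\[
\int f(\Phi)\,x\cdot\nabla\psi\,dx=\int x\cdot\nabla F(\Phi)\,dx+W\int x_2\,\omega\,dx=-2\int F(\Phi)\,dx+W\mu .
\]
Here the boundary terms of $F(\Phi)$ vanish because $F(\Phi)$ has compact support and $x\cdot\nu=0$ on $\{x_2=0\}$. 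Since $F(\Phi)=\omega^2/(2\lambda)$, this yields \eqref{eq:pohozaev-L2}: $\|\omega\|_{L^2}^2=\lambda W\mu$.

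\textbf{Deriving \eqref{eq:pohozaev-energy} and \eqref{eq:pohozaev-penalized}.} Inserting \eqref{eq:pohozaev-L2} into the $\psi$-identity gives $2E=2W\mu+\gamma m$, which is \eqref{eq:pohozaev-energy}. Then
\[
I_{\mu,\nu,\lambda}=E-\tfrac1{2\lambda}\|\omega\|_{L^2}^2=W\mu+\tfrac{\gamma}{2}m-\tfrac12W\mu,
\]
which is \eqref{eq:pohozaev-penalized}.

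\textbf{Multiplier $\partial_{x_2}\psi$, giving \eqref{eq:pohozaev-W}.} Test against $\partial_{x_2}\psi$ and again write $\partial_2\psi=\partial_2\Phi+W$:
\[
\int\omega\,\partial_2\psi\,dx=\int\partial_2F(\Phi)\,dx+Wm=Wm .
\]
The term $\int\partial_2 F(\Phi)\,dx$ vanishes because $F(\Phi)=0$ on $\{x_2=0\}$. This holds since $\Phi=-\gamma\le0$ there, including the case $\gamma=0$.

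For the left-hand side, use $G(x,y)=N(x-y)-N(x-\bar y)$ from \eqref{eq:Green}. The kernel $\partial_{x_2}N(x-y)$ is antisymmetric in $(x,y)$, so its contribution to $\iint\omega(x)\omega(y)\,\partial_{x_2}G(x,y)$ cancels. The reflected part is
\[
-\partial_{x_2}N(x-\bar y)=\frac{x_2+y_2}{2\pi|x-\bar y|^2},
\]
which gives exactly the double integral in \eqref{eq:pohozaev-W}. Dividing by $m$, which is positive because $\mu>0$, completes the proof.

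\textbf{Main obstacle.} I expect the only delicate step to be the rigorous justification of the dilation identity: the vanishing of the boundary terms at infinity and on $\{x_2=0\}$. This requires regularity of $\psi$ up to the boundary and the $O(|x|^{-2})$ decay of $\nabla\psi$. Both follow from the compact support of $\omega$ via the explicit Green function.
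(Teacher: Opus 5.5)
Your proposal is correct and follows essentially the paper's proof. You test the Euler--Lagrange equation against $\omega$, use the Poho\v{z}aev dilation identity $\int\omega\,x\cdot\nabla\psi\,dx=0$ rewritten through $x\cdot\nabla(\Phi_+^2)$, and use the $x_2$-translation identity in which the antisymmetric part of $\partial_{x_2}G$ cancels. Your treatment of the boundary terms (dipole decay at infinity, and $x\cdot\nu=0$, $\partial_{x_1}\psi=0$ on $\{x_2=0\}$) supplies the details the paper simply calls standard.
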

	
\begin{proof}
We include the proof for clarity. Multiplying \eqref{eq:near-Lamb-fixed-EL} by $\omega$ and integrating over $\mathbb R^2_+$ yields \eqref{eq:pohozaev-penalized}. We set $\psi=\mathcal G\omega$. Since $-\Delta\psi=\omega$ in $\mathbb R^2_+$ and $\psi=0$ on $\partial\mathbb R^2_+$, the standard Poho\v{z}aev computation gives
\begin{equation*}
\int_{\mathbb R^2_+} \omega\, x\cdot\nabla\psi\,dx = 0.
\end{equation*}
We apply this identity to compute 
\begin{align*}
\int_{\mathbb R^2_+} x\cdot \nabla (\psi-Wx_2-\gamma)_+^2 \,dx 
            &= 2 \int_{\mathbb R^2_+} (\psi-Wx_2-\gamma)_+ \,x\cdot\nabla(\psi-Wx_2-\gamma)\,dx \\
			&= \frac{2}{\lambda} \int_{\mathbb R^2_+} \omega\, x\cdot\nabla(\psi-Wx_2-\gamma)\,dx
			= -\frac{2W\mu}{\lambda}.
\end{align*}
The left-hand side equals 
\begin{equation*}
 -2\int_{\mathbb R^2_+}(\psi-Wx_2-\gamma)_+^2\,dx,
\end{equation*}
by the divergence theorem and we obtain \eqref{eq:pohozaev-L2}. The identity \eqref{eq:pohozaev-energy} follows from \eqref{eq:pohozaev-penalized} and \eqref{eq:pohozaev-L2}. Differentiating \eqref{eq:near-Lamb-fixed-EL} with respect to $x_2$, we find that
\begin{equation*}
\partial_{x_2}\omega
=\lambda \mathbf{1}_{(0,\infty)}(\psi-Wx_2-\gamma)(\partial_{x_2}\psi-W).
\end{equation*}
Multiplying by $\omega$ and integrating over $\mathbb R^2_+$, we obtain
\begin{equation*}
\int_{\mathbb R^2_+} \omega\,\partial_{x_2}\omega\,dx
=\lambda \int_{\mathbb R^2_+} \omega\,\partial_{x_2}\psi\,dx - \lambda W\|\omega\|_{L^1(\mathbb R^2_+)}.
\end{equation*}		
The left-hand side vanishes. Using
		\begin{equation*}
			\partial_{x_2}G(x,y) = \frac1{2\pi} \left( \frac{x_2+y_2}{|x-\bar y|^2} - \frac{x_2-y_2}{|x-y|^2} \right),
		\end{equation*}
the second term is expressed as 
\begin{align*}
			\int_{\mathbb R^2_+} \omega\partial_{x_2}\psi\,dx = \frac1{2\pi} \iint_{\mathbb R^2_+\times\mathbb R^2_+} \left( \frac{x_2+y_2}{|x-\bar y|^2} - \frac{x_2-y_2}{|x-y|^2} \right) \omega(x)\omega(y)\,dx\,dy.
\end{align*}
Since the kernel $(x_2-y_2)/|x-y|^2$ is antisymmetric with respect to $x$ and $y$, its integral against the symmetric product $\omega(x)\omega(y)$ vanishes identically. We obtain \eqref{eq:pohozaev-W}.
\end{proof}

\begin{lemma}
We have
\begin{align}
\sup_{\omega\in S_{\mu}}\left(||\omega||_{L^{p}(\mathbb{R}^{2}_{+})}+\left\|\frac{\mathcal{G}\omega}{x_2}\right\|_{L^{\infty}(\mathbb{R}^{2}_{+})}\right)\leq C, \quad 1\leq p\leq \infty,\quad \kappa_*\leq \mu.\label{eq:uniformvortex}
\end{align}
\end{lemma}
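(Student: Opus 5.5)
Here $\lambda=\nu=1$ and $\mu\ge\kappa_*$, so $\gamma>0$ or we are at the Lamb dipole, and $\|\omega\|_{L^1}=1$ in all cases. The plan is to bound, uniformly in $\omega\in S_\mu$, first the $L^2$ norm, then $\mathcal G\omega/x_2$ together with the $L^\infty$ norm, and finally to interpolate for $1<p<\infty$.

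\textbf{Step 1: $L^2$ bound.} By \eqref{eq:pohozaev-L2}, $\|\omega\|_{L^2}^2=W\mu$. It therefore suffices to show $W\mu\lesssim 1$. Since $\gamma\ge0$, \eqref{eq:pohozaev-penalized} gives $W\mu\le 2I_\mu$, which only yields $\lesssim\mu^\varepsilon$ by \eqref{eq:P3}. A sharper route is \eqref{eq:pohozaev-W}. Because $x_2+y_2\le |x-\bar y|$, we get
\[
W\le \frac{1}{2\pi}\iint \frac{\omega(x)\omega(y)}{|x-\bar y|}\,dx\,dy .
\]
Multiplying by $\mu=\int y_2\omega(y)\,dy$ is awkward, so instead I use $\mu=\int x_2\omega$ directly:
\[
W\mu=\frac{1}{2\pi}\iint \frac{(x_2+y_2)^2}{|x-\bar y|^2}\,\omega(x)\omega(y)\,dx\,dy\cdot\frac{\mu}{\int\!\!\int (x_2+y_2)\omega\omega}.
\]
The symmetric denominator equals $2\mu$, since $\|\omega\|_{L^1}=1$. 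Hence
\[
W\mu=\frac{1}{4\pi}\iint\frac{(x_2+y_2)^2}{|x-\bar y|^2}\,\omega\omega\le\frac{1}{4\pi}.
\]
This gives $\|\omega\|_{L^2}^2\le 1/(4\pi)$ uniformly, which is consistent with the limit $W\mu\to1/(4\pi)$ in \eqref{eq:convergenceWg}. With $L^1=1$, interpolation then controls $L^p$ for $1\le p\le 2$.

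\textbf{Step 2: bound on $\mathcal G\omega/x_2$ and $L^\infty$.} Write $\psi=\mathcal G\omega$. The Euler--Lagrange equation \eqref{eq:near-Lamb-fixed-EL} gives $\omega\le(\psi-Wx_2)_+\le\psi$, so $\|\omega\|_{L^\infty}\le\|\psi\|_{L^\infty}$. The bound \eqref{eq:SFE} is useless here because it involves $\mu^{1/2}$. Instead I bound $\psi(x)/x_2$ directly from the Green function using $G(x,y)\le \frac{1}{4\pi}\frac{4x_2y_2}{|x-y|^2}$ and $G(x,y)\le \frac{1}{4\pi}\log(1+4x_2y_2/|x-y|^2)$. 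Split the $y$-integral into two regions.
\begin{itemize}
\item On $|x-y|\ge x_2/2$, use $\log(1+t)\le t$. This needs $y_2\omega(y)/|x-y|^2$ integrated, which still involves $y_2$.
\item Better, use $\log(1+t)\le C_\theta t^\theta$ and Hölder to show $\psi(x)\lesssim x_2^{\theta}\,\|y_2^\theta\omega\|\cdots$, which again carries $\mu$-dependence.
\end{itemize}
So I expect the main obstacle to be exactly this $\mu$-independence of the $L^\infty$ bound. My first attempt is a bootstrap. Since $\omega\in L^1\cap L^2$ with $\|\omega\|_{L^1}=1$ and $\|\omega\|_{L^2}$ bounded, the local part $\int_{|x-y|<1}$ of the Newtonian potential is bounded by $\|\omega\|_{L^2}$. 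For the far part, $G\le N(x-y)-N(x-\bar y)$ and $N(x-y)\le 0$ when $|x-y|\ge1$, so $G\le -N(x-\bar y)\le\frac{1}{2\pi}\log|x-\bar y|$. This alone is not uniform.

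The better route is the maximum principle on $\Omega$. There $\psi>Wx_2+\gamma$, and $\omega=\psi-Wx_2-\gamma$ solves $-\Delta\omega=\omega$ in $\Omega$ with $\omega=0$ on $\partial\Omega$. Local elliptic estimates then give $\|\omega\|_{L^\infty}\lesssim\|\omega\|_{L^2(B_1(x))}+\|\omega\|_{L^1}$ uniformly. Concretely, $\omega_+$ is a subsolution of $-\Delta\omega\le\omega$ in all of $\mathbb R^2_+$ (after zero extension across $\partial\Omega$, by Kato), so mean-value/Moser iteration on unit balls gives $\sup\omega\lesssim\|\omega\|_{L^2}$. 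This settles $L^\infty$, and hence all $L^p$.

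\textbf{Step 3: conclude $\mathcal G\omega/x_2$.} Finally, $\partial_{x_2}\psi$ is bounded by \eqref{eq:sinfty} with $\alpha$ small, except that this carries $\mu^{\alpha/2}$. Instead I use the Steiner symmetric estimate \eqref{eq:inequality sf 4}, which also carries $\mu$. To avoid $\mu$, I would estimate $\psi(x)/x_2=\int \frac{G(x,y)}{x_2}\omega(y)\,dy$ via the identity $\frac{G}{x_2}\le \frac{1}{4\pi}\frac{4y_2}{|x-y|^2}\wedge\frac{C}{x_2}\log(\ldots)$. This splits into near-boundary points $x_2\le1$, handled with the gradient bound $|\nabla\psi|\lesssim\|\omega\|_{L^1}^{1/2}\|\omega\|_{L^\infty}^{1/2}$ (the whole-plane potential estimate applied to the odd extension), and points $x_2\ge1$, where $\psi(x)/x_2\le\psi(x)\lesssim\|\omega\|_{L^1}+\|\omega\|_{L^\infty}$ by the log-kernel bound near $x$ together with $G\le\frac{1}{4\pi}\log(1+4x_2y_2/|x-y|^2)\lesssim 1$ when $|x-y|\gtrsim\sqrt{x_2y_2}$.
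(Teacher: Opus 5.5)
\textbf{Gap in Step 1.} There is a genuine gap here, and Steps 2 and 3 depend on it. Your displayed formula for $W\mu$ does not follow from \eqref{eq:pohozaev-W}. Take $\|\omega\|_{L^1}=1$ and the probability measure $\sigma=\omega(x)\omega(y)\,dx\,dy$. Then \eqref{eq:pohozaev-W} says
\[
W\mu=\frac{1}{4\pi}\Bigl(\iint\frac{x_2+y_2}{|x-\bar y|^2}\,d\sigma\Bigr)\Bigl(\iint(x_2+y_2)\,d\sigma\Bigr),
\]
which is a product of two averages. You replaced it with the single average $\frac{1}{4\pi}\iint\frac{(x_2+y_2)^2}{|x-\bar y|^2}\,d\sigma$. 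That is not valid, because the non-constant weight $x_2+y_2$ cannot be moved inside the first integral.

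In fact \eqref{eq:pohozaev-W} on its own cannot give $W\mu\lesssim 1$. It does not rule out most of the mass sitting near $\partial\mathbb R^2_+$, which makes the first factor large, while a small distant portion carries the impulse. So after Step 1 you only have $W\mu\le 2I_\mu\lesssim\mu^\varepsilon$, which is not uniform.

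\textbf{The missing idea.} You need a second variation that preserves both constraints. The paper uses the horizontal stretching $\omega_s(x_1,x_2)=s\,\omega(sx_1,x_2)$. It fixes $\|\omega\|_{L^1}$ and the impulse, so $\omega_s\in K_\mu$. It multiplies $\|\omega\|_{L^2}^2$ by $s$. It turns the energy into $\frac12\iint G\bigl((x_1/s,x_2),(y_1/s,y_2)\bigr)\omega(x)\omega(y)\,dx\,dy$. Stationarity of $E_2[\omega_s]$ at $s=1$ then gives
\[
\|\omega\|_{L^2}^2=\iint\frac{2x_2y_2(x_1-y_1)^2}{\pi|x-y|^2|x-\bar y|^2}\,\omega(x)\omega(y)\,dx\,dy\le\frac{1}{2\pi},
\]
using $(x_1-y_1)^2\le|x-y|^2$ and $4x_2y_2\le|x-\bar y|^2$. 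By \eqref{eq:pohozaev-L2} this is the uniform bound $W\mu\le 1/(2\pi)$ you wanted.

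\textbf{Step 2.} Once the $L^2$ bound is in place, your Step 2 is a valid alternative to the paper's argument. Kato's inequality gives $-\Delta\omega\le\omega$ for the zero extension, and the local maximum principle on unit balls gives $\|\omega\|_{L^\infty}\lesssim\|\omega\|_{L^2}$ directly. The paper instead combines three ingredients: the Hardy--Littlewood--Sobolev bound $\|\nabla\mathcal G\omega\|_{L^4}\lesssim\|\omega\|_{L^{4/3}}$, the bound $W=\|\omega\|_{L^2}^2/\mu\le C$, and Sobolev embedding on unit balls. Your route has the small advantage of not needing a bound on $W$.

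\textbf{Step 3.} The estimate $\|\nabla\mathcal G\omega\|_{L^\infty}\lesssim\|\omega\|_{L^1}^{1/2}\|\omega\|_{L^\infty}^{1/2}$, together with $\mathcal G\omega(x_1,0)=0$, already gives $|\mathcal G\omega(x)|\le x_2\|\partial_{x_2}\mathcal G\omega\|_{L^\infty}$ for \emph{all} $x_2>0$. This is exactly how the paper concludes. You should drop your separate case $x_2\ge 1$. Its claim $\mathcal G\omega\lesssim\|\omega\|_{L^1}+\|\omega\|_{L^\infty}$ is false uniformly in $\mu$, since $\|\mathcal G\omega\|_{L^\infty}\ge\int\omega\,\mathcal G\omega\,dx=2E[\omega]\ge 2I_\mu$, and $I_\mu$ grows logarithmically. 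The argument breaks down in the intermediate region $1\le|x-y|\lesssim\sqrt{x_2y_2}$, where $G$ is of size $\log x_2$.
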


\begin{proof}
We first show the uniform estimate of the vorticity for $1\leq p\leq 2$. For $\omega\in S_{\mu}$, we set
$$
\omega_{s}(x_1,x_2)=s\omega(sx_1,x_2),\quad s>0.
$$
Observe that $\omega_{s}\in K_{\mu}$. Using 
\begin{align*}
E[\omega_s]&=\frac{1}{2}\iint_{\mathbb R^2_+\times\mathbb R^2_+}G\left(\frac{x_1}{s},\frac{y_1}{s},x_2,y_2\right)\omega(x)\omega(y)dxdy, \\
\int_{\mathbb{R}^{2}_{+}}\omega_s^{2}dx&=s\int_{\mathbb{R}^{2}_{+}}\omega^{2}dx,
\end{align*}
the maximality at $s=1$ yields  
\begin{align*}
0=\frac d{ds}E_2[\omega_{s}]\Bigg|_{s=1}
=\frac{1}{2}\iint_{\mathbb R^2_+\times\mathbb R^2_+}\partial_s G\left(\frac{x_1}{s},\frac{y_1}{s},x_2,y_2\right)\Bigg|_{s=1}\omega(x)\omega(y)dxdy
-\frac{1}{2}\int_{\mathbb{R}^{2}_{+}}\omega^{2}dx.
\end{align*}
Differentiating the Green function, we have
\begin{align*}
\partial_s G\left(\frac{x_1}{s},\frac{y_1}{s},x_2,y_2\right)\Bigg|_{s=1}
=\frac{2x_2y_2 |x_1-y_1|^{2}}{\pi |x-y|^{2}|x-\bar{y}|^{2}}\leq \frac{1}{2\pi}.
\end{align*}
Since $||\omega||_{L^{1}}=1$ for $\mu\geq \kappa_*$ by Theorem \ref{thm:threshold}, we obtain $||\omega||_{L^2}^2\leq1/(2\pi)$ and hence the uniform vorticity bound for $1\leq p\leq2$.

We infer from \eqref{eq:SFE} that $\|\mathcal{G}\omega\|_{L^\infty}\leq C\sqrt{\mu}$.
By the Hardy-Littlewood-Sobolev inequality, we then get $\|\nabla\mathcal{G}\omega\|_{L^4(\mathbb{R}^{2}_{+})}\leq C\|\omega\|_{L^{4/3}(\mathbb{R}^{2}_{+})}\leq C$. Moreover, we obtain $W=\|\omega\|_{L^2}^2/\mu\leq C$ by \eqref{eq:pohozaev-L2}   For  the zero extension of $\omega$ to $\mathbb{R}^{2}$, the chain rule and the Sobolev inequality on unit balls yield
\begin{align*}
\|\omega\|_{L^{\infty}(B_x(1))}&\leq C\left(\|\omega\|_{L^2(B_x(1))}+\|\nabla\omega\|_{L^4(B_x(1))}\right)\\
&\leq C\left(\|\omega\|_{L^2(\mathbb{R}^{2}_{+})}+\|\nabla\mathcal{G}\omega\|_{L^4(\mathbb{R}^{2}_{+})}+W\right)\leq C,
\end{align*}
uniformly in $x\in\mathbb{R}^{2}$ and $\mu\geq\kappa_*$. This proves the remaining $L^p$ bounds of $\omega$ by interpolation.

Finally, splitting the gradient kernel at $|x-y|=1$ gives $\|\nabla\mathcal{G}\omega\|_{L^{\infty}}\leq C(\|\omega\|_{L^1}+\|\omega\|_{L^{\infty}})\leq C$. Since $\mathcal{G}\omega(x_1,0)=0$, it follows that $\|\mathcal{G}\omega/x_2\|_{L^{\infty}}\leq C$. This proves \eqref{eq:uniformvortex}.
%By the Hardy-Littlewood--Sobolev inequality, $\mathcal{G}\omega$ is uniformly bounded in $W^{1,q}$ for $1/q=1/p-1/2$ for $1<p<2$. By the Sobolev inequality and \eqref{eq:near-Lamb-fixed-EL}, $\psi$ and $\omega$ are uniformly bounded in $L^{\infty}$. Thus the uniform bound \eqref{eq:uniformvortex} for $2\leq p\leq \infty$ holds. The stream function estimate in \eqref{eq:uniformvortex} follows from \eqref{eq:sinfty}.
\end{proof}

\subsection{Lipschitz continuity of the maximum}

We will use the continuity of the function $I_{\mu}$ for the compactness of maximizers in \S \ref{s:4}.

\begin{lemma}\label{l:Lip}
The function $I_{\mu}$ is globally Lipschitz continuous on
$[0,\infty)$.    
\end{lemma}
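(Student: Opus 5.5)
Lipschitz continuity of $I_\mu$ (with $\lambda=\nu=1$) will follow from a two-sided comparison: push a maximizer at one impulse into the admissible class at a nearby impulse by an explicit deformation that changes the impulse but keeps $\omega\ge0$ and $\|\omega\|_{L^1}\le 1$. Since $I_\mu$ is increasing by \eqref{eq:P1}, it suffices to show $0\le I_{\alpha}-I_\mu\le C(\alpha-\mu)$ for all $0\le\mu<\alpha$, with $C$ independent of $\mu,\alpha$. On $[0,\kappa_*]$ this is immediate from the explicit formula \eqref{eq:P2}, whose derivative $\mu/(j_{1,1}^2\pi)$ is bounded there. So the work is for $\alpha>\kappa_*$, and by splitting intervals at $\kappa_*$ we may assume $\kappa_*\le\mu<\alpha$.

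The deformation is a vertical translation. Take $\omega\in S_\alpha$, which is Steiner symmetric by Theorem~\ref{thm:nearly Lamb}, and set $h=\alpha-\mu>0$. By Theorem~\ref{thm:threshold}(B), $\|\omega\|_{L^1}=1$ and the support stays away from $\partial\mathbb R^2_+$, though not necessarily at distance $h$. So I will not translate directly but combine vertical translation with cutting. Define
\[
\omega_h(x_1,x_2)=\omega(x_1,x_2+h)\,\mathbf 1_{\{x_2>0\}}.
\]
Then $\omega_h\ge0$ and $\|\omega_h\|_{L^1}\le1$. Its impulse is
\[
\int x_2\omega(x+(0,h))\,dx=\int_{y_2>h}(y_2-h)\omega\,dy\ \ge\ \alpha-h.
\]
I will then compare energies. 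Write $G(x+he_2,y+he_2)$ against $G(x,y)$: since $G(x,y)=\frac1{4\pi}\log\bigl(1+4x_2y_2/|x-y|^2\bigr)$ is increasing in each of $x_2,y_2$ at fixed difference $x-y$, downward translation decreases the energy. The loss is bounded through
\[
\partial_{x_2}G+\partial_{y_2}G=\frac1{\pi}\,\frac{x_2+y_2}{|x-\bar y|^2}\ \ge 0.
\]
By the identity \eqref{eq:pohozaev-W} and the bound $W\le C$ from \eqref{eq:uniformvortex} together with \eqref{eq:pohozaev-L2}, integrating this against $\omega\otimes\omega$ gives an $O(1)$ quantity. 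Hence $E[\omega_h]\ge E[\omega]-Ch$, up to the truncated part. The truncated part is bounded by mass in the strip $\{x_2<h\}$, which is controlled by $\|\omega\|_{L^\infty}h\cdot(\text{horizontal extent})$. That is awkward, so instead I will use the pointwise bound $G(x,y)\lesssim x_2\,\|\mathcal G\omega/x_2\|_\infty$ from \eqref{eq:uniformvortex}: the energy from the strip is $\le \int_{x_2<h}\mathcal G\omega\,\omega\le C h\|\omega\|_{L^1}$. The enstrophy does not increase. This yields $E_2[\omega_h]\ge I_\alpha-Ch$.

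It remains to adjust the impulse exactly to $\mu$. If the impulse of $\omega_h$ exceeds $\mu$, I will instead choose the shift $h'\in[0,h]$ by continuity of $h'\mapsto\|\omega_{h'}\|_{L^1_*}$, which equals $\alpha$ at $0$ and is $\le\mu$ at $h$ since $\int_{y_2>h}(y_2-h)\omega\le\alpha-h$ when $\|\omega\|_{L^1}\le1$. Thus some $h'\le h$ gives impulse exactly $\mu$, so $\omega_{h'}\in K_\mu$ and $I_\mu\ge I_\alpha-Ch'\ge I_\alpha-C(\alpha-\mu)$.

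The main obstacle is making the derivative-of-energy bound rigorous along the path, i.e.\ a bound on $\frac{d}{ds}E[\omega_s]$ uniform in $s$. The fact that $\omega_s$ is not a maximizer is harmless, because the bound on $\iint\frac{x_2+y_2}{|x-\bar y|^2}\omega_s\omega_s$ follows from $\|\nabla\mathcal G\omega_s\|_\infty\le C(\|\omega_s\|_{L^1}+\|\omega_s\|_{L^\infty})$ and the uniform $L^1\cap L^\infty$ bounds \eqref{eq:uniformvortex}, which are preserved under translation and truncation.
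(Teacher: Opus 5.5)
Your energy estimate for the truncated translate is fine. With $\omega_t(x)=\omega(x+te_2)\mathbf 1_{\{x_2>0\}}$, three ingredients give $E_2[\omega_t]\ge I_\alpha-Ct$: the bound $\frac{x_2+y_2}{|x-\bar y|^2}\le\frac{1}{|x-y|}$, the uniform $L^1\cap L^\infty$ bounds, and $\mathcal G\omega\le Cx_2$ from \eqref{eq:uniformvortex}.

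The gap is in the impulse adjustment, where the key inequality is reversed. Write $P(t)=\int_{\mathbb R^2_+}(y_2-t)_+\,\omega(y)\,dy$ for the impulse of $\omega_t$. Since $\|\omega\|_{L^1}=1$, one has $P(t)=\alpha-t+\int_{\{y_2<t\}}(t-y_2)\,\omega(y)\,dy\ge\alpha-t$, which you computed yourself one line earlier. Hence $P(h)\ge\mu$, strictly as soon as $\omega$ charges the strip $\{0<y_2<h\}$. That is exactly the situation you introduced the truncation for; near $\kappa_*$ the gap to the boundary is of order $\gamma\to0$. So the claim $\int_{\{y_2>h\}}(y_2-h)\omega\le\alpha-h$ is false. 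Since $P$ is nonincreasing, no $h'\in[0,h]$ gives impulse $\mu$.

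To reach impulse $\mu$ you need a larger shift $h'\ge h$ with $\int_0^{h'}\bigl(1-m(s)\bigr)\,ds=\alpha-\mu$, where $m(s)=\int_{\{y_2<s\}}\omega$. To get $h'\lesssim\alpha-\mu$ you then need a uniform estimate such as $m(s)\le\frac12$ for $s\le s_0$ over all maximizers, i.e.\ uniform control of the mass in thin strips along $\partial\mathbb R^2_+$. Nothing in your argument supplies this. The bound $\omega\le\mathcal G\omega\le Cx_2$ is not enough without a uniform bound on the horizontal extent of the support. The paper has such a bound only in the two limiting regimes, and the near-Lamb one relies on this very lemma. Monotonicity of $I$ does not rescue the step either: $\omega_h\in K_{P(h)}$ with $P(h)\ge\mu$ bounds $I_{P(h)}\ge I_\mu$ from below, not $I_\mu$.

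The simplest repair is to change amplitude rather than position. For $\omega\in S_\alpha$ and $c=\mu/\alpha$, $c\omega\in K_\mu$ (mass $c\le1$, impulse exactly $\mu$) and $E_2[c\omega]=c^2I_\alpha$. Hence $0\le I_\alpha-I_\mu\le(1-c^2)I_\alpha\le 2(\alpha-\mu)\,I_\alpha/\alpha$, and $\sup_{\alpha>0}I_\alpha/\alpha<\infty$ by \eqref{eq:P2} and \eqref{eq:P3} with $\varepsilon=1$. Rescaling your $\omega_h$ by $\mu/P(h)$ would also work, since $0\le P(h)-\mu\le h$, but then the translation is redundant.

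For comparison, the paper uses the dilation $\omega_\tau(x)=\tau^2\omega(\tau x)$ with $\tau=\alpha/\mu$. This preserves $E$ and the mass and divides the impulse by $\tau$ exactly, at the enstrophy cost $\frac12(\tau^2-1)\|\omega\|_{L^2}^2$. That cost is controlled by \eqref{eq:uniformvortex} when $\alpha\le2\mu$, and the case $\alpha>2\mu$ is handled by the growth bound \eqref{eq:P3}.
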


\begin{proof}
Since $I_{\mu}$ is quadratic in $[0,\kappa_*]$ by \eqref{eq:P2}, it suffices to show that $I_{\mu}$ is globally Lipschitz continuous on $[\kappa_*,\infty)$. For $\kappa_*\leq \mu<\alpha$ and $\omega\in S_{\alpha}$, $\omega_{\tau}(x)=\tau^{2}\omega(\tau x)$ satisfies 
\begin{align*}
||\omega_{\tau}||_{L^{2}}^{2}=\tau^{2}||\omega||_{L^{2}}^{2},\quad 
||\omega_{\tau}||_{L^{1}_{*}}=\frac{1}{\tau}||\omega||_{L^{1}_{*}},\quad 
||\omega_{\tau}||_{L^{1}}=||\omega||_{L^{1}},\quad E[\omega_{\tau}]=E[\omega].
\end{align*}
We choose $\tau=\alpha/\mu>1$ so that $\omega_{\tau}\in K_{\mu}$. It follows from \eqref{eq:uniformvortex} that 
\begin{align*}
I_{\alpha}=E_2[\omega]
=E[\omega]-\frac{1}{2}||\omega||_{L^{2}}^{2} 
=E[\omega_{\tau}]-\frac{1}{2}||\omega_{\tau}||_{L^{2}}^{2}+\frac{1}{2}(\tau^{2}-1)||\omega||_{L^{2}}^{2} 
\leq I_{\mu}+ C\frac{(\alpha-\mu)(\alpha+\mu)}{2\mu^{2}}.
\end{align*}
By the monotonicity \eqref{eq:P1}, if $\mu<\alpha\leq 2\mu$, then
\begin{align*}
0\leq
\frac{I_{\alpha}-I_{\mu}}{\alpha-\mu}
\lesssim
\frac{\alpha+\mu}{2\mu^{2}}
\leq \frac{3}{2\kappa_*}.
\end{align*}
On the other hand, if $\alpha>2\mu$, then $\alpha-\mu>\alpha/2$, and the growth estimate \eqref{eq:P3} yields, %for any fixed $0<\varepsilon<1$,
\begin{align*}
0\leq
\frac{I_{\alpha}-I_{\mu}}{\alpha-\mu}
\leq \frac{2I_{\alpha}}{\alpha}\leq C.
%\lesssim \frac{1}{\alpha^{1-\varepsilon}}\leq \frac{1}{\kappa_*^{1-\varepsilon}}.
\end{align*}
Thus the difference quotients of \(I_\mu\) are uniformly bounded on $[\kappa_*,\infty)$, and hence $I_{\mu}$ is globally Lipschitz continuous there. Together with its quadratic behavior on $[0,\kappa_*]$, this proves the assertion.
\end{proof}

%S3
\section{Coercive estimates for the linearized operators}\label{s:3}

We establish coercive estimates for the quadratic forms associated with the operators \(L\) on \(L^2(B)\), obtained by linearizing the Euler--Lagrange equation \eqref{eq:near-Lamb-fixed-EL} around the two explicit solutions \eqref{eq:radiallysymmetric} and \eqref{eq:Lamb}. For the radially symmetric solution, \(L\) is nonnegative but has a nontrivial kernel arising from symmetries. We obtain coercivity by eliminating these neutral directions through the zero-moment condition. In contrast, for the Lamb dipole, \(L\) has a negative eigenvalue. We nevertheless establish coercivity on the subspace determined by the \(x_1\)-symmetry and the zero-impulse condition.

\subsection{The Fredholm operator}

We consider the operator of the form  
\begin{align}
L=I-K, \quad 
K=\mathbf{1}_{B}(-\Delta_{\mathbb{R}^{2}})^{-1}\mathbf{1}_{B}, \label{eq:opL}
\end{align}
for a ball $B=B_0(R)$ with radius $R$ centered at the origin. Using the kernel $N=-(2\pi)^{-1}\log|x|$, $K$ is expressed as 
\begin{align}
Kf(x)=\mathbf{1}_{B}(x)\int_{B}N(x-y)f(y)dy. \label{eq:opKrep}
\end{align}

\begin{proposition}\label{p:spectrumL}
The following statements hold:
\begin{itemize}
\item[(i)] The operator $K: L^{2}(B)\to L^{2}(B)$ is a compact and symmetric operator. The nonzero spectrum consists of real eigenvalues $\{\mu_n\}$, with \(0\) as the only possible accumulation point.

\item[(ii)] $\sigma(L)=\{1-\mu:\mu\in \sigma(K)\}$.
\end{itemize}
\end{proposition}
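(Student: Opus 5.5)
The plan is to show that $K$ is an integral operator on $L^2(B)$ with a square-integrable, symmetric kernel. Then (i) follows from the spectral theorem for compact self-adjoint operators, and (ii) is a shift of the spectrum by the identity.

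For compactness, I would write $K$ via \eqref{eq:opKrep} with kernel $k(x,y)=\mathbf{1}_B(x)\mathbf{1}_B(y)N(x-y)$. Since $B$ is bounded, $x-y\in B_0(2R)$ for all $x,y\in B$. The logarithm is square integrable near the origin, since $\int_{B_0(2R)}|\log|z||^2\,dz<\infty$. By Fubini,
\begin{align*}
\iint_{B\times B}|N(x-y)|^2\,dx\,dy\leq |B|\int_{B_0(2R)}|N(z)|^2\,dz<\infty,
\end{align*}
so $K$ is Hilbert--Schmidt on $L^2(B)$, hence bounded and compact. Symmetry follows from $N(x-y)=N(y-x)$ and Fubini, which give $(Kf,g)_{L^2(B)}=(f,Kg)_{L^2(B)}$ for real $f,g\in L^2(B)$; the complexified operator is self-adjoint in the same way.

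The spectral theorem for compact self-adjoint operators on a Hilbert space then gives the rest of (i). Every nonzero point of $\sigma(K)$ is an eigenvalue of finite multiplicity, every eigenvalue is real, and the nonzero eigenvalues $\{\mu_n\}$ can accumulate only at $0$. The point $0$ itself belongs to $\sigma(K)$, because $L^2(B)$ is infinite dimensional and $K$ is compact; this is consistent with the statement, which only describes the nonzero spectrum.

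For (ii), note that for $z\in\mathbb{C}$ we have $L-z=(1-z)I-K$. Hence $L-z$ is boundedly invertible exactly when $K-(1-z)$ is, so $z\in\sigma(L)$ if and only if $1-z\in\sigma(K)$. This says $\sigma(L)=\{1-\mu:\mu\in\sigma(K)\}$, with the same eigenspaces for corresponding eigenvalues.

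There is no real obstacle here. The only point needing care is the square integrability of the logarithmic kernel on the bounded set $B\times B$. This is where boundedness of $B$ is used, since on all of $\mathbb{R}^2$ the operator $(-\Delta_{\mathbb{R}^2})^{-1}$ is not even bounded on $L^2$.
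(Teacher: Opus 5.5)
Your proof is correct. For symmetry and part (ii) it is the same as the paper's: both use the symmetry of the kernel $N(x-y)$, and both use the identity $L-z=(1-z)I-K$ to match resolvent sets.

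The real difference is how you get compactness. You show that the kernel $\mathbf{1}_B(x)\mathbf{1}_B(y)N(x-y)$ belongs to $L^2(B\times B)$, so $K$ is Hilbert--Schmidt. The paper instead argues by elliptic regularity of the Newtonian potential that $K$ maps $L^2(B)$ boundedly into $H^2(B)$, and then applies the Rellich embedding $H^2(B)\Subset L^2(B)$.

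Your route is more elementary and self-contained: it uses only the local square integrability of $\log|z|$ and the boundedness of $B$. It also gives the slightly stronger conclusion $\sum_n \mu_n^2<\infty$. The paper's route brings regularity of $Kf$ along as a by-product, which fits the later analysis where eigenfunctions are found by solving Bessel equations. Either argument proves the proposition as stated.
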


\begin{proof}
By standard interior elliptic regularity, the operator $K:L^{2}(B)\to H^{2}(B)$ is bounded. The compactness follows from the Rellich theorem $H^{2}(B)\Subset L^{2}(B)$. The symmetry follows from the representation
\begin{align*}
(Kf,g)_{L^{2}(B)}=\iint_{B\times B}N(x-y)f(y)g(x)dydx=(f,Kg)_{L^{2}(B)}.
\end{align*}
Thus (i) holds, e.g., \cite[Appendix D, Theorem 6]{Evans2010}. If $\mu$ belongs to the resolvent set of $K$, for $f\in L^{2}$ there exists a unique $\psi\in L^{2}$ satisfying 
\begin{align*}
f=(\mu-K)\psi=(\mu-1+L)\psi.
\end{align*}
Thus \(1-\mu\) belongs to the resolvent set of \(L\). The converse also holds. Hence $\mu\in \sigma(K)$ iff $1-\mu\in \sigma(L)$. We proved (ii).
\end{proof}

\begin{proposition}
The following statements hold:
\begin{itemize}
\item[(i)] All eigenvalues associated with eigenfunctions of $L$ satisfying $\int_{\mathbb{T}}fd\theta=0$ for a.e. $0<r<R$ are given by 
\begin{align}
\lambda_{m,k}=1-\frac{R^{2}}{j_{m-1,k}^{2}},\quad m,k=1,2,\ldots,  \label{eq:eigenvalues}
\end{align}
where $j_{m-1,k}$ is the $k$-th positive zero of $J_{m-1}$.
\item[(ii)] If $\lambda<1$ is an eigenvalue associated with some radial eigenfunction of $L$, then
\begin{align}
J_0(\kappa R)+\kappa R\log{R}J_1(\kappa R)=0,\quad \kappa=\frac{1}{\sqrt{1-\lambda}}.\label{eq:eigenvalues2}
\end{align}
\end{itemize}
\end{proposition}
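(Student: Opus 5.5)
We will use a separation-of-variables argument. The eigenvalue equation is turned into a Helmholtz equation inside $B$ coupled to a harmonic exterior problem, and the eigenvalues are read off from the $C^1$-matching condition at $r=R$.

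\textbf{Reduction.} Let $Lf=\lambda f$ with $f\in L^2(B)$, $f\neq0$. Set
\[
\psi=N*(\mathbf 1_Bf)\quad\text{on }\mathbb R^2 .
\]
Then $\psi\in H^2_{\mathrm{loc}}(\mathbb R^2)$, so $\psi$ and $\partial_r\psi$ are continuous across $\partial B$. It satisfies $-\Delta\psi=f$ in $B$ and $\Delta\psi=0$ in $\mathbb R^2\setminus\overline B$. The eigenvalue equation reads $Kf=(1-\lambda)f$, i.e. $\psi=(1-\lambda)f$ in $B$.

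If $\lambda=1$, then $\psi\equiv0$ in $B$, hence $f=-\Delta\psi=0$ in $B$, a contradiction. So $\lambda\neq1$, and in $B$ we get $-\Delta\psi=(1-\lambda)^{-1}\psi$. Since $K$ commutes with rotations, we expand $f=\sum_m g_m(r)e^{im\theta}$ and $\psi$ accordingly. Each nonzero Fourier mode of an eigenfunction is again an eigenfunction with the same $\lambda$, so it suffices to treat a single mode $m$.

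\textbf{Nonradial modes, part (i).} Take $m\ge1$; negative $m$ is symmetric. The hypothesis $\int_{\mathbb T}f\,d\theta=0$ exactly removes the mode $m=0$.

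For $m\geq 1$ the exterior potential of a compactly supported mode-$m$ density is exactly $d\,r^{-m}e^{im\theta}$ for $r>R$. This follows from the multipole expansion of $\log|x-y|$, in which only the $r^{-m}$ term of the $m$-th mode survives. Inside, regularity at the origin forces the following:
\begin{itemize}
\item if $1-\lambda>0$, then $\psi=c\,J_m(\kappa r)e^{im\theta}$ with $\kappa=(1-\lambda)^{-1/2}$;
\item if $1-\lambda<0$, then $\psi=c\,I_m(\kappa' r)e^{im\theta}$.
\end{itemize}

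Continuity of $\psi$ and $\partial_r\psi$ at $r=R$ eliminates $c$ and $d$ and gives $\kappa R J_m'(\kappa R)+mJ_m(\kappa R)=0$. By the recurrence $zJ_m'+mJ_m=zJ_{m-1}$, this is $J_{m-1}(\kappa R)=0$. In the modified Bessel case the analogous condition is $I_{m-1}(\kappa' R)=0$, which is impossible since $I_{m-1}>0$ on $(0,\infty)$. Hence $\kappa R=j_{m-1,k}$, which is exactly \eqref{eq:eigenvalues}.

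For the converse, fix $m,k$ and define $\psi$ piecewise, with constants chosen so that $\psi$ is $C^1$ across $r=R$. Set $f=(1-\lambda)^{-1}\psi\mathbf 1_B$. The difference $\psi-N*f$ is harmonic on all of $\mathbb R^2$, since the $C^1$ matching removes any singular layer on $\partial B$. It is of mode $m\geq1$ and decays at infinity, so it vanishes. Thus each value in \eqref{eq:eigenvalues} is indeed an eigenvalue.

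\textbf{Radial modes, part (ii).} Now $\lambda<1$ is assumed, so $\kappa=(1-\lambda)^{-1/2}$ is real and $\psi=c\,J_0(\kappa r)$ in $B$. By Newton's theorem, outside $B$ we have $\psi=M\,N(x)=-\frac{M}{2\pi}\log r$ with $M=\int_Bf$, and there is no additive constant. Matching value and derivative at $r=R$ gives two conditions on $c$ and $A=-\frac{M}{2\pi}$:
\[
cJ_0(\kappa R)=A\log R,\qquad -c\kappa J_1(\kappa R)=\frac{A}{R}.
\]
Substituting $A=-c\kappa RJ_1(\kappa R)$ into the first relation, and using $c\neq0$, yields \eqref{eq:eigenvalues2}.

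\textbf{Main obstacle.} The delicate point is justifying the exact exterior form of $\psi$ mode by mode, with no additive constant or extra $\log r$ term when $m\ge1$ and no constant when $m=0$. This is what makes the matching conditions exact rather than only up to lower-order terms. I would handle it directly from the expansion
\[
\log|x-y|=\log r-\sum_{n\ge1}\frac1n\Bigl(\frac{|y|}{r}\Bigr)^n\cos\bigl(n(\theta-\phi)\bigr),\qquad |y|<r .
\]
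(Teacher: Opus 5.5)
Your proposal is correct and follows essentially the paper's argument. Both proofs use the Fourier decomposition, the exact exterior form ($r^{-m}$ for $m\ge1$, $\log r$ for $m=0$), the regular interior Bessel solution, and $C^1$ matching at $r=R$ combined with the recurrence $zJ_m'+mJ_m=zJ_{m-1}$. The differences are minor. You rule out $\lambda\ge1$ by treating $\lambda=1$ directly and using positivity of $I_{m-1}$ for $\lambda>1$, whereas the paper uses the energy identity $\int|\nabla\psi_m|^2=(1-\lambda)^{-1}\int_B|\psi_m|^2$. You also verify that each $\lambda_{m,k}$ is actually attained, which the paper only does later in Lemma~\ref{l:Fourier}.
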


\begin{proof}
We take an eigenfunction $f\in L^{2}(B)$ associated with the eigenvalue $\lambda\in \mathbb{R}$ and set 
\begin{align*}
\psi(x)=(-\Delta_{\mathbb{R}^{2}})^{-1}(\mathbf{1}_Bf)=\int_{B}N(x-y)f(y)dy.
\end{align*}
By Fourier expansion,
\begin{align*}
f=\sum_{m\in \mathbb{Z}}\hat{f}_m(r)e^{im\theta},\quad \psi=\sum_{m\in \mathbb{Z}}\hat{\psi}_m(r)e^{im\theta}.
\end{align*}
We show (i). For $f$ satisfying $\hat{f}_0=(2\pi)^{-1/2}\int_{\mathbb{T}}fd\theta=0$, we may assume that $m\neq 0$. By $Lf=\lambda f$, $\psi$ satisfies 
\begin{align*}
\left(\Delta +\frac{1}{1-\lambda}\mathbf{1}_B\right)\psi=0.
\end{align*}
Then, $\hat{\psi}_m$ satisfies 
\begin{align*}
\left(\partial_r^{2}+\frac{1}{r}\partial_r-\frac{m^{2}}{r^{2}}+\frac{1}{1-\lambda}\mathbf{1}_{(0,R)}\right)\hat{\psi}_m=0.
\end{align*}
We may assume that $m\geq 1$. For $r>R$, $\hat{\psi}_m$ is decaying as $r\to\infty$. By solving the above equation, we obtain  $\hat{\psi}_m=C_1r^{-m}$. This implies $\nabla \psi_m\in L^{2}(\mathbb{R}^{2})$ for $\psi_m=\hat{\psi}_me^{im\theta}$. Using $-\Delta \psi_m=(1-\lambda)^{-1}\mathbf{1}_B\psi_m$, we find that $\lambda<1$. By solving Bessel's differential equation for $0<r<R$, we obtain $\hat{\psi}_{m}=C_2J_m(\kappa r)$ for $\kappa=(1-\lambda)^{-1/2}$. The \(C^1\)-matching condition at \(r=R\), together with the recurrence relation $zJ'_m(z)+mJ_m(z)=zJ_{m-1}(z)$, yields \(J_{m-1}(\kappa R)=0\). Thus $\kappa R=j_{m-1,k}$ for $k=1,2,\ldots$ and we obtain the eigenvalues \eqref{eq:eigenvalues}.

We show (ii). If $f=f(r)$ is an eigenfunction of $L$, $\psi=C_1\log r$ for $r>R$. Since $\lambda<1$, we have $\psi=C_2J_0(\kappa r)$. The $C^{1}$ continuity at $r=R$ and $J_0'=-J_1$ yield the condition \eqref{eq:eigenvalues2}.
\end{proof}

\begin{proposition}\label{p:L2Rbasis}
For $m\geq 1$, the functions 
\begin{align}
\left\{J_m\left(\frac{j_{m-1,k}}{R}r\right):k\geq 1\right\}  \label{eq:basis}
\end{align}
form a complete orthogonal basis of $L^{2}(0,R;rdr)$ solving Bessel's differential equation
\begin{align}
\left(\partial_r^{2}+\frac{1}{r}\partial_r-\frac{m^{2}}{r^{2}}+\frac{j_{m-1,k}^{2}}{R^{2}}\right)\varphi&=0,\quad 0<r<R,  \label{eq:Bessel}\\
R\varphi'(R)+m\varphi(R)&=0.  \label{eq:BC}
\end{align}
For $m=0$, $\{1\}$ and \eqref{eq:basis} form a complete orthogonal basis of $L^{2}(0,R;rdr)$ with $j_{-1,k}=j_{1,k}$.
\end{proposition}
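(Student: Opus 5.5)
The plan is to realize the functions in \eqref{eq:basis} as the full set of eigenfunctions of a self-adjoint Sturm--Liouville operator with compact resolvent on $L^{2}(0,R;rdr)$. Completeness and orthogonality then follow from the spectral theorem for compact symmetric operators, the same tool used in Proposition \ref{p:spectrumL}. Fix $m\geq 0$ and consider the quadratic form
\begin{align*}
a_m[\varphi]=\int_0^R\left(|\varphi'|^{2}+\frac{m^{2}}{r^{2}}|\varphi|^{2}\right)r\,dr+mR\,|\varphi(R)|^{2}.
\end{align*}
Its form domain $V_m$ consists of those $\varphi$ for which $\varphi(r)e^{im\theta}\in H^{1}(B)$, with $B=B_0(R)$. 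Integrating by parts formally against $-(r\varphi')'/r+m^{2}\varphi/r^{2}$ produces the boundary term $-R\varphi'(R)\varphi(R)$. Replacing it by $mR\varphi(R)^{2}$ is exactly the Robin condition \eqref{eq:BC}, which is therefore the natural boundary condition of $a_m$. The trace term is controlled by the $H^{1}(B)$ norm through the trace inequality on $\partial B$.

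The first step is compactness. The form $a_m+\|\cdot\|^{2}_{L^{2}(rdr)}$ is equivalent to the $H^{1}(B)$ norm on the $m$-th Fourier mode. By Lax--Milgram, the associated operator $A_m+I$ has a bounded inverse from $L^{2}(0,R;rdr)$ into $V_m$. Rellich's theorem $H^{1}(B)\Subset L^{2}(B)$, applied to the mode-$m$ subspace, makes this inverse compact, and it is symmetric because the form is symmetric. Its eigenfunctions therefore form a complete orthogonal basis of $L^{2}(0,R;rdr)$, and each is a weak solution of the Bessel equation \eqref{eq:Bessel} with spectral parameter $\nu$ in place of $j_{m-1,k}^{2}/R^{2}$, satisfying \eqref{eq:BC}. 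Eigenfunctions with distinct eigenvalues are automatically orthogonal.

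The second step identifies all eigenpairs. Since $a_m\geq 0$, every eigenvalue satisfies $\nu\geq0$.
\begin{itemize}
\item For $m\geq1$, $a_m[\varphi]=0$ forces $\varphi=0$, so $\nu>0$.
\item For $m=0$, the kernel consists exactly of the constants: here the condition \eqref{eq:BC} is Neumann, $\varphi'(R)=0$.
\end{itemize}
For $\nu>0$, the solutions of the Bessel equation that lie in $V_m$ (regular at $r=0$) form the one-dimensional space spanned by $J_m(\sqrt\nu\, r)$, because $Y_m$ is excluded by its singularity at the origin. Each eigenvalue is therefore simple. Inserting $z=\sqrt\nu R$ into \eqref{eq:BC} and using the recurrence $zJ_m'(z)+mJ_m(z)=zJ_{m-1}(z)$ gives $zJ_{m-1}(z)=0$, that is, $\sqrt\nu R=j_{m-1,k}$. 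For $m=0$ the condition becomes $J_0'(z)=-J_1(z)=0$, so $\sqrt{\nu}R=j_{1,k}$. This matches the convention $j_{-1,k}=j_{1,k}$, which is consistent with $J_{-1}=-J_1$. Combined with the constant eigenfunction, this yields exactly the stated bases.

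The main obstacle is the functional-analytic setup rather than the computation. One must check that the weak eigenfunctions of $a_m$ are the classical ones. Interior ODE regularity on $(0,R)$ handles this away from the endpoints. Membership in $H^{1}(B)$ of $\varphi e^{im\theta}$ is what excludes the singular solution $Y_m$ at $r=0$, so no separate condition at the origin is needed. Finally, the natural boundary condition at $r=R$ must be recovered by testing the weak form against functions that do not vanish at $R$.
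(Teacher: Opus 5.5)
Your overall route is the paper's: a symmetric form on the mode-$m$ weighted space, Lax--Milgram, a compact self-adjoint solution operator, the spectral theorem, exclusion of $Y_m$ at $r=0$, and the recurrence $zJ_m'+mJ_m=zJ_{m-1}$. There are two minor variations, and both are fine:
\begin{itemize}
\item You get compactness from Rellich on $H^1(B)$ via $\varphi\mapsto\varphi e^{im\theta}$. The paper instead uses the direct estimate $\int_0^\delta|\varphi|^2r\,dr\le\delta^2\int_0^\delta|\varphi/r|^2r\,dr$.
\item For $m=0$ you shift by $I$ and identify the kernel as the constants. The paper instead restricts the form to average-zero functions.
\end{itemize}

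There is, however, a concrete error in the boundary term of $a_m$. Under \eqref{eq:BC} one has $-R\varphi'(R)=m\varphi(R)$. So the boundary term $-R\varphi'(R)\varphi(R)$ equals $m\varphi(R)^2$, not $mR\varphi(R)^2$.

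With your coefficient $mR$, test the weak eigenvalue equation against $\phi$ with $\phi(R)\neq0$. This gives the natural condition $R\varphi'(R)+mR\varphi(R)=0$, i.e.\ $\varphi'(R)+m\varphi(R)=0$. For $\varphi=J_m(\kappa r)$ and $z=\kappa R$, it reads $zJ_{m-1}(z)+m(R-1)J_m(z)=0$. This is not $J_{m-1}(z)=0$ unless $R=1$, and the paper needs $R=j_{0,1}$ and $R=j_{1,1}$.

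So, as written, the eigenfunctions of your operator are not the family \eqref{eq:basis}. Your later step of inserting $z=\sqrt\nu R$ into \eqref{eq:BC} uses a boundary condition that your form does not impose.

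The fix is to take the boundary term $m|\varphi(R)|^2$, as in the paper's form $B(\varphi,\phi)=\int_0^R(\varphi'\phi'+m^2r^{-2}\varphi\phi)\,r\,dr+m\varphi(R)\phi(R)$. In terms of $u=\varphi e^{im\theta}$, the correct Robin weight against arc length on $\partial B$ is $m/R$, not $m$. With this correction, the trace bound and the rest of your argument go through unchanged.
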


\begin{proof}
We consider the operator 
\begin{align*}
\mathscr{L}\varphi=\left(\partial_r^{2}+\frac{1}{r}\partial_r-\frac{m^{2}}{r^{2}}\right)\varphi
=\frac{1}{r}\partial_r(r\partial_r \varphi)-\frac{m^{2}}{r^{2}}\varphi,
\end{align*}
and the associated bilinear form 
\begin{align*}
B(\varphi,\phi)=\int_{0}^{R}\left(\varphi'\phi'+\frac{m^{2}}{r^{2}}\varphi\phi \right)rdr+m\varphi(R)\phi(R).
\end{align*}
We define the weighted Hilbert space
\begin{align*}
V=\left\{\varphi\in L^{2}(0,R; rdr): \varphi',\frac{\varphi}{r}\in L^{2}(0,R; rdr) \right\},
\end{align*}
endowed with the inner product
\begin{align*}
(\varphi,\phi)_{V}=\int_{0}^{R}\left(\varphi'\phi'+\frac{1}{r^{2}}\varphi\phi \right)rdr.
\end{align*}
The bilinear form $B$ is coercive on $V$ and we have the compact embedding 
\begin{align*}
V\Subset L^{2}(0,R; rdr),
\end{align*}
as follows from the Rellich theorem away from \(r=0\) together with the estimate
\begin{align*}
\int_0^{\delta}|\varphi|^{2}rdr\leq
\delta^{2}\int_{0}^{\delta}\left|\frac{\varphi}{r}\right|^{2}rdr,\quad \delta>0.
\end{align*}
Since the bilinear form $B$ is coercive, for $f\in L^{2}(0,R;rdr)$, there exists a unique $\varphi\in V$ such that $B(\varphi,\phi)=(f,\phi)_{L^{2}(0,R;rdr)}$ by the Lax--Milgram theorem. The solution operator 
\begin{align*}
\mathscr{K}:L^{2}(0,R;rdr)\ni f\longmapsto \varphi\in V\subset L^{2}(0,R;rdr)
\end{align*}
is compact and self-adjoint. Hence, by the spectral theorem for compact self-adjoint operators, there exists a complete orthogonal basis $\{\varphi_k\}_{k=1}^{\infty}$ of $L^{2}(0,R;rdr)$ satisfying 
\begin{align*}
-\mathscr{L}\varphi_k=\kappa_{k}^{2}\varphi_k,
\end{align*}
and \eqref{eq:BC}. This equation is Bessel’s differential equation. Since the singular Bessel solution \(Y_m(\kappa_k r)\) does not belong to $V$, we have 
\begin{align*}
\varphi_k(r)=CJ_m\left(\kappa_{k}r\right).
\end{align*}
By \eqref{eq:BC} and the recurrence relation $zJ'_m(z)+mJ_m(z)=zJ_{m-1}(z)$, we obtain $\kappa_{k}=j_{m-1,k}/R$.

For $m=0$, we consider the space of average zero functions $L^{2}_{\textrm{av}}(0,R;rdr)$ and define 
\begin{align*}
V_{0}&=\left\{\varphi\in L^{2}(0,R; rdr): \varphi'\in L^{2}(0,R; rdr) \right\}, \\
V_{0,\textrm{av}}&=V_{0}\cap L^{2}_{\textrm{av}}(0,R;rdr).
\end{align*}
Then, the bilinear form $B$ for $m=0$ is coercive on $V_{0,\textrm{av}}$ and the same compactness and spectral argument applies on \(V_{0,\mathrm{av}}\). Thus the functions \eqref{eq:basis} form a complete orthogonal basis of $L^{2}_{\textrm{av}}(0,R;rdr)$. Since $J_{-1}=-J_1$, we have $j_{-1,k}=j_{1,k}$.
\end{proof}

\begin{lemma}\label{l:Fourier}
Set 
\begin{align}
L^{2}(B)=\bigoplus_{m\in \mathbb{Z} }H_m,\quad 
H_m&=\left\{f(r)e^{im\theta}:\ f\in L^{2}(0,R;rdr)\right\}. \label{eq:fourier}
\end{align}
There exists a complete orthonormal basis $\{f_{m,k}^{+},f_{m,k}^{-}:m,k\geq 1\}$ of $H_0^{\perp}=\oplus_{m\in \mathbb{Z}\backslash \{0\} }H_m$ consisting of eigenfunctions of the operator $L$ with the eigenvalues \eqref{eq:eigenvalues}.  
\end{lemma}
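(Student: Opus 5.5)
The plan is to combine the Fourier decomposition \eqref{eq:fourier} with the basis of Proposition \ref{p:L2Rbasis}, after checking that $L$ preserves each $H_m$.

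\textbf{Step 1: $L$ commutes with rotations.} Since $N(x-y)$ depends only on $|x-y|$ and $B$ is a disc, $K$ commutes with rotations about the origin. Hence $K$, and therefore $L$, maps $H_m$ into $H_m$ for every $m\in\mathbb{Z}$. Moreover $L$ commutes with complex conjugation, so $H_m$ and $H_{-m}$ may be treated together.

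\textbf{Step 2: $L$ on $H_m$ for $m\neq 0$.} Fix $m\geq 1$ and $k\geq 1$, and set $g(r)=J_m(j_{m-1,k}r/R)$ and $\kappa=j_{m-1,k}/R$. For $f=g(r)e^{im\theta}$, let $\psi=(-\Delta_{\mathbb{R}^2})^{-1}(\mathbf{1}_Bf)$; this is well defined because $f$ has zero angular mean. I claim that
\begin{align*}
\psi=\kappa^{-2}\,g(r)e^{im\theta}\ \ (r<R),\qquad \psi=\kappa^{-2}\,g(R)\Bigl(\frac{R}{r}\Bigr)^{m}e^{im\theta}\ \ (r>R).
\end{align*}
Inside $B$ this holds by Bessel's equation \eqref{eq:Bessel}. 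Outside $B$ the function is harmonic and decays like $r^{-m}$. The derivatives match at $r=R$ exactly because of the boundary condition \eqref{eq:BC}: $Rg'(R)=-mg(R)$. So $\psi$ is $C^1$ across $r=R$, solves $-\Delta\psi=\mathbf{1}_Bf$ in the distributional sense, and decays. By uniqueness of decaying solutions of $-\Delta\psi=\mathbf{1}_Bf$ with zero-mean data, and using that the logarithmic potential of a zero-mean, mode-$m$ density is $O(r^{-m})$, this $\psi$ is the Newtonian potential.

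\textbf{Step 3: the eigenvalues.} From Step 2, $Kf=\kappa^{-2}f$, hence
\begin{align*}
Lf=\Bigl(1-\frac{R^2}{j_{m-1,k}^2}\Bigr)f=\lambda_{m,k}f.
\end{align*}
Taking complex conjugates gives the same eigenvalue on $H_{-m}$.

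\textbf{Step 4: the orthonormal basis.} Normalize and define
\begin{align*}
f^{+}_{m,k}=c_{m,k}\,J_m(\kappa r)\cos m\theta,\qquad f^{-}_{m,k}=c_{m,k}\,J_m(\kappa r)\sin m\theta,
\end{align*}
with real normalizing constants $c_{m,k}$. These are real combinations of the functions from Steps 2 and 3 in $H_m\oplus H_{-m}$, so they are eigenfunctions of $L$ with eigenvalue $\lambda_{m,k}$. Orthogonality across different $m$ comes from orthogonality of the angular Fourier modes. Orthogonality across different $k$ for fixed $m$, and completeness within each $H_m\oplus H_{-m}$, come from Proposition \ref{p:L2Rbasis}, which says the radial profiles form a complete orthogonal basis of $L^2(0,R;rdr)$ for each fixed $m\geq 1$. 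Summing over $m\geq 1$ gives a complete orthonormal basis of $H_0^\perp$.

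\textbf{Main obstacle.} The main point of care is Step 2: justifying that the explicit piecewise function is exactly $(-\Delta)^{-1}(\mathbf{1}_Bf)$ defined through $N$, and not that function plus a harmonic polynomial. Two facts settle this. First, the logarithmic potential of a density with zero angular mean decays at infinity. Second, a harmonic function on $\mathbb{R}^2$ that tends to $0$ at infinity vanishes by Liouville's theorem.
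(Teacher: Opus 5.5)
Your proposal is correct and follows essentially the same route as the paper: you take the radial basis $J_m(j_{m-1,k}r/R)$ from Proposition~\ref{p:L2Rbasis}, extend $J_m(j_{m-1,k}r/R)e^{\pm im\theta}$ to a $C^1$ function on $\mathbb{R}^2$ that is harmonic outside $B$ using the boundary condition \eqref{eq:BC}, and read off $Kf=(R^2/j_{m-1,k}^2)f$. The differences are cosmetic: you use the real $\cos m\theta$, $\sin m\theta$ combinations where the paper uses $e^{\pm im\theta}$, and you supply the decay/Liouville argument identifying the extended function with the Newtonian potential, which the paper leaves implicit.
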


\begin{proof}
We set 
\begin{align*}
\psi_{m,k}^{+}=J_m\left(\frac{j_{m-1,k}}{R}r\right)e^{im\theta},\quad \psi_{m,k}^{-}=J_m\left(\frac{j_{m-1,k}}{R}r\right)e^{-im\theta},
\end{align*}
so that $\{\psi_{m,k}^{+}:k\geq 1\}$ and $\{\psi_{m,k}^{-}:k\geq 1\}$ are bases of $H_m$ and $H_{-m}$, respectively. The function $\psi_{m,k}^{+}$ satisfies 
\begin{align*}
-\Delta \psi_{m,k}^{+}=\frac{1}{1-\lambda_{m,k}}\psi_{m,k}^{+},
\end{align*}
in $B$, and is extendable to $\mathbb{R}^{2}$ as a $C^{1}$ function at $r=R$ and a harmonic function in $B^{c}$. We use the same symbol for the extended $\psi_{m,k}^{+}$ and set the function $f_{m,k}^{+}$ by 
\begin{align*}
f_{m,k}^{+}=-\Delta \psi_{m,k}^{+}=\frac{1}{1-\lambda_{m,k}}\mathbf{1}_{B}\psi_{m,k}^{+},
\end{align*}
so that $f_{m,k}^{+}$ is an eigenfunction of $L$. Similarly, we set $f_{m,k}^{-}$. Since $\{\psi_{m,k}^{+},\psi_{m,k}^{-}:m,k\geq 1\}$ is an orthogonal basis of $H_0^{\perp}$, we obtain the desired orthonormal basis by normalization.
\end{proof}

\begin{lemma}\label{l:singeigenvalues}
Let $\lambda_{m,k}$ be as in \eqref{eq:eigenvalues}. The following statements hold:
\begin{itemize}
\item[(i)] For $R=j_{0,1}$, $\lambda_{1,1}=0$ and $\lambda_{m,k}>0$ for $(m,k)\neq (1,1)$.
\item[(ii)] For $R=j_{1,1}$, $\lambda_{1,1}<0$, $\lambda_{2,1}=0$, $\lambda_{m,k}>0$ for $(m,k)\neq (1,1)$, $(2,1)$.
\end{itemize}
\end{lemma}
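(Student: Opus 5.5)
The proof is a direct inspection of the explicit formula \eqref{eq:eigenvalues}. Since $\lambda_{m,k}=1-R^{2}/j_{m-1,k}^{2}$ and all quantities are positive, the sign of $\lambda_{m,k}$ is the sign of $j_{m-1,k}-R$. So $\lambda_{m,k}<0$, $=0$ or $>0$ according as $j_{m-1,k}<R$, $=R$ or $>R$. Both statements therefore reduce to comparing the zeros $j_{m-1,k}$ ($m,k\ge1$, so the order $m-1\ge0$) with $R=j_{0,1}$ and with $R=j_{1,1}$.

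I will use three classical facts about positive zeros of Bessel functions of integer order $n\ge0$ (Watson, \S15):
\begin{itemize}
\item[(a)] For fixed $n$, the zeros $j_{n,k}$ are simple and strictly increasing in $k$.
\item[(b)] For fixed $k$, $j_{n,k}$ is strictly increasing in $n$; in particular $j_{n,1}<j_{n+1,1}$.
\item[(c)] The zeros interlace: $j_{n,k}<j_{n+1,k}<j_{n,k+1}$.
\end{itemize}
Fact (b) follows from (c), and (c) itself comes from the recurrences $(z^{-n}J_n)'=-z^{-n}J_{n+1}$ and $(z^{n+1}J_{n+1})'=z^{n+1}J_n$ combined with Rolle's theorem. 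As a numerical sanity check: $j_{0,1}\approx2.405<j_{1,1}\approx3.832<j_{2,1}\approx5.136<j_{0,2}\approx5.520$.

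For (i), take $R=j_{0,1}$. Then $\lambda_{1,1}=0$ by definition. Let $(m,k)\neq(1,1)$. If $m=1$ and $k\ge2$, then $j_{0,k}>j_{0,1}$ by (a). If $m\ge2$, then (a) and (b) give $j_{m-1,k}\ge j_{m-1,1}>j_{0,1}$. In either case $\lambda_{m,k}>0$.

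For (ii), take $R=j_{1,1}$.
\begin{itemize}
\item $(m,k)=(1,1)$: $\lambda_{1,1}=1-j_{1,1}^{2}/j_{0,1}^{2}<0$, since $j_{0,1}<j_{1,1}$ by (b).
\item $(m,k)=(2,1)$: $\lambda_{2,1}=0$ by definition.
\item $m=1$, $k\ge2$: $j_{0,k}\ge j_{0,2}>j_{1,1}$, using (a) and the interlacing (c) with $n=0$, $k=1$. Hence $\lambda_{1,k}>0$.
\item $m=2$, $k\ge2$: $j_{1,k}>j_{1,1}$ by (a), so $\lambda_{2,k}>0$.
\item $m\ge3$: $j_{m-1,k}\ge j_{m-1,1}\ge j_{2,1}>j_{1,1}$ by (a) and (b), so $\lambda_{m,k}>0$.
\end{itemize}
This covers all $(m,k)$ and proves (ii).

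There is no real obstacle here. The only step needing more than monotonicity in $n$ and $k$ separately is the cross-comparison $j_{0,2}>j_{1,1}$ in (ii), which is exactly the interlacing property (c). If a self-contained argument is wanted, it follows by applying Rolle's theorem to $z J_1(z)$, whose derivative is $zJ_0(z)$: between the consecutive zeros $0$ and $j_{1,1}$ of $zJ_1$ there lies a zero of $J_0$, namely $j_{0,1}$. Symmetrically, $J_0'=-J_1$ forces a zero of $J_1$ between $j_{0,1}$ and $j_{0,2}$, and this zero must be $j_{1,1}$, giving $j_{0,1}<j_{1,1}<j_{0,2}$.
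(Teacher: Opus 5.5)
Your argument is correct and is essentially the paper's proof. The paper reads off the signs of $\lambda_{m,k}=1-R^2/j_{m-1,k}^2$ from the standard ordering $j_{0,1}<j_{1,1}<j_{2,1}<\cdots$ of the smallest elements of $\{j_{m-1,k}\}$, and you make explicit what that ordering tacitly uses: monotonicity in $k$, monotonicity in the order, and the interlacing $j_{1,1}<j_{0,2}$. One small tightening: in your Rolle step you need not claim that the zero of $J_1$ found in $(j_{0,1},j_{0,2})$ \emph{is} $j_{1,1}$, since its existence already gives $j_{1,1}<j_{0,2}$.
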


\begin{proof}
Using the standard ordering of the positive zeros of Bessel functions, the results follow from the three smallest numbers of $\{j_{m-1,k}:m,k\geq 1\}$: \begin{equation}
    \begin{split} j_{0,1}=2.4048\cdots<j_{1,1}=3.8317\cdots<j_{2,1}=5.1356\cdots<\cdots. \qedhere  
    \end{split}
\end{equation} 
\end{proof}

\subsection{The case \texorpdfstring{$R=j_{0,1}$}{R=j01}}

We first establish the coercive estimate of the operator $L$ for $R=j_{0,1}$ on the real-valued space $L^{2}(B)$. In this case, the kernel of $L$ is two-dimensional, and the zero-moment condition allows us to control the kernel component and obtain the desired coercive estimate. 

\begin{theorem}[Coercive estimate for $R=j_{0,1}$]\label{t:wholespacecoercive}
Let $L$ be the operator in \eqref{eq:opL}. Assume that $R=j_{0,1}$. Then, there exists $\delta>0$ such that 
\begin{align}
(Lf,f)_{L^{2}(B)}\geq \delta ||f||_{L^{2}(B)}^{2}  \label{eq:coercivej01}
\end{align}
for all $f\in L^{2}(B)$ satisfying 
\begin{align}
\int_{B}xfdx=0.  \label{eq:momentzero}
\end{align}
\end{theorem}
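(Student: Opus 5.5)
The plan is to split $L^{2}(B)=H_0\oplus H_0^{\perp}$ according to the Fourier decomposition \eqref{eq:fourier}, and to treat the radial sector and the nonradial sector separately. Both subspaces are invariant under the convolution operator $K$, since $N(x-y)$ depends only on $|x-y|$ and so commutes with rotations. They are also orthogonal, so $(Lf,f)=(Lf_0,f_0)+(Lf^{\perp},f^{\perp})$ for $f=f_0+f^{\perp}$. The moment condition \eqref{eq:momentzero} only involves the modes $m=\pm1$, because $x=r(\cos\theta,\sin\theta)$. Hence it constrains only $f^{\perp}$, and it suffices to prove coercivity on $H_0$ without any condition and on $H_0^{\perp}$ under \eqref{eq:momentzero}.

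\emph{Nonradial sector.} By Lemma \ref{l:Fourier} and Lemma \ref{l:singeigenvalues}(i), $L$ restricted to $H_0^{\perp}$ is diagonal in an orthonormal eigenbasis. Its eigenvalues are $\lambda_{m,k}=1-j_{0,1}^2/j_{m-1,k}^2$. These are positive except $\lambda_{1,1}=0$, and they are bounded below by $\lambda_*:=1-j_{0,1}^2/j_{1,1}^2>0$ away from $(1,1)$.
\begin{itemize}
\item The kernel is spanned by the real functions $g_1=\mathbf 1_B J_1(r)\cos\theta$ and $g_2=\mathbf 1_B J_1(r)\sin\theta$.
\item Writing $P$ for the orthogonal projection onto this kernel, we get $(Lf^\perp,f^\perp)\ge\lambda_*\|(I-P)f^\perp\|^2$.
\item To control $Pf^{\perp}$, we compute
\begin{align*}
\int_B x_1 g_1\,dx=\pi\int_0^{j_{0,1}}r^2J_1(r)\,dr=\pi j_{0,1}^2J_2(j_{0,1})\neq0,
\end{align*}
since $\int_0^a r^2J_1(r)\,dr=a^2J_2(a)$ and $J_2(j_{0,1})=2J_1(j_{0,1})/j_{0,1}\ne0$. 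The same value holds for $\int_B x_2 g_2\,dx$, while the cross moments $\int x_2 g_1$ and $\int x_1 g_2$ vanish.
\item The condition \eqref{eq:momentzero} then gives $\int x\,Pf^\perp=-\int x\,(I-P)f^\perp$. By Cauchy--Schwarz this yields $\|Pf^\perp\|\le C\|(I-P)f^\perp\|$, hence $(Lf^\perp,f^\perp)\ge \lambda_*(1+C^2)^{-1}\|f^\perp\|^2$.
\end{itemize}

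\emph{Radial sector (main obstacle).} Here I would not solve the transcendental equation \eqref{eq:eigenvalues2} directly, but use an energy argument. For radial $f\in H_0$, let $\psi=N*(\mathbf 1_Bf)$. Outside $B$ we have $\psi=-(2\pi)^{-1}\log r\int_B f$, so $\psi(R)=-(2\pi)^{-1}\log R\int_B f$. The function $\varphi=\psi-\psi(R)$ lies in $H^1_0(B)$ and satisfies $-\Delta\varphi=f$ in $B$. Therefore
\begin{align*}
(Kf,f)=\int_B|\nabla\varphi|^2\,dx-\frac{\log R}{2\pi}\Big(\int_B f\,dx\Big)^2\le \frac{R^2}{j_{0,1}^2}\|f\|_{L^2(B)}^2-\frac{\log R}{2\pi}\Big(\int_B f\,dx\Big)^2,
\end{align*}
where the inequality uses the first Dirichlet eigenvalue $j_{0,1}^2/R^2$ of $B$. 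With $R=j_{0,1}>1$ this gives $(Kf,f)\le\|f\|^2$, i.e. $(Lf,f)\ge0$ on $H_0$.

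To upgrade this to a strict gap, note that $K|_{H_0}$ is compact and self-adjoint (Proposition \ref{p:spectrumL}). If the supremum of $(Kf,f)/\|f\|^2$ over $H_0$ equalled $1$, it would be attained at an eigenfunction. Equality in the Dirichlet step would force $f$ to be a multiple of $J_0(r)\mathbf 1_B$. This function has $\int_B f=2\pi j_{0,1}J_1(j_{0,1})\neq0$, so the logarithmic term is strictly negative, which is a contradiction. Hence $(Lf_0,f_0)\ge\delta_0\|f_0\|^2$ for some $\delta_0>0$. Combining the two sectors with $\delta=\min\{\delta_0,\lambda_*(1+C^2)^{-1}\}$ gives \eqref{eq:coercivej01}.

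The main obstacle is this radial step. One must check that the sign of $\log R$ at $R=j_{0,1}$ is exactly what is needed, and handle the equality case carefully via compactness.
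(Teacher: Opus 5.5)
Your proof is correct. In the nonradial sector it coincides with the paper's argument: you use the eigenbasis of $L$ on $H_0^{\perp}$, the kernel spanned by $J_1(r)\cos\theta$ and $J_1(r)\sin\theta$, and the moment condition to bound the kernel component by the remainder. Your evaluation $\int_B x_1J_1(r)\cos\theta\,dx=\pi j_{0,1}^2J_2(j_{0,1})\neq0$ makes explicit the nondegeneracy that the paper uses tacitly when it invokes H\"older's inequality.

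The genuine difference is in the radial sector.
\begin{itemize}
\item \emph{The paper's route.} It matches $J_0(\kappa r)$ inside $B$ with $C\log r$ outside, which gives the transcendental relation \eqref{eq:eigenvalues2} for any radial eigenvalue $\lambda<1$. It then excludes $\lambda\le 0$ by a sign check: $\kappa\le 1$ gives $\kappa j_{0,1}\le j_{0,1}<j_{1,1}$, so $J_0(\kappa j_{0,1})\ge 0$ and $J_1(\kappa j_{0,1})>0$, and $\log j_{0,1}>0$ makes the left side positive. Compactness of $K$ then turns ``no eigenvalue $\le 0$'' into a positive infimum.
\item \emph{Your route.} You bypass the radial ODE entirely. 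Newton's theorem gives, for radial $f$, the identity $(Kf,f)=\|\nabla\varphi\|_{L^2(B)}^2-\frac{\log R}{2\pi}\bigl(\int_B f\,dx\bigr)^2$, where $\varphi\in H^1_0(B)$ solves $-\Delta\varphi=f$. The first Dirichlet eigenvalue $j_{0,1}^2/R^2$ of $B$ bounds the first term by $(R/j_{0,1})^2\|f\|^2=\|f\|^2$. The strict gap then follows from compactness together with the nonzero mass of the Dirichlet ground state $J_0(r)\mathbf{1}_B$.
\end{itemize}

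Both proofs rest on the same two facts: $R=j_{0,1}$ is exactly where the Dirichlet bound saturates, and $\log j_{0,1}>0$. Your version displays them as a splitting of the quadratic form into a Dirichlet energy plus a logarithmic capacity term. This explains transparently why $j_{0,1}$ is the borderline radius and why the sign of $\log R$ is what restores coercivity. The paper's version is shorter in context, because the radial eigenvalue relation has already been derived in the preceding proposition and yields the spectral information directly.
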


We consider the direct sum decomposition $L^{2}(B)=H_0\oplus H_0^{\perp}$ by the space of all radial functions $H_0=L^{2}(0,R;rdr)$. We show the coercive estimate \eqref{eq:coercivej01} on \(H_0\) and on \(H_0^\perp\cap(\operatorname{Ker}L)^\perp\), respectively. 

\begin{proposition}\label{p:coercive01}
The estimate \eqref{eq:coercivej01} holds on $H_0$.
\end{proposition}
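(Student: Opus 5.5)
The plan is to reduce the estimate on $H_0$ to a statement about the radial eigenvalues of $L$. Those eigenvalues are characterized by \eqref{eq:eigenvalues2}, and for $R=j_{0,1}$ we will check directly that none of them is $\le 0$. Note that for radial $f$ the zero-moment condition \eqref{eq:momentzero} holds automatically, so no constraint enters here.

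\emph{Step 1: spectral reduction to $H_0$.} The kernel $N(x-y)$ is invariant under simultaneous rotations of $x$ and $y$, so $K$ commutes with rotations and maps $H_0$ into itself. By Proposition~\ref{p:spectrumL}, $K|_{H_0}$ is compact and symmetric on $H_0$. The spectral theorem then gives an orthonormal basis of $H_0$ of eigenfunctions of $K|_{H_0}$, whose eigenvalues $\mu_n$ are real and accumulate only at $0$. Correspondingly, the radial eigenvalues $1-\mu_n$ of $L$ accumulate only at $1$, so only finitely many of them lie below $1/2$. Expanding $f\in H_0$ in this basis gives
\[
(Lf,f)_{L^2(B)}\geq \min\Bigl\{\tfrac12,\ \min\{1-\mu_n:\ 1-\mu_n<\tfrac12\}\Bigr\}\,\|f\|_{L^2(B)}^2 .
\]
(If no $1-\mu_n$ lies below $1/2$, the inner minimum is omitted and the constant is $1/2$.) The logarithmic kernel need not be positive definite, so $\mu_n<0$ is allowed; those terms only help. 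Thus it suffices to show that \emph{every radial eigenvalue $\lambda$ of $L$ satisfies $\lambda>0$}. The outer minimum is then a minimum of finitely many positive numbers, which gives $\delta>0$.

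\emph{Step 2: excluding radial eigenvalues $\lambda\le 0$.} Suppose $\lambda\le 0$ is a radial eigenvalue. Then $\lambda<1$, so part (ii) of the preceding proposition applies. With $\kappa=(1-\lambda)^{-1/2}\in(0,1]$, it gives
\[
F(\kappa):=J_0(\kappa R)+\kappa R\log R\, J_1(\kappa R)=0,\qquad R=j_{0,1}.
\]
We now check the sign of each term for $\kappa\in(0,1]$:
\begin{itemize}
\item $\kappa R\in(0,j_{0,1}]$, so $J_0(\kappa R)\ge 0$, with equality only when $\kappa=1$.
\item Since $j_{0,1}<j_{1,1}$, we have $J_1(\kappa R)>0$.
\item $\log R=\log j_{0,1}\approx 0.877>0$.
\end{itemize}
Hence $F(\kappa)>0$ on $(0,1]$, a contradiction. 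So every radial eigenvalue is positive, and Step 1 yields \eqref{eq:coercivej01} on $H_0$.

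\emph{Main obstacle.} The delicate point is the derivation behind \eqref{eq:eigenvalues2} for $m=0$. Outside $B$ the potential is only $C_1\log r$ rather than decaying, so one must make sure the matching is done for the true Newtonian potential $\int_B N(x-y)f(y)\,dy$, including its additive constant. Concretely, I would re-verify two things. First, for $r>R$ one has $\psi=-\frac{1}{2\pi}\bigl(\int_B f\bigr)\log r$ exactly, with no additional constant. Second, the value and slope conditions at $r=R$ together produce exactly the combination $J_0+\kappa R\log R\,J_1$. This is also where the case $\int_B f=0$ must be handled: then $\psi\equiv0$ outside $B$, so matching forces $J_0(\kappa R)=J_1(\kappa R)=0$, which is impossible. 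Once this is settled, the sign argument in Step 2 is elementary.
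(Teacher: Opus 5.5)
Your proposal is correct and follows essentially the same route as the paper. Both arguments exclude radial eigenvalues $\lambda\le 0$ via the positivity of $J_0(\kappa j_{0,1})+\kappa j_{0,1}\log j_{0,1}\,J_1(\kappa j_{0,1})$ for $\kappa\in(0,1]$, and then use the eigenfunction expansion of the compact symmetric operator $K|_{H_0}$, whose eigenvalues accumulate only at $0$, to turn the absence of non-positive eigenvalues into a uniform lower bound. The matching issue you flag is already covered by part (ii) of the preceding proposition: the exterior potential is exactly $C_1\log r$, and value-plus-slope matching gives that combination for any $C_1$, including $C_1=0$.
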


\begin{proof}
We first observe that $\inf\sigma(L|_{H_0})>0$. Indeed, if there exists a non-positive eigenvalue $\lambda\leq 0$, the condition \eqref{eq:eigenvalues2} holds. Since $\kappa R=\kappa j_{0,1}\leq j_{0,1}<j_{1,1}$, $J_0(\kappa j_{0,1})\geq 0$ and $J_1(\kappa j_{0,1})>0$. The left-hand side of \eqref{eq:eigenvalues2} is positive by  $\log{j_{0,1}}>0$. This is a contradiction.

We take an orthonormal basis $\{e_n\}_{n=1}^{\infty}$ of $H_0$ consisting of eigenfunctions of $K$ with eigenvalues $\mu_n$. If $f\in \operatorname{Ker} K$, we find that $0=-\Delta Kf=f$. Thus $\operatorname{Ker} K=\{0\}$ and $\mu_n\neq 0$. By applying Proposition \ref{p:spectrumL} (ii) for $L|_{H_0}$, we find that 
\begin{align*}
0<\inf\sigma(L|_{H_0})=\inf\{1-\mu: \mu\in \sigma(K|_{H_0})\}=\inf\left\{1,\inf_{n\geq 1}\{1-\mu_n\}\right\}.
\end{align*}
This implies 
\begin{align*}
(Lf,f)_{L^{2}(B)}=||f||_{L^{2}(B)}^{2}-(Kf,f)_{L^{2}(B)}
=\sum_{n\geq 1}(1-\mu_n)|(f,e_n)_{L^{2}(B)}|^{2}\geq \inf_{n\geq 1}\{1-\mu_n\}||f||_{L^{2}(B)}^{2},
\end{align*}
and hence \eqref{eq:coercivej01} holds on $H_0$.
\end{proof}

\begin{proposition}\label{p:coercive02}

The functions 
\begin{align}
J_{m}\left(\frac{j_{m-1,k}}{R}r\right)\cos m\theta,\ J_{m}\left(\frac{j_{m-1,k}}{R}r\right)\sin m\theta,\quad m,k\geq 1,  \label{eq:basisL2}
\end{align}
form an orthogonal basis of $H_0^{\perp}$. In particular, 
\begin{align}
\textrm{Ker}\ L=\textrm{span}\left\{J_1\left(\frac{j_{0,1}}{R}r\right)\cos\theta,J_1\left(\frac{j_{0,1}}{R}r\right)\sin\theta\right\}.  \label{eq:KerL}
\end{align}
Moreover, the estimate \eqref{eq:coercivej01} holds on $H_0^{\perp}\cap (\textrm{Ker}\ L)^{\perp}$.
\end{proposition}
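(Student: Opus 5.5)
The plan is to obtain the result as a direct corollary of Lemma~\ref{l:Fourier} and Lemma~\ref{l:singeigenvalues}(i), working in the real-valued space $L^2(B)$.

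\textbf{Step 1: the real basis.} By Lemma~\ref{l:Fourier}, the complex functions $\psi^{\pm}_{m,k}=J_m(j_{m-1,k}r/R)e^{\pm im\theta}$ form an orthogonal basis of $H_0^\perp$. For each fixed $m\ge1$ the pair $(\psi^+_{m,k},\psi^-_{m,k})$ spans the same space as
\begin{align*}
J_m\left(\frac{j_{m-1,k}}{R}r\right)\cos m\theta,\qquad J_m\left(\frac{j_{m-1,k}}{R}r\right)\sin m\theta,
\end{align*}
since these are $(\psi^++\psi^-)/2$ and $(\psi^+-\psi^-)/(2i)$. Orthogonality follows from the angular orthogonality of $\cos m\theta$ and $\sin m'\theta$, together with the radial orthogonality within a fixed $m$ from Proposition~\ref{p:L2Rbasis}. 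Completeness in the real space $H_0^\perp$ follows by taking real parts of the complex expansion. Each such function is $(1-\lambda_{m,k})$ times an eigenfunction of $L$ with eigenvalue $\lambda_{m,k}$, exactly as in the proof of Lemma~\ref{l:Fourier}; on $B$ the eigenfunction $f$ and the potential $\psi$ differ only by a constant factor.

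\textbf{Step 2: the kernel.} On $H_0^\perp$, the operator $L$ is diagonal in this basis, with eigenvalues $\lambda_{m,k}=1-R^2/j_{m-1,k}^2$. For $R=j_{0,1}$, Lemma~\ref{l:singeigenvalues}(i) gives $\lambda_{1,1}=0$ and $\lambda_{m,k}>0$ otherwise. Hence $\operatorname{Ker}L\cap H_0^\perp$ is spanned by the two $(m,k)=(1,1)$ functions, namely $J_1(r)\cos\theta$ and $J_1(r)\sin\theta$ (note $j_{0,1}/R=1$). Proposition~\ref{p:coercive01} shows that $L$ is strictly positive on $H_0$, so $H_0$ contributes no kernel. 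Since $L$ preserves both $H_0$ and $H_0^\perp$ (it commutes with rotations), this gives \eqref{eq:KerL}.

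\textbf{Step 3: coercivity on $H_0^\perp\cap(\operatorname{Ker}L)^\perp$.} The point needing care is a uniform positive lower bound for the remaining eigenvalues. We have
\begin{align*}
\inf_{(m,k)\neq(1,1)}\lambda_{m,k}=1-\frac{j_{0,1}^2}{\min_{(m,k)\ne(1,1)}j_{m-1,k}^2}=1-\frac{j_{0,1}^2}{j_{1,1}^2}=:\delta_1>0.
\end{align*}
This uses two facts: the zeros satisfy $j_{m-1,k}\ge j_{m-1,1}$ and $j_{0,2}>j_{1,1}$, and $j_{m-1,1}$ increases with $m$. The latter is the standard ordering already invoked in Lemma~\ref{l:singeigenvalues}. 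Moreover $\lambda_{m,k}\to1$ as $m,k\to\infty$, so the infimum is attained and strictly positive.

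Now expand $f\in H_0^\perp\cap(\operatorname{Ker}L)^\perp$ in the normalized real eigenbasis, $f=\sum c_{m,k,\pm}e_{m,k,\pm}$, where the $(1,1)$ coefficients vanish. Then
\begin{align*}
(Lf,f)_{L^2(B)}=\sum\lambda_{m,k}|c_{m,k,\pm}|^2\ge\delta_1\|f\|_{L^2(B)}^2.
\end{align*}
The interchange of $L$ with the infinite sum is justified because $L=I-K$ with $K$ bounded.
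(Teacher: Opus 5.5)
Your proof is correct and follows essentially the same route as the paper: you get the real cosine/sine basis from Lemma~\ref{l:Fourier}, read off the kernel from $\lambda_{1,1}=0$ in Lemma~\ref{l:singeigenvalues}(i), and obtain coercivity from the positive infimum of the remaining eigenvalues in the eigenfunction expansion. Your extra remarks usefully fill in details the paper leaves implicit: Proposition~\ref{p:coercive01} excludes any kernel in $H_0$, so \eqref{eq:KerL} holds on all of $L^2(B)$ because $L$ commutes with rotations, and the constant is explicitly $1-j_{0,1}^2/j_{1,1}^2$.
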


\begin{proof}
The functions \eqref{eq:basisL2} form a basis of $L^{2}(B)$ since they form a basis of the complex-valued $L^{2}$ space by Lemma \ref{l:Fourier}. Since $\lambda_{1,1}=0$ by Lemma \ref{l:singeigenvalues} (i), the representation \eqref{eq:KerL} holds.

For $f\in H_0^{\perp}\cap (\textrm{Ker}\ L)^{\perp}$, we consider the expansion 
\begin{align*}
f&=\sum_{(m,k)\neq (1,1)}J_{m}\left(\frac{j_{m-1,k}}{R}r\right)(a_{m,k}\cos m\theta+b_{m,k}\sin m\theta),\\
Lf&=\sum_{(m,k)\neq (1,1)}\lambda_{m,k}J_{m}\left(\frac{j_{m-1,k}}{R}r\right)(a_{m,k}\cos m\theta+b_{m,k}\sin m\theta).
\end{align*}
Since $\inf\{\lambda_{m,k}:(m,k)\neq (1,1)\}>0$ by Lemma \ref{l:singeigenvalues} (i), the estimate \eqref{eq:coercivej01} follows.
\end{proof}

\begin{proof}[Proof of Theorem \ref{t:wholespacecoercive}]
For $f\in L^{2}(B)$ satisfying the condition \eqref{eq:momentzero}, we consider the direct sum decomposition 
\begin{align*}
f=f_0+f_1+f_2\in H_0\oplus \textrm{Ker}L\oplus (H_0^{\perp}\cap (\textrm{Ker}L)^{\perp}).
\end{align*}
Since $Lf=Lf_0+Lf_2\in H_0\oplus (H_0^{\perp}\cap (\textrm{Ker}L)^{\perp})$, we apply Propositions \ref{p:coercive01} and \ref{p:coercive02} to estimate 
\begin{align*}
(Lf,f)_{L^{2}}=(Lf_0,f_0)_{L^{2}}+(Lf_2,f_2)_{L^{2}}\geq \delta_0\left(||f_0||_{L^{2}}^{2}+||f_2||_{L^{2}}^{2}\right),
\end{align*}
with some constant $\delta_0>0$. Using the basis of Ker $L$, we set 
\begin{align*}
f_1=ag(r)\cos\theta+bg(r)\sin\theta.
\end{align*}
Observe that 
\begin{align*}
||f_1||_{L^{2}}^{2}=C(|a|^{2}+|b|^{2}).
\end{align*}
Moreover, multiplying $f=f_0+f_1+f_2$ by $x_i$ and integrating it on $B$, we have 
\begin{align*}
0&=a(r,g(r)\cos^{2}\theta)_{L^{2}}+(x_1,f_2)_{L^{2}},\\
0&=b(r,g(r)\sin^{2}\theta)_{L^{2}}+(x_2,f_2)_{L^{2}}.
\end{align*}
Using H\"older's inequality, we obtain 
\begin{align*}
||f_1||_{L^{2}}\leq C||f_2||_{L^{2}},
\end{align*}
with some constant $C$ independent of $f$. Thus the coercive estimate \eqref{eq:coercivej01} holds.
\end{proof}

\subsection{The case \texorpdfstring{$R=j_{1,1}$}{R=j11}}

We establish a coercive estimate for the operator $L$ when $R=j_{1,1}$. In this case, the operator $L$ has a negative direction and a two-dimensional kernel on $L^{2}(B)$. We restrict $f$ to the symmetric class
below. These symmetries eliminate the kernel, while the zero-$x_2$-moment
condition controls the remaining negative direction.

\begin{theorem}[Coercive estimate for $R=j_{1,1}$]\label{t:halfspacecoercive}
Let $L$ be the operator in \eqref{eq:opL}. Assume that $R=j_{1,1}$. Then, there exists $\delta>0$ such that 
\begin{align}
(Lf,f)_{L^{2}(B)}\geq \delta ||f||_{L^{2}(B)}^{2}  \label{eq:coercivej11}
\end{align}
for all $f\in L^{2}(B)$ satisfying 
\begin{align}
f(x_1,x_2)&=f(-x_1,x_2),  \label{eq:x1even}\\
f(x_1,x_2)&=-f(x_1,-x_2),\label{eq:x2odd}\\
\int_{B}x_2fdx&=0.  \label{eq:momentzero2}
\end{align}
\end{theorem}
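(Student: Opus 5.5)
Throughout, $R=j_{1,1}$. The plan is to use the symmetry conditions \eqref{eq:x1even}--\eqref{eq:x2odd} to cut $L^2(B)$ down to a sector containing exactly one negative direction of $L$ and no kernel. On that sector I would then handle the constraint \eqref{eq:momentzero2} by a Vakhitov--Kolokolov type criterion: a self-adjoint operator with exactly one negative eigenvalue, restricted to the orthogonal complement of a vector $v$, is coercive if $(L^{-1}v,v)<0$. Here $v=x_2$, and $L^{-1}x_2$ can be computed explicitly with Bessel functions.

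\emph{Step 1 (symmetry reduction).} In polar coordinates, oddness in $x_2$, i.e.\ $f(r,-\theta)=-f(r,\theta)$, removes $H_0$ and all $\cos m\theta$ components. Evenness in $x_1$, i.e.\ invariance under $\theta\mapsto\pi-\theta$, uses $\sin m(\pi-\theta)=-(-1)^m\sin m\theta$ and so leaves only odd $m$. By Proposition \ref{p:coercive02}, the admissible class $X$ is therefore the closed span of
\[
\phi_{m,k}=J_m\!\left(\tfrac{j_{m-1,k}}{R}r\right)\sin m\theta,\qquad m\ \text{odd},\ k\ge1 .
\]
Each $\phi_{m,k}$ is an eigenfunction of $L$ with eigenvalue $\lambda_{m,k}$. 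By Lemma \ref{l:singeigenvalues}~(ii), the kernel direction $(m,k)=(2,1)$ is excluded because $m=2$ is even. Hence on $X$ the only nonpositive eigenvalue is $\lambda_{1,1}<0$, with eigenfunction $\phi_{1,1}$. All other eigenvalues are at least $\delta_1:=\min\{\lambda_{1,2},\lambda_{3,1}\}>0$. In particular $L|_X$ is invertible with bounded inverse.

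\emph{Step 2 (compute $L^{-1}x_2$).} Note that $x_2\in X$. I would solve $\phi-K\phi=x_2$ by writing $\psi=(-\Delta)^{-1}(\mathbf 1_B\phi)$, so that $\phi=x_2+\psi$ in $B$ and $-\Delta\psi=\mathbf 1_B(x_2+\psi)$. The ansatz is
\[
\psi=AJ_1(r)\sin\theta-r\sin\theta\ \text{ in } B,\qquad \psi=C\sin\theta/r\ \text{ outside}.
\]
This works in $B$ because $r\sin\theta$ is harmonic. The $C^1$ matching at $r=R$ gives $C=-R^2$ and $AJ_1'(R)=2$. Since $J_1(j_{1,1})=0$, we have $J_1'(R)=J_0(j_{1,1})$, so $A=2/J_0(j_{1,1})$ and $\phi=AJ_1(r)\sin\theta$. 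Then
\[
(L^{-1}x_2,x_2)=A\pi\int_0^R r^2J_1(r)\,dr=A\pi R^2J_2(R)=-2\pi R^2<0,
\]
using $J_2(j_{1,1})=-J_0(j_{1,1})$.

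\emph{Step 3 (constrained coercivity).} For $f\in X$ with $(f,x_2)=0$, suppose $(Lf,f)\le0$ for some $f\neq0$. Split $f=a\phi_{1,1}+h$ with $h$ in the spectral complement, and write $p:=L^{-1}x_2$. Because $(Lp,f)=(x_2,f)=0$, the span of $f$ and $p$ is $L$-orthogonal, $L$ is nonpositive on $f$, and $(Lp,p)=(x_2,p)<0$. If $f$ and $p$ are linearly independent, this makes $L$ nonpositive on a two-dimensional subspace of $X$, contradicting the fact that $L$ has only one nonpositive eigenvalue there. If instead $f$ is parallel to $p$, then $(f,x_2)\neq0$, which violates the constraint. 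So $(Lf,f)>0$ on the constrained subspace.

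To get the uniform constant, I would argue by compactness. Since $L=I-K$ with $K$ compact, take a minimizing sequence with $\|f_n\|=1$ and pass to a weak limit $f$. If $\inf(Lf,f)\le0$, then $(Kf_n,f_n)\to(Kf,f)$ gives $1-(Kf,f)\le0$, which forces $f\neq0$. Since $\|f\|\le1$, we also get $(Lf,f)=\|f\|^2-(Kf,f)\le0$, contradicting the previous paragraph. This yields \eqref{eq:coercivej11}.

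The main obstacle is the sign computation in Step 2. A crude Hölder bound such as the one used in Theorem \ref{t:wholespacecoercive} does not suffice here, because it cannot overcome a genuinely negative eigenvalue. The Bessel identities $J_1'(j_{1,1})=J_0(j_{1,1})$ and $J_2(j_{1,1})=-J_0(j_{1,1})$ are what close the argument.
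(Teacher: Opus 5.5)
Your proof is correct. Its ingredients are the paper's: the same reduction to the modes $J_m(j_{m-1,k}r/R)\sin m\theta$ with $m$ odd (one negative eigenvalue $\lambda_{1,1}$, no kernel, gap $\inf\{\lambda_{2l+1,k}:(l,k)\neq(0,1)\}$), and the same key object $L^{-1}x_2=\frac{2}{J_0(j_{1,1})}J_1(r)\sin\theta$, which is the paper's $\omega_-=-\omega^L_{1,1}$, with $(L^{-1}x_2,x_2)=-2\pi j_{1,1}^2<0$.

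There are two differences in execution. First, the paper obtains $L\omega_-=\mathbf 1_B x_2$ from the Lamb dipole's Euler--Lagrange equation and the sign from its known impulse. You derive both by a direct Bessel matching computation, which is self-contained.

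Second, and more substantively, the paper's last step avoids index counting and compactness. It writes $f=(f,e_1)e_1+g$ and turns the constraint into $a(\omega_-,e_1)(f,e_1)=(\omega_-^\perp,Lg)$ with $a=-\lambda_{1,1}$. It then applies Cauchy--Schwarz for the inner product $(L\cdot,\cdot)$ on $Z=e_1^\perp$, using $(L\omega_-^\perp,\omega_-^\perp)<a|(\omega_-,e_1)|^2$ (equivalent to $(L\omega_-,\omega_-)<0$). This gives $a|(f,e_1)|^2\le\sigma(Lg,g)$ with $\sigma<1$, hence the explicit constant $\bigl[(1-\sigma)^{-1}(\sigma/a+1/\delta)\bigr]^{-1}$ with no compactness needed.

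Your route instead combines a two-dimensional nonpositive-subspace contradiction with weak compactness of $K$; this is the abstract Vakhitov--Kolokolov argument. It is shorter and transfers verbatim to any $I-K$ with $K$ compact and a single negative direction. The cost is that it yields the coercivity constant only by contradiction, not explicitly.
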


\begin{proposition}
Let $L^{2}_{\textrm{sy}}(B)$ denote the space of all functions in $L^{2}(B)$ satisfying \eqref{eq:x1even}-\eqref{eq:x2odd}. The functions 
\begin{align}
 J_{2l+1}\left(\frac{j_{2l,k}}{R}r\right)\sin((2l+1)\theta),\quad l\geq 0,k\geq 1,  \label{eq:basisL2sy}
\end{align}
form an orthogonal basis of $L^{2}_{\textrm{sy}}(B)$. Set 
\begin{align}
e_1=cJ_{1}\left(\frac{j_{0,1}}{R}r\right)\sin\theta,  \label{eq:e1}
\end{align}
with the constant $c$ satisfying  $||e_1||_{L^{2}}=1$. Then,
\begin{align}
L^{2}_{\textrm{sy}}(B)=Y\oplus Z,\quad
Y=\mathbb{R}e_{1},\quad Z=Y^{\perp}.
\end{align}
\end{proposition}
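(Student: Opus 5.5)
The plan is to reduce the statement to the real Fourier basis \eqref{eq:basisL2} of Proposition \ref{p:coercive02}, together with the radial basis of Proposition \ref{p:L2Rbasis}. In polar coordinates, the symmetry \eqref{eq:x1even} becomes $f(r,\pi-\theta)=f(r,\theta)$, and \eqref{eq:x2odd} becomes $f(r,-\theta)=-f(r,\theta)$. So the angular dependence must be odd in $\theta$ and invariant under $\theta\mapsto\pi-\theta$.

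First, I will expand an arbitrary $f\in L^2(B)$ as
\[
f=f_0(r)+\sum_{m\ge1}\bigl(a_m(r)\cos m\theta+b_m(r)\sin m\theta\bigr).
\]
Oddness in $\theta$ kills $f_0$ and every $a_m$, because the Fourier coefficients of an odd function are unique. Next, $\sin m(\pi-\theta)=-(-1)^m\sin m\theta$. Invariance under $\theta\mapsto\pi-\theta$ therefore forces $b_m=0$ for even $m$ and leaves odd $m=2l+1$ unconstrained. Conversely, every function $b(r)\sin((2l+1)\theta)$ with $b\in L^2(0,R;rdr)$ satisfies both symmetries. Hence $L^2_{\rm sy}(B)$ is the closed orthogonal sum over $l\ge0$ of the spaces $\{b(r)\sin((2l+1)\theta)\}$.

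Second, for each fixed $m=2l+1\ge1$, Proposition \ref{p:L2Rbasis} says that $\{J_m(j_{m-1,k}r/R)\}_{k\ge1}$ is a complete orthogonal basis of $L^2(0,R;rdr)$. Here $m-1=2l$, which gives exactly the functions \eqref{eq:basisL2sy}. Orthogonality across different $l$ follows from the orthogonality of $\sin m\theta$ on $\mathbb{T}$, and completeness follows from the previous decomposition. This is also the subfamily of \eqref{eq:basisL2} that survives the symmetry constraints, which is consistent with Proposition \ref{p:coercive02}.

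Third, $e_1$ is the element $l=0$, $k=1$ of \eqref{eq:basisL2sy}, since $j_{2l,k}=j_{0,1}$ there. It lies in $L^2_{\rm sy}(B)$ and can be normalized because it is nonzero. The decomposition $L^2_{\rm sy}(B)=Y\oplus Z$ with $Z=Y^\perp$, taken within $L^2_{\rm sy}(B)$, is then just the orthogonal decomposition of a Hilbert space with respect to a one-dimensional closed subspace.

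It is also worth recording two facts for later use. First, $Z$ is the closed span of the remaining basis functions. Second, by Lemma \ref{l:singeigenvalues} (ii), $Le_1=\lambda_{1,1}e_1$ with $\lambda_{1,1}<0$. The kernel direction $m=2$ is absent from $L^2_{\rm sy}(B)$ because $m$ is odd there, so $L$ is coercive on $Z$.

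The only point requiring care is the first step: the characterization of the symmetry class through Fourier coefficients, including the sign bookkeeping for $\sin m(\pi-\theta)$. Everything else is a direct citation of Proposition \ref{p:L2Rbasis}.
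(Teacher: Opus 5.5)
Your proposal is correct and takes the same approach as the paper. The paper's proof simply observes that, since the functions \eqref{eq:basisL2} form an orthogonal basis of $H_0^{\perp}$, the symmetries \eqref{eq:x1even}--\eqref{eq:x2odd} force an expansion in a sine series. You spell out what the paper leaves implicit: oddness in $\theta$ removes the radial part and the cosine terms, $\sin m(\pi-\theta)=-(-1)^m\sin m\theta$ removes even $m$, and Proposition \ref{p:L2Rbasis} with $m=2l+1$ then gives the basis.
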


\begin{proof}
Since the functions \eqref{eq:basisL2} form an orthogonal basis of $H_0^{\perp}$, the conditions \eqref{eq:x1even}-\eqref{eq:x2odd} for $f\in L^{2}(B)$ imply that $f$ can be expanded in a sine series. 
\end{proof}

\begin{proposition}
The estimate \eqref{eq:coercivej11} holds on $Z$ with $\delta=\inf\{\lambda_{2l+1,k}: (l,k)\neq (0,1)\}$. Moreover, the inequality 
\begin{align}
|(Lg,h)_{L^{2}(B)}|\leq \sqrt{(Lg,g)_{L^{2}(B)}}\sqrt{(Lh,h)_{L^{2}(B)}} \label{eq:CS}
\end{align}
holds for $g,h\in Z$.
\end{proposition}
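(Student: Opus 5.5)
We expand in the orthogonal eigenbasis \eqref{eq:basisL2sy} of $L^{2}_{\textrm{sy}}(B)$. By Lemma \ref{l:Fourier}, each function
\[
\phi_{l,k}=J_{2l+1}\left(\frac{j_{2l,k}}{R}r\right)\sin((2l+1)\theta)
\]
is an eigenfunction of $L$ with eigenvalue $\lambda_{2l+1,k}=1-R^{2}/j_{2l,k}^{2}$.

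Since $e_1$ is the normalized $\phi_{0,1}$, the space $Z=Y^{\perp}$ in $L^{2}_{\textrm{sy}}(B)$ is the closed span of $\{\phi_{l,k}:(l,k)\neq(0,1)\}$. For $g\in Z$ we write
\[
g=\sum_{(l,k)\neq(0,1)}a_{l,k}\,\hat\phi_{l,k},\qquad \hat\phi_{l,k}=\frac{\phi_{l,k}}{\|\phi_{l,k}\|_{L^{2}}}.
\]
Because $L$ is bounded and symmetric (Proposition \ref{p:spectrumL}), it acts diagonally: $Lg=\sum\lambda_{2l+1,k}a_{l,k}\hat\phi_{l,k}$, with convergence in $L^{2}$ since $|\lambda_{m,k}|\le 1+R^{2}/j_{0,1}^{2}$. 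Hence
\[
(Lg,g)_{L^{2}}=\sum_{(l,k)\neq(0,1)}\lambda_{2l+1,k}|a_{l,k}|^{2}\geq \delta\|g\|_{L^{2}}^{2},\qquad \delta=\inf_{(l,k)\neq(0,1)}\lambda_{2l+1,k}.
\]

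The main step is to check that $\delta>0$. By Lemma \ref{l:singeigenvalues}(ii) with $R=j_{1,1}$, the only non-positive eigenvalues are $\lambda_{1,1}<0$ and $\lambda_{2,1}=0$. The index $m=2$ is even and so never occurs in the odd-$m$ symmetric class, and $(m,k)=(1,1)$ is excluded by passing to $Z$. So every remaining $\lambda_{2l+1,k}$ is positive. They are also bounded away from zero: $j_{2l,k}$ increases in both $l$ and $k$, so $\lambda_{2l+1,k}\to1$ along the sequence. The infimum is therefore attained at the smallest admissible zero, which is $\min\{j_{0,2},j_{2,1}\}=j_{2,1}$ (numerically $j_{2,1}\approx5.1356<j_{0,2}\approx5.5201$). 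This gives
\[
\delta=1-\frac{j_{1,1}^{2}}{j_{2,1}^{2}}>0.
\]

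For \eqref{eq:CS}, the form $[g,h]=(Lg,h)_{L^{2}}$ is a symmetric bilinear form on $Z$ by the symmetry of $L$, and it is nonnegative by the estimate above. The Cauchy--Schwarz inequality for nonnegative symmetric bilinear forms then yields \eqref{eq:CS}: expand $[g+th,g+th]\ge0$ and use that the discriminant of this quadratic in $t$ is non-positive. The only care needed is the real-valuedness of the setting, which holds since the sine basis consists of real functions.
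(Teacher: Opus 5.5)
Your proposal is correct and follows the same route as the paper. You expand $g\in Z$ in the sine eigenbasis, use Lemma~\ref{l:singeigenvalues}(ii) to see that in the odd-$m$ symmetric class the only non-positive mode is the excluded $(l,k)=(0,1)$, and apply Cauchy--Schwarz to the nonnegative symmetric form $(Lg,h)_{L^{2}(B)}$ on $Z$. Your explicit value $\delta=1-j_{1,1}^{2}/j_{2,1}^{2}>0$, obtained from the monotonicity of the Bessel zeros, is a small extra detail that the paper leaves implicit.
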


\begin{proof}
By Lemma \ref{l:singeigenvalues} (ii), the operator $L$ on $L^{2}_{\textrm{sy}}(B)$ has one negative eigenvalue $\lambda_{1,1}<0$ and trivial kernel. Thus the estimate \eqref{eq:coercivej11} follows from the eigenfunction expansions of $L$ on $Z$. By \eqref{eq:coercivej11}, the bilinear form $(Lg,h)_{L^{2}(B)}$ defines an inner product on $Z$, and \eqref{eq:CS} follows from the Cauchy--Schwarz inequality.
\end{proof}

\begin{proposition}
Set 
\begin{align}
\omega_-=\mathbf{1}_{B}\frac{2}{J_0(j_{1,1})}J_1(r)\sin\theta=(\omega_-,e_{1})_{L^{2}(B)}e_1+\omega^{\perp}_{-}\in Y\oplus Z. \label{eq:Lambminus}
\end{align}
Then, the following statements hold:
\begin{align}
L\omega_{-}&=\mathbf{1}_{B}x_2, \label{eq:Lambminus1}\\
(L\omega_{-},\omega_{-})_{L^{2}(B)}&=-2j_{1,1}^{2}\pi, \label{eq:Lambminus2}\\
(L\omega^{\perp}_{-},\omega^{\perp}_{-})_{L^{2}(B)}&<a|(\omega_-,e_1)_{L^{2}(B)}|^{2},\quad a=-\lambda_{1,1}. \label{eq:Lambminus3}
\end{align}
\end{proposition}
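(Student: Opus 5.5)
The plan is to derive all three statements from the explicit stream function of the Lamb dipole \eqref{eq:Lambstream} and the conserved quantities in Proposition~\ref{p:Lambmass}, taking $\lambda=W=1$ so that $a=j_{1,1}=R$. First, $\omega_-\in L^2_{\textrm{sy}}(B)$, because $J_1(r)\sin\theta$ is even in $x_1$ and odd in $x_2$. Moreover, since $C_L=-2/J_0(j_{1,1})$ in \eqref{eq:Lambstreamconst}, we have $\omega_-=-\tilde\omega^L$, where $\tilde\omega^L$ is the odd-in-$x_2$ extension of $\omega^L_{1,1}$ to $\mathbb R^2$. Recall also that $\operatorname{spt}\omega^L_{1,1}\subset \overline{B}$.

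\textbf{Step 1: \eqref{eq:Lambminus1}.} Since $G(x,y)=N(x-y)-N(x-\bar y)$, the Newtonian potential of the odd extension satisfies
\begin{align*}
\psi:=(-\Delta_{\mathbb R^2})^{-1}\tilde\omega^L=\int_{\mathbb R^2}N(x-y)\tilde\omega^L(y)\,dy ,
\end{align*}
which is odd in $x_2$ and coincides with $\mathcal G\omega^L_{1,1}$ on $\mathbb R^2_+$. The integral converges absolutely, because $\tilde\omega^L$ is bounded and compactly supported.

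By \eqref{eq:Lambstream}, on $B\cap\mathbb R^2_+$ we have $\psi-x_2=C_LJ_1(r)\sin\theta=\omega^L_{1,1}$. Both sides are odd in $x_2$, so the identity $\psi=x_2+\tilde\omega^L$ holds on all of $B$.

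This gives $K\tilde\omega^L=\mathbf 1_B\psi=\mathbf 1_Bx_2+\tilde\omega^L$, hence $L\tilde\omega^L=-\mathbf 1_Bx_2$. Multiplying by $-1$ yields $L\omega_-=\mathbf 1_Bx_2$.

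The main point to check here is that the half-plane Green representation really matches the whole-plane kernel $N$ used in $K$. The same fact can be verified directly: outside $B$ the exterior formula gives $\psi=x_2-(r-a^2/r)\sin\theta=a^2\sin\theta/r$, which decays at infinity. Since $\tilde\omega^L$ has zero total mass, $\int N(x-y)\tilde\omega^L(y)\,dy$ also decays, so the two functions differ by a bounded harmonic function vanishing at infinity, which is identically zero.

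\textbf{Step 2: \eqref{eq:Lambminus2}.} Using Step 1 and the oddness of $\tilde\omega^L$,
\begin{align*}
(L\omega_-,\omega_-)_{L^2(B)}=\int_B x_2\omega_-\,dx=-\int_{\mathbb R^2}x_2\tilde\omega^L\,dx=-2\int_{\mathbb R^2_+}x_2\,\omega^L_{1,1}\,dx=-2j_{1,1}^2\pi,
\end{align*}
where the last equality is the impulse identity $\|\omega^L_{1,1}\|_{L^1_*}=j_{1,1}^2\pi$ from \eqref{eq:massLamb}, with $\omega^L_{1,1}\ge0$.

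\textbf{Step 3: \eqref{eq:Lambminus3}.} Write $c=(\omega_-,e_1)_{L^2(B)}$. By Lemma~\ref{l:Fourier}, $e_1$ is an eigenfunction of $L$ with eigenvalue $\lambda_{1,1}$. Since $L$ is symmetric and $Le_1=\lambda_{1,1}e_1$, $L$ maps $Z=e_1^\perp$ into itself, and in particular $(L\omega_-^\perp,e_1)=\lambda_{1,1}(\omega_-^\perp,e_1)=0$. Expanding the quadratic form therefore gives
\begin{align*}
(L\omega_-,\omega_-)=\lambda_{1,1}c^2+(L\omega_-^\perp,\omega_-^\perp).
\end{align*}
Combining this with Step 2,
\begin{align*}
(L\omega_-^\perp,\omega_-^\perp)=-2j_{1,1}^2\pi+a c^2<a c^2 .
\end{align*}
This is exactly \eqref{eq:Lambminus3}, with strict inequality because $-2j_{1,1}^2\pi<0$.
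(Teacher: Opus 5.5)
Your proof is correct and follows essentially the same route as the paper. Both arguments identify $\omega_-$ with $-\omega^L_{1,1}$ (odd extension) and use the relation $\mathcal G\omega^L_{1,1}-x_2=\omega^L_{1,1}$ inside the core, transported to the whole-plane kernel by oddness, to get $L\omega^L_{1,1}=-\mathbf{1}_B x_2$. They then read off the quadratic form from the impulse and split $\omega_-$ along $Y\oplus Z$ using $Le_1=\lambda_{1,1}e_1$. Your extra check via the exterior formula is harmless but unnecessary, since $G(x,y)=N(x-y)-N(x-\bar y)$ already gives the whole-plane representation exactly.
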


\begin{proof}
The function $\omega_{-}$ is the negative of the Lamb dipole $\omega_{1,1}^{L}$ \eqref{eq:Lamb} for $\lambda=W=1$. Namely, $\omega_{-}=-\omega_{1,1}^{L}$. Since $\omega_{1,1}^{L}=(\mathcal{G}\omega_{1,1}^{L}-x_2)_{+}$ and $\omega^{L}_{1,1}$ is $x_2$-odd symmetric,
\begin{align*}
\omega^{L}_{1,1}=(-\Delta_{\mathbb{R}^{2}})^{-1}\mathbf{1}_{B}\omega^{L}_{1,1}-x_2,\quad x\in B.
\end{align*}
Thus $L\omega^{L}_{1,1}=-\mathbf{1}_{B}x_2$ and \eqref{eq:Lambminus1} holds. By the impulse of the Lamb dipole \eqref{eq:massLamb} and \eqref{eq:Lambminus1}, we have 
\begin{align*}
2j_{1,1}^{2}\pi=2\int_{\mathbb{R}^{2}_{+}}x_2\omega_{1,1}^{L}dx=(x_2\mathbf{1}_{B},\omega^{L}_{1,1})_{L^{2}(B)}=-(L\omega_{-},\omega_{-})_{L^{2}(B)}.
\end{align*}
Thus \eqref{eq:Lambminus2} holds. Using $L\omega_-=-a(\omega_-,e_1)_{L^{2}}e_1+L\omega^{\perp}_{-}$ and \eqref{eq:Lambminus2}, it follows that 
\begin{align*}
0>(L\omega_-,\omega_{-})_{L^{2}(B)}=-a|(\omega_-,e_1)_{L^{2}(B)}|^{2}+(L\omega^{\perp}_{-},\omega^{\perp}_{-})_{L^{2}(B)}.
\end{align*}
Thus \eqref{eq:Lambminus3} holds.
\end{proof}

\begin{proof}[Proof of Theorem \ref{t:halfspacecoercive}]
We set $f=(f,e_1)_{L^{2}}e_1+g\in Y\oplus Z$. Since $L$ is symmetric, the condition \eqref{eq:momentzero2} implies
\begin{align*}
0=(x_2\mathbf{1}_B,f)_{L^{2}(B)}
=(L\omega_{-},f)_{L^{2}(B)}
=(\omega_{-},Lf)_{L^{2}(B)}.
\end{align*}
Using 
\begin{align*}
(\omega_{-},Lf)_{L^{2}(B)}
&=\left((\omega_{-},e_1)_{L^{2}(B)}e_1+\omega_{-}^{\perp},-a(f,e_1)_{L^{2}(B)}e_1+Lg\right)_{L^{2}(B)}\\
&=-a(\omega_{-},e_1)_{L^{2}(B)}(f,e_1)_{L^{2}(B)}+(\omega^{\perp}_-,Lg)_{L^{2}(B)},
\end{align*}
we obtain 
\begin{align*}
a(\omega_{-},e_1)_{L^{2}(B)}(f,e_1)_{L^{2}(B)}=(\omega^{\perp}_-,Lg)_{L^{2}(B)}.
\end{align*}
Applying the inequality \eqref{eq:CS} for $\omega^{\perp}_-, g\in Z$, we estimate 
\begin{align*}
|a(\omega_{-},e_1)_{L^{2}(B)}(f,e_1)_{L^{2}(B)}|^{2}=|(Lg,\omega^{\perp}_-)_{L^{2}(B)}|^{2}\leq (Lg,g)_{L^{2}(B)}(L\omega^{\perp}_-,\omega^{\perp}_-)_{L^{2}(B)}.
\end{align*}
By \eqref{eq:Lambminus3}, we set 
\begin{align*}
\sigma=\frac{(L\omega^{\perp}_{-},\omega^{\perp}_{-})_{L^{2}(B)}}{a|(\omega_-,e_1)_{L^{2}(B)}|^{2}}<1,
\end{align*}
and divide both sides by $a|(\omega_-,e_1)_{L^{2}}|^{2}$ to obtain 
\begin{align*}
a|(f,e_1)_{L^{2}(B)}|^{2}\leq \sigma (Lg,g)_{L^{2}(B)}.
\end{align*}
Using this inequality, we find that 
\begin{align*}
(Lf,f)_{L^{2}(B)}&=\left(-a(f,e_1)_{L^{2}(B)}e_1+Lg,(f,e_1)_{L^{2}(B)}e_1+g\right)_{L^{2}(B)} \\
&=-a|(f,e_1)_{L^{2}(B)}|^{2}+(Lg,g)_{L^{2}(B)} \\
&\geq (1-\sigma)(Lg,g)_{L^{2}(B)}.
\end{align*}
Since \eqref{eq:coercivej11} holds on $Z$, we estimate 
\begin{align*}
||f||_{L^{2}(B)}^{2}=|(f,e_1)_{L^{2}(B)}|^{2}+||g||_{L^{2}(B)}^{2}
\leq \left(\frac{\sigma}{a}+\frac{1}{\delta}\right)(Lg,g)_{L^{2}(B)}
\leq \frac{1}{1-\sigma}\left(\frac{\sigma}{a}+\frac{1}{\delta}\right)(Lf,f)_{L^{2}(B)}.
\end{align*}
We obtained \eqref{eq:coercivej11}.
\end{proof}

We state Theorem \ref{t:halfspacecoercive} in the half-plane setting for the application to the proof of Theorem \ref{thm:unique-near-lamb}.

\begin{theorem}[Coercive estimate in a half-disk]\label{t:halfspacecoercive2}
Define the operator $L_+:L^{2}(B_+)\to L^{2}(B_+)$ by
\begin{align}
L_{+}=I-\mathbf{1}_{B_+}(-\Delta_D)^{-1}\mathbf{1}_{B_+},\quad
B_+=B_0(j_{1,1})\cap \mathbb{R}^{2}_{+}.  \label{eq:L+}
\end{align}
Then, the estimate
\begin{align}
(L_+f,f)_{L^{2}(B_+)}\geq \delta ||f||_{L^{2}(B_+)}^{2}  \label{eq:coercive+}
\end{align}
holds for all $f\in L^{2}(B_+)$ satisfying \eqref{eq:x1even} and 
\begin{align}
\int_{B_+}x_2fdx=0.   \label{eq:impulsezero}
\end{align}
The constant $\delta$ is the same constant as in Theorem \ref{t:halfspacecoercive}.
\end{theorem}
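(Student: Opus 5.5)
We reduce Theorem \ref{t:halfspacecoercive2} to Theorem \ref{t:halfspacecoercive} by odd reflection across the $x_1$-axis. Given $f\in L^{2}(B_+)$ satisfying \eqref{eq:x1even} and \eqref{eq:impulsezero}, define its odd extension
\begin{align*}
\tilde f(x_1,x_2)=
\begin{cases}
f(x_1,x_2), & x_2>0,\\
-f(x_1,-x_2), & x_2<0,
\end{cases}
\qquad x\in B=B_0(j_{1,1}).
\end{align*}
Then $\tilde f\in L^{2}(B)$ satisfies \eqref{eq:x1even} and \eqref{eq:x2odd}. Moreover $\int_B x_2\tilde f\,dx=2\int_{B_+}x_2 f\,dx=0$, which is \eqref{eq:momentzero2}. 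Finally, $\|\tilde f\|_{L^{2}(B)}^{2}=2\|f\|_{L^{2}(B_+)}^{2}$.

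The key step is the identity $(L\tilde f,\tilde f)_{L^{2}(B)}=2(L_+f,f)_{L^{2}(B_+)}$. We split the quadratic form as
\begin{align*}
(L\tilde f,\tilde f)_{L^{2}(B)}=\|\tilde f\|^{2}_{L^{2}(B)}-\iint_{B\times B}N(x-y)\tilde f(x)\tilde f(y)\,dx\,dy .
\end{align*}
In the double integral, we split each variable into the upper and lower half-disks. We substitute $y\mapsto\bar y$ on the lower half and use oddness of $\tilde f$ and the symmetry $N(\bar x-\bar y)=N(x-y)$. This gives
\begin{align*}
\iint_{B\times B}N(x-y)\tilde f(x)\tilde f(y)\,dx\,dy
&=2\iint_{B_+\times B_+}\bigl(N(x-y)-N(x-\bar y)\bigr)f(x)f(y)\,dx\,dy\\
&=2\iint_{B_+\times B_+}G(x,y)f(x)f(y)\,dx\,dy,
\end{align*}
using \eqref{eq:Green}. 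Since $(-\Delta_D)^{-1}$ on $\mathbb{R}^2_+$ has kernel $G$, the last expression equals $2\bigl(\mathbf{1}_{B_+}(-\Delta_D)^{-1}\mathbf{1}_{B_+}f,f\bigr)_{L^{2}(B_+)}$. Combining this with $\|\tilde f\|^{2}_{L^{2}(B)}=2\|f\|^{2}_{L^{2}(B_+)}$ gives the identity.

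Applying \eqref{eq:coercivej11} to $\tilde f$ then yields
\[
2(L_+f,f)_{L^{2}(B_+)}\ge\delta\,\|\tilde f\|_{L^{2}(B)}^{2}=2\delta\,\|f\|_{L^{2}(B_+)}^{2}.
\]
This is \eqref{eq:coercive+} with the same constant $\delta$ as in Theorem \ref{t:halfspacecoercive}.

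The only point needing care is the bookkeeping of the four half-disk cross terms. One must check that the two mixed terms each produce $-N(x-\bar y)$ with the correct sign coming from oddness, and that they are equal by the symmetry of $N$. All integrals converge absolutely since $f\in L^2$ on a bounded set and $N$ is locally integrable, so Fubini's theorem applies.
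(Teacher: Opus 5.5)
Your proposal is correct and follows the paper's argument: odd reflection across the $x_1$-axis, the identity $(L\tilde f,\tilde f)_{L^2(B)}=2(L_+f,f)_{L^2(B_+)}$, and then Theorem~\ref{t:halfspacecoercive} applied to $\tilde f$. The only cosmetic difference is how the identity is obtained. The paper derives it from the operator relation $L f^{\mathrm{o}}|_{\mathbb{R}^2_+}=L_+f$, using the method of images $(-\Delta_{\mathbb{R}^2})^{-1}\mathbf{1}_B f^{\mathrm{o}}|_{\mathbb{R}^2_+}=\mathcal{G}(\mathbf{1}_{B_+}f)$. You instead verify it directly at the level of the quadratic form by splitting the kernel integral into half-disk pieces.
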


\begin{proof}
We consider the odd extension of $f\in L^{2}(B_{+})$ to $\mathbb{R}^{2}$ by setting 
\begin{align*}
f^{\textrm{o}}(x_1,x_2)=
\begin{cases}
\ f(x_1,x_2)\quad &x_2\geq 0,\\
\ -f(x_1,-x_2)\quad & x_2<0.
\end{cases}
\end{align*}
The odd extension $f^{\textrm{o}}\in L^{2}(B)$ satisfies the conditions \eqref{eq:x1even}-\eqref{eq:momentzero2}. Using 
\begin{align*}
(-\Delta_{\mathbb{R}^{2}})^{-1}\mathbf{1}_{B}f^{\textrm{o}}|_{\mathbb{R}^{2}_{+}}
=E*(\mathbf{1}_{B}f^{\textrm{o}})|_{\mathbb{R}^{2}_{+}}
=\mathcal{G}(\mathbf{1}_{B_{+}}f)
=(-\Delta_{D})^{-1}(\mathbf{1}_{B_{+}}f),
\end{align*}
we find that 
\begin{align*}
Lf^{\textrm{o}}|_{\mathbb{R}^{2}_{+}}
=(f^{\textrm{o}}-\mathbf{1}_{B}(-\Delta_{\mathbb{R}^{2}})^{-1}\mathbf{1}_{B}f^{\textrm{o}})|_{\mathbb{R}^{2}_{+}}
=f-\mathbf{1}_{B_+}(-\Delta_{D})^{-1}\mathbf{1}_{B_+}f
=L_+f.
\end{align*}
Thus applying \eqref{eq:coercivej11} yields 
\begin{align*}
2(L_+f,f)_{L^{2}(B_+)}=(Lf^{\textrm{o}},f^{\textrm{o}})_{L^{2}(B)}\geq \delta ||f^{\textrm{o}}||_{L^{2}(B)}^{2}=2\delta||f||_{L^{2}(B_+)}^{2}.
\end{align*}
We obtain the desired result.
\end{proof}

\begin{remark}
In the study of quantitative stability of the Lamb dipole, the work \cite[Proposition 3.2]{LSZ26} shows the coercive estimate \eqref{eq:coercive+} for $f\in L^{2}(B_{+})$ satisfying the conditions \eqref{eq:momentzero2} and 
\begin{align}
\int_{B_+}\partial_{x_1}\omega^{L}_{1,1}fdx=0 \label{eq:kc},
\end{align}
with some constant $\delta_0>0$. Without the $x_1$ symmetry \eqref{eq:x1even}, the operator $L_+$ has one negative eigenvalue $\lambda_{1,1}<0$ and one-dimensional kernel corresponding to $\lambda_{2,1}=0$ with eigenfunctions $e_1$ and $\partial_{x_1}\omega^{L}_{1,1}$, respectively. Namely, 
\begin{align*}
L^{2}(B_+)=\mathbb{R}e_1\oplus \mathbb{R}\partial_{x_1}\omega^{L}_{1,1}\oplus Z_{+}.
\end{align*}
The proof of Theorem \ref{t:halfspacecoercive} also shows that \eqref{eq:coercive+} holds for $f\in \mathbb{R}e_1\oplus Z_+=(\mathbb{R}\partial_{x_1}\omega^{L}_{1,1})^{\perp}$ satisfying \eqref{eq:momentzero2}. Thus, the $x_1$-symmetry assumption \eqref{eq:x1even} in Theorem~\ref{t:halfspacecoercive2} can be replaced by
the orthogonality condition \eqref{eq:kc}.
\end{remark}

\section{Asymptotics near the Lamb dipole}\label{s:4}

We prove the uniqueness of the maximizer in the near-Lamb dipole regime in Theorem \ref{thm:unique-near-lamb} and the limiting profile in Theorem \ref{thm:well separated dipole} (i). We first use the compactness of maximizing sequences to show that, as $\mu_n\to\kappa_*^+$, the corresponding maximizers converge to the Lamb dipole. We then use the resulting uniform bounds to establish a uniform bound on the vortex cores, which in turn upgrades the convergence to uniform convergence. Combining this convergence with the coercivity of the linearized operator established in \S \ref{s:3}, we prove the uniqueness of the maximizer. Once uniqueness is established, the limiting profile in Theorem \ref{thm:well separated dipole} (i) follows without passing to subsequences.

\subsection{Convergence toward the Lamb dipole}\label{sec:ab}

We will use the following compactness result for maximizing sequences \cite[Lemma 3.5, Theorem 1.3]{AC2019} to prove the convergence of the maximum toward the Lamb dipole as $\mu\to\kappa_*^{+}$.

\begin{proposition}[Compactness of maximizing sequences]\label{p:compactness-maximizing}
Let $\mu>0$. Let $\{\omega_n\}\subset  L^{2}\cap L^{1}_{*} (\mathbb R^2_+)$ be a Steiner symmetric sequence satisfying $\omega_n\geq 0$, $||\omega_n||_{L^{1}}\leq 1$,   $\nrm{\omega_n}_{L^1_*}\to\mu$ and $E_2[\omega_n]\to I_\mu$. Then, this sequence is relatively compact in $L^2 \cap L^1_*(\mathbb R^2_+)$. 
\end{proposition}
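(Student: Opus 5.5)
The plan is to take a weak limit of the sequence and upgrade weak convergence to strong convergence in two steps: first show that the kinetic energy converges, then use strict monotonicity of $I_\mu$ to rule out loss of impulse.

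\emph{Uniform bounds.} Since $I_\mu>0$ by \eqref{eq:P1}, we have $E_2[\omega_n]\geq I_\mu/2>0$ for large $n$, so $\frac12\|\omega_n\|_{L^2}^2\leq E[\omega_n]$. Combining this with \eqref{eq:KEE} for some fixed $\alpha\in(0,1/2)$, together with $\|\omega_n\|_{L^1}\leq 1$ and $\|\omega_n\|_{L^1_*}\leq 2\mu$, gives $\|\omega_n\|_{L^2}^{2-2\alpha}\lesssim 1$. Hence $\{\omega_n\}$ is bounded in $L^1\cap L^2\cap L^1_*(\mathbb R^2_+)$. Passing to a subsequence, $\omega_n\rightharpoonup\omega$ weakly in $L^2$. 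The limit is nonnegative and Steiner symmetric. By lower semicontinuity (testing against truncations of $\mathbf 1$ and of $x_2$), we get $\|\omega\|_{L^1}\leq 1$ and $\mu':=\|\omega\|_{L^1_*}\leq\mu$.

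\emph{Energy convergence (main step).} The goal is $E[\omega_n]\to E[\omega]$. The plan is to split $\mathbb R^2_+$ into a compact box $Q_R=\{|x_1|<R,\ R^{-1}<x_2<R\}$ and its complement, and show three things.
\begin{itemize}
\item On $Q_R\times Q_R$, the operator $\mathbf 1_{Q_R}\mathcal G\mathbf 1_{Q_R}$ is compact on $L^2(Q_R)$, since its kernel is locally integrable. Weak convergence then gives convergence of the interaction energy there.
\item The interactions involving the region $\{x_2>R\}$ are $o(1)$ as $R\to\infty$, uniformly in $n$. This follows from \eqref{eq:inequality sf 2}, which gives $\sup_{x_2\geq R}\mathcal G\omega_n\lesssim R^{-\theta}$, integrated against $\omega_n$ with mass at most $1$.
\item The interactions involving $\{x_2<R^{-1}\}$ are small because $\mathcal G\omega_n\leq C x_2$ (this uses \eqref{eq:inequality sf 4} with $r=2$), so that region contributes at most $CR^{-1}\|\omega_n\|_{L^1}$. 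The region $\{|x_1|>R\}$ is small because \eqref{eq:inequality sf 4} gives $\mathcal G\omega_n(x)\lesssim x_2|x_1|^{-1/4}$, hence a contribution at most $CR^{-1/4}\mu$.
\end{itemize}
Steiner symmetry is essential in the last point: it is what prevents mass from escaping horizontally. I expect this step to be the main obstacle. The delicate point is making the tail bounds uniform while only knowing $L^1$, $L^2$ and $L^1_*$ bounds, especially near the boundary $x_2\to 0$, where the $L^1_*$ norm gives no control.

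\emph{Conclusion.} By weak lower semicontinuity of the $L^2$ norm and energy convergence,
\[
E_2[\omega]\geq \limsup_n E_2[\omega_n]=I_\mu.
\]
If $\mu'<\mu$, then $\omega\in K_{\mu'}$ and $I_{\mu'}\geq E_2[\omega]\geq I_\mu$, which contradicts the strict monotonicity in \eqref{eq:P1}. (The case $\mu'=0$ is excluded because then $E[\omega]=0$.) Hence $\mu'=\mu$, $\omega\in K_\mu$, $E_2[\omega]=I_\mu$, and every inequality above is an equality. In particular $\|\omega_n\|_{L^2}\to\|\omega\|_{L^2}$, so $\omega_n\to\omega$ strongly in $L^2$.

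For the $L^1_*$ convergence, I would use
\[
\|\omega_n-\omega\|_{L^1_*}=2\int x_2(\omega-\omega_n)_+\,dx+\bigl(\|\omega_n\|_{L^1_*}-\mu\bigr).
\]
Here $0\leq x_2(\omega-\omega_n)_+\leq x_2\omega\in L^1$, and $(\omega-\omega_n)_+\to 0$ in measure on bounded sets by the strong $L^2$ convergence. Dominated convergence, applied along subsequences converging a.e., shows the first term tends to $0$, and the second tends to $0$ by hypothesis. Since every subsequence has a further subsequence converging in $L^2\cap L^1_*$, the sequence is relatively compact.
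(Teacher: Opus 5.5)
Your overall architecture is sound, and the paper gives no competing proof to compare against: it imports the statement from \cite[Lemma 3.5, Theorem 1.3]{AC2019}. The architecture consists of uniform bounds, a weak limit, energy convergence, the strict monotonicity \eqref{eq:P1} to exclude loss of impulse, norm convergence, and the Scheff\'e-type identity for $L^1_*$.

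There is, however, a genuine gap in the step you yourself flagged. Your near-boundary and horizontal tail bounds both rest on \eqref{eq:inequality sf 4} with $r=2$. That estimate is only available for $2<r<\infty$, and its constant involves $\|\omega\|_{L^r}$. A maximizing sequence is only bounded in $L^1\cap L^2\cap L^1_*$, which is all your first step provides, and its elements need not lie in any $L^r$ with $r>2$.

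At $r=2$ the pointwise bound $\mathcal G\omega\le Cx_2$ is false. Take the radially nonincreasing, hence Steiner symmetric, function $\omega(y)=\mathbf 1_{\{|y|<1/4\}}|y|^{-1}\bigl(\log(1/|y|)\bigr)^{-1}$, which lies in $L^1\cap L^2\cap L^1_*(\mathbb R^2_+)$. Fatou's lemma gives
\[
\liminf_{x_2\to0^+}\frac{\mathcal G\omega(0,x_2)}{x_2}\ge\frac1\pi\int_{\mathbb R^2_+}\frac{y_2\,\omega(y)}{|y|^2}\,dy=\frac{2}{\pi}\int_0^{1/4}\frac{dr}{r\log(1/r)}=\infty .
\]
So, as written, energy convergence is unproved.

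Both tails can still be controlled with the norms you actually have.

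\textbf{Horizontal tail.} Use Steiner symmetry slice by slice. Set $m_n(x_2)=\int_{\mathbb R}\omega_n(s,x_2)\,ds$. Monotonicity in $|x_1|$ gives $\omega_n(x_1,x_2)\le m_n(x_2)/(2R)$ for $|x_1|\ge R$. For the kernel $K_{x_2,y_2}$ from the proof of Proposition~\ref{prop:Steiner}, one has $\int_{\mathbb R}K_{x_2,y_2}(t)\,dt=\min\{x_2,y_2\}$. This follows from $t^2+(x_2-y_2)^2+4x_2y_2=t^2+(x_2+y_2)^2$ and $\int_{\mathbb R}\log\frac{t^2+A^2}{t^2+B^2}\,dt=2\pi(A-B)$. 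Estimating each pair of slices by $\|f\|_{L^\infty}\|K\|_{L^1}\|g\|_{L^1}$ gives
\[
\int_{\{|x_1|\ge R\}}\omega_n\,\mathcal G\omega_n\,dx\le\frac1{2R}\iint_{(0,\infty)^2}m_n(x_2)\,m_n(y_2)\min\{x_2,y_2\}\,dx_2\,dy_2\le\frac{\|\omega_n\|_{L^1}\|\omega_n\|_{L^1_*}}{2R}.
\]

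\textbf{Boundary layer.} Restrict to $|x_1|<R$ and use \eqref{eq:SFE} together with Cauchy--Schwarz:
\[
\int_{\{|x_1|<R,\ x_2<\delta\}}\omega_n\,\mathcal G\omega_n\,dx\le\|\mathcal G\omega_n\|_{L^\infty}\,(2R\delta)^{1/2}\,\|\omega_n\|_{L^2}\lesssim(R\delta)^{1/2}.
\]
With your choice $\delta=R^{-1}$ this bound is only $O(1)$. So the box must be thinner at the bottom, for example $Q_R=\{|x_1|<R,\ R^{-2}<x_2<R\}$.

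\textbf{Conclusion.} Since $G\ge0$ and $\omega_n\ge0$,
\[
\Bigl|2E[\omega_n]-\iint_{Q_R\times Q_R}G(x,y)\,\omega_n(x)\,\omega_n(y)\,dx\,dy\Bigr|\le2\int_{\mathbb R^2_+\setminus Q_R}\omega_n\,\mathcal G\omega_n\,dx .
\]
Combined with your (correct) vertical bound from \eqref{eq:inequality sf 2}, the right-hand side is $O(R^{-\theta}+R^{-1/2})$ uniformly in $n$.

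On $Q_R$, the operator $\mathbf 1_{Q_R}\mathcal G\mathbf 1_{Q_R}$ is compact because its kernel lies in $L^2(Q_R\times Q_R)$. This is the Hilbert--Schmidt property; local integrability of the kernel, which is what you cited, does not by itself give compactness.

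With these two replacements, $E[\omega_n]\to E[\omega]$. The rest of your argument then goes through unchanged and gives a self-contained proof in place of the citation.
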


\begin{lemma}\label{lem:convergenceLamb}
Let $\{\mu_n\}$ be a sequence such that $\mu_n\to \kappa_*^{+}$. Let $\omega_n\in S_{\mu_n}$ be a Steiner symmetric maximizer in Theorem \ref{thm:nearly Lamb} with the constants $W_n>0$ and $\gamma_n>0$. Then, passing to a subsequence,
\begin{align}
\omega_n&\to \omega^{L}_{1,W_L}\quad \textrm{in}\ L^{2}\cap L^{1}_{*}(\mathbb{R}^{2}_{+}),  \label{eq:convergenceLamb}\\
W_{n}&\to W_{L}=\frac{\kappa_*}{j_{1,1}^{2}\pi},\label{eq:convergenceLambW}\\
\gamma_{n}&\to 0. \label{eq:convergenceLambg}
\end{align}
Moreover, 
\begin{align}
&\mathcal{G}\omega_n \to \mathcal{G}\omega^{L}_{1,W_{L}} \quad \textrm{uniformly in}\ \overline{\mathbb{R}^{2}_{+}}, \label{eq:psiconvergence} \\
&\mathcal{G}\omega_n-W_nx_2-\gamma_n\to
\mathcal{G}\omega^{L}_{1,W_{L}}-W_{L}x_2
\quad \textrm{locally uniformly in}\ \overline{\mathbb{R}^{2}_{+}}.  \label{eq:Psiconvergence}
\end{align}
In particular,  
\begin{align}
\mathbf{1}_{(0,\infty)}(\mathcal{G}\omega_n-W_nx_2-\gamma_n)\to \mathbf{1}_{B_0(j_{1,1})_{+}}(x),\quad \textrm{a.e.}\ x\in \mathbb{R}^{2}_{+}. \label{eq:convergeLamb2}
\end{align}
\end{lemma}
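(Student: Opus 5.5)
The plan is to first show that $\{\omega_n\}$ is a maximizing sequence for $I_{\kappa_*}$, then identify its limit using Theorem \ref{thm:threshold} (A), and finally pass to the limit in the Lagrange multipliers and the stream functions.

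\textbf{Step 1: maximizing sequence.} By Lemma \ref{l:Lip}, $I_{\mu_n}\to I_{\kappa_*}$. Each $\omega_n$ is Steiner symmetric, nonnegative, and satisfies $\|\omega_n\|_{L^1}=1$ (Theorem \ref{thm:threshold} (B)), $\|\omega_n\|_{L^1_*}=\mu_n\to\kappa_*$ and $E_2[\omega_n]=I_{\mu_n}\to I_{\kappa_*}$. Proposition \ref{p:compactness-maximizing} then gives a subsequence with $\omega_n\to\omega$ in $L^2\cap L^1_*$.

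\textbf{Step 2: identifying the limit.} The limit satisfies $\omega\ge0$, $\|\omega\|_{L^1_*}=\kappa_*$, and $\|\omega\|_{L^1}\le1$ by Fatou's lemma. The energy $E$ is continuous in $L^2\cap L^1_*$ by \eqref{eq:KEE} and bilinearity, so $E_2[\omega]=I_{\kappa_*}$ and $\omega\in S_{\kappa_*}$. By Theorem \ref{thm:threshold} (A), $\omega$ is a horizontal translate of $\omega^L_{1,W_L}$. Both $\omega$ and the $\omega_n$ are even in $x_1$, so the translate must be $\omega^L_{1,W_L}$ itself. This proves \eqref{eq:convergenceLamb}.

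\textbf{Step 3: the multipliers.} By \eqref{eq:pohozaev-L2}, $W_n=\|\omega_n\|_{L^2}^2/\mu_n\to\|\omega^L\|_{L^2}^2/\kappa_*$. The same Pohozaev identity for the Lamb dipole ($\lambda=1$, $\gamma=0$) gives this limit as $W_L=\kappa_*/(j_{1,1}^2\pi)$. Next, \eqref{eq:pohozaev-penalized} gives $\gamma_n=2I_{\mu_n}-W_n\mu_n\to 2I_{\kappa_*}-W_L\kappa_*$. Using \eqref{eq:P2}, this equals $\kappa_*^2/(j_{1,1}^2\pi)-\kappa_*^2/(j_{1,1}^2\pi)=0$, which proves \eqref{eq:convergenceLambW} and \eqref{eq:convergenceLambg}.

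\textbf{Step 4: the stream functions.} Apply \eqref{eq:SFE} to $\omega_n-\omega^L$:
\[
\|\mathcal G(\omega_n-\omega^L)\|_{L^\infty}\lesssim\|\omega_n-\omega^L\|_{L^1_*}^{1/2}\|\omega_n-\omega^L\|_{L^2}^{1/2}\to0,
\]
which gives \eqref{eq:psiconvergence}. Adding the convergence of $W_nx_2$ (locally uniform) and of $\gamma_n$ yields \eqref{eq:Psiconvergence}.

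\textbf{Step 5: the cores.} For the indicator convergence \eqref{eq:convergeLamb2}, use \eqref{eq:Lambstream}. The limit function $\Psi=\mathcal G\omega^L-W_Lx_2$ equals $C_LJ_1(r)\sin\theta$ with $C_L>0$ inside $B_0(j_{1,1})_+$, so it is strictly positive there. Outside the disk it equals $-W_L(r-a^2/r)\sin\theta$, which is strictly negative in the open half-plane. Hence $\{\Psi=0\}\cap\mathbb R^2_+$ is the semicircle $\{r=j_{1,1}\}$, a null set. At every point off this set, locally uniform convergence fixes the sign of $\Psi_n$ for large $n$, and the pointwise a.e. convergence of the indicators follows.

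The main obstacle is Step 2, namely making sure the limit lies in the admissible class and carries the full energy. Mass can only be lost (Fatou's lemma), which is harmless because the constraint is $\le1$. The continuity of $E$ needs a remark: \eqref{eq:KEE} involves the $L^1$ norm, which is only bounded, not convergent. Since $E$ is quadratic, the bound $|E[f]-E[g]|\lesssim$ (norms of $f-g$) still applies, with $\|f-g\|_{L^1}\le2$ bounded and the $L^1_*$ and $L^2$ factors of $f-g$ tending to zero, so the bound goes to zero. Every other step is a direct consequence of the stated identities.
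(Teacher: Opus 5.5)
Your proposal is correct and follows essentially the same route as the paper: Lipschitz continuity of $I_\mu$, compactness of maximizing sequences and uniqueness at $\kappa_*$ give the $L^2\cap L^1_*$ limit; the Poho\v{z}aev identities give the multipliers; and \eqref{eq:SFE} gives the stream-function convergence. Your extra checks fill in details the paper leaves implicit: Fatou for the mass constraint, continuity of $E$, evenness to rule out a nonzero translate, and the sign analysis of the limiting stream function (where you correctly include the factor $\sin\theta$ outside the disk).
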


\begin{proof}
By continuity of $I_{\mu}$ in Lemma \ref{l:Lip}, $E_2[\omega_n]=I_{\mu_n}\to I_{\kappa_*}$ as $\mu_n\to \kappa_*^{+}$. Thus, $\{\omega_n\}$ is a maximizing sequence for $I_{\kappa_*}$. By the compactness result in Proposition \ref{p:compactness-maximizing}, $\{\omega_n\}$ converges along a subsequence to a maximizer of $I_{\kappa_*}$ in $L^{2}\cap L^{1}_{*}$. By Theorem \ref{thm:threshold}, the maximizer of $I_{\kappa_*}$ is the Lamb dipole $\omega^{L}_{1,W_{L}}$. Thus the convergence \eqref{eq:convergenceLamb} follows. By the identities \eqref{eq:pohozaev-L2} and \eqref{eq:pohozaev-penalized}, we obtain
\begin{align*}
W_n&=\frac{||\omega_n||_{L^{2}(\mathbb{R}^{2}_{+})}^{2}}{\mu_n}
\to \frac{||\omega^{L}_{1,W_{L}}||_{L^{2}(\mathbb{R}^{2}_{+})}^{2}}{\kappa_*}=W_{L},\\
\gamma_n&=2I_{\mu_n}-W_n\mu_n\to 2I_{\kappa_*}-W_{L}\kappa_*=0. 
\end{align*}
The convergences \eqref{eq:psiconvergence} and \eqref{eq:Psiconvergence} follow from the stream function estimate \eqref{eq:SFE} and \eqref{eq:convergenceLamb}-\eqref{eq:convergenceLambg}. By the explicit form \eqref{eq:Lamb},  $\omega^{L}_{1,W_{L}}$ is supported on $B_0(j_{1,1})_{+}$. For a.e. \(x\in\mathbb R^2_+\), the limiting function \(\mathcal{G}\omega^L_{1,W_L}-W_Lx_2\) is nonzero; hence \eqref{eq:Psiconvergence} implies \eqref{eq:convergeLamb2}.
\end{proof}

\subsection{Boundedness of the vortex core}

We use upper and lower $L^2$ bounds for the maximizers to obtain a uniform bound on their vortex cores.

\begin{lemma}\label{lem:bddsupport}
Let $\{\omega_n\}$ be a sequence of maximizers in Lemma \ref{lem:convergenceLamb}. Then, there exists $R>0$ such that for all $n\geq 1$,
\begin{align}
\operatorname{spt} \omega_n\subset B_0(R)_{+}=B_0(R)\cap \mathbb{R}^{2}_{+}.   \label{eq:ubddspt}
\end{align}
\end{lemma}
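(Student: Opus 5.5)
The plan is to bound the vertical and horizontal extents of the vortex core $\Omega_n=\{\mathcal G\omega_n-W_nx_2-\gamma_n>0\}$ separately, using the Euler--Lagrange equation \eqref{eq:near-Lamb-fixed-EL} together with the uniform bounds available from Lemma \ref{lem:convergenceLamb} and \eqref{eq:uniformvortex}. Since $\mu_n\to\kappa_*^+$, we have $W_n\to W_L>0$, so $W_n\geq W_L/2$ for large $n$; the finitely many remaining $n$ each have compact support by Theorem \ref{thm:nearly Lamb} and can be absorbed by enlarging $R$.

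\emph{Vertical bound.} By \eqref{eq:SFE}, $\|\mathcal G\omega_n\|_{L^\infty}\lesssim \mu_n^{1/2}\|\omega_n\|_{L^2}^{1/2}\leq C$, since $\|\omega_n\|_{L^2}^2=W_n\mu_n$ is bounded by \eqref{eq:pohozaev-L2}. On $\Omega_n$ we need $W_nx_2<\mathcal G\omega_n\leq C$, so $x_2<2C/W_L$ there. Alternatively, \eqref{eq:inequality sf 2} gives decay of $\mathcal G\omega_n$ for large $x_2$, but the $L^\infty$ bound already suffices.

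\emph{Horizontal bound.} This is the main step. I would use Steiner symmetry and \eqref{eq:inequality sf 4}: with $r=4$, say, and uniformly bounded $\|\omega_n\|_{L^1_*}+\|\omega_n\|_{L^1}+\|\omega_n\|_{L^4}$ (bounded by \eqref{eq:uniformvortex}, which applies since $\mu_n\geq\kappa_*$), we get $\mathcal G\omega_n(x)\leq C x_2|x_1|^{-1/8}$. Then on $\Omega_n$ we need $Cx_2|x_1|^{-1/8}>W_nx_2+\gamma_n\geq (W_L/2)x_2$, which forces $|x_1|\leq (2C/W_L)^8$. Note that the factor $x_2$ cancels, so $\gamma_n\to0$ causes no trouble and no lower bound on $\gamma_n$ is needed. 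Combining the two bounds gives $\Omega_n\subset B_0(R)_+$, and since $\operatorname{spt}\omega_n=\overline{\Omega_n}$, \eqref{eq:ubddspt} follows.

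The only delicate point is that the constants remain uniform in $n$. This relies on the uniform $L^p$ bounds of \eqref{eq:uniformvortex} in the mass-saturated regime and on the lower bound $W_n\geq W_L/2$ coming from \eqref{eq:convergenceLambW}. If one prefers to avoid \eqref{eq:inequality sf 4}, a backup is to combine the locally uniform convergence \eqref{eq:Psiconvergence} with the $L^2$ convergence \eqref{eq:convergenceLamb} and Steiner monotonicity. Mass of $\omega_n$ far out horizontally must vanish, and monotonicity in $|x_1|$ then forces $\omega_n$ to be small at large $|x_1|$, which contradicts the positivity of $\mathcal G\omega_n-W_nx_2$ there. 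Making that argument quantitative is more cumbersome, so I would try the \eqref{eq:inequality sf 4} route first.
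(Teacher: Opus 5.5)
Your proof is correct. The vertical bound matches the paper's: both use the uniform $L^\infty$ bound \eqref{eq:SFE} on $\mathcal G\omega_n$ against a uniform positive lower bound on $W_n$. The paper gets that lower bound as $W_n=\|\omega_n\|_{L^2}^2/\mu_n\geq N^2/\mu_n$ from the $L^2$ convergence; you get it from $W_n\to W_L$.

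The horizontal bound is where you take a different route. You quote Burton's Steiner-symmetric estimate \eqref{eq:inequality sf 4}, with norms bounded uniformly by \eqref{eq:uniformvortex}. The factor $x_2$ then cancels against $W_nx_2$, and you obtain $|x_1|\leq (2C/W_L)^8$ in one line. This is the same mechanism the paper uses later in Proposition \ref{lem: rough uniform bounds}. There it only yields polynomial growth in $\mu$, because $W\to 0$ and one must work with $\gamma$ instead. Here $W_n\to W_L>0$, so the same estimate gives a uniform bound directly.

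The paper instead derives the needed decay by hand. For $|x_1|>1$ it splits $\omega_n$ into two parts:
\begin{itemize}
\item The part in the vertical strip $|y_1-x_1|<\sqrt{|x_1|}$. Its $L^1_*$ and $L^2$ norms are small by Steiner monotonicity. Its stream function is controlled through the gradient bound \eqref{eq:sinfty}, using $\|\omega_n\|_{L^\infty}\leq\|\mathcal G\omega_n\|_{L^\infty}$ from the Euler--Lagrange equation.
\item The far part, controlled by \eqref{eq:GFE} with $\alpha=1$, which gives $\mu_n x_2/(\pi|x_1|)$.
\end{itemize}
Together these give $\mathcal G\omega_n(x)/x_2\leq C(\sqrt{|x_1|}-1)^{-3/8}+\mu_n/(\pi|x_1|)$. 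This is a self-contained substitute for \eqref{eq:inequality sf 4}. It uses only the $L^1_*$, $L^2$ and $L^\infty$ control coming from convergence to the Lamb dipole. It does not need the $L^r$ bounds of \eqref{eq:uniformvortex} or the external lemma of \cite{Burton21}.

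Your route is shorter, and uniformity of constants is immediate. The paper's is more elementary and shows where the decay in $|x_1|$ comes from: mass near $x_1$ is small by monotonicity, and mass far away contributes only $O(1/|x_1|)$. Your backup argument is not needed.
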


\begin{proof}
Since the sequence $\{\omega_n\}$ converges in $L^{2}\cap L^{1}_{*}$, we have  
\begin{align*}
N\leq ||\omega_n||_{L^{2}}\leq M, 
\end{align*}
for some constants $M, N>0$. By the equation \eqref{eq:near-Lamb-fixed-EL}, we have $\omega_n\leq \psi_n$ for $\psi_n=\mathcal G\omega_n$. We apply \eqref{eq:SFE} to estimate 
\begin{align*}
\|\omega_n\|_{L^\infty} \le ||\psi_n||_{L^{\infty}} \leq C\mu_n^{1/2}M^{1/2}.
\end{align*}
By the Poho\v{z}aev identity \eqref{eq:pohozaev-L2},  
\begin{align*}
W_n=\frac{1}{\mu_n}||\omega_n||_{L^{2}}^{2}\geq 
\frac{N^{2}}{\mu_n}.
\end{align*}
Since $\psi_n-W_nx_2-\gamma_n>0$ on  $\operatorname{spt} \omega_n $, we find that 
\begin{equation*}
\frac{N^{2}}{\mu_n} x_2
\leq W_nx_2+\gamma_n
\leq \psi_n (x)
\leq C\mu_n^{1/2}M^{1/2}.
\end{equation*}
Since $\mu_n\to \kappa_*^{+}>0$, \(\operatorname{spt}\omega_n\) is uniformly bounded in the \(x_2\)-direction.

We take an arbitrary point $x=(x_1,x_2)\in \mathbb{R}^{2}_{+}$ satisfying $|x_1|>1$ and define a vertical strip of width $2\sqrt{|x_1|}$ centered at $x_1$ by
\begin{align*}
A_{x_1} = \left\{ y=(y_1,y_2) \in \mathbb{R}^2_+ : |y_1 - x_1| < \sqrt{|x_1|} \right\}.
\end{align*}
For $\omega=\omega_n$, we set 
\begin{align*}
\omega(y)
=\omega \mathbf{1}_{A_{x_1}}(y)+\omega(1- \mathbf{1}_{A_{x_1}})(y)=\omega_1(y)+\omega_2(y).
\end{align*}
Observe that non-increasing functions $f(t) \ge 0$ on $[0,\infty)$ satisfy  
\begin{align*}
\int_{t-a}^{t+a} f(s) \, ds \le 2a f(t-a)\leq \frac{2a}{t-a}||f||_{L^{1}(0,\infty)},\quad t>a>0. \label{eq:nonincreasing} 
\end{align*}
Applying this inequality for $f=\omega$, $a=\sqrt{|x_1|}$, and $t=x_1$ yields   
\begin{align*}
\int_{x_1-\sqrt{|x_1|}}^{x_1+\sqrt{|x_1|}} \omega(y_1,y_2) \, dy_1 \le \frac{1}{\sqrt{|x_1|}-1}\, ||\omega_n||_{L^{1}(\mathbb{R})}(y_2).
\end{align*}
Integrating this inequality for $ y_2>0$ gives 
\begin{align*}
||\omega_1||_{L^{1}_*(\mathbb{R}_+^{2})}
 \le \frac{1}{\sqrt{|x_1|}-1}\,||\omega||_{L^{1}_*(\mathbb{R}_+^{2})}.
\end{align*}
Similarly, we estimate 
\begin{align*}
||\omega_1||_{L^{2}(\mathbb{R}_+^{2})}
 \le \left(\frac{1}{\sqrt{|x_1|}-1}\right)^{\frac{1}{2}} ||\omega||_{L^{2}(\mathbb{R}_+^{2})}.
\end{align*}
Applying the stream function estimate \eqref{eq:sinfty} for $\psi_1=\mathcal{G}\omega_1$ yields 
\[
||\nabla \psi_{1}||_{L^{\infty}(\mathbb{R}^{2}_{+})}
\le C
\|\omega_{1}\|_{L^1_*(\mathbb{R}^{2}_{+})}^{\frac{1}{4}}
\|\omega_{1}\|_{L^2(\mathbb{R}^{2}_{+})}^{\frac{1}{4}} \|\omega_{1}\|_{L^\infty(\mathbb{R}^{2}_{+})}^{\frac{1}{2}}.
\]
We thus obtain
\[
\frac{\psi_{1}(x)}{x_2} \le C
 \left(\frac{1 }{\sqrt{|x_1|-1}} \right)^{\frac{3}{8}}\mu_n^{1/2}M^{1/2}.
\]
For $y \notin A_{x_1}$, we have $|x-y| \ge |x_1 - y_1| \ge \sqrt{|x_1|}$. Using \eqref{eq:GFE} for $\alpha=1$, we estimate $\psi_2=\mathcal{G}\omega_2$ by 
\[
\psi_{2}(x) \le \frac{1}{4\pi} \int_{\mathbb{R}^2_+ \setminus A_{x_1}} \frac{4 x_2 y_2}{|x-y|^2} \omega (y) \, dy \le \frac{1}{\pi} \frac{x_2}{|x_1|} \int_{\mathbb{R}^2_+} y_2 \omega (y) \, dy = \frac{\mu_n x_2}{\pi|x_1|}.
\]
For $x = (x_1, x_2) \in \operatorname{spt} \omega_n$ satisfying $|x_1|>1$, we find that 
\begin{align*}
\frac{N^{2}}{\mu_n}\leq \frac{\psi_n(x)}{x_2}
\leq C 
 \left(\frac{1 }{\sqrt{|x_1|-1}} \right)^{\frac{3}{8}}\mu_n^{1/2}M^{1/2}+\frac{\mu_n}{\pi|x_1|},
\end{align*}
and \(\operatorname{spt}\omega_n\) is uniformly bounded in the \(x_1\)-direction.
\end{proof}

\begin{proposition}\label{p:strongerconvergence0}
The sequence $\{\omega_n\}$ in Lemma \ref{lem:convergenceLamb} satisfies
\begin{align}
\omega_n&\to \omega^{L}_{1,W_L}\quad  \textrm{uniformly in}\  \overline{\mathbb{R}^{2}_{+}},  \label{eq:omegauniform}\\
\nabla \mathcal{G}\omega_n&\to \nabla \mathcal{G}\omega^{L}_{1,W_{L}}\quad  \textrm{uniformly in}\  \overline{\mathbb{R}^{2}_{+}}.\label{eq:nablapsiuniform}
\end{align}    
\end{proposition}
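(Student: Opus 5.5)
The plan is to upgrade the convergence of Lemma \ref{lem:convergenceLamb} by combining the Euler--Lagrange equation \eqref{eq:near-Lamb-fixed-EL} with the uniform support bound of Lemma \ref{lem:bddsupport}. Write $\psi_n=\mathcal G\omega_n$ and $\psi=\mathcal G\omega^L_{1,W_L}$. Since $\omega_n=(\psi_n-W_nx_2-\gamma_n)_+$ and $\omega^L_{1,W_L}=(\psi-W_Lx_2)_+$ (Lemma \ref{l:uniquemass} with $\lambda=1$), the $1$-Lipschitz property of $s\mapsto s_+$ gives
\begin{align*}
|\omega_n-\omega^L_{1,W_L}|\leq |\psi_n-\psi|+|W_n-W_L|x_2+\gamma_n .
\end{align*}
Both functions vanish outside $B_0(R)_+$ by \eqref{eq:ubddspt} and the explicit support of the Lamb dipole, so it suffices to bound the right-hand side on $B_0(R)_+$, where $x_2\le R$. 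The first term tends to zero uniformly by \eqref{eq:psiconvergence}, and the remaining terms tend to zero by \eqref{eq:convergenceLambW}--\eqref{eq:convergenceLambg}. This proves \eqref{eq:omegauniform}. (Alternatively, \eqref{eq:Psiconvergence} is already locally uniform, and compact support turns it into global uniform convergence.)

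For \eqref{eq:nablapsiuniform}, apply the gradient estimate \eqref{eq:sinfty} to the difference $\omega_n-\omega^L_{1,W_L}$; it holds for signed $\omega$, since the proof reduces to $|\omega|$. Take, say, $\alpha=1/4$:
\begin{align*}
\|\nabla\mathcal G(\omega_n-\omega^L_{1,W_L})\|_{L^\infty}\lesssim \|\cdot\|_{L^1_*}^{1/8}\|\cdot\|_{L^1}^{1/4}\|\cdot\|_{L^2}^{1/8}\|\cdot\|_{L^\infty}^{1/2}.
\end{align*}
The $L^1_*$ and $L^2$ norms of the difference tend to zero by \eqref{eq:convergenceLamb}, the $L^\infty$ norm tends to zero by \eqref{eq:omegauniform}, and the $L^1$ norm is bounded because both functions have mass at most $1$. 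Alternatively, uniform convergence on a fixed bounded support already gives $L^1$ convergence. Hence the gradients converge uniformly on $\overline{\mathbb R^2_+}$.

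The only delicate point is that \eqref{eq:psiconvergence} and \eqref{eq:Psiconvergence} hold only locally because of the unbounded term $W_nx_2$. This is exactly what Lemma \ref{lem:bddsupport} handles: outside a fixed half-disk both vorticities vanish identically, so no estimate is needed there. I expect no further obstacle.
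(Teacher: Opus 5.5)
Your proposal is correct and follows essentially the same route as the paper. The paper writes $\omega_n-\omega^L_{1,W_L}$ via the Euler--Lagrange equation as an averaged indicator times the difference of the shifted stream functions, which is exactly your $1$-Lipschitz bound for $s\mapsto s_+$; it then combines the locally uniform convergence \eqref{eq:Psiconvergence} with the uniform support bound of Lemma \ref{lem:bddsupport}, and deduces \eqref{eq:nablapsiuniform} from \eqref{eq:sinfty} applied to the difference. Your justifications for using \eqref{eq:sinfty} with signed $\omega$ and for the bounded $L^1$ factor are sound, and the only implicit step, enlarging $R$ so that $R\geq j_{1,1}$, is harmless.
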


\begin{proof}
It follows from \eqref{eq:near-Lamb-fixed-EL} that 
\begin{align*}
\omega_n-\omega^{L}_{1,W_{L}}
=&(\mathcal{G}\omega_n-W_nx_2-\gamma_n)_+
-(\mathcal{G}\omega^{L}_{1,W_{L}}-W_{L}x_2)_+ \\
=&\int_{0}^{1}\mathbf{1}_{(0,\infty)}(t(\mathcal{G}\omega_n-W_nx_2-\gamma_n)+(1-t)(\mathcal{G}\omega^{L}_{1,W_{L}}-W_{L}x_2))dt\\
&\cdot \left(\mathcal{G}\omega_n-W_nx_2-\gamma_n- (\mathcal{G}\omega^{L}_{1,W_{L}}-W_{L}x_2)\right).
\end{align*}
By the local uniform convergence of the stream function \eqref{eq:Psiconvergence}, $\omega_n$ converges to $\omega^{L}_{1,W_{L}}$ locally uniformly in $\overline{\mathbb{R}^{2}_{+}}$. Since $\omega_n$ is supported on $B_0(R)_{+}$ uniformly for all $n$ by Lemma \ref{lem:bddsupport}, $\omega_n$ converges to $\omega^{L}_{1,W_{L}}$ uniformly in $\overline{\mathbb{R}^{2}_{+}}$. The convergence \eqref{eq:nablapsiuniform} follows from \eqref{eq:sinfty}.
\end{proof}

\subsection{The distance estimate for the vortex core}\label{subsec:gap}
	
\begin{lemma}\label{lem:gap}
		There exists a constant $C>0$ such that
		\begin{align}
		C^{-1}\gamma_n\le\operatorname{dist}(\operatorname{spt}\omega_n,\partial\mathbb R^2_+)\le C\gamma_n.  \label{eq:gap}
		\end{align}
	\end{lemma}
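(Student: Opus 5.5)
The plan is to compare the relative stream function $\Psi_n=\mathcal G\omega_n-W_nx_2-\gamma_n$ with its boundary value. Set $\psi_n=\mathcal G\omega_n$. The argument has two halves: a lower bound on the gap using the gradient bound, and an upper bound using nondegeneracy of the Lamb stream function at the origin. The core $\Omega_n=\{\Psi_n>0\}$ is bounded by Lemma \ref{lem:bddsupport}, and $\operatorname{spt}\omega_n=\overline{\Omega_n}$. Since $\psi_n=0$ on $\partial\mathbb R^2_+$, we have $\Psi_n(x_1,0)=-\gamma_n<0$.

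\emph{Lower bound.} By Proposition \ref{p:strongerconvergence0}, $\|\nabla\psi_n\|_{L^\infty}\le C$ uniformly in $n$. Take a point $x\in\operatorname{spt}\omega_n$, so that $\Psi_n(x)\ge 0$. Then
\begin{align*}
\gamma_n\le \psi_n(x)-W_nx_2\le \psi_n(x)=\psi_n(x)-\psi_n(x_1,0)\le Cx_2 .
\end{align*}
Taking the infimum over $x$ gives $\operatorname{dist}(\operatorname{spt}\omega_n,\partial\mathbb R^2_+)\ge \gamma_n/C$.

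\emph{Upper bound.} By Steiner symmetry, the point of the core closest to the boundary can be sought on the $x_2$-axis. I would show that $\Psi_n(0,x_2)>0$ for $x_2=C\gamma_n$. From \eqref{eq:Lambstream}, the Lamb stream function $\Psi_L=\mathcal G\omega^L_{1,W_L}-W_Lx_2=C_LJ_1(r)\sin\theta$ satisfies
\begin{align*}
\partial_{x_2}\Psi_L(0,0)=C_LJ_1'(0)=C_L/2>0,
\end{align*}
since $J_0(j_{1,1})<0$ gives $C_L>0$. I need $\partial_{x_2}\Psi_n$ to be bounded below near the origin. Uniform convergence of $\nabla\psi_n$ from \eqref{eq:nablapsiuniform}, together with $W_n\to W_L$, gives $\partial_{x_2}\Psi_n\ge C_L/4$ on $B_0(\rho)_+$ for some fixed $\rho>0$ and all large $n$. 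Hence
\begin{align*}
\Psi_n(0,x_2)\ge -\gamma_n+\frac{C_L}{4}x_2 \qquad\text{for } x_2<\rho .
\end{align*}
Since $\gamma_n\to0$, the choice $x_2=8\gamma_n/C_L<\rho$ gives $\Psi_n(0,x_2)>0$. This point lies in $\Omega_n$, so $\operatorname{dist}(\operatorname{spt}\omega_n,\partial\mathbb R^2_+)\le 8\gamma_n/C_L$. The finitely many small $n$ are absorbed into the constant, using that each $\omega_n$ has positive distance to the boundary and $\gamma_n>0$.

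The main obstacle is the lower bound on $\partial_{x_2}\Psi_n$ near the origin. It requires the uniform gradient convergence \eqref{eq:nablapsiuniform}, rather than mere uniform convergence of the stream function, because the scale $\gamma_n$ shrinks to zero. If \eqref{eq:sinfty} only yields $L^\infty$ bounds on gradient differences, I would verify that the bound does tend to zero. It does: $\|\omega_n-\omega^L\|$ tends to zero in $L^1_*$, $L^1$ (compact supports) and $L^2$, and stays bounded in $L^\infty$.
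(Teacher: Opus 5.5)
Your proposal is correct and follows essentially the paper's argument. For the lower bound you use the uniform Lipschitz bound on $\psi_n$ together with $\psi_n=0$ on $\partial\mathbb R^2_+$, which is the paper's uniform bound on $\mathcal G\omega_n/x_2$. For the upper bound you use the nondegeneracy $\partial_{x_2}\Psi_L(0)=C_L/2=-W_L/J_0(j_{1,1})>0$ combined with \eqref{eq:nablapsiuniform} to exhibit the core point $(0,C\gamma_n)$.

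The only differences are cosmetic. You bound $\partial_{x_2}\Psi_n$ from below and integrate, whereas the paper bounds the difference quotient $(\mathcal G\omega_n-W_nx_2)/x_2$ directly. Your last paragraph re-derives \eqref{eq:nablapsiuniform}, which Proposition \ref{p:strongerconvergence0} already provides.
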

    
\begin{proof}
Observe from \eqref{eq:near-Lamb-fixed-EL} that 
\begin{equation*}\
\{x\in\mathbb R^2_+:\omega_n>0\}=\left\{x\in\mathbb R^2_+:\frac{\mathcal{G}\omega_n}{x_2}-W_n>\frac{\gamma_n}{x_2}\right\}.
\end{equation*}
By the convergence \eqref{eq:nablapsiuniform}, $\mathcal{G}\omega_n/x_2$ is uniformly bounded in $\mathbb{R}^{2}_{+}$. Thus for $x\in \operatorname{spt} \omega_n$,
\begin{equation*}
\frac{\gamma_n}{x_2}\le C,
\end{equation*}
with some constant $C$ independent of $n$. This yields the lower bound in \eqref{eq:gap}.

By the explicit form of the stream function of the Lamb dipole \eqref{eq:Lambstream} for $r\leq j_{1,1}$ and $J'_1(0)=1/2$, we observe that  
\begin{align*}
\frac{\mathcal{G}\omega^{L}_{1,W_{L}}-W_{L}x_2}{x_2}=\frac{J_1(r)}{r}\left(-\frac{2W_{L}}{J_0(j_{1,1})}\right)\to \left(-\frac{W_{L}}{J_0(j_{1,1})}\right)=:c>0\quad \textrm{as}\ r\to 0.
\end{align*}
Since $\mathcal{G}\omega_n/x_2-W_n$ converges to $\mathcal{G}\omega^{L}_{1,W_{L}}/x_2-W_{L}$ uniformly in $\overline{\mathbb{R}^{2}_{+}}$ by \eqref{eq:nablapsiuniform}, there exists $R>0$ such that 
\begin{align*}
\frac{\mathcal{G}\omega_n-W_nx_2}{x_2}>\frac{c}{2},
\end{align*}
for all $x\in B_0(R)_{+}$ and $n$. The constant
\begin{align*}
b_n=\frac{2\gamma_n}{c},
\end{align*}
vanishes as $n\to\infty$ by \eqref{eq:convergenceLambg}. Thus $(0,b_n)\in B_0(R)_{+}$ for large $n$ and 
\begin{align*}
\frac{(\mathcal{G}\omega_n)(0,b_n)-W_nb_n}{b_n}>\frac{c}{2}.
\end{align*}
Namely, we have
\begin{align*}
(\mathcal{G}\omega_n)(0,b_n)-W_nb_n>\gamma_n.
\end{align*}
Hence, $(0,b_n)\in \operatorname{spt} \omega_n$. We conclude that the upper bound in \eqref{eq:gap} also holds.
\end{proof}

\subsection{Uniqueness of maximizers}
	We now prove the uniqueness of the maximizers in Theorem \ref{thm:unique-near-lamb} near the Lamb dipole regime.	

\begin{proposition}\label{p:identitydifference}
Let $\mu>\kappa_*$. For two maximizers $\omega^{i}\in S_{\mu}$ in Theorem \ref{thm:nearly Lamb} with constants $W^{i},\gamma^{i}>0$ for $i=1,2$, we have
\begin{equation}
\begin{aligned}
\omega^{1}-\omega^{2}
=& \left(\int_{0}^{1}\mathbf{1}_{(0,\infty)}\left(t\left(\mathcal{G}\omega^{1}-W^{1}x_2-\gamma^{1}\right)+(1-t)\left(\mathcal{G}\omega^{2}-W^{2}x_2-\gamma^{2}\right)\right)dt\right)\\
&\cdot\left(\mathcal{G}(\omega^1-\omega^{2})-\left(\frac{1}{\mu}x_2-1\right)\left(\omega^{1}-\omega^{2},\omega^{1}+\omega^{2}\right)_{L^{2}(\mathbb{R}^{2}_{+})}\right).
\end{aligned}
\label{eq:identitydifference}
\end{equation}
\end{proposition}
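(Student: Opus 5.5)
The identity follows by subtracting the two Euler--Lagrange equations and then removing the Lagrange multipliers with the integral identities of Lemma~\ref{lemma:pohozaev}. We work with $\lambda=\nu=1$. For $i=1,2$ we write $\Psi^i=\mathcal G\omega^i-W^ix_2-\gamma^i$, so that \eqref{eq:near-Lamb-fixed-EL} reads $\omega^i=(\Psi^i)_+$ in $\mathbb R^2_+$. We also set $\Delta=(\omega^1-\omega^2,\omega^1+\omega^2)_{L^2(\mathbb R^2_+)}$.

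\emph{Step 1: the prefactor.} We use the elementary pointwise identity
\begin{align*}
a_+-b_+=\left(\int_0^1\mathbf 1_{(0,\infty)}\bigl(ta+(1-t)b\bigr)\,dt\right)(a-b),\qquad a,b\in\mathbb R .
\end{align*}
If $a=b$, both sides vanish. If $a\neq b$, the map $t\mapsto ta+(1-t)b$ is affine and nonconstant, so it vanishes at no more than one $t$. Hence $s\mapsto s_+$, which is Lipschitz, can be differentiated along this segment for a.e.\ $t$, and the fundamental theorem of calculus gives the identity. Applying it at each $x$ with $a=\Psi^1(x)$ and $b=\Psi^2(x)$ produces exactly the first factor in \eqref{eq:identitydifference}. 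The second factor is
\begin{align*}
\Psi^1-\Psi^2=\mathcal G(\omega^1-\omega^2)-(W^1-W^2)x_2-(\gamma^1-\gamma^2).
\end{align*}

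\emph{Step 2: the multipliers.} By \eqref{eq:pohozaev-L2} with $\lambda=1$ we have $\|\omega^i\|_{L^2}^2=W^i\mu$, and therefore
\begin{align*}
W^1-W^2=\frac1\mu\bigl(\|\omega^1\|_{L^2}^2-\|\omega^2\|_{L^2}^2\bigr)=\frac{\Delta}{\mu}.
\end{align*}
Since $\mu>\kappa_*$, Theorem~\ref{thm:threshold}(B) gives $\|\omega^i\|_{L^1}=1$. Then \eqref{eq:pohozaev-penalized} becomes $I_\mu=\tfrac12W^i\mu+\tfrac12\gamma^i$. Because both $\omega^1$ and $\omega^2$ attain the same maximum $I_\mu$, this yields $\gamma^i=2I_\mu-W^i\mu$. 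Consequently
\begin{align*}
\gamma^1-\gamma^2=-\mu(W^1-W^2)=-\Delta .
\end{align*}

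\emph{Step 3: conclusion.} Substituting these two relations gives
\begin{align*}
-(W^1-W^2)x_2-(\gamma^1-\gamma^2)=-\Bigl(\frac{x_2}{\mu}-1\Bigr)\Delta .
\end{align*}
Inserting this into Step 1 gives \eqref{eq:identitydifference}.

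There is no genuine obstacle here. The only point that needs care is that the mass constraint is saturated, $\|\omega^i\|_{L^1}=1$, for both maximizers. This is what allows $\gamma^1-\gamma^2$ to be expressed through the common value $I_\mu$, and it is supplied by Theorem~\ref{thm:threshold}(B).
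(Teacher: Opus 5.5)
Your proposal is correct and follows the paper's proof essentially step for step. In both, mass saturation from Theorem~\ref{thm:threshold}(B), combined with \eqref{eq:pohozaev-penalized} and \eqref{eq:pohozaev-L2}, gives $W^1-W^2=\mu^{-1}(\omega^1-\omega^2,\omega^1+\omega^2)_{L^2}$ and $\gamma^1-\gamma^2=-(\omega^1-\omega^2,\omega^1+\omega^2)_{L^2}$, and these are substituted into the integral representation of the difference of positive parts. Your explicit justification of $a_+-b_+=\bigl(\int_0^1\mathbf{1}_{(0,\infty)}(ta+(1-t)b)\,dt\bigr)(a-b)$ is a detail the paper leaves implicit.
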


\begin{proof}
Since mass is saturated for $\mu>\kappa_*$, the identities \eqref{eq:pohozaev-penalized} and \eqref{eq:pohozaev-L2} imply 
\begin{align*}
&I_{\mu}=\frac{W^{i}}{2}\mu+\frac{\gamma^{i}}{2},\\
&||\omega^{i}||_{L^{2}(\mathbb{R}^{2}_{+})}^{2}=W^{i}\mu.
\end{align*}
Taking the difference, we have 
\begin{align*}
W^{1}-W^{2}&=\frac{1}{\mu}(\omega^{1}-\omega^{2},\omega^{1}+\omega^{2})_{L^{2}(\mathbb{R}^{2}_{+})},\\
\gamma^{1}-\gamma^{2}&=-(\omega^{1}-\omega^{2},\omega^{1}+\omega^{2})_{L^{2}(\mathbb{R}^{2}_{+})}.
\end{align*}
It follows that  
\begin{align*}
\omega^{1}-\omega^{2}
=&\left(\int_{0}^{1}\mathbf{1}_{(0,\infty)}\left(t\left(\mathcal{G}\omega^{1}-W^{1}x_2-\gamma^{1}\right)+(1-t)\left(\mathcal{G}\omega^{2}-W^{2}x_2-\gamma^{2}\right)\right)dt\right)\\
&\cdot \left(\mathcal{G}\left(\omega^{1}-\omega^{2}\right)-\left(W^{1}-W^{2}\right)x_2-\left(\gamma^{1}-\gamma^{2}\right)\right).
\end{align*}
Substituting the above expressions for $W^{1}-W^{2}$ and $\gamma^{1}-\gamma^{2}$ yields the desired expression.
\end{proof}

\begin{lemma}\label{uniq-maximizer}
		There exists $\delta_{*}>0$ such that for $\mu \in  (\kappa_*,\kappa_*+\delta_{*}]$, there exists a unique Steiner symmetric $\omega$ such that 
        \begin{align}
        S_{\mu}=\{\omega(\cdot +(a,0)): a\in \mathbb{R}\}.  \label{eq:uniqueness}
        \end{align}
		\end{lemma}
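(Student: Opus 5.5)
We argue by contradiction. If the lemma fails, there is a sequence $\mu_n\to\kappa_*^+$ and two Steiner symmetric maximizers $\omega_n^1,\omega_n^2\in S_{\mu_n}$, both symmetric about the $x_2$-axis, with $\omega_n^1\neq\omega_n^2$. By Proposition \ref{prop:Steiner} (ii), every element of $S_{\mu_n}$ is a horizontal translate of a Steiner symmetric one. So it suffices to show that two Steiner symmetric maximizers centred on the $x_2$-axis coincide.

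By Lemma \ref{lem:convergenceLamb}, Lemma \ref{lem:bddsupport} and Proposition \ref{p:strongerconvergence0}, after passing to a subsequence both sequences satisfy the following:
\begin{itemize}
\item they converge uniformly to $\omega^L_{1,W_L}$;
\item their supports lie in a fixed half-disk $B_0(R)_+$;
\item $\mathcal G\omega_n^i-W_n^ix_2-\gamma_n^i\to\mathcal G\omega^L_{1,W_L}-W_Lx_2$ locally uniformly.
\end{itemize}
We normalize the difference,
\begin{align*}
\zeta_n=\frac{\omega_n^1-\omega_n^2}{\|\omega_n^1-\omega_n^2\|_{L^2}}.
\end{align*}
Each $\zeta_n$ is supported in $B_0(R)_+$, is even in $x_1$, and has $\int x_2\zeta_n\,dx=0$ because both impulses equal $\mu_n$. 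It also has $\int\zeta_n\,dx=0$, since the mass is saturated by Theorem \ref{thm:threshold} (B).

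Next we pass to the limit in Proposition \ref{p:identitydifference}. Divide \eqref{eq:identitydifference} by $\|\omega_n^1-\omega_n^2\|_{L^2}$. The weight
\begin{align*}
\chi_n=\int_0^1\mathbf 1_{(0,\infty)}(\cdots)\,dt
\end{align*}
takes values in $[0,1]$ and converges a.e. to $\mathbf 1_{B_+}$ with $B_+=B_0(j_{1,1})_+$, by \eqref{eq:convergeLamb2}. This holds because the set where the limit stream function vanishes is null. The term $\mathcal G\zeta_n$ is bounded in $H^2_{loc}$ and in $L^\infty$ by \eqref{eq:SFE}, so it is precompact in $C(\overline{B_0(R)_+})$. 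The scalars
\begin{align*}
c_n=(\zeta_n,\omega_n^1+\omega_n^2)_{L^2}
\end{align*}
are bounded, so along a subsequence $c_n\to c=2(\zeta,\omega^L_{1,W_L})_{L^2}$, where $\zeta_n\rightharpoonup\zeta$ weakly in $L^2$. The key consequence is that the right-hand side $\chi_n(\mathcal G\zeta_n-(x_2/\mu_n-1)c_n)$ converges \emph{strongly} in $L^2(B_0(R)_+)$, by dominated convergence on a bounded set. Since $\zeta_n$ equals this right-hand side, $\zeta_n\to\zeta$ strongly. Hence $\|\zeta\|_{L^2}=1$, and $\zeta$ is supported in $\overline{B_+}$. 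In the limit, $\zeta$ satisfies
\begin{align*}
L_+\zeta=-\Bigl(\frac{x_2}{\kappa_*}-1\Bigr)\mathbf 1_{B_+}\,2(\zeta,\omega^L_{1,W_L})_{L^2},
\end{align*}
together with $\zeta$ even in $x_1$, $\int x_2\zeta=0$ and $\int\zeta=0$.

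Now we apply Theorem \ref{t:halfspacecoercive2}. Pairing the limit equation with $\zeta$ and using $\int x_2\zeta=\int\zeta=0$ kills the right-hand side, so $(L_+\zeta,\zeta)=0$. Coercivity then forces $\zeta=0$, which contradicts $\|\zeta\|_{L^2}=1$. This gives uniqueness for $\mu\in(\kappa_*,\kappa_*+\delta_*]$ for some $\delta_*>0$.

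The main obstacle is the strong-convergence step. Specifically, we need the indicator weights $\chi_n$ to converge a.e. on all of $\mathbb R^2_+$ rather than only in the interior, and we need to control the contribution from the thin boundary layer $\{x_2\lesssim\gamma_n\}$ and the region near $\partial B_+$. Uniform support bounds combined with dominated convergence should handle both. We also need the mass constraint $\int\zeta=0$ to pass to the limit; it does, because the convergence is strong and the domain is bounded. That constraint is exactly what eliminates the inhomogeneous $\gamma$-term.
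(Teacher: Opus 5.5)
Your proposal is correct and follows essentially the same route as the paper: argue by contradiction along $\mu_n\to\kappa_*^+$, normalize the difference of two Steiner symmetric maximizers, and pass to the limit in the identity \eqref{eq:identitydifference} to obtain strong $L^2$ convergence to a nontrivial $\zeta$ solving $L_+\zeta=-2(x_2/\kappa_*-1)(\zeta,\omega^L_{1,W_L})$. The zero-mass and zero-impulse conditions then give $(L_+\zeta,\zeta)=0$, which contradicts the coercivity in Theorem \ref{t:halfspacecoercive2}. The only cosmetic difference is that you obtain convergence of $\mathcal G\zeta_n$ through $H^2$ precompactness, whereas the paper uses pointwise convergence from the Green function bound; either suffices, as long as you identify the limit as $\mathcal G\zeta$ via the weak convergence $\zeta_n\rightharpoonup\zeta$.
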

	
\begin{proof}
Suppose, for contradiction, that for any $n\geq 1$ there exists $\kappa_*< \mu_n\leq \kappa_*+1/n$ such that there exist two Steiner symmetric maximizers $\omega_{n}^{1}, \omega_{n}^{2}\in S_{\mu_n}$ with the constants $W^{i}_n, \gamma^{i}_{n}>0$. By properties of the maximizers, each $\omega^{i}_{n}$ satisfies 
\begin{align*}
I_{\mu_n}=E_{2}[\omega_n^{i}],\quad
\mu_n=\int_{\mathbb{R}^{2}_{+}}x_2\omega_{n}^{i}dx, \quad
1=\int_{\mathbb{R}^{2}_{+}}\omega_{n}^{i}dx, \quad
\omega^{i}_{n}=(\mathcal{G}\omega^{i}_{n}-W^{i}_nx_2-\gamma_n^{i})_+.
\end{align*}
By Lemmas \ref{lem:convergenceLamb} and \ref{lem:bddsupport}, along a subsequence, $\omega_n^{i}$ converges to the Lamb dipole  $\omega^{L}_{1,W_{L}}$ uniformly in $\overline{\mathbb{R}^{2}_{+}}$ and  
\begin{align*}
\operatorname{spt} \omega^{i}_{n}\subset B_+=B_0(R)_{+},
\end{align*}
for some $R>0$. We set 
\[
\zeta_n = \frac{\omega_{n}^{1}-\omega_{n}^{2}}{\|\omega_{n}^{1}-\omega_{n}^{2}\|_{L^2(B_+)}},
\]
so that 
\begin{align*}
||\zeta_n||_{L^{2}(B_+)}=1,\quad \int_{B_+}x_2 \zeta_n dx=0,\quad \int_{B_+}\zeta_n dx=0,\quad \zeta_n(x_1,x_2)=\zeta_n(-x_1,x_2).
\end{align*}
We apply the identity \eqref{eq:identitydifference} and observe that $\zeta_n$ satisfies 
\begin{equation*}
\begin{aligned}
\zeta_n
=& \left(\int_{0}^{1}\mathbf{1}_{(0,\infty)}\left(t\left(\mathcal{G}\omega^{1}_n-W^{1}_nx_2-\gamma^{1}_n\right)+(1-t)\left(\mathcal{G}\omega^{2}_n-W^{2}_nx_2-\gamma^{2}_n\right)\right)dt\right)\\
&\cdot\left(\mathcal{G}\zeta_n-\left(\frac{1}{\mu_n}x_2-1\right)\left(\zeta_n,\omega^{1}_n+\omega^{2}_n\right)_{L^{2}(B_+)}\right).
\end{aligned}
\end{equation*}
Passing to a subsequence, $\zeta_n$ converges to a limit $\zeta$ weakly in $L^{2}(B_+)$. Since $\zeta_n$ is bounded in $L^{2}(B_+)$, $\mathcal{G}\zeta_n$ is bounded in $L^{\infty}(\mathbb{R}^{2}_{+})$ by \eqref{eq:SFE}. By the above expression, $\zeta_n$ is uniformly bounded in $B_+$. Moreover, the Green function estimate \eqref{eq:GFE} implies that $\mathcal{G}\zeta_n(x)$ converges to $\mathcal{G}\zeta(x)$ for each $x\in B_+$. 

The convergence \eqref{eq:Psiconvergence} for $\omega^{i}_n$ implies that for each $0\leq t\leq 1$,
\begin{align*}
\mathbf{1}_{(0,\infty)}\left(t\left(\mathcal{G}\omega^{1}_n-W^{1}_nx_2-\gamma^{1}_n\right)+(1-t)\left(\mathcal{G}\omega^{2}_n-W^{2}_nx_2-\gamma^{2}_n\right)\right)
\to \mathbf{1}_{B_0(j_{1,1})_{+}}(x)\quad \textrm{a.e.}\ x\in \mathbb{R}^{2}_{+}. 
\end{align*}
Thus letting $n\to\infty$ implies 
\begin{align*}
\zeta_n\to \zeta= \mathbf{1}_{B_0(j_{1,1})_{+}}(x) \left(\mathcal{G}\zeta(x)-2\left(\frac{1}{\kappa_*}x_2-1\right)(\zeta,\omega^{L}_{1,W_{L}})_{L^{2}(B_+)}\right)\quad \textrm{a.e.}\ x\in \mathbb{R}^{2}_{+}.
\end{align*}
In particular, $\zeta_n\to \zeta$ in $L^{2}(B_+)$. Since $\zeta$ is supported on $\overline{B_{0}(j_{1,1})_{+}}$, this implies 
\begin{align*}
||\zeta||_{L^{2}(B_{0}(j_{1,1})_{+})}=1,\quad \int_{B_{0}(j_{1,1})_{+}}x_2 \zeta dx=0,\quad \int_{B_{0}(j_{1,1})_{+}}\zeta dx=0,\quad \zeta(x_1,x_2)=\zeta(-x_1,x_2).
\end{align*}
The limit $\zeta$ satisfies 
\begin{align*}
L_+\zeta=-2\left(\frac{1}{\kappa_*}x_2-1\right)(\zeta,\omega^{L}_{1,W_{L}})_{L^{2}(B_{0}(j_{1,1})_{+})},
\end{align*}
for the operator $L_{+}$ in \eqref{eq:L+}. In particular, using the impulse and mass conditions, we have 
\begin{align*}
(L_+\zeta,\zeta)_{L^{2}(B_{0}(j_{1,1})_{+})}=0.
\end{align*}
Since $\zeta$ is even in $x_1$ and supported on $B_{0}(j_{1,1})_+$, we apply the coercive estimate \eqref{eq:coercive+} and conclude that $\zeta=0$. This yields a contradiction. We proved \eqref{eq:uniqueness}. 
\end{proof}

\begin{proof}[Proof of Theorem \ref{thm:unique-near-lamb} for $\kappa_{*}<\kappa\leq \kappa_{*}+\delta_{*}$ and of Theorem \ref{thm:well separated dipole} (i)]
The uniqueness follows from Lemma \ref{uniq-maximizer}. The limiting profile follows from Lemmas \ref{lem:convergenceLamb}, \ref{lem:bddsupport}, \ref{lem:gap} and Proposition \ref{p:strongerconvergence0}.
\end{proof}

\section{Asymptotics at large impulse}\label{s:5}
	
In contrast to the near-Lamb dipole regime, in the large-impulse regime we first establish a uniform bound on the vortex cores of the maximizers as $\mu\to\infty$. This uniform localization allows us to show that the recentered maximizers form a maximizing sequence for the logarithmic free-energy maximization problem \eqref{eq:MFE}. Its compactness and uniqueness then yield convergence to the radially symmetric solution.

\subsection{Outline of the vortex core bound}\label{ss:5.1}

The main step of the proof of uniqueness (Theorem \ref{thm:unique-near-lamb}) is the uniform vortex core bound 
\begin{align}
\operatorname{spt}\hat{\omega}\subset B_0(R),\quad \hat{\omega}(x)=\omega(x +(0,\mu)), \label{eq:uniformvortexcorebound1}
\end{align}
with some $R>0$ for Steiner symmetric maximizers $\omega\in S_{\mu}$ to \eqref{eq: VP-2} and all large $\mu$. We prove this property in the following procedure.

\subsubsection{The lower bound for the flux constant}

We first show the lower bound for the constant $\gamma$:
\begin{align}
\gamma\geq \frac{1}{2\pi}\log(2\mu)-C,  \label{eq:gamma lower bound}
\end{align}
using the identities \eqref{eq:pohozaev-penalized} and
\eqref{eq:pohozaev-L2} together with the uniform estimate
\eqref{eq:uniformvortex}.

\subsubsection{Rough support bounds}

We then show the growth estimate of the vortex core 
\begin{equation}
D_1=\sup_{x\in \operatorname{spt}\omega} |x_1|\lesssim \mu^a,\quad 
D_2=\sup_{x\in \operatorname{spt}  \omega}x_2\lesssim \mu, \label{eq:rough uniform bounds}
\end{equation}
for any $a>8$. This shows that horizontal growth is at most polynomial. The vertical growth is optimal. These growth bounds follow from \eqref{eq:gamma lower bound} and the stream function estimates. 

\subsubsection{The local mass bound}

We first upgrade the horizontal bound for $a=0$. A key step of the proof is to estimate the local mass from below with the dimensionless constant
\begin{align}
Q=\max\left\{e,\sqrt{\left(\frac{D_1}{\mu}\right)^{2}+\left(\frac{D_2}{\mu}\right)^{2}}\right\}.  \label{eq:Q}
\end{align}
This constant is not a priori bounded for all $\mu$. We uniformly estimate a local mass from below by 
\begin{equation}\label{eq:large-local-mass}
\inf\left\{\int_{B_x({A Q^{1+\delta}})}\omega(y)\,dy:\ x\in\operatorname{spt}\omega,\ \mu\geq M\right\}\ge \frac{1}{2}\left(1-\frac{1}{1+\delta}\right),
\end{equation}
for every $\delta>0$ with some $A=A_{\delta}\geq 1$. We obtain this local mass bound by estimating $N*\omega$ from below by $-\log Q$ using \eqref{eq:gamma lower bound} and from above by $-\log{R}\int_{|x-y|\geq R}\omega dx$ and taking $R=AQ^{1+\delta}$.

\subsubsection{Packing argument}

We choose $N$ disjoint balls $\{B_{x_j}(R)\}_{j=1}^{N}$ with centers lying on a horizontal line intersecting
$\operatorname{spt}\omega$ and apply the local mass bound \eqref{eq:large-local-mass} to bound $D_1$ uniformly for all $\mu$; see Figure \ref{fig:horizontal-bound}. The uniform boundedness of $D_1$ implies a uniform bound for $Q$ and upgrades the radius of the local mass bound by a uniformly bounded constant $R_0$; namely we obtain 
\begin{equation}\label{eq:large-local-mass2}
\inf\left\{\int_{B_x(R_0)}\omega(y)\,dy:\ x\in\operatorname{spt}\omega,\ \mu\geq M\right\}\ge m_0.
\end{equation}

\begin{figure}[h]
    \centering

    \begin{subfigure}[b]{0.63\textwidth}
        \centering
        \includegraphics[width=\textwidth]{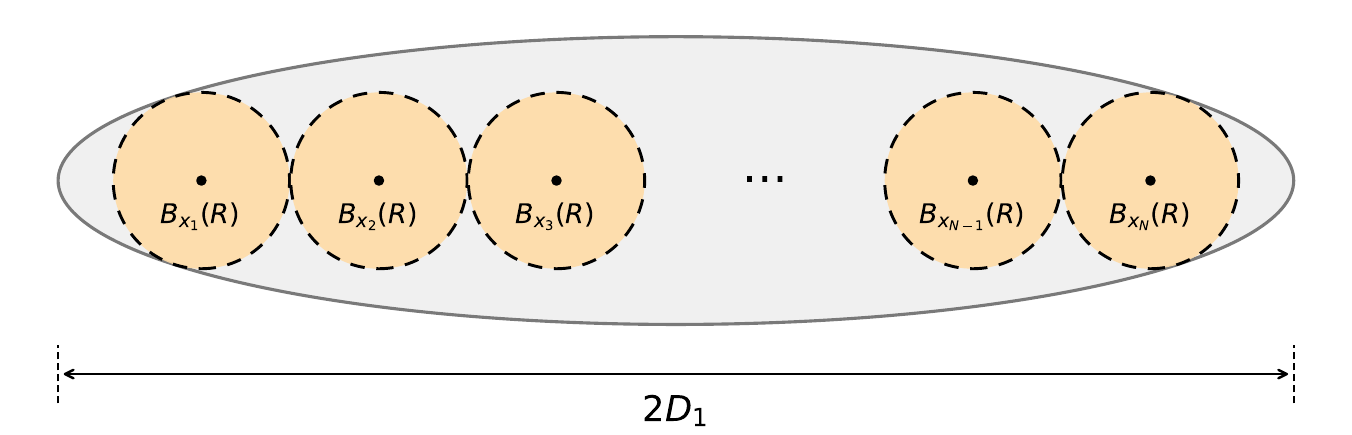}
        \caption{Horizontal bound}
        \label{fig:horizontal-bound}
    \end{subfigure}
    \hfill
    \begin{subfigure}[b]{0.35\textwidth}
        \centering
        \includegraphics[height=5.8cm]{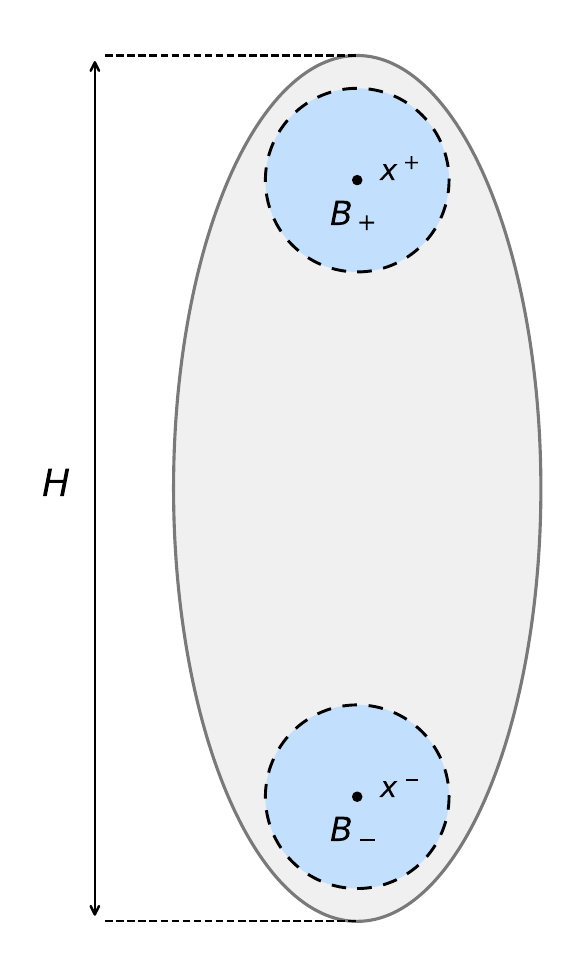}
        \caption{Vertical bound}
        \label{fig:vertical-bound}
    \end{subfigure}

    \caption{Schematic illustration of the vortex core localization argument}
    \label{fig:vortex-core-bound}
\end{figure}

\subsubsection{The lower bound for the free energy}

It remains to control the vertical extent of $\operatorname{spt}\hat{\omega}$. We show a lower bound for the free energy
\begin{align}
-C_1\leq \inf\{\mathcal{E}_2[\hat{\omega}]:\mu\geq M\}.\label{eq:lowerboundE2}
\end{align}
We estimate $I_{\mu}=E_{2}[\omega]$ from above by $\mathcal{E}_2[\hat{\omega}]+(2\pi)^{-1}\log(2\mu)+C$ and from below by $E_2[\check{\omega}^{R}_{1,\gamma_R}]$ using the maximality of $I_{\mu}$ and translation of the radially symmetric solution $\check{\omega}^{R}_{1,\gamma_R}$.

\subsubsection{The upper bound for the free energy}

The final step is the upper bound for the free energy 
\begin{align}
\mathcal{E}_2[\hat{\omega}]\leq C_2-\frac{m_0^{2}}{2\pi}\log{H},   \label{eq:upperboundE2}
\end{align}
using the vertical length of the vortex core
\begin{align}
H=\sup_{x\in \operatorname{spt}\hat{\omega}}x_2-\inf_{x\in \operatorname{spt}\hat{\omega}}x_2.
\end{align}
See Figure \ref{fig:vertical-bound}. We use the updated local mass bound \eqref{eq:large-local-mass2} to estimate the negative quantity in $N*\hat{\omega}$ from above by $-\log H$. The lower bound \eqref{eq:lowerboundE2} and the upper bound \eqref{eq:upperboundE2} imply $H$ is uniformly bounded for all $\mu$. This yields the desired bound  \eqref{eq:uniformvortexcorebound1} since $\operatorname{spt}\hat{\omega}$ intersects both sides of the $x_1$-axis by $\int_{\mathbb{R}^{2}}x_2\hat{\omega}dx=0$.

\subsection{Horizontal vortex core bound}

\begin{proposition}
    Let $\mu>\kappa_*$. Let $\omega\in S_{\mu}$ be a Steiner symmetric maximizer of \eqref{eq: VP-2} satisfying \eqref{eq:near-Lamb-fixed-EL} with $W,\gamma>0$ in Theorem \ref{thm:nearly Lamb}. There exist constants $M,C>0$ such that \eqref{eq:gamma lower bound} holds for $\mu\geq M$.
\end{proposition}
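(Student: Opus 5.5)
The plan is to combine the Pohožaev identities of Lemma~\ref{lemma:pohozaev} with a lower bound on $I_\mu$ obtained from an explicit competitor. Since $\mu>\kappa_*$, Theorem~\ref{thm:threshold}(B) gives $\|\omega\|_{L^1}=1$. Then \eqref{eq:pohozaev-penalized} and \eqref{eq:pohozaev-L2} (with $\lambda=\nu=1$) read
\begin{align*}
I_\mu=\frac{W\mu}{2}+\frac{\gamma}{2},\qquad W\mu=\|\omega\|_{L^2(\mathbb R^2_+)}^2 .
\end{align*}
Solving for $\gamma$ gives $\gamma=2I_\mu-\|\omega\|_{L^2}^2$. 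By the uniform bound \eqref{eq:uniformvortex} (more precisely, $\|\omega\|_{L^2}^2\le 1/(2\pi)$ from its proof), this yields $\gamma\ge 2I_\mu-C$. It therefore suffices to show
\begin{align*}
I_\mu\ge \frac{1}{4\pi}\log(2\mu)-C\qquad \text{for } \mu\ge M .
\end{align*}

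For the competitor, take the radial profile $\omega^R=\omega^R_{1,\gamma_R}$ from \eqref{eq:radiallysymmetric}, which has unit mass and support $\overline{B_0(j_{0,1})}$. Translate it to $\check\omega(x)=\omega^R(x-(0,\mu))$ and choose $M>j_{0,1}$, so that $\operatorname{spt}\check\omega\subset\mathbb R^2_+$ when $\mu\ge M$. Radial symmetry gives $\int x_2\check\omega\,dx=\mu$ exactly. Together with $\|\check\omega\|_{L^1}=1$, $\check\omega\ge0$ and $\check\omega\in L^2$, this shows $\check\omega\in K_\mu$, hence $I_\mu\ge E_2[\check\omega]$.

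Next I compute $E_2[\check\omega]$ using the splitting \eqref{eq:Green}, $G(x,y)=N(x-y)-N(x-\bar y)$. The direct term, together with the enstrophy, equals $\mathcal E_2[\omega^R]$ by translation invariance; this is a finite constant independent of $\mu$. The reflected term is
\begin{align*}
-\frac12\iint N(x-\bar y)\check\omega(x)\check\omega(y)\,dx\,dy=\frac1{4\pi}\iint \log|x-\bar y|\,\check\omega(x)\check\omega(y)\,dx\,dy .
\end{align*}
The reflected density $y\mapsto\check\omega(\bar y)$ is radial about $(0,-\mu)$, has unit mass, and its support is disjoint from $\operatorname{spt}\check\omega$ because $\mu>j_{0,1}$. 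Newton's theorem for the logarithmic potential therefore gives $\int\log|x-\bar y|\,\check\omega(y)\,dy=\log|x-(0,-\mu)|$ for $x\in\operatorname{spt}\check\omega$. Applying the same theorem once more, now to the radial $\check\omega$ centered at $(0,\mu)$ evaluated at the exterior point $(0,-\mu)$, gives exactly $\log(2\mu)$. Hence
\begin{align*}
E_2[\check\omega]=\mathcal E_2[\omega^R]+\frac1{4\pi}\log(2\mu),
\end{align*}
and so $\gamma\ge\frac1{2\pi}\log(2\mu)+2\mathcal E_2[\omega^R]-C$, which is \eqref{eq:gamma lower bound}.

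There is no real obstacle in this argument. The only points needing care are, first, that the uniform $L^2$ bound from \eqref{eq:uniformvortex} holds for every maximizer once $\mu\ge\kappa_*$, which is what makes $W\mu$ bounded. Second, the exact evaluation of the interaction term requires the supports of $\check\omega$ and its reflection to be disjoint, which is why $M>j_{0,1}$. Even without exactness, expanding $\log|x-\bar y|=\log(2\mu)+O(\mu^{-1})$ on the supports would suffice.
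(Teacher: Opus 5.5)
Your proof is correct and follows essentially the same route as the paper: the Poho\v{z}aev identities together with the uniform $L^2$ bound give $\gamma=2I_\mu+O(1)$, and an explicit unit-mass radial competitor centered at $(0,\mu)$ gives $I_\mu\ge\frac{1}{4\pi}\log(2\mu)-C$. The differences are cosmetic. The paper uses the indicator of the unit-area disk $B_{(0,\mu)}(\pi^{-1/2})$ rather than the translated profile $\omega^R_{1,\gamma_R}$, and it only records $E_2[\mathbf{1}_B]=\frac{1}{4\pi}\log(2\mu)+O(1)$, whereas your Newton's-theorem computation evaluates the reflected interaction exactly.
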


\begin{proof}
By identities \eqref{eq:pohozaev-penalized}-\eqref{eq:pohozaev-L2} and the uniform bound \eqref{eq:uniformvortex}, as $\mu\to\infty$, we have 
\begin{align*}
\gamma
=2I_{\mu}-W\mu=2I_{\mu}+O(1).
\end{align*}
We set $B=B_{(0,\mu)}(\pi^{-1/2})$ and $\hat{B}=B_{0}(\pi^{-1/2})$. Using the test function $ \mathbf{1}_{B} \in K_\mu$, 
\begin{align*}
I_{\mu}\geq E_{2}[\mathbf{1}_{B}]
=\mathcal{E}[\mathbf{1}_{\hat{B}}]
+\frac{1}{4\pi}\iint_{\hat{B}\times \hat{B}}\log{|x-\bar{y}+(0,2\mu)|}dxdy-\frac{1}{2}=\frac{1}{4\pi}\log{(2 \mu)}+O(1).
\end{align*}
We thus obtain \eqref{eq:gamma lower bound}.
\end{proof}

\begin{proposition}\label{lem: rough uniform bounds}
For $a>8$, the growth bounds \eqref{eq:rough uniform bounds} hold for $\mu\geq M$. 
\end{proposition}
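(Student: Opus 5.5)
The plan is to combine the Euler--Lagrange equation \eqref{eq:near-Lamb-fixed-EL} with the lower bound \eqref{eq:gamma lower bound} on $\gamma$ and the uniform bounds \eqref{eq:uniformvortex}. Write $\psi=\mathcal G\omega$. On $\operatorname{spt}\omega$ we have
\[
\psi(x)> Wx_2+\gamma\ge \gamma\ge \frac{1}{2\pi}\log(2\mu)-C .
\]
So it suffices to show that $\psi(x)$ is smaller than $\frac1{2\pi}\log(2\mu)-C$ whenever $x_2\gg\mu$, or whenever $|x_1|\gg\mu^a$.

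\textbf{Vertical bound.} For $D_2\lesssim\mu$ I use \eqref{eq:inequality sf 2} with $\|\omega\|_{L^1_*}=\mu$ and $\|\omega\|_{L^2}\le C$. For $x_2\ge 1$ this gives
\[
\psi(x)\lesssim \frac{\mu(\log x_2+1)}{x_2}+\frac{\mu^\theta}{x_2^\theta}.
\]
Take $x_2=K\mu$ with $K$ large. The right-hand side is then at most $C K^{-1}(\log K+\log\mu+1)+CK^{-\theta}$. With the implicit constant fixed, choose $K$ so large that $CK^{-1}<\frac{1}{4\pi}$. Then for large $\mu$ this is below $\frac{1}{2\pi}\log(2\mu)-C$, so such points lie outside the support. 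This gives $D_2\le K\mu$.

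\textbf{Horizontal bound.} For $D_1\lesssim\mu^a$ I use the Steiner-symmetric estimate \eqref{eq:inequality sf 4} with $r$ fixed. The norms $\|\omega\|_{L^1_*}+\|\omega\|_{L^1}+\|\omega\|_{L^r}$ are $O(\mu)$ by \eqref{eq:uniformvortex}. This gives
\[
\psi(x)\lesssim \mu\, x_2\,|x_1|^{-1/(2r)} .
\]
Combined with $x_2\le D_2\lesssim\mu$ on the support, points in the support satisfy $\log\mu\lesssim \mu^2|x_1|^{-1/(2r)}$. Hence $|x_1|\lesssim \mu^{4r}(\log\mu)^{-2r}\le\mu^{4r}$. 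Taking $r>2$ arbitrarily close to $2$ gives any exponent $a>8$, which matches the threshold in the statement. The vertical bound must be established first, since it feeds the factor $x_2\lesssim\mu$.

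\textbf{Main obstacle.} The delicate step is the vertical estimate. The constant $\frac{1}{2\pi}$ in \eqref{eq:gamma lower bound} must beat the coefficient of $\log\mu$ coming from $\frac{\mu\log x_2}{x_2}$. This is why $K$ must be chosen depending on the implicit constant in \eqref{eq:inequality sf 2}. The term $\frac{\mu}{x_2}\log x_2=K^{-1}\log(K\mu)$ grows like $\log\mu$, so a mere $O(1)$ bound is unavailable and the constants must be compared explicitly. If the constant in \eqref{eq:inequality sf 2} turns out to be inconvenient, I would instead estimate $\psi$ directly. I would split $G$ into the region $y_2<x_2/2$, controlled by $G\lesssim x_2y_2/|x-y|^2$ and the impulse, and the complementary region, where $y_2\ge x_2/2$ carries mass at most $2\mu/x_2$ and $G$ has a logarithmic singularity controlled by $L^\infty$ of $\omega$.
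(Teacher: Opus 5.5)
Your proposal is correct and follows the paper's proof essentially step for step. The vertical bound comes from \eqref{eq:inequality sf 2} with $K$ chosen large relative to the implicit constant; to pass from $x_2=K\mu$ to all $x_2\ge K\mu$ you should state, as the paper does, that the bound is decreasing for $x_2>1$. The horizontal bound $a=4r>8$ then comes from \eqref{eq:inequality sf 4} together with $D_2\lesssim\mu$, so the fallback splitting of $G$ you sketch is not needed.
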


\begin{proof}
We may assume that 
\begin{align*}
\frac{1}{2\pi}\log{(2M)}-C>0,
\end{align*}
for the constant $C$ in \eqref{eq:gamma lower bound}. We take $0<\theta<1$ and apply \eqref{eq:inequality sf 2} for $x_2\geq 1$ to estimate 
\begin{equation*}
\mathcal G\omega(x)
\lesssim \frac{\mu}{x_2}(1+\log x_2)
+\left(\frac\mu{x_2}\right)^\theta.
\end{equation*}
The right-hand side is decreasing for $x_2>1$. We take $K>0$ satisfying $K\geq 1+1/M$. For $x_2\geq \mu K\geq MK>1$, we have  
\begin{equation*}
\mathcal G\omega(x)
\leq \frac{C'}{K^{\theta}}\left(1+\log{(2\mu)}+\log{K}\right).
\end{equation*}
For $x\in \operatorname{spt} \omega$, we use \eqref{eq:gamma lower bound} to estimate
\begin{align*}
\frac{1}{2\pi}\log(2\mu)-C\leq \gamma\leq \mathcal{G}\omega(x).
\end{align*}
It follows that 
\begin{align*}
\left(\frac{1}{2\pi}-\frac{C'}{K^{\theta}}\right)\log{(2M)}-C\leq \frac{C'}{K^{\theta}}(1+\log{K}).
\end{align*}
The left-hand side converges to a positive constant, while the right-hand side vanishes as $K\to\infty$. Thus $\operatorname{spt}\omega\cap \{x_2\geq \mu K\}=\emptyset$ for large $K$. We obtain the height bound in \eqref{eq:rough uniform bounds}. 

We apply \eqref{eq:inequality sf 4} and \eqref{eq:gamma lower bound} for $x\in \operatorname{spt}\omega$ to estimate 
\begin{align*}
\frac{1}{2\pi}\log{(2M)}-C
\leq \mathcal{G}\omega(x)
\lesssim (\mu+1)x_2 \min\left\{1,\frac{1}{|x_1|^{\frac{1}{2r}}}\right\},
\end{align*}
for $2<r<\infty$. Using  $D_2\lesssim \mu$, we obtain
\begin{align*}
\left(\frac{1}{2\pi}\log{(2M)}-C\right)|x_1|^{\frac{1}{2r}}
\lesssim \mu(\mu+1),
\end{align*}
for $|x_1|>1$. Thus the horizontal bound in \eqref{eq:rough uniform bounds} holds for $a=4r>8$.
\end{proof}

\begin{proposition}
For every $\delta>0$, there exists a constant $A\geq 1$ such that \eqref{eq:large-local-mass} holds. 
\end{proposition}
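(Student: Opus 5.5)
The plan is to compare the lower bound $\gamma\ge \frac{1}{2\pi}\log(2\mu)-C$ from \eqref{eq:gamma lower bound} with an upper bound for the stream function at a point $x\in\operatorname{spt}\omega$. By \eqref{eq:near-Lamb-fixed-EL}, $\mathcal G\omega(x)>Wx_2+\gamma\ge\gamma$ on the core. I would use the representation $G(x,y)=N(x-y)-N(x-\bar y)$ from \eqref{eq:Green} and split
\begin{align*}
\mathcal G\omega(x)=\int N(x-y)\omega(y)\,dy+\frac{1}{2\pi}\int\log|x-\bar y|\,\omega(y)\,dy .
\end{align*}
For the reflected part, $|x-\bar y|=\sqrt{(x_1-y_1)^2+(x_2+y_2)^2}\le \sqrt{(2D_1)^2+(2D_2)^2}\le 2\mu Q$ for $x,y\in\operatorname{spt}\omega$, by the definition \eqref{eq:Q} of $Q$. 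Since $\|\omega\|_{L^1}=1$ (Theorem~\ref{thm:threshold}(B)), this part is at most $\frac{1}{2\pi}\log(2\mu)+\frac{1}{2\pi}\log Q$. Combining this with the lower bound on $\gamma$ gives
\begin{align*}
\int N(x-y)\omega(y)\,dy\ \ge\ -\frac{1}{2\pi}\log Q - C,\qquad x\in\operatorname{spt}\omega,
\end{align*}
uniformly for $\mu\ge M$. This is the ``$N*\omega\gtrsim-\log Q$'' step.

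For the upper bound on $N*\omega(x)$, I split the integral at radius $R$ and write $m_R(x)=\int_{B_x(R)}\omega$. The outer region contributes at most $-\frac{1}{2\pi}\log R\,(1-m_R(x))$. For the inner region, I split again at $|x-y|=1$. The part with $|x-y|\ge1$ is nonpositive. The singular part is bounded by $\|\omega\|_{L^\infty}\int_{B_0(1)}\frac{1}{2\pi}\log\frac{1}{|z|}\,dz\le C$, using the uniform $L^\infty$ bound \eqref{eq:uniformvortex}. Hence
\begin{align*}
-\frac{1}{2\pi}\log Q - C\ \le\ C-\frac{1}{2\pi}(1-m_R(x))\log R ,
\end{align*}
so that $(1-m_R(x))\log R\le \log Q+C'$.

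Now I choose $R=AQ^{1+\delta}$ and use $Q\ge e$, so $\log Q\ge 1$. Then
\begin{align*}
1-m_R(x)\le \frac{\log Q+C'}{(1+\delta)\log Q+\log A}\le \frac{1+C'}{1+\delta+\log A}.
\end{align*}
If $A$ is large enough that $\log A\ge C'(1+\delta)$, the ratio $(\log Q+C')/((1+\delta)\log Q+\log A)$ is at most $1/(1+\delta)$ for all $\log Q\ge1$, because cross-multiplying reduces to $C'(1+\delta)\le\log A$. This gives $m_R(x)\ge 1-\frac{1}{1+\delta}$, which is even better than \eqref{eq:large-local-mass} by the factor $\frac12$; that factor leaves slack for the constants.

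The main obstacle is keeping every constant independent of $\mu$ and $x$. The reflected-term bound needs $x$ and $y$ both in the support, which holds here. The singular estimate needs the uniform $L^\infty$ bound, and this is exactly where \eqref{eq:uniformvortex} is used. One point to check is whether $R\ge1$ is needed to make the outer logarithm negative. This holds automatically since $A\ge1$ and $Q\ge e$.
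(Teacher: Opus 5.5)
Your proposal is correct and follows the paper's argument essentially step by step: you split $\mathcal G\omega$ into $N*\omega$ plus the reflected term, bound the reflected term by $\frac{1}{2\pi}\log(2\mu Q)$, and compare with the upper bound $C-\frac{1}{2\pi}\log R\int_{|x-y|\ge R}\omega$ at $R=AQ^{1+\delta}$; your cross-multiplication choice $\log A\ge(1+\delta)C'$ even gives the sharper constant $1-\frac{1}{1+\delta}$. One small slip: the intermediate bound $\frac{\log Q+C'}{(1+\delta)\log Q+\log A}\le\frac{1+C'}{1+\delta+\log A}$ is false whenever $\log A>(1+\delta)C'$, since the ratio then increases in $\log Q$ toward $\frac{1}{1+\delta}$. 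You never use that bound, so simply delete it.
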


\begin{proof}
We set  
		\[
		\mathcal G\omega(x)=\frac1{2\pi}\int_{\mathbb{R}^{2}_{+}}\log\frac1{|x-y|}\omega(y)\,dy+\frac1{2\pi}\int_{\mathbb{R}^{2}_{+}}\log|x-\bar y|\omega(y)\,dy=:\mathcal{G}_1\omega+\mathcal{G}_2\omega.
		\]
Observe that for $x=(x_1,x_2)$ and $y=(y_1,y_2)$ in $\operatorname{spt}\omega$,  
\begin{align*}
|x-\bar{y}|=\sqrt{|x_1-y_1|^{2}+|x_2+y_2|^{2}}\leq 2\mu\sqrt{\left(\frac{D_1}{\mu}\right)^{2}+\left(\frac{D_2}{\mu}\right)^{2}}\leq 2\mu Q.
\end{align*}
This yields 
\begin{align*}
\mathcal{G}_2\omega(x)\le \frac{1}{2\pi}\log(2\mu Q).
\end{align*}
Using \eqref{eq:gamma lower bound}, we obtain the lower bound 
\begin{equation*}
\mathcal{G}_1\omega(x)=\mathcal{G}\omega(x)-\mathcal{G}_2\omega(x)
>\gamma-\mathcal{G}_2\omega(x)
\ge-\frac1{2\pi}\log Q-C.
\end{equation*}
We take $R>1$ and set
\begin{align*}
\mathcal{G}_1\omega(x)=\frac1{2\pi}\int_{|x-y|<R}\log\frac1{|x-y|}\omega(y)\,dy+\frac1{2\pi}\int_{|x-y|\geq R}\log\frac1{|x-y|}\omega(y)\,dy.
\end{align*}
By H\"older's inequality and the uniform bound \eqref{eq:uniformvortex}, we estimate 
\begin{align*}
\frac1{2\pi}\int_{|x-y|<R}\log\frac1{|x-y|}\omega(y)\,dy
\leq \frac1{2\pi}\int_{|x-y|<1}\log\frac1{|x-y|}\omega(y)\,dy\leq C'.
\end{align*}
Using 
\[
\int_{|x-y|\geq R}\omega(y)\log\frac1{|x-y|}\,dy
\le-\log R\int_{|x-y|\geq R}\omega(y)\,dy,
\]
we obtain the upper bound
\begin{align*}
\mathcal{G}_1\omega(x)\leq C'-\frac{1}{2\pi}\log R\int_{|x-y|\geq R}\omega(y)\,dy.
\end{align*}
Combining the lower and upper bounds imply  
\begin{align*}
\int_{|x-y|\geq R}\omega(y)\,dy\leq \frac{\log Q+2\pi(C+C')}{\log R}.
\end{align*}
For $R=AQ^{1+\delta}$, we observe that  
\begin{align*}
\frac{\log Q+2\pi(C+C')}{\log R}=\frac{\log Q+2\pi(C+C')}{\log A+(1+\delta)\log{Q}}\leq \frac{1}{1+\delta}+\frac{2\pi(C+C')}{\log A}.
\end{align*} 
We take large $A \geq 1$ so that 
\begin{align*}
\frac{2\pi(C+C')}{\log A}\leq \frac{1}{2}\left(1-\frac{1}{1+\delta}\right).
\end{align*}
This bound yields 
\begin{align*}
\int_{B_x(R)}\omega(y) dy=1-\int_{|x-y|\geq R}\omega(y) dy\geq \frac{1}{2}\left(1-\frac{1}{1+\delta}\right).
\end{align*}
We obtain \eqref{eq:large-local-mass}.
\end{proof}

\begin{proposition}\label{p:D_1bdd}
The constant $D_1$ is uniformly bounded for $\mu\geq M$. Moreover, there exist constants $R_0>0$ and $0<m_0<1/2$ such that \eqref{eq:large-local-mass2} holds.     
\end{proposition}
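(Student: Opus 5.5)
The plan is a packing argument along a horizontal line through the support, combined with the local mass bound \eqref{eq:large-local-mass}. This gives $D_1\lesssim Q^{1+\delta}$, and a bootstrap starting from the rough bounds \eqref{eq:rough uniform bounds} then makes $Q$ bounded. Fix $\mu\geq M$ and a Steiner symmetric $\omega\in S_\mu$. Since $\omega=(\mathcal G\omega-Wx_2-\gamma)_+$ is continuous, pick $x=(x_1,h)\in\operatorname{spt}\omega$ with $|x_1|\geq D_1/2$. Steiner symmetry (monotonicity in $|x_1|$ at fixed height) gives that the whole segment $\{(s,h):|s|\le |x_1|\}$ lies in $\operatorname{spt}\omega$.

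\textbf{Packing.} Set $R=AQ^{1+\delta}$ with $A=A_\delta$ from \eqref{eq:large-local-mass}, and choose the points $x_j=(2jR,h)$ for $0\le j\le N$ with $N=\lfloor |x_1|/(2R)\rfloor$. All $x_j$ lie in $\operatorname{spt}\omega$, and the balls $B_{x_j}(R)$ are pairwise disjoint. By \eqref{eq:large-local-mass}, each ball carries mass at least $c_\delta=\frac12(1-\frac1{1+\delta})$. Since $\|\omega\|_{L^1}=1$ by Theorem \ref{thm:threshold}, we get $(N+1)c_\delta\le 1$. This yields
\begin{align*}
D_1\le 2|x_1|\le 4R(N+1)\le \frac{4A_\delta}{c_\delta}\,Q^{1+\delta}.
\end{align*}

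\textbf{Bootstrap.} Since $D_2\lesssim\mu$ by \eqref{eq:rough uniform bounds}, we have $Q\lesssim 1+D_1/\mu$. Hence
\begin{align*}
D_1\le C_\delta\bigl(1+(D_1/\mu)^{1+\delta}\bigr).
\end{align*}
Start from $D_1\lesssim \mu^{a_0}$ with $a_0=9$ (Proposition \ref{lem: rough uniform bounds}). Inserting this gives $D_1\lesssim 1+\mu^{a_1}$ with $a_1=(a_0-1)(1+\delta)$. Iterate $a_{k+1}=(a_k-1)(1+\delta)$, so that $a_{k+1}-a_k=\delta a_k-(1+\delta)$. Choosing $\delta=1/10$, the fixed point $(1+\delta)/\delta=11$ exceeds $a_0$. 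Therefore $a_k$ decreases by at least a fixed amount at each step, and after finitely many steps $a_k\le1$. At that point $D_1\lesssim\mu$, so $Q$ is bounded, and one more application of the packing bound gives $D_1\le C$ uniformly in $\mu\ge M$. Only finitely many steps are used, so the constants stay uniform. This iteration is the delicate point: one must check that $\delta$ can be fixed independently of $\mu$ so that the exponent strictly drops. The choice above does this, since $A_\delta$ and $c_\delta$ then become fixed constants.

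\textbf{Conclusion.} With $D_1$ and $D_2/\mu$ bounded, $Q\le Q_0$ uniformly. Set $R_0=A_\delta Q_0^{1+\delta}$ and $m_0=c_\delta=\frac{\delta}{2(1+\delta)}<\frac12$. Then $B_x(AQ^{1+\delta})\subset B_x(R_0)$, and \eqref{eq:large-local-mass} yields \eqref{eq:large-local-mass2}.
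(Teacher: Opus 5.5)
Your proof is correct and follows the paper's argument. You pack disjoint balls of radius $AQ^{1+\delta}$ along a horizontal chord of the Steiner-symmetric support, apply the local mass bound to get $D_1\le C_\delta Q^{1+\delta}$, and close using the rough bound $D_1\lesssim\mu^{a}$. The only difference is bookkeeping: the paper absorbs in one step via $D_1^{\delta}\le C\mu^{a\delta}$, i.e. $D_1\le C'_\delta\bigl(1+\mu^{a\delta-1-\delta}D_1\bigr)$ for large $\mu$, while you lower the exponent by finite iteration. Both routes rely on the same smallness condition $\delta<1/(a-1)$.
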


\begin{proof}
For $R>0$, there exist points $\{x_j\}_{j=1}^{N}\subset \operatorname{spt} \omega$ lying on a horizontal line such that $\{B_{x_j}(R)\}_{j=1}^{N}$ are disjoint and $D_1\leq NR$ since $\omega$ is Steiner symmetric. Applying \eqref{eq:large-local-mass} for $R=AQ^{1+\delta}$ yields
\begin{align*}
1=\int_{\mathbb{R}^{2}_{+}}\omega dx\geq \sum_{j=1}^{N}\int_{B_{x_j}(AQ^{1+\delta})}\omega dx 
\geq \frac{N}{2}\left(1-\frac{1}{1+\delta}\right).
\end{align*}
We thus obtain $D_1\leq C_{\delta}Q^{1+\delta}$ with some constant $C_{\delta}$. Since $D_2/\mu$ is uniformly bounded for $\mu\geq M$, $Q\lesssim 1+D_1/\mu$ and 
\begin{align*}
D_1\leq C_{\delta}\left(1+\left(\frac{D_1}{\mu}\right)^{1+\delta}\right).
\end{align*}
Using $D_1\leq C\mu^{a}$ in \eqref{eq:rough uniform bounds}, we have
\begin{align*}
D_1\leq C_{\delta}'\left(1+\frac{1}{\mu^{1+\delta-a\delta}}D_1\right).
\end{align*}
We fix $\delta=\delta_0\in (0,1/(a-1))$ and absorb the second term into the left-hand side by taking $\mu$ sufficiently large. Thus $D_1$ and $Q$ are uniformly bounded for all $\mu\geq M$. We obtain the lower bound \eqref{eq:large-local-mass2} for   \begin{equation*}
    \begin{split} 
R_0=\sup\left\{A_{\delta_0}Q^{1+\delta_0}:\mu\geq M\right\},\quad 
m_0=\frac{1}{2}\left(1-\frac{1}{1+\delta_0}\right). \qedhere  
    \end{split}
\end{equation*}  
\end{proof}

\subsection{Vertical vortex core bound}

\begin{proposition}\label{p:lowerboundE2}
There exists a constant $C_1>0$ such that \eqref{eq:lowerboundE2} holds.
\end{proposition}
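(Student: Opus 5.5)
The plan is to squeeze the maximum $I_\mu$ between two quantities that both carry the same divergent term $\frac{1}{4\pi}\log(2\mu)$. An upper bound will come from splitting the half-plane Green function, and a lower bound from a competitor built by translating the radial profile. Subtracting the two bounds should leave a lower bound on $\mathcal E_2[\hat\omega]$ that is uniform in $\mu\geq M$.

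\emph{Upper bound.} Take a Steiner symmetric maximizer $\omega\in S_\mu$ with $\mu\ge M$, and let $\hat\omega$ denote its recentered version extended by zero to $\mathbb R^2$. By \eqref{eq:Green} we split
\begin{align*}
E[\omega]=\frac{1}{4\pi}\iint \log\frac{1}{|x-y|}\,\omega(x)\omega(y)\,dx\,dy+\frac{1}{4\pi}\iint\log|x-\bar y|\,\omega(x)\omega(y)\,dx\,dy .
\end{align*}
The first term is translation invariant, so it equals $\mathcal E[\hat\omega]$. We also have $\|\omega\|_{L^2}=\|\hat\omega\|_{L^2}$.

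For the second term we use the bound already obtained in the proof of \eqref{eq:large-local-mass}. For $x,y\in\operatorname{spt}\omega$ it gives $|x-\bar y|\le 2\mu Q$. By Proposition~\ref{p:D_1bdd} together with $D_2\lesssim\mu$ from \eqref{eq:rough uniform bounds}, $Q$ is bounded uniformly for $\mu\ge M$. Since $\|\omega\|_{L^1}=1$ in the mass-saturated regime (Theorem~\ref{thm:threshold}), the second term is at most $\frac{1}{4\pi}\log(2\mu)+C$. Hence
\begin{align*}
I_\mu=E_2[\omega]\le \mathcal E_2[\hat\omega]+\frac{1}{4\pi}\log(2\mu)+C .
\end{align*}

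\emph{Lower bound.} As a competitor we take $\check\omega(x)=\omega^R_{1,\gamma_R}(x-(0,\mu))$. For $\mu\geq M>j_{0,1}$ its support, a disk of radius $j_{0,1}$, lies in $\mathbb R^2_+$. It is nonnegative, has unit mass, and, by radial symmetry, has impulse exactly $\mu$, so $\check\omega\in K_\mu$. The same splitting gives
\begin{align*}
E_2[\check\omega]=\mathcal E_2[\omega^R_{1,\gamma_R}]+\frac{1}{4\pi}\iint \log|x-\bar y+(0,2\mu)|\,\omega^R_{1,\gamma_R}(x)\,\omega^R_{1,\gamma_R}(y)\,dx\,dy .
\end{align*}
On the supports we have $|x-\bar y+(0,2\mu)|\ge 2\mu-2j_{0,1}\ge\mu$ for large $\mu$. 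Therefore
\begin{align*}
I_\mu\ge E_2[\check\omega]\ge \mathcal E_2[\omega^R_{1,\gamma_R}]+\frac{1}{4\pi}\log(2\mu)-C .
\end{align*}

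\emph{Conclusion.} Combining the two bounds, the $\frac{1}{4\pi}\log(2\mu)$ terms cancel and we obtain
\begin{align*}
\mathcal E_2[\hat\omega]\ge \mathcal E_2[\omega^R_{1,\gamma_R}]-2C=:-C_1 .
\end{align*}
The right-hand side is finite because $\omega^R_{1,\gamma_R}$ is bounded with compact support. The only delicate ingredient is the uniform control of $|x-\bar y|/\mu$ on $\operatorname{spt}\omega$, which requires the horizontal bound on $D_1$ from Proposition~\ref{p:D_1bdd}; everything else is a direct computation.
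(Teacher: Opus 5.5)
Your proof is correct and follows the paper's argument. You split off the translation-invariant part to obtain $\mathcal E_2[\hat\omega]$, bound the image term from above using the uniform bound on $D_1$ from Proposition~\ref{p:D_1bdd}, and bound $I_\mu$ from below with the translated radial profile $\check\omega^R_{1,\gamma_R}$. The differences are only cosmetic: the paper bounds the image term by Jensen's inequality together with the vanishing $x_2$-moment of $\hat\omega$ rather than your pointwise bound $|x-\bar y|\le 2\mu Q$, and it gets an $O(\mu^{-2})$ error in the lower bound where your $O(1)$ loss is enough.
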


\begin{proof}
We estimate 
\begin{align*}
I_{\mu}=E_2[\omega]
=\mathcal{E}_2[\hat{\omega}]+\frac{1}{4\pi}\iint_{\mathbb{R}^{2}\times \mathbb{R}^{2}}\log|x-\overline{y}+(0,2\mu)|\hat{\omega}(x)\hat{\omega}(y)dxdy. 
\end{align*}
We apply Jensen's inequality
\begin{align*}
\int_{X}\log{f}d\sigma\leq \log\left(\int_{X}fd\sigma\right)
\end{align*}
for $X=\mathbb{R}^{4}$,  $d\sigma=\hat{\omega}(x)\hat{\omega}(y)dxdy$, and $f=|x-\overline{y}+(0,2\mu)|$ to estimate 
\begin{align*}
\iint_{\mathbb{R}^{2}\times \mathbb{R}^{2}}\log|x-\overline{y}+(0,2\mu)|\hat{\omega}(x)\hat{\omega}(y)dxdy\leq 
\log\left(\iint_{\mathbb{R}^{2}\times \mathbb{R}^{2}}|x-\overline{y}+(0,2\mu)|\hat{\omega}(x)\hat{\omega}(y)dxdy\right).
\end{align*}
Since $|x-\overline{y}+(0,2\mu)|\leq 2D_1+x_2+y_2+2\mu
$ and $\hat{\omega}$ has zero-$x_2$-moment and unit mass, the right-hand side is bounded by $\log2(D_1+\mu)$. We thus obtain the upper bound
\begin{align*}
I_{\mu}\leq \mathcal{E}_2[\hat{\omega}]+\frac{1}{4\pi}\log{(2D_1)}+\frac{1}{4\pi}\log{(2 \mu)}. 
\end{align*}
We estimate $I_{\mu}$ from below. For the radially symmetric $\omega^{R}_{1,\gamma_{R}}$ in \eqref{eq:radiallysymmetric}, we set  
\begin{align*}
\check{\omega}^{R}_{1,\gamma_{R}}(x)=\omega^{R}_{1,\gamma_{R}}(x-(0,\mu))
\end{align*}
so that 
\begin{align*}
\int_{\mathbb{R}^{2}_{+}}\check{\omega}^{R}_{1,\gamma_{R}}dx=1,\quad 
\int_{\mathbb{R}^{2}_{+}}x_2\check{\omega}^{R}_{1,\gamma_{R}}dx=\mu.
\end{align*}    
Namely, $\check{\omega}^{R}_{1,\gamma_{R}}\in K_{\mu}$. By the maximality of $I_{\mu}$, we have 
\begin{align*}
I_{\mu}
\geq E_2[\check{\omega}^{R}_{1,\gamma_{R}}]
&=\mathcal{I}+\frac{1}{4\pi}\iint_{\mathbb{R}^{2}\times \mathbb{R}^{2}}\log|x-\overline{y}+(0,2\mu)|\omega^{R}_{1,\gamma_{R}}(x)\omega^{R}_{1,\gamma_{R}}(y)dxdy \\
&=\mathcal{I}+\frac{1}{4\pi}\log(2\mu)+
\frac{1}{4\pi}\iint_{\mathbb{R}^{2}\times \mathbb{R}^{2}}\log\left|\frac{x-\overline{y}}{2\mu}+(0,1)\right|\omega^{R}_{1,\gamma_{R}}(x)\omega^{R}_{1,\gamma_{R}}(y)dxdy.
\end{align*}
Using $\log(t+1)\leq t$ and $\int x\omega^{R}_{1,\gamma_R}dx=0$, the last term is $O(\mu^{-2})$. Combining this with the upper bound, we obtain 
\begin{align*}
\mathcal{I}-\frac{1}{4\pi}\log2D_1+O\left(\mu^{-2}\right)\leq \mathcal{E}_2[\hat{\omega}].
\end{align*}
Since $D_1$ is bounded by Proposition \ref{p:D_1bdd}, we obtain \eqref{eq:lowerboundE2}.
\end{proof}

\begin{proposition}\label{p:upperboundE2}
There exists a constant $C_2>0$ such that \eqref{eq:upperboundE2} holds for $\mu\geq M$, provided that $H\geq 4(R_0+1)$. 
\end{proposition}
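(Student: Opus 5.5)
We aim at \eqref{eq:upperboundE2} by splitting the double integral defining $\mathcal E[\hat\omega]$ into a near-diagonal part and a far part, and extracting a $-\log H$ contribution from two well-separated pieces of mass. Write
\[
\mathcal E_2[\hat\omega]=\frac1{4\pi}\iint_{|x-y|<1}\log\frac1{|x-y|}\hat\omega(x)\hat\omega(y)\,dx\,dy+\frac1{4\pi}\iint_{|x-y|\ge1}\log\frac1{|x-y|}\hat\omega(x)\hat\omega(y)\,dx\,dy-\frac12\|\hat\omega\|_{L^2}^2 .
\]
The first term is bounded by a constant $C$. To see this, use H\"older together with the uniform bound \eqref{eq:uniformvortex} ($\|\omega\|_{L^\infty}\le C$, unit mass), since $\int_{|x-y|<1}\log\frac1{|x-y|}\hat\omega(y)\,dy\le C\|\hat\omega\|_{L^\infty}$. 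The second term is nonpositive pointwise, so we may discard any part of it. The last term is nonpositive.

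To generate the logarithm, pick points $p,q\in\operatorname{spt}\hat\omega$ realizing (nearly) the vertical extent, so that $q_2-p_2\ge H-1$. Then apply the local mass bound \eqref{eq:large-local-mass2}, valid after the shift since it is translation invariant: each of $B_p(R_0)$ and $B_q(R_0)$ carries mass at least $m_0$. Take $x\in B_p(R_0)$ and $y\in B_q(R_0)$. The assumption $H\ge4(R_0+1)$ gives $|x-y|\ge H-1-2R_0\ge H/2\ge 1$, and also $|x-y|\ge1$, so the two balls are disjoint and this pair region lies in the far part. Keeping only the region $(B_p(R_0)\times B_q(R_0))\cup(B_q(R_0)\times B_p(R_0))$ of the far integral, we obtain
\[
\frac1{4\pi}\iint_{|x-y|\ge1}\log\frac1{|x-y|}\hat\omega\hat\omega\le \frac{2}{4\pi}\,m_0^2\log\frac{2}{H}=-\frac{m_0^2}{2\pi}\log H+\frac{m_0^2}{2\pi}\log 2 .
\]
Combining this with the bound on the near part yields \eqref{eq:upperboundE2} with $C_2=C+\log 2/(2\pi)$.

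The main point requiring care is where $p,q$ sit. Points of the support must be chosen with the local mass bound holding at them. The bound \eqref{eq:large-local-mass2} is stated for $x\in\operatorname{spt}\omega$, and a near-supremum point can be used, because $\operatorname{spt}$ is closed and the supremum and infimum are attained up to $\varepsilon$. One also needs the balls to be disjoint so that the two masses are genuinely separate. This is exactly what the hypothesis $H\ge4(R_0+1)$ guarantees. Otherwise the argument uses only the uniform $L^\infty$ and $L^1$ bounds and the sign of $\log(1/|x-y|)$ for $|x-y|\ge1$.
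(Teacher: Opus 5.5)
Your proposal is correct and follows essentially the same route as the paper's proof. Both pick two support points separated vertically by at least $H-1$, use \eqref{eq:large-local-mass2} to place mass $\ge m_0$ in each $R_0$-ball, bound the pair interaction by $m_0^2(\log 2-\log H)$, and control everything else by the positive part of $\log\frac{1}{|x-y|}$ via the uniform $L^\infty\cap L^1$ bounds. The differences are cosmetic: the paper takes the points on the axis $\{x_1=0\}$ using Steiner symmetry, which is not needed, and it bounds the complement of the pair region directly by the positive part of the logarithm rather than using your near/far split at $|x-y|=1$.
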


\begin{proof}
We take two points $x^{+}$, $x^{-}\in \operatorname{spt} \omega\cap \{x_1=0\}$ such that $|x^{+}-x^{-}|\geq H-1$. By Proposition \ref{p:D_1bdd}, we have 
\begin{align*}
\int_{B^{+}}\omega dx\geq m_0,\quad  \int_{B^{-}}\omega dx\geq m_0,
\end{align*}
for $B^{+}=B_{x^{+}}(R_0)$ and $B^{-}=B_{x^{-}}(R_0)$. For $x\in B^{+}$ and $y\in B^{-}$, we have $H-1\leq |x^{+}-x^{-}|\leq 2R_0+|x-y|$. Using the condition $H\geq 4(R_0+1)$,
we see that 
\begin{align*}
|x-y|\geq \frac{H}{2}.
\end{align*}
We set $\mathbb{R}^{4}=(B^{+}\times B^{-})\cup (B^{-}\times B^{+})\cup G$ and estimate 
\begin{align*}
\mathcal{E}_2[\hat{\omega}]
&\leq \frac{1}{4\pi}\iint_{\mathbb{R}^{2}\times \mathbb{R}^{2}}\log\frac{1}{|x-y|}\hat{\omega}(x)\hat{\omega}(y)dxdy\\
&=\frac{1}{4\pi}\iint_{G}\log\frac{1}{|x-y|}\hat{\omega}(x)\hat{\omega}(y)dxdy
+\frac{1}{2\pi}\iint_{B^{+}\times B^{-}}\log\frac{1}{|x-y|}\hat{\omega}(x)\hat{\omega}(y)dxdy.
\end{align*}
Observe that 
\begin{align*}
\iint_{G}\log\frac{1}{|x-y|}\hat{\omega}(x)\hat{\omega}(y)dxdy
&\leq \iint_{G}\left(\log\frac{1}{|x-y|}\right)_{+}\hat{\omega}(x)\hat{\omega}(y)dxdy \\
&=\iint_{|x-y|\leq 1}\left(\log\frac{1}{|x-y|}\right)_{+}\hat{\omega}(x)\hat{\omega}(y)dxdy.
\end{align*}
The right-hand side is uniformly bounded for $\mu\geq M$ by the uniform bound for $\hat{\omega}$ in $L^{\infty}\cap L^{1}$. Using $|x-y|\geq H/2$, we estimate
\begin{align*}
\iint_{B^{+}\times B^{-}}\log\frac{1}{|x-y|}\hat{\omega}(x)\hat{\omega}(y)dxdy\leq (\log2-\log{H})\left(\int_{B^{+}}\omega dx\right)\left(\int_{B^{-}}\omega dx\right)
\leq m_0^{2}(\log2-\log{H}).
\end{align*}
We thus obtain \eqref{eq:upperboundE2}.
\end{proof}

\begin{lemma}[Uniform vortex core bound]\label{lem:uniform bounds}
There exists a constant $R>0$ such that \eqref{eq:uniformvortexcorebound1} holds for $\mu\geq M$.
\end{lemma}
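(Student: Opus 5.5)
The plan is to combine the two free-energy bounds in Propositions \ref{p:lowerboundE2} and \ref{p:upperboundE2} to bound the vertical extent $H$. Then the zero-moment property of $\hat{\omega}$ turns this into a bound on the vertical position of $\operatorname{spt}\hat{\omega}$, and Proposition \ref{p:D_1bdd} supplies the horizontal bound.

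\emph{Vertical extent.} Let $\omega\in S_\mu$ be Steiner symmetric with $\mu\geq M$, and set $\hat{\omega}(x)=\omega(x+(0,\mu))$, extended by zero to $\mathbb{R}^2$. Suppose $H\geq 4(R_0+1)$. Then Proposition \ref{p:upperboundE2} and Proposition \ref{p:lowerboundE2} give
\begin{align*}
-C_1\leq \mathcal{E}_2[\hat{\omega}]\leq C_2-\frac{m_0^2}{2\pi}\log H .
\end{align*}
Hence $\log H\leq 2\pi(C_1+C_2)/m_0^2$. In every case we therefore get
\begin{align*}
H\leq H_0:=\max\left\{4(R_0+1),\ \exp\!\left(\frac{2\pi(C_1+C_2)}{m_0^{2}}\right)\right\},
\end{align*}
and $H_0$ is independent of $\mu\geq M$. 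Before relying on Proposition \ref{p:upperboundE2}, I will check one point in its proof. It uses two points of $\operatorname{spt}\omega$ on the axis $\{x_1=0\}$ at mutual distance at least $H-1$. Such points exist because Steiner symmetry makes each horizontal section of the support, when nonempty, an interval centered on the $x_2$-axis (up to closure), so the vertical extent is realized on the axis.

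\emph{Vertical position.} The impulse constraint together with the mass saturation $\|\omega\|_{L^1}=1$ from Theorem \ref{thm:threshold}(B) gives $\int_{\mathbb{R}^2} x_2\hat{\omega}\,dx=\mu-\mu=0$. Since $\hat{\omega}\geq 0$ is nonzero, $\operatorname{spt}\hat{\omega}$ cannot lie entirely in $\{x_2>0\}$ or entirely in $\{x_2<0\}$. So $\inf_{\operatorname{spt}\hat\omega}x_2\leq 0\leq \sup_{\operatorname{spt}\hat\omega}x_2$, and consequently $|x_2|\leq H\leq H_0$ for every $x\in\operatorname{spt}\hat{\omega}$.

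\emph{Conclusion.} Proposition \ref{p:D_1bdd} gives $|x_1|\leq D_1\leq \bar D$ on the support, uniformly in $\mu\geq M$. Taking $R=\sqrt{\bar D^2+H_0^2}+1$ then yields \eqref{eq:uniformvortexcorebound1}. The step needing the most care is the existence of the two axis points with separation at least $H-1$ in Proposition \ref{p:upperboundE2}; it follows from the interval structure of Steiner symmetric sections and the closedness of the support. Everything else is a direct combination of the earlier estimates.
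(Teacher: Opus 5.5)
Your proposal is correct and follows the paper's proof. You combine Propositions \ref{p:lowerboundE2} and \ref{p:upperboundE2} to bound $H$ uniformly, use the zero $x_2$-moment of $\hat\omega$ (from mass saturation) to place the support on both sides of the $x_1$-axis, and invoke Proposition \ref{p:D_1bdd} for the horizontal extent; your Steiner-symmetry check that the vertical extent is realized on the axis $\{x_1=0\}$ simply makes explicit a step the paper uses silently in Proposition \ref{p:upperboundE2}.
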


\begin{proof}
We show that $H$ is uniformly bounded for $\mu\geq M$. We may assume that $H\geq 4(R_0+1)$ and apply Propositions \ref{p:lowerboundE2} and \ref{p:upperboundE2} to estimate 
\begin{align*}
-C_1\leq C_2-\frac{m_0^{2}}{2\pi}\log{H}.
\end{align*}
Thus $H$ is uniformly bounded and \eqref{eq:uniformvortexcorebound1} holds for some constant $R>0$.
\end{proof}

\subsection{The logarithmic free-energy maximization}

We apply the existence and uniqueness results for the variational problem
\eqref{eq:MFE} proved in \cite[Theorems 2.1 and 4.8]{Car}. We also use compactness for maximizing sequences supported in a fixed compact set.

\begin{theorem}[Logarithmic free-energy maximization]\label{thm: lim VP}
The variational problem \eqref{eq:MFE} admits a unique maximizer,
given by the radially symmetric solution for $\lambda=1$. Moreover, for any nonnegative sequence $\{\omega_n\}\subset \mathcal{K}$ satisfying $\mathcal{E}_2[\omega_n]\to \mathcal{I}$ and $\operatorname{spt}\omega_n\subset B_0(R)$ for some $R>0$, there exists a subsequence such that $\omega_n\to \omega^{R}_{1,\gamma_{R}}$ in $L^{2}(B_0(R))$. 
\end{theorem}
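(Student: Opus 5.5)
The existence and uniqueness part of Theorem~\ref{thm: lim VP} I will take from \cite[Theorems 2.1 and 4.8]{Car}: the radially symmetric profile $\omega^{R}_{1,\gamma_R}$ is the unique maximizer of \eqref{eq:MFE} in $\mathcal K$. The uniqueness statement there already accounts for translations, since $\mathcal K$ fixes the center of mass. What remains is the compactness statement for maximizing sequences supported in a fixed ball $B=B_0(R)$. I plan the standard route: a uniform $L^2$ bound, then weak compactness, then continuity of the logarithmic energy under weak convergence, then an upgrade to strong convergence using the uniqueness of the maximizer.

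\emph{Step 1: uniform $L^2$ bound.} I split the logarithmic kernel on $B\times B$ as $\log\frac{1}{|x-y|}=\bigl(\log\frac{1}{|x-y|}\bigr)_+-\bigl(\log\frac{1}{|x-y|}\bigr)_-$. The negative part is bounded by $\log(2R)$ on $B\times B$, so its contribution is $O(1)$ by unit mass. The kernel $k(z)=\bigl(\log\frac1{|z|}\bigr)_+$ lies in $L^2(\mathbb R^2)$, so Young's inequality gives
\begin{equation*}
\iint k(x-y)\,\omega_n(x)\omega_n(y)\,dx\,dy\le \|k*\omega_n\|_{L^2}\|\omega_n\|_{L^2}\le \|k\|_{L^2}\|\omega_n\|_{L^1}\|\omega_n\|_{L^2}.
\end{equation*}
Since $\mathcal E_2[\omega_n]\ge \mathcal I-1$ for large $n$, this yields $\frac12\|\omega_n\|_{L^2}^2\le C+C\|\omega_n\|_{L^2}$. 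Hence $\{\omega_n\}$ is bounded in $L^2(B)$.

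\emph{Step 2: weak limit is admissible.} Pass to a subsequence with $\omega_n\rightharpoonup\omega$ weakly in $L^2(B)$. Because $B$ is bounded, the functions $1$ and $x$ belong to $L^2(B)$. So nonnegativity, unit mass and the zero first moment all pass to the limit, and $\omega\in\mathcal K$ with $\operatorname{spt}\omega\subset\overline B$.

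\emph{Step 3: convergence of the energy.} The kernel $\log\frac{1}{|x-y|}$ belongs to $L^2(B\times B)$. Hence the operator $f\mapsto \mathbf 1_B\int_B\log\frac1{|x-y|}f(y)\,dy$ is Hilbert--Schmidt, and thus compact on $L^2(B)$. Therefore $\mathcal E[\omega_n]\to\mathcal E[\omega]$. By weak lower semicontinuity of the $L^2$ norm,
\begin{equation*}
\mathcal E_2[\omega]\ge\limsup_{n\to\infty}\mathcal E_2[\omega_n]=\mathcal I .
\end{equation*}
So $\omega$ is a maximizer, and by the uniqueness from \cite{Car} we get $\omega=\omega^{R}_{1,\gamma_R}$.

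\emph{Step 4: strong convergence.} From $\mathcal E_2[\omega_n]\to\mathcal I=\mathcal E_2[\omega]$ and $\mathcal E[\omega_n]\to\mathcal E[\omega]$ we obtain $\|\omega_n\|_{L^2}\to\|\omega\|_{L^2}$. Together with weak convergence, this gives strong convergence in $L^2(B_0(R))$.

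I expect the only delicate point to be Step 1, namely controlling the singular positive part of the logarithmic kernel by $\|\omega_n\|_{L^1}\|\omega_n\|_{L^2}$. Once the $L^2$ bound is in hand, the rest follows from compactness of the Hilbert--Schmidt operator and the cited uniqueness.
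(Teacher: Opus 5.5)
Your proposal is correct and follows essentially the same route as the paper: a uniform $L^2$ bound from unit mass and the local square-integrability of the logarithmic kernel, weak compactness preserving mass and zero moment, convergence of the interaction energy with weak lower semicontinuity of the $L^2$ norm, and the upgrade to strong convergence via norm convergence together with the uniqueness from \cite{Car}. The differences are cosmetic. The paper bounds $\|\psi_n\|_{L^\infty(B)}\lesssim\|\omega_n\|_{L^2(B)}$ by H\"older instead of using your Young-inequality splitting of the kernel. It also obtains convergence of the energy from the $L^2(B)$ convergence of the stream functions, rather than by explicitly invoking Hilbert--Schmidt compactness.
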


\begin{proof}
For the sequence $\{\omega_n\}$ satisfying $\mathcal{E}_2[\omega_n]\to \mathcal{I}$ and $\operatorname{spt} \omega_n\subset B=B_0(R)$, we set 
\begin{align*}
\psi_n(x)&=\frac{1}{2\pi}\int_{B}\log\frac{1}{|x-y|}\omega_n(y)dy,\\
\mathcal{E}_2[\omega_n]
&=\frac{1}{2}(\psi_n,\omega_n)_{L^{2}(B)}-\frac{1}{2}||\omega_n||_{L^{2}(B)}^{2}.
\end{align*}
Using H\"older's inequality and $||\omega_n||_{L^{1}}=1$, we estimate $||\psi_n||_{L^{\infty}}\lesssim ||\omega_n||_{L^{2}}$ and 
\begin{align*}
\frac{1}{2}||\omega_n||_{L^{2}(B)}^{2}=\frac{1}{2}(\psi_n,\omega_n)_{L^{2}(B)}-\mathcal{E}_2[\omega_n]
\leq C||\omega_n||_{L^{2}(B)}-\mathcal{E}_2[\omega_n].
\end{align*}
Applying Young's inequality implies that $\{\omega_n\}$ is bounded in $L^{2}(B)$. We choose a subsequence such that $\omega_n\rightharpoonup \omega$ in $L^{2}(B)$. Since $1,x\in L^2(B)$, the weak convergence preserves the mass and
zero-moment conditions. Hence $\omega\in\mathcal K$. The stream function is bounded in $L^{\infty}(B)$ and converges to 
\begin{align*}
\psi(x)=\frac{1}{2\pi}\int_{B}\log\frac{1}{|x-y|}\omega(y)dy
\end{align*}
in $L^{2}(B)$. This yields the convergence 
\begin{align*}
\mathcal{I}=\lim_{n\to\infty}\mathcal{E}_2[\omega_n]
=\frac{1}{2}\lim_{n\to\infty}(\psi_n,\omega_n)_{L^{2}(B)}-\frac{1}{2}\liminf_{n\to\infty}||\omega_n||_{L^{2}(B)}^{2} 
\leq \frac{1}{2}(\psi,\omega)_{L^{2}(B)}-\frac{1}{2}||\omega||_{L^{2}(B)}^{2}\leq \mathcal{I}.
\end{align*}
Thus $\omega$ is a maximizer of \eqref{eq:MFE} and $\omega_n\to \omega$ in $L^{2}(B)$. By uniqueness, $\omega=\omega^{R}_{1,\gamma_R}$.
\end{proof}

\begin{remark}
The Euler--Lagrange equation of \eqref{eq:MFE} is 
\begin{align}
\omega=(\psi-\gamma)_{+},  \label{eq:ELR2}
\end{align}
for $\psi=N*\omega$ and some constant $\gamma$ \cite[Theorems 2.1, 3.1]{Car}. Multiplying this equation by \(\omega\) and integrating over \(\mathbb R^2\) yields the identity 
\begin{align}
\mathcal{I}=\frac{1}{2}\gamma.  \label{eq:P0}
\end{align}
The radially decreasing solution to \eqref{eq:ELR2} can be expressed as $\psi-\gamma=C_1J_0(r)$ for $0<r<R$ and $\psi=C_2\log r$ for $r>R$. Using the $C^{1}$ matching condition for $\psi$ at $r=R$ and the mass condition $\int_{\mathbb{R}^{2}} \omega dx=1$, we find that $R=j_{0,1}$ and $\psi$ is of the form
\begin{align}
\psi(r)=\begin{cases}
\displaystyle\frac{1}{2\pi j_{0,1}J_1(j_{0,1})}J_0(r)+\gamma_{j_{0,1}} & 0<r\leq j_{0,1},\\
-\displaystyle\frac{1}{2\pi}\log{r} & r>j_{0,1},
\end{cases}
\label{eq:psiexplicit}
\end{align}
and satisfies \eqref{eq:ELR2} for 
\begin{align}
\gamma=-\frac{1}{2\pi}\log{j_{0,1}}. \label{eq:gammaexplicit}
\end{align}
The constants $\gamma$ and $\mathcal{I}$ are negative, unlike the half-plane problem \eqref{eq: VP-2}.
\end{remark}

\begin{proposition}\label{p:convergencemax}
\begin{align}
I_{\mu}-\frac{1}{4\pi}\log{(2 \mu)}= \mathcal{I}+O\left(\mu^{-2}\right)\quad \textrm{as}\ \mu\to\infty. \label{eq:P5}
\end{align}
\end{proposition}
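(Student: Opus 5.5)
The plan is to prove matching upper and lower bounds for $I_\mu-\frac{1}{4\pi}\log(2\mu)$. Both come from the same splitting of the half-plane energy into a whole-plane logarithmic part and an interaction part with the reflected vortex. For $\omega$ supported in $\mathbb{R}^2_+$ and $\hat\omega(x)=\omega(x+(0,\mu))$, extended by zero, the expression \eqref{eq:Green} of the Green function gives
\begin{align*}
E_2[\omega]=\mathcal{E}_2[\hat\omega]+\frac{1}{4\pi}\log(2\mu)\Big(\int\hat\omega\Big)^2+\frac{1}{4\pi}\iint_{\mathbb{R}^2\times\mathbb{R}^2}\log\Big|(0,1)+\frac{x-\bar y}{2\mu}\Big|\,\hat\omega(x)\hat\omega(y)\,dx\,dy .
\end{align*}
This is the identity already used in the proof of Proposition \ref{p:lowerboundE2}.

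\emph{Lower bound.} This is already contained in the proof of Proposition \ref{p:lowerboundE2}. Testing with the translated radial profile $\check\omega^{R}_{1,\gamma_R}\in K_\mu$ gives
\begin{align*}
I_\mu\ge \mathcal{I}+\frac{1}{4\pi}\log(2\mu)+O(\mu^{-2}).
\end{align*}
The remainder is $O(\mu^{-2})$ because of two facts. First, $\log|(0,1)+w|=\frac12\log(1+2w_2+|w|^2)\le w_2+\frac12|w|^2$. Second, for $w=(x-\bar y)/(2\mu)$ the linear term $w_2=(x_2+y_2)/(2\mu)$ integrates to zero, since $\omega^R_{1,\gamma_R}$ has zero first moment. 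For the matching lower estimate of the remainder I would use $\log(1+s)\ge s-s^2$ for small $s$; this is harmless because $\omega^R_{1,\gamma_R}$ is supported in $B_0(j_{0,1})$, so $|w|\lesssim\mu^{-1}$ there.

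\emph{Upper bound.} Take $\mu\ge M$ and a Steiner symmetric maximizer $\omega\in S_\mu$. By Theorem \ref{thm:threshold}(B), $\|\omega\|_{L^1}=1$. By Lemma \ref{lem:uniform bounds}, $\operatorname{spt}\hat\omega\subset B_0(R)$ with $R$ independent of $\mu$. I then check that $\hat\omega\in\mathcal{K}$:
\begin{itemize}
\item it has unit mass;
\item $\int x_1\hat\omega=0$ by evenness in $x_1$;
\item $\int x_2\hat\omega=\mu-\mu=0$ by the impulse constraint;
\item $|x|\hat\omega\in L^1$ by the compact support.
\end{itemize}
Hence $\mathcal{E}_2[\hat\omega]\le\mathcal{I}$. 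For the interaction term I use the same inequality $\log|(0,1)+w|\le w_2+\frac12|w|^2$ with $w=(x-\bar y)/(2\mu)$. The $w_2$ contribution vanishes by the zero $x_2$-moment of $\hat\omega$. The $|w|^2$ contribution is at most $\frac{(2R)^2}{8\mu^2}$, uniformly in $\mu$, because $|x-\bar y|\le 2R$ on the support. Therefore
\begin{align*}
I_\mu=E_2[\omega]\le \mathcal{I}+\frac{1}{4\pi}\log(2\mu)+C\mu^{-2}.
\end{align*}
Combining the two bounds gives \eqref{eq:P5}.

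The only nontrivial input is the uniform localization $\operatorname{spt}\hat\omega\subset B_0(R)$ from Lemma \ref{lem:uniform bounds}. Without it the quadratic remainder is not controlled, and one only gets the weaker upper bound with the $\log(2D_1)$ loss from the Jensen argument in Proposition \ref{p:lowerboundE2}. Since that lemma is available, the remaining steps are elementary expansions of the logarithm.
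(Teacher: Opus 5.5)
Your proposal is correct and follows essentially the same route as the paper's proof. The upper bound uses the uniform localization from Lemma \ref{lem:uniform bounds} to place $\hat\omega$ in $\mathcal{K}$ and to bound the reflected-interaction remainder, and the lower bound tests with the translated radial profile $\check\omega^{R}_{1,\gamma_R}$. You also make explicit two details the paper leaves implicit when it asserts the remainder is $O(\mu^{-2})$: the zero $x_2$-moment cancels the linear term, and the inequality $\log(1+s)\ge s-s^2$ gives the lower estimate of the remainder needed for the lower bound.
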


\begin{proof}
For the Steiner symmetric $\omega\in S_{\mu}$ and $\mu\geq M$, $\operatorname{spt}\hat{\omega}\subset B=B_0(R)$ by Lemma \ref{lem:uniform bounds}. Since $\omega$ is even for $x_1$, the recentered function \(\hat\omega\) satisfies
\begin{align*}
\int_{\mathbb{R}^{2}}\hat{\omega}dx=1,\quad \int_{\mathbb{R}^{2}}x\hat{\omega}dx=0.
\end{align*}
This means that $\hat{\omega}\in \mathcal{K}$. It follows from \eqref{eq:FE} that 
\begin{align*}
I_{\mu}=E_2[\omega]
&=\mathcal{E}_{2}[\hat{\omega}]+\frac{1}{4\pi}\iint_{B\times B}\log|x-\overline{y}+(0,2\mu)|\hat{\omega}(x)\hat{\omega}(y)dxdy \\
&\leq \mathcal{I}
+\frac{1}{4\pi}\log (2\mu)
+\frac{1}{8\pi}\iint_{B\times B}\log\left( \left|\frac{x_1-y_1}{2\mu}\right|^{2}+\left|\frac{x_2+y_2}{2\mu}+1\right|^{2} \right)\hat{\omega}(x)\hat{\omega}(y)dxdy.
\end{align*}
The last term is $O(\mu^{-2})$. Thus $I_{\mu}-(4\pi)^{-1}\log(2\mu)-\mathcal{I}\leq O(\mu^{-2})$. Conversely, for the radially symmetric $\omega^{R}_{1,\gamma_R}$ in \eqref{eq:radiallysymmetric}, the $x_2$-translation 
\begin{align*}
\check{\omega}^{R}_{1,\gamma_R}(x)=\omega^{R}_{1,\gamma_R}(x-(0,\mu))
\end{align*}
satisfies 
\begin{align*}
\int_{\mathbb{R}^{2}_{+}}\check{\omega}^{R}_{1,\gamma_R}dx=1,\quad 
\int_{\mathbb{R}^{2}_{+}}x_2\check{\omega}^{R}_{1,\gamma_R}dx=\mu.
\end{align*}    
As in the proof of Proposition \ref{p:lowerboundE2}, we obtain   
\begin{align*}
I_{\mu}
\geq E_2[\check{\omega}^{R}_{1,\gamma_R}]
=\mathcal{I}+\frac{1}{4\pi}\int_{\mathbb{R}^{2}}\int_{\mathbb{R}^{2}}\log|x-\overline{y}+(0,2\mu)|\omega^{R}_{1,\gamma_R}(x)\omega^{R}_{1,\gamma_R}(y)dxdy 
= \mathcal{I}+\frac{1}{4\pi}\log(2\mu)+O\left(\mu^{-2}\right).
\end{align*}
Thus $|I_{\mu}-(4\pi)^{-1}\log(2\mu)-\mathcal{I}|\lesssim \mu^{-2}$. We proved \eqref{eq:P5}.
\end{proof}

\subsection{Convergence toward the radially symmetric solution}
	
\begin{lemma}\label{l:converge}
Let $\{\mu_n\}$ be a sequence such that $\mu_n\to \infty$. Let $\omega_n\in S_{\mu_n}$ be a Steiner symmetric maximizer in Theorem \ref{thm:nearly Lamb} with the constants $W_n>0$ and $\gamma_n>0$. Then, there exists $R>0$ such that $\operatorname{spt} \hat{\omega}_n \subset B_0(R)$ and, passing to a subsequence, 
\begin{align}
\hat{\omega}_n&\to\omega^{R}_{1,\gamma_R}
		\quad\textrm{in}\ L^{2}(B_0(R)), \label{eq:convergencerecentred}  \\
W_n&=\frac1{4\pi\mu_n}+o\left(\mu^{-2}_{n}\right),   \label{eq:large-multiplier-asymptoticsW} \\
\gamma_n-\frac{1}{2\pi}\log(2\mu_n)+\frac{1}{4\pi}&=\gamma_R+o\left(\mu_n^{-1}\right), \label{eq:large-multiplier-asymptoticsg}
\end{align}
where $\gamma_R=\gamma_R(1)$ is the constant \eqref{eq:radialflux}.
\end{lemma}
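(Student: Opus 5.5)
The plan is to combine the uniform vortex core bound of Lemma \ref{lem:uniform bounds} with the energy asymptotics of Proposition \ref{p:convergencemax}. The multipliers $W_n,\gamma_n$ will then be read off from the Poho\v{z}aev identities of Lemma \ref{lemma:pohozaev}.

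\emph{Step 1: the recentered maximizers are admissible for \eqref{eq:MFE}.} By Lemma \ref{lem:uniform bounds}, for $\mu_n\geq M$ we have $\operatorname{spt}\hat\omega_n\subset B_0(R)$ with $R$ independent of $n$. Since $\mu_n>\kappa_*$, Theorem \ref{thm:threshold} gives $\|\omega_n\|_{L^1}=1$. Evenness in $x_1$ gives $\int x_1\hat\omega_n\,dx=0$, and $\int x_2\omega_n\,dx=\mu_n$ gives $\int x_2\hat\omega_n\,dx=0$. Hence $\hat\omega_n\in\mathcal K$.

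\emph{Step 2: $\hat\omega_n$ is a maximizing sequence.} I will use the splitting
\[
E_2[\omega_n]=\mathcal E_2[\hat\omega_n]+\frac1{4\pi}\iint\log|x-\bar y+(0,2\mu_n)|\,\hat\omega_n(x)\hat\omega_n(y)\,dx\,dy .
\]
Write $\log|x-\bar y+(0,2\mu_n)|=\log(2\mu_n)+\frac{x_2+y_2}{2\mu_n}+O(\mu_n^{-2})$ uniformly on $B_0(R)\times B_0(R)$. The linear term integrates to zero by the zero moment, so the interaction term equals $\frac1{4\pi}\log(2\mu_n)+O(\mu_n^{-2})$. Combined with \eqref{eq:P5}, this yields $\mathcal E_2[\hat\omega_n]=\mathcal I+O(\mu_n^{-2})\to\mathcal I$. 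Theorem \ref{thm: lim VP} then gives \eqref{eq:convergencerecentred} along a subsequence.

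\emph{Step 3: the speed.} I will use \eqref{eq:pohozaev-W} with unit mass and change variables to the recentered coordinates. Set $s=x_2+y_2$ and $d=x_1-y_1$, both bounded on the support. The kernel becomes
\[
\frac{2\mu_n+s}{(2\mu_n+s)^2+d^2}=\frac{1}{2\mu_n}-\frac{s}{4\mu_n^2}+O(\mu_n^{-3}).
\]
The $s$-term again vanishes after integration by the zero moment, so $W_n=\frac1{4\pi\mu_n}+O(\mu_n^{-3})$, which gives \eqref{eq:large-multiplier-asymptoticsW}.

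\emph{Step 4: the flux constant.} Identity \eqref{eq:pohozaev-penalized} with unit mass gives $\gamma_n=2I_{\mu_n}-W_n\mu_n$. Inserting \eqref{eq:P5} and Step 3 gives $\gamma_n=2\mathcal I+\frac1{2\pi}\log(2\mu_n)-\frac1{4\pi}+O(\mu_n^{-2})$. By \eqref{eq:P0} and \eqref{eq:gammaexplicit}, $2\mathcal I=-\frac1{2\pi}\log j_{0,1}=\gamma_R(1)$, which gives \eqref{eq:large-multiplier-asymptoticsg} with an error of order $\mu_n^{-2}=o(\mu_n^{-1})$.

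The genuinely hard input, the uniform core bound, is already available from Lemma \ref{lem:uniform bounds}. The only point needing care is keeping the Taylor remainders uniform, which holds because both supports stay inside the fixed ball $B_0(R)$. I expect no step here to be a real obstacle beyond bookkeeping of these $O(\mu_n^{-2})$ terms.
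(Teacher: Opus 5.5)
Your proposal is correct and follows the paper's proof essentially step by step. You use the uniform core bound of Lemma \ref{lem:uniform bounds} and the splitting of $E_2$ combined with \eqref{eq:P5} to make $\hat\omega_n$ a maximizing sequence for \eqref{eq:MFE}, then read off the multipliers from \eqref{eq:pohozaev-W}, \eqref{eq:pohozaev-penalized} and \eqref{eq:P0}. Your explicit expansions, in which the zero $x_2$-moment cancels the linear term, actually give the sharper errors $O(\mu_n^{-3})$ for $W_n$ and $O(\mu_n^{-2})$ for $\gamma_n$; the paper relies on the same cancellation but does not spell it out.
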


\begin{proof}
The sequence $\{\omega_n\}$ is uniformly bounded by \eqref{eq:uniformvortex}. By translation,  
\[
		\int_{\mathbb R^2}\hat{\omega}_ndx=1,
		\qquad
		\int_{\mathbb R^2}x\hat{\omega}_n\,dx=0.
\]
By Lemma \ref{lem:uniform bounds}, $\operatorname{spt}\hat{\omega}_n\subset B=B_0(R)$ for some $R>0$. It follows that 
\begin{align*}
I_{\mu_n}-\frac{1}{4\pi}\log(2\mu_n)
&=\mathcal{E}_2[\hat{\omega}_n]+\frac{1}{4\pi}\iint_{B\times B}\log{|x-\overline{y}+(0,2\mu)|}\hat{\omega}_n(x)\hat{\omega}_n(y)dxdy-\frac{1}{4\pi}\log(2\mu_n)\\
&=\mathcal{E}_2[\hat{\omega}_n]+O\left(\mu_n^{-2}\right).
\end{align*}
Thus $\mathcal{E}_{2}[\hat{\omega}_n]\to \mathcal{I}$ by \eqref{eq:P5}. We apply Theorem \ref{thm: lim VP} and deduce the convergence \eqref{eq:convergencerecentred}. It follows from \eqref{eq:pohozaev-W} that 
\begin{align*}
W_n
&=\frac{1}{2\pi} \iint_{B\times B} \frac{x_2+y_2+2\mu_n}{|x-\bar y+(0,2\mu_n)|^2} \hat{\omega}_n(x)\hat{\omega}_n(y)\,dx\,dy\\
&=\frac1{4\pi\mu_n}+\frac{1}{2\pi} \iint_{B\times B} \left(\frac{x_2+y_2+2\mu_n}{|x-\bar y+(0,2\mu_n)|^2}-\frac{1}{2\mu_n}\right) \hat{\omega}_n(x)\hat{\omega}_n(y)\,dx\,dy
=\frac1{4\pi\mu_n}+o\left(\mu_n^{-2}\right).
\end{align*}
We thus obtain \eqref{eq:large-multiplier-asymptoticsW}. Using \eqref{eq:P1}, \eqref{eq:P5}, \eqref{eq:large-multiplier-asymptoticsW}, and \eqref{eq:P0}, we obtain
\begin{align*}
\gamma_n=2I_{\mu_n}-W_n\mu_n=\frac{1}{2\pi}\log{(2\mu_n)}+2\mathcal{I}-\frac{1}{4\pi}+o\left(\mu_n^{-1}\right)=\frac{1}{2\pi}\log{(2\mu_n)}+\gamma_R-\frac{1}{4\pi}+o\left(\mu_n^{-1}\right).
\end{align*}
Thus, \eqref{eq:large-multiplier-asymptoticsg} holds. 
\end{proof}

We use the symbol $f(x;\mu)=O_{L^{\infty}_{\textrm{loc}}}(\mu^{-1})$ if $||f(\cdot\ ;\mu)||_{L^{\infty}(B_0(R))}=O(\mu^{-1})$ as $\mu\to\infty$ for each $R>0$.

\begin{proposition}\label{p:strongerconvergence}
In Lemma \ref{l:converge}, set 
\begin{align}
\hat{\psi}_{n}(x)=\mathcal{G}\omega_n(x+(0,\mu_n)).
\end{align}
Then, $\hat{\psi}_{n}$ is odd with respect to the line $x_2=-\mu_n$ and satisfies 
\begin{align}
\hat{\psi}_{n}(x)&=N*\hat{\omega}_n+\frac{1}{2\pi}\log(2\mu_n)+O_{L^{\infty}_{\textrm{loc}}}(\mu_n^{-1}), \label{eq:psirecenter}\\
\hat{\omega}_n&=\left(N*\hat{\omega}_n-\gamma_R+O_{L^{\infty}_{\textrm{loc}}}(\mu_n^{-1})\right)_+,\quad x_2>-\mu_n. \label{eq:recentereq}
\end{align}
Moreover, 
\begin{align}
N*\hat{\omega}_n &\to N*\omega^{R}_{1,\gamma_R},
\qquad
\hat{\omega}_n \to \omega^{R}_{1,\gamma_R}\quad \textrm{uniformly in}\ \mathbb{R}^{2}, \label{eq:recenteruniformconvergence} \\
\mathbf{1}_{(0,\infty)}(N*\hat{\omega}_n-\gamma_R)(x)
&\to \mathbf{1}_{B_0(j_{0,1})}(x)\quad \textrm{a.e.}\ x\in \mathbb{R}^{2}.
\end{align}
\end{proposition}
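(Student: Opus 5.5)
We write the Green function as $G(x,y)=N(x-y)-N(x-\bar y)$ and translate by $(0,\mu_n)$. For $\hat x=x+(0,\mu_n)$ and $\hat y=y+(0,\mu_n)$ we have $\hat x-\hat y=x-y$ and $\hat x-\bar{\hat y}=x-\bar y+(0,2\mu_n)$. Hence
\begin{align*}
\hat\psi_n(x)=N*\hat\omega_n(x)+\frac{1}{2\pi}\int_{B}\log|x-\bar y+(0,2\mu_n)|\,\hat\omega_n(y)\,dy .
\end{align*}
Here $B=B_0(R)$ contains $\operatorname{spt}\hat\omega_n$ by Lemma \ref{l:converge}, and $\bar y$ denotes reflection in the line $x_2=-\mu_n$ in recentered coordinates; this reflection is also what gives the oddness of $\hat\psi_n$ about $x_2=-\mu_n$. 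For $x\in B_0(R')$ we expand the second term as
\begin{align*}
\log|x-\bar y+(0,2\mu_n)|=\log(2\mu_n)+\frac{x_2+y_2}{2\mu_n}+O(\mu_n^{-2}).
\end{align*}
Integrating against $\hat\omega_n$ uses unit mass and the zero moment $\int y_2\hat\omega_n\,dy=0$. This leaves $\frac{1}{2\pi}\log(2\mu_n)+\frac{x_2}{4\pi\mu_n}+O(\mu_n^{-2})$, which gives \eqref{eq:psirecenter}.

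Next we insert this into the Euler--Lagrange equation \eqref{eq:near-Lamb-fixed-EL} at $\hat x$, which reads $\hat\omega_n=(\hat\psi_n-W_n(x_2+\mu_n)-\gamma_n)_+$. By \eqref{eq:large-multiplier-asymptoticsW} we have $W_n\mu_n=\frac{1}{4\pi}+o(\mu_n^{-1})$, and the term $W_nx_2$ is $O_{L^\infty_{\rm loc}}(\mu_n^{-1})$. Using \eqref{eq:large-multiplier-asymptoticsg} for $\gamma_n$, the constants $\frac{1}{2\pi}\log(2\mu_n)$ cancel, and the $-\frac{1}{4\pi}$ coming from $W_n\mu_n$ cancels against the $+\frac{1}{4\pi}$ in the expansion of $\gamma_n$. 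This leaves $N*\hat\omega_n-\gamma_R+O_{L^\infty_{\rm loc}}(\mu_n^{-1})$, which is \eqref{eq:recentereq}. The $o(\mu_n^{-1})$ errors are absorbed into $O(\mu_n^{-1})$. Since the support is uniformly bounded, it suffices to verify the identity on $B_0(R)$.

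For the uniform convergence we use that $\hat\omega_n\to\omega^R_{1,\gamma_R}$ in $L^2(B)$ along the subsequence of Lemma \ref{l:converge}, with supports in $B$ and masses equal to $1$. The logarithmic kernel restricted to a bounded set is in $L^2_{\rm loc}$, so Cauchy--Schwarz gives local uniform convergence of $N*\hat\omega_n$ to $N*\omega^R_{1,\gamma_R}$. Away from $B$ we split $\log|x-y|=\log|x|+\log|1-y/|x||$-type terms. The $\log|x|$ part is identical for all $n$ because the masses coincide, and the remainder is bounded by $C\|\hat\omega_n-\omega^R\|_{L^1}$ uniformly for $|x|\ge 2R$. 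This yields uniform convergence on all of $\mathbb{R}^2$.

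Then \eqref{eq:recentereq}, together with the Lipschitz property of $s\mapsto s_+$ and the equation $\omega^R_{1,\gamma_R}=(N*\omega^R_{1,\gamma_R}-\gamma_R)_+$ from \eqref{eq:ELR2} and \eqref{eq:gammaexplicit}, gives uniform convergence of $\hat\omega_n$ on $B$. Both functions vanish outside $B$, so the convergence is uniform on $\mathbb{R}^2$. Finally, $N*\omega^R_{1,\gamma_R}-\gamma_R$ is positive in $B_0(j_{0,1})$, negative outside its closure, and vanishes only on the null set $\{r=j_{0,1}\}$, by \eqref{eq:psiexplicit}. Uniform convergence of $N*\hat\omega_n$ then gives the a.e.\ convergence of the indicator functions.

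The main obstacle is bookkeeping the constants exactly, so that $\log(2\mu_n)$ and the $\frac{1}{4\pi}$ terms cancel, and checking that the error really is $O(\mu_n^{-1})$ locally rather than merely $o(1)$. This requires the sharp asymptotics \eqref{eq:large-multiplier-asymptoticsW} and \eqref{eq:large-multiplier-asymptoticsg} as they are stated.
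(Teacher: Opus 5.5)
Your proof is correct and follows the same route as the paper: you split $\mathcal{G}$ into $N*\hat\omega_n$ plus the reflected logarithm, expand the latter around $\log(2\mu_n)$, substitute into the translated Euler--Lagrange equation using the asymptotics of $W_n$ and $\gamma_n$, and upgrade $L^2$ convergence to uniform convergence via the Lipschitz map $s\mapsto s_+$ and the uniform support bound. Your Taylor expansion using the zero $x_2$-moment and your far-field argument using equal masses make explicit steps the paper only asserts. One small notational slip: in your formula $\bar y$ must be the usual reflection across $x_2=0$, since the reflection in the line $x_2=-\mu_n$ is $\bar y-(0,2\mu_n)$.
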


\begin{proof}
We set 
\begin{align*}
\psi_n(x)=\mathcal{G}\omega_n(x)=\frac{1}{2\pi}\int_{\mathbb{R}^{2}_{+}}\log\frac{1}{|x-y|}\omega_n(y)dy+\frac{1}{2\pi}\int_{\mathbb{R}^{2}_{+}}\log|x-\bar{y}|\omega_n(y)dy.
\end{align*}
Changing the variable, we have 
\begin{align*}
\hat{\psi}_n(x)
&=N*\hat{\omega}_n(x)+\frac{1}{2\pi}\int_{\mathbb{R}^{2}}\log|x-\bar{y}+(0,2\mu_n)|\hat{\omega}_n(y)dy \\
&=N*\hat{\omega}_n(x)+\frac{1}{2\pi}\log(2\mu_n)+\frac{1}{2\pi}\int_{\mathbb{R}^{2}}\log\left|\frac{x-\bar{y}}{2\mu_n}+(0,1)\right|\hat{\omega}_n(y)dy.
\end{align*}
Using the pointwise estimate $\log(t+1)\lesssim t$ for $t>0$, we see that the last term is $O_{L^{\infty}_{\textrm{loc}}}(\mu_n^{-1})$. Thus \eqref{eq:psirecenter} holds.

Since $\omega_n=(\psi_n-W_nx_2-\gamma_n)_{+}$ for $x_2>0$, $\hat{\omega}_n$ satisfies 
\begin{align*}
\hat{\omega}_n(x)=(\hat{\psi}_n(x)-W_nx_2-W_n\mu_n-\gamma_n)_{+},\quad x_2>-\mu_n.
\end{align*}
Substituting \eqref{eq:psirecenter} into this, 
\begin{align*}
\hat{\omega}_n(x)=\left(N*\hat{\omega}_{n}+\frac{1}{2\pi}\log(2\mu_n)-W_n\mu_n-\gamma_n+O_{L^{\infty}_{\textrm{loc}} }\left(\mu_n^{-1}\right)\right)_{+}.
\end{align*}
Using \eqref{eq:large-multiplier-asymptoticsW} and \eqref{eq:large-multiplier-asymptoticsg}, we obtain \eqref{eq:recentereq}. Since $\hat{\omega}_n$ is compactly supported and converges to $\omega^{R}_{1,\gamma_{R}}$ in $L^{2}$ by \eqref{eq:convergencerecentred}, $N*\hat{\omega}_n$ converges uniformly to $N*\omega^{R}_{1,\gamma_{R}}$ in $\mathbb{R}^{2}$. By \eqref{eq:recentereq}, $\hat{\omega}_n$ converges to $\omega^{R}_{1,\gamma_R}$ locally uniformly in $\mathbb{R}^{2}$. Since $\hat{\omega}_n$ is uniformly supported in $B_0(R)$, its convergence is uniform in $\mathbb{R}^{2}$.
\end{proof}

\subsection{Uniqueness of maximizers}

\begin{proposition}\label{p:identitydifference2}
For two maximizers $\omega^{i}\in S_{\mu}$ in Theorem \ref{thm:nearly Lamb} for $i=1,2$, we have
\begin{equation}
\begin{aligned}
\hat{\omega}^{1}-\hat{\omega}^{2}
=& \left(\int_{0}^{1}\mathbf{1}_{(0,\infty)}\left(N*(t\hat{\omega}^{1}+(1-t)\hat{\omega}^{2})-\gamma_R+O_{L^{\infty}_{\textrm{loc}}}\left(\mu^{-1}\right)\right)dt\right)\\
&\cdot\left(N*(\hat{\omega}^1-\hat{\omega}^{2})
+||\hat{\omega}^{1}-\hat{\omega}^{2}||_{L^{2}(\mathbb{R}^{2})}O_{L^{\infty}_{\textrm{loc}}}\left(\mu^{-1}\right)
\right),
\end{aligned}
\label{eq:identitydifference2}
\end{equation}
as $\mu\to\infty$.
\end{proposition}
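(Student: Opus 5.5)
The plan is to mimic the proof of Proposition \ref{p:identitydifference} and then insert the recentered expansions of Proposition \ref{p:strongerconvergence}. Since $\mu>\kappa_*$, both maximizers have unit mass. The identities \eqref{eq:pohozaev-penalized} and \eqref{eq:pohozaev-L2} then give
\begin{align*}
W^1-W^2=\frac1\mu(\omega^1-\omega^2,\omega^1+\omega^2)_{L^2},\qquad \gamma^1-\gamma^2=-(\omega^1-\omega^2,\omega^1+\omega^2)_{L^2}.
\end{align*}
Writing $\Psi^i=\mathcal G\omega^i-W^ix_2-\gamma^i$, the elementary identity $a_+-b_+=\int_0^1\mathbf 1_{(0,\infty)}(ta+(1-t)b)\,dt\,(a-b)$ gives the same formula as in Proposition \ref{p:identitydifference}. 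We then recenter by $x\mapsto x+(0,\mu)$; translation preserves $L^2$ norms.

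\textbf{Cutoff factor.} By \eqref{eq:psirecenter} and the asymptotics \eqref{eq:large-multiplier-asymptoticsW}--\eqref{eq:large-multiplier-asymptoticsg} (uniform for all maximizers with $\mu\ge M$, since Lemma \ref{lem:uniform bounds} applies to every maximizer), we have
\begin{align*}
\hat\Psi^i(x)=N*\hat\omega^i-\gamma_R+O_{L^\infty_{\rm loc}}(\mu^{-1}),
\end{align*}
exactly as in \eqref{eq:recentereq}. Hence the convex combination $t\hat\Psi^1+(1-t)\hat\Psi^2$ equals $N*(t\hat\omega^1+(1-t)\hat\omega^2)-\gamma_R+O_{L^\infty_{\rm loc}}(\mu^{-1})$, which is the first factor of \eqref{eq:identitydifference2}. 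One point to check is that the $O$-terms should be uniform, not merely subsequential. If the $o(\mu^{-1})$ in Lemma \ref{l:converge} is only along subsequences, I would state it via the contradiction sequence; this is harmless for the later use.

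\textbf{Second factor.} From the same decomposition of the Green function,
\begin{align*}
\widehat{\mathcal G(\omega^1-\omega^2)}(x)=N*(\hat\omega^1-\hat\omega^2)(x)+\frac1{2\pi}\int\log\left|\frac{x-\bar y}{2\mu}+(0,1)\right|(\hat\omega^1-\hat\omega^2)(y)\,dy.
\end{align*}
The $\log(2\mu)$ term cancels because the masses agree. On $B_0(R)$ the remaining integrand is $O(\mu^{-1})$ uniformly on the common support $B_0(R)$, so Cauchy--Schwarz bounds this term by $C\mu^{-1}\|\hat\omega^1-\hat\omega^2\|_{L^2}$. Next, $(W^1-W^2)(x_2+\mu)+(\gamma^1-\gamma^2)=\frac{x_2}{\mu}(\hat\omega^1-\hat\omega^2,\hat\omega^1+\hat\omega^2)_{L^2}$; the constant parts cancel exactly. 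Using the uniform $L^2$ bound \eqref{eq:uniformvortex}, this term is $\|\hat\omega^1-\hat\omega^2\|_{L^2}\,O_{L^\infty_{\rm loc}}(\mu^{-1})$.

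I expect the only delicate step to be bookkeeping this cancellation of the $x_2$-independent multipliers. The cancellation is what prevents an $O(1)$ inhomogeneous term like the one in the near-Lamb case.
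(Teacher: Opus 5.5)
Your proposal is correct and follows the paper's own proof. You translate the difference identity of Proposition \ref{p:identitydifference}, split $\mathcal{G}$ into $N*$ plus the reflected logarithm, cancel $\log(2\mu)$ by equal masses, and bound the remainder by $C\mu^{-1}\|\hat\omega^1-\hat\omega^2\|_{L^2}$ on the common support $B_0(R)$. You also make explicit the reduction of the multiplier term to $\frac{x_2}{\mu}(\hat\omega^1-\hat\omega^2,\hat\omega^1+\hat\omega^2)_{L^2}$, which the paper leaves implicit in ``substituting into the translation.''

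Your uniformity worry is unnecessary. The asymptotics for $W$ and $\gamma$ use only the uniform support bound, the zero moment and Proposition \ref{p:convergencemax}, none of which require passing to a subsequence.
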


\begin{proof}
Using the Lagrange multipliers $W^{i},\gamma^{i}>0$ for $\omega^{i}$ and $\psi^{i}=\mathcal{G}\omega^{i}$, we have 
\begin{align*}
\hat{\psi}^{i}&=N*\hat{\omega}^{i}+\frac{1}{2\pi}\log(2\mu)+\frac{1}{2\pi}\int_{\mathbb{R}^{2}}\log\left|\frac{x-\bar{y}}{2\mu}+(0,1)\right|\hat{\omega}^{i}(y)dy,\\
\widehat{\left(\psi^{i}-W^{i}x_2-\gamma^{i}\right)}&=N*\hat{\omega}^{i}-\gamma_R+O_{L^{\infty}_{\textrm{loc}}}(\mu^{-1}).
\end{align*}
By Lemma \ref{lem:uniform bounds}, we have  $\operatorname{spt}\hat{\omega}^{i}\subset B=B_0(R)$ for some $R>0$. We set 
\begin{align*}
\zeta=\frac{\hat{\omega}^{1}-\hat{\omega}^{2}}{||\hat{\omega}^{1}-\hat{\omega}^{2}||_{L^{2}(B)}}
\end{align*}
so that $||\zeta||_{L^{2}}=1$. It follows that 
\begin{align*}
\hat{\psi}^{1}-\hat{\psi}^{2}=N*(\hat{\omega}^{1}-\hat{\omega}^{2})+\frac{||\hat{\omega}^{1}-\hat{\omega}^{2}||_{L^{2}(B)}}{2\pi}\int_{B}\log{\left|\frac{x-\bar{y}}{2\mu}+(0,1)\right|}\zeta(y)dy.
\end{align*}
Using $\log(t+1)\lesssim t$ for $t>0$ and $\operatorname{spt}\zeta\subset B$, the last integral is $O_{L^{\infty}_{\textrm{loc}}}(\mu^{-1})$. Substituting this into the translation of \eqref{eq:identitydifference}, we obtain \eqref{eq:identitydifference2}.
\end{proof}

\begin{lemma}\label{uniq-maximizer2}
		There exists $\kappa_{**}>0$ such that for $\mu \geq \kappa_{**}$, there exists a unique Steiner symmetric $\omega$ such that 
        \begin{align}
        S_{\mu}=\{\omega(\cdot +(a,0)): a\in \mathbb{R}\}.  \label{eq:uniqueness2}
        \end{align}
		\end{lemma}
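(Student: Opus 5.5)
We argue by contradiction, mirroring the proof of Lemma \ref{uniq-maximizer}. Suppose that for every $n$ there is $\mu_n\geq n$ such that $S_{\mu_n}$ contains two Steiner symmetric maximizers $\omega_n^{1}\neq\omega_n^{2}$, which therefore are not horizontal translates of each other. Both are even in $x_1$, so they are centered at the $x_2$-axis. By Lemma \ref{lem:uniform bounds} and Lemma \ref{l:converge}, along a subsequence both recentered functions $\hat\omega_n^{i}$ are supported in a fixed ball $B=B_0(R)$ and converge uniformly to $\omega^{R}_{1,\gamma_R}$. Proposition \ref{p:strongerconvergence} gives, for each fixed $t\in[0,1]$,
\[
\mathbf{1}_{(0,\infty)}\bigl(N*(t\hat\omega_n^{1}+(1-t)\hat\omega_n^{2})-\gamma_R+O_{L^\infty_{\mathrm{loc}}}(\mu_n^{-1})\bigr)\to\mathbf{1}_{B_0(j_{0,1})}\quad\text{a.e.}
\]
This uses the fact that $N*\omega^{R}_{1,\gamma_R}-\gamma_R$ vanishes only on the circle $r=j_{0,1}$, which has measure zero.

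Set $\zeta_n=(\hat\omega^{1}_n-\hat\omega^{2}_n)/\|\hat\omega^{1}_n-\hat\omega^{2}_n\|_{L^2(B)}$. Then
\[
\|\zeta_n\|_{L^2(B)}=1,\qquad \int\zeta_n\,dx=0,\qquad \int x_2\zeta_n\,dx=0,
\]
because the mass is saturated and both maximizers have impulse $\mu_n$, so the recentered $x_2$-moments coincide. Evenness in $x_1$ gives $\int x_1\zeta_n\,dx=0$. Dividing the identity \eqref{eq:identitydifference2} by the norm, we get
\[
\zeta_n=\chi_n\bigl(N*\zeta_n+O_{L^\infty_{\mathrm{loc}}}(\mu_n^{-1})\bigr),
\]
where $\chi_n\in[0,1]$ denotes the $t$-averaged indicator. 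The one point needing care is that $N*\zeta_n$ is uniformly bounded on $B$. This follows from zero mass, compact support and $\|\zeta_n\|_{L^2}=1$, since the logarithmic kernel is integrable on $B$; the $\log$ growth at infinity is irrelevant because everything lives in $B$. Hence $\zeta_n$ is uniformly bounded. Passing to a weak limit $\zeta_n\rightharpoonup\zeta$ in $L^2(B)$, we have $N*\zeta_n\to N*\zeta$ pointwise, indeed uniformly on $B$ by compactness of the kernel. By dominated convergence,
\[
\zeta_n\to\mathbf{1}_{B_0(j_{0,1})}\,N*\zeta
\]
a.e. and in $L^2(B)$. Thus the convergence is strong, and $\zeta$ satisfies $\|\zeta\|_{L^2}=1$, zero mass, and zero first moments.

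The limit satisfies $\zeta=\mathbf{1}_{B_0(j_{0,1})}N*\zeta$, so $\zeta$ is supported in $B_0(j_{0,1})$. Because $\int\zeta=0$, we have $N*\zeta=(-\Delta)^{-1}\zeta$ in the sense of \eqref{eq:opKrep}. Hence $L\zeta=0$ for the operator \eqref{eq:opL} with $R=j_{0,1}$, and in particular $(L\zeta,\zeta)_{L^2}=0$. Since $\int x\zeta\,dx=0$, Theorem \ref{t:wholespacecoercive} gives $\zeta=0$, which contradicts $\|\zeta\|_{L^2}=1$. Therefore, for $\mu$ large, any two Steiner symmetric maximizers coincide. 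Combined with Theorem \ref{thm:nearly Lamb} (every maximizer is a translate of a Steiner symmetric one), this yields \eqref{eq:uniqueness2} with $\kappa_{**}$ chosen larger than $\kappa_*+\delta_*$.

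The main obstacle is controlling the $O(\mu^{-1})$ error term in \eqref{eq:identitydifference2} uniformly relative to the normalization, together with justifying that the limit of the indicator factor is exactly $\mathbf{1}_{B_0(j_{0,1})}$. Both are handled by the uniform support bound of Lemma \ref{lem:uniform bounds} and by the uniform convergence in Proposition \ref{p:strongerconvergence}.
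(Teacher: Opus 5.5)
Your proposal is correct and follows essentially the same route as the paper: a contradiction along $\mu_n\to\infty$, then the normalized difference $\zeta_n$ of the recentered maximizers via \eqref{eq:identitydifference2}, strong $L^2$ convergence to a nontrivial $\zeta=\mathbf{1}_{B_0(j_{0,1})}N*\zeta$ with zero mass and zero first moments, and finally the coercive estimate of Theorem \ref{t:wholespacecoercive} to force $\zeta=0$. Your added justifications (the circle $r=j_{0,1}$ has measure zero, and $\int x_2\zeta_n\,dx=0$ follows from equal impulse and saturated mass) make explicit what the paper leaves implicit; the zero-mass condition is not needed to identify $N*\zeta$ with $K\zeta$, since $K$ in \eqref{eq:opKrep} is defined directly through the kernel $N$.
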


\begin{proof}
Suppose, for contradiction, that for any $n\geq 1$ there exists $\mu_n\geq n$ such that there exist two Steiner symmetric maximizers $\omega_n^{i}\in S_{\mu_n}$ for $i=1,2$. The recentered $\hat{\omega}^{i}_{n}$ satisfy $\operatorname{spt} \hat{\omega}^{i}_{n}\subset B=B_0(R)$ for some $R>0$ and 
\begin{align*}
\int_{B}\hat{\omega}^{i}_ndx=1,\quad \int_{B}x\hat{\omega}^{i}_ndx=0. 
\end{align*}
By Proposition \ref{p:strongerconvergence}, $N*\hat{\omega}_n^{i}$ and $\hat{\omega}_n^{i}$ converge to $N*\omega^{R}_{1,\gamma_{R}}$ and $\omega^{R}_{1,\gamma_{R}}$ uniformly in $\mathbb{R}^{2}$.

We set 
\begin{align*}
\zeta_n=\frac{\hat{\omega}_n^{1}-\hat{\omega}_n^{2}}{||\hat{\omega}_n^{1}-\hat{\omega}_n^{2}||_{L^{2}(B)}}.
\end{align*}
Then, $\zeta_n$ satisfies 
\begin{align*}
||\zeta_n||_{L^{2}(B)}=1,\quad \int_{B}\zeta_n dx=0,\quad \int_{B}x\zeta_n dx=0. 
\end{align*}
Passing to a subsequence, we have $\zeta_n\rightharpoonup\zeta$ in $L^2(B)$. Applying \eqref{eq:identitydifference2}, $\zeta_n$ satisfies 
\begin{equation*}
\begin{aligned}
\zeta_n
= \left(\int_{0}^{1}\mathbf{1}_{(0,\infty)}\left(N*(t\hat{\omega}^{1}_n+(1-t)\hat{\omega}^{2}_n)-\gamma_R+O_{L^{\infty}_{\textrm{loc}}}\left(\mu^{-1}_n\right)\right)dt\right)
\cdot\left(N*\zeta_n
+O_{L^{\infty}_{\textrm{loc}}}\left(\mu^{-1}_n\right)
\right).
\end{aligned}
\end{equation*}
From this expression, $\zeta_n$ is uniformly bounded in $B$ and converges to the limit 
\begin{align*}
\zeta= \mathbf{1}_{B_0(j_{0,1})}N*\zeta,
\end{align*}
almost everywhere in $B$. In particular, $\zeta_n\to \zeta$ in $L^{2}(B)$ and $||\zeta||_{L^{2}(B)}=1$ and $\int_{B}x\zeta dx=0$. Since $\zeta$ belongs to the kernel of the operator $L=I-\mathbf{1}_{B_0(j_{0,1})}(-\Delta_{\mathbb{R}^{2}})^{-1}\mathbf{1}_{B_0(j_{0,1})}$, we apply the coercive estimate \eqref{eq:coercivej01} and conclude that $\zeta=0$. We obtained a contradiction, and the proof is now complete.
\end{proof}

\begin{proof}[Proof of Theorems \ref{thm:unique-near-lamb} and  \ref{thm:well separated dipole} (ii)]
The uniqueness follows from Lemma \ref{uniq-maximizer2}. The limiting profile \eqref{eq:uniformvortexcorebound}-\eqref{eq:convergenceWg} follows from Lemma \ref{l:converge} and \eqref{eq:recenteruniformconvergence}, \eqref{eq:large-multiplier-asymptoticsW}, and \eqref{eq:large-multiplier-asymptoticsg}.

It remains to prove the continuity of the unique Steiner symmetric
maximizer $\omega_\mu$. In the normalized case \(\lambda=\nu=1\), we have \(\kappa=\mu\). For $0<\mu\leq \kappa_{*}$, the continuity follows directly from the explicit formula \eqref{eq:Lamb}. It remains to consider the two mass-saturated regimes. Let $\mu_n\to\bar\mu$ in
\[
[\kappa_*,\kappa_*+\delta_{*}]\cup[\kappa_{**},\infty).
\]
Since $I_{\mu_n}\to I_{\bar\mu}$ by Lemma \ref{l:Lip}, the sequence
$\{\omega_{\mu_n}\}$ is a maximizing sequence for $I_{\bar\mu}$.
By Proposition~\ref{p:compactness-maximizing}, every subsequence admits a
further subsequence converging in $L^2\cap L^1_*$ to a maximizer of
$I_{\bar\mu}$. By uniqueness, the limit is necessarily
$\omega_{\bar\mu}$. Hence the whole sequence converges to
$\omega_{\bar\mu}$ in $L^2\cap L^1_*$. The uniform convergence follows
as in Proposition \ref{p:strongerconvergence0}. Therefore,
\[
\omega_{\mu_n}\to\omega_{\bar\mu}
\quad\text{in}\quad
L^2\cap L^1_*\cap L^{\infty}(\mathbb{R}^2_+).
\]
\end{proof}

\section{Stability}\label{s:6} 

It remains to show the stability (Theorem \ref{thm:stability-near-lamb}). We apply a stability result for $S_\mu$ under regular perturbations \cite[Theorem 1.4]{AC2019}.  

\begin{proposition}\label{p:stability}
Let $\mu>0$. The set $S_{\mu}$ is stable in the sense that for every $\varepsilon>0$, there exists $\delta>0$ such that, whenever $\zeta_0\in L^{\infty}\cap L^1  \cap L^1_*(\mathbb R^2_+)$ is nonnegative and satisfies $\| \zt_0\|_{L^1}\le 1$ and 
\begin{equation*}
\inf_{\omega\in S_{\mu}} \nrm{\zeta_0-\omega}_{L^2 (\bbR^2_+)} 
+\left|\int_{\mathbb R^2_+}x_2\zeta_0\,dx-\mu\right|
\leq\delta,
\end{equation*}
the unique global-in-time solution $\zeta(t)$ of \eqref{eq:2D-Euler} with initial data $\zeta_0$ satisfies
\begin{equation*}
\inf_{\omega\in S_{\mu}} \nrm{\zeta(t)-\omega}_{L^2 \cap L^1_*(\bbR^2_+)} 
\leq\varepsilon,
\qquad t \in \bbR. 
\end{equation*}
\end{proposition}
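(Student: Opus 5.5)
We argue by contradiction, in the standard Lyapunov-type way: the functional $E_2$ and the constraints are conserved by the Euler flow, so solutions starting near $S_\mu$ stay on a maximizing sequence for $I_\mu$. Compactness of maximizing sequences then forces them back near $S_\mu$. Suppose the claim fails. Then there are $\varepsilon_0>0$, nonnegative data $\zeta_{0,n}\in L^\infty\cap L^1\cap L^1_*(\mathbb R^2_+)$ with $\|\zeta_{0,n}\|_{L^1}\le 1$, and $\omega_n\in S_\mu$ such that
\begin{equation*}
\|\zeta_{0,n}-\omega_n\|_{L^2}+\Bigl|\int_{\mathbb R^2_+}x_2\zeta_{0,n}\,dx-\mu\Bigr|\to0 .
\end{equation*}
There are also times $t_n$ with $\inf_{\omega\in S_\mu}\|\zeta_n(t_n)-\omega\|_{L^2\cap L^1_*}\ge\varepsilon_0$.

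\emph{Step 1 (conservation).} For the Yudovich solution (extended oddly in $x_2$), the vorticity is transported by a measure-preserving flow that leaves $\mathbb R^2_+$ invariant. Hence $\zeta_n(t)\ge0$, and all $L^p$ norms, in particular $\|\zeta_n(t)\|_{L^1}\le1$ and $\|\zeta_n(t)\|_{L^2}$, are conserved. The kinetic energy $E$ and the impulse $\int x_2\zeta_n\,dx$ are conserved as well. Thus $\mu_n:=\|\zeta_n(t_n)\|_{L^1_*}=\|\zeta_{0,n}\|_{L^1_*}\to\mu$ and $E_2[\zeta_n(t_n)]=E_2[\zeta_{0,n}]$.

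\emph{Step 2 (maximizing sequence).} We show $E_2[\zeta_{0,n}]\to I_\mu$. The $L^2$ part converges directly. For the energy, write $E[\zeta_{0,n}]-E[\omega_n]$ via the bilinear form and use Cauchy--Schwarz for the positive form $(\mathcal G\cdot,\cdot)$. Then bound $E[\zeta_{0,n}-\omega_n]$ by \eqref{eq:KEE} with some $0<\alpha\le1/2$. The difference is small in $L^2$, and bounded in $L^1$ (by $2$) and in $L^1_*$ (by $\mu+\mu_n$). Since $E[\omega_n]=E[\omega_n]\le C$ uniformly, this gives $E[\zeta_{0,n}]-E[\omega_n]\to0$, so $E_2[\zeta_{0,n}]-I_\mu\to0$. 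By Lemma \ref{l:Lip}, $I_{\mu_n}\to I_\mu$ as well. So $\{\zeta_n(t_n)\}$ satisfies all hypotheses of Proposition \ref{p:compactness-maximizing} except possibly Steiner symmetry.

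\emph{Step 3 (compactness up to translation).} This is the main obstacle: Proposition \ref{p:compactness-maximizing} is stated for Steiner symmetric sequences, whereas $\zeta_n(t_n)$ need not be symmetric. I would first try to quote the translation-invariant concentration-compactness version in \cite[Lemma 3.5]{AC2019}: there exist $a_n\in\mathbb R$ such that $\zeta_n(t_n)(\cdot+(a_n,0))$ is relatively compact in $L^2\cap L^1_*$. Vanishing and dichotomy are excluded by strict subadditivity/monotonicity of $I_\mu$, via \eqref{eq:P1} and \eqref{eq:KEE}. If only the symmetric statement is available, I would rerun the concentration-compactness argument directly. Symmetrization alone does not suffice: the Steiner symmetrizations $\zeta_n(t_n)^*$ (Proposition \ref{prop:Steiner}) do converge to an element of $S_\mu$, but we would still need a quantitative version of the equality case in Proposition \ref{prop:Steiner}(ii), which I regard as harder.

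\emph{Step 4 (conclusion).} Along a subsequence, $\zeta_n(t_n)(\cdot+(a_n,0))\to\bar\omega$ in $L^2\cap L^1_*$. Then $\|\bar\omega\|_{L^1}\le1$ by Fatou, and $\|\bar\omega\|_{L^1_*}=\mu$. By continuity of $E_2$ (as in Step 2), $E_2[\bar\omega]=I_\mu$, so $\bar\omega\in S_\mu$. Since $S_\mu$ is invariant under horizontal translations, $\bar\omega(\cdot-(a_n,0))\in S_\mu$. Hence $\inf_{\omega\in S_\mu}\|\zeta_n(t_n)-\omega\|_{L^2\cap L^1_*}\to0$, contradicting the choice of $t_n$.
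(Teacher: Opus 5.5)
Your argument is correct and is essentially the same as the paper's, which gives no independent proof of Proposition \ref{p:stability} and simply invokes \cite[Theorem 1.4]{AC2019}, a result that rests on this same conservation-plus-compactness contradiction argument. The one external input is the one you flagged in Step 3: compactness of non-symmetric maximizing sequences up to horizontal translations. That is available in \cite{AC2019}, and the paper's Proposition \ref{p:compactness-maximizing} is only its Steiner symmetric specialization, so no quantitative version of the equality case in Proposition \ref{prop:Steiner}(ii) is needed.
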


\begin{proof}[Proof of Theorem \ref{thm:stability-near-lamb}]
For $\kappa_*<\mu\leq \kappa_*+\delta_{*}$ or $\kappa_{**}\leq \mu$, there exists a maximizer $\omega$ such that  
\begin{align*}
S_{\mu}=\{\omega(\cdot+(a,0)):a\in \mathbb{R} \},
\end{align*}
by Theorem \ref{thm:unique-near-lamb}. Since 
\begin{align*}
\inf_{\omega\in S_{\mu}}||\zeta-\omega||_{L^{2}\cap L^{1}_{*}(\mathbb{R}^{2}_{+})}
=\inf_{a\in  \mathbb{R}}||\zeta-\omega(\cdot +(a,0))||_{L^{2}\cap L^{1}_{*}(\mathbb{R}^{2}_{+})},
\end{align*}
we deduce the stability result from Proposition \ref{p:stability}.
\end{proof}

\bigskip

\noindent\textbf{\large Statements and Declarations:}

	\medskip
    
\noindent\textbf{Acknowledgments}  KA is supported by the JSPS through the Grant-in-Aid for Scientific Research (C) 24K06800 and MEXT Promotion of Distinctive Joint Research Center Program JP MXP0619217849. IJ has been supported by the NRF grant from the Korean government (MSIT), No. 2022R1C1C1011051, RS-2024-00406821, the Asian Young Scientist Fellowship, and the KIAS Individual Grant at Korea Institute for Advanced Study. GQ is supported by National Key R\&D Program of China (Grant 2025YFA1018400) and NNSF of China (Grant 12471190). 
	
	\medskip
	
	\noindent\textbf{Conflict of interest statement} There is no conflict of interest.
	
	\medskip
	
	\noindent\textbf{Data availability statement} Data sharing not applicable to this article as no datasets were generated or analysed during the current study.
		
\begingroup
\small

	\bibliographystyle{alpha}
	\bibliography{dipoles-merged}

@unpublished{DMW26,
  author = {M. del Pino and M. Musso and J. Wei},
  title = {Nondegeneracy of the {Sadovskii} vortex},
  note = {\href{https://arxiv.org/abs/2609.13362}{arXiv:2609.13362}}
}

@article{ZPS24,
  author  = {X. Zhao and B. Protas and R. Shvydkoy},
  title   = {On the inviscid instability of the 2-D {Taylor--Green} vortex},
  journal = {J. Fluid Mech.},
  volume  = {999},
  pages   = {A64},
  year    = {2024},
  doi     = {10.1017/jfm.2024.946}
}

@unpublished{CQYZZ22,
  title  = {Existence, uniqueness and stability of steady vortex rings of small cross-section},
  author = {Cao, D. and Qin, G. and Yu, W. and Zhan, W. and Zou, C.},
  note   ={\href{https://arxiv.org/abs/2201.08232} {arXiv:2201.08232}},
}

@unpublished{CCDV26,
	archiveprefix = {arXiv},
	author = {G. Cao-Labora and M. Colombo and M. Dolce and P. Ventura},
	note = {\href{https://arxiv.org/abs/2601.23040}{arXiv:2601.23040}},
	title = {Instability of two-dimensional {Taylor--Green} vortices}}

@unpublished{EH26,
	author = {T. M. Elgindi and Y. Huang},
	note = {\href{https://arxiv.org/abs/2604.12962}{arXiv:2604.12962}},
	title = {On the flexibility of 2D {Euler} steady states}}

@article{DDMP24,
	author = {J. D{\'a}vila and M. del Pino and M. Musso and S. Parmeshwar},
	doi = {10.1016/j.jde.2024.06.023},
	journal = {J. Differential Equations},
	pages = {33--63},
	title = {Asymptotic properties of vortex-pair solutions for incompressible {Euler} equations in $\mathbb{R}^2$},
	volume = {408},
	year = {2024}}

@article{DDMP26,
	archiveprefix = {arXiv},
	author = {J. D{\'a}vila and M. del Pino and M. Musso and S. Parmeshwar},
	doi = {10.4171/JEMS/1776},
	eprint = {2310.07238},
	journal = {J. Eur. Math. Soc.},
	note = {Online first},
	title = {Global-in-time vortex configurations for the 2D {Euler} equations},
	year = {2026}}

@book{WMZ15,
	address = {Berlin, Heidelberg},
	author = {J.-Z. Wu and H.-Y. Ma and M.-D. Zhou},
	doi = {10.1007/978-3-662-47061-9},
	publisher = {Springer},
	title = {Vortical Flows},
	year = {2015}}

@article{Pro24,
	author = {B. Protas},
	doi = {10.1017/jfm.2024.160},
	journal = {J. Fluid Mech.},
	pages = {A7},
	title = {On the linear stability of the {Lamb--Chaplygin} dipole},
	volume = {984},
	year = {2024}}

@article{BCK26,
	archiveprefix = {arXiv},
	author = {E. Bru{\`e} and M. Colombo and A. Kumar},
	doi = {10.1215/00127094-2025-0056},
	eprint = {2408.07934},
	journal = {Duke Math. J.},
	number = {9},
	pages = {1593--1647},
	title = {Flexibility of two-dimensional {Euler} flows with integrable vorticity},
	url = {https://arxiv.org/abs/2408.07934},
	volume = {175},
	year = {2026}}

@article{CDG21,
	author = {P. Constantin and T. D. Drivas and D. Ginsberg},
	journal = {Comm. Math. Phys.},
	number = {1},
	pages = {521--563},
	title = {Flexibility and rigidity in steady fluid motion},
	volume = {385},
	year = {2021}}

@article{GPSY21,
	author = {J. G{\'o}mez-Serrano and J. Park and J. Shi and Y. Yao},
	journal = {Duke Math. J.},
	number = {13},
	pages = {2957--3038},
	title = {Symmetry in stationary and uniformly rotating solutions of active scalar equations},
	volume = {170},
	year = {2021}}

@article{Ruiz23,
	author = {D. Ruiz},
	journal = {Arch. Ration. Mech. Anal.},
	number = {3},
	pages = {Paper No. 40},
	title = {Symmetry results for compactly supported steady solutions of the 2D {Euler} equations},
	volume = {247},
	year = {2023}}

@article{DG24,
	author = {T. D. Drivas and D. Ginsberg},
	journal = {Proc. Amer. Math. Soc.},
	number = {11},
	pages = {4855--4863},
	title = {Islands in stable fluid equilibria},
	volume = {152},
	year = {2024}}

@article{EFR24,
	author = {A. Enciso and A. J. Fern{\'a}ndez and D. Ruiz},
	journal = {J. Eur. Math. Soc.},
	note = {\href{https://arxiv.org/abs/2406.04414}{arXiv:2406.04414}},
	title = {Smooth nonradial stationary {Euler} flows on the plane with compact support},
	year = {to appear}}

@article{EHSX26,
	author = {T. M. Elgindi and Y. Huang and A. R. Said and C. Xie},
	journal = {Duke Math. J.},
	note = {\href{https://arxiv.org/abs/2408.14662}{arXiv:2408.14662}},
	title = {A classification theorem for steady {Euler} flows},
	year = {to appear}}

@article{FT81,
	author = {Friedman, A. and Turkington, B.},
	doi = {10.1090/S0002-9947-1981-0628444-6},
	journal = {Trans. Amer. Math. Soc.},
	number = {1},
	pages = {1--37},
	title = {Vortex rings: Existence and asymptotic estimates},
	volume = {268},
	year = {1981}}

@article{Fra70,
	author = {Fraenkel, L. E.},
	doi = {10.1098/rspa.1970.0065},
	journal = {Proc. Roy. Soc. London Ser. A},
	pages = {29--62},
	title = {On steady vortex rings of small cross-section in an ideal fluid},
	volume = {316},
	year = {1970}}

@article{No73,
	author = {Norbury, J.},
	doi = {10.1017/S0022112073001266},
	journal = {J. Fluid Mech.},
	number = {3},
	pages = {417--431},
	title = {A family of steady vortex rings},
	volume = {57},
	year = {1973}}

@article{CLQZZ26,
	author = {Cao, D. and Lai, S. and Qin, G. and Zhan, W. and Zou, C.},
	doi = {10.1007/s00208-026-03352-5},
	journal = {Math. Ann.},
	pages = {9},
	title = {Uniqueness and stability of steady vortex rings for {3D} incompressible {Euler} equation},
	volume = {394},
	year = {2026}}

@book{LL01,
	address = {Providence, RI},
	author = {Lieb, E. H. and Loss, M.},
	edition = {2},
	publisher = {American Mathematical Society},
	series = {Graduate Studies in Mathematics},
	title = {Analysis},
	volume = {14},
	year = {2001}}

@article{CS12,
	author = {Choffrut, A. and {\v{S}}ver{\'a}k, V.},
	doi = {10.1007/s00039-012-0149-6},
	journal = {Geom. Funct. Anal.},
	number = {1},
	pages = {136--201},
	title = {Local Structure of the Set of Steady-State Solutions to the 2D Incompressible Euler Equations},
	volume = {22},
	year = {2012}}

@incollection{BCG26,
	address = {Singapore},
	author = {Bailo, R. and Carrillo, J. A. and G{\'o}mez-Castro, D.},
	booktitle = {Recent Developments in Industrial and Applied Mathematics},
	doi = {10.1007/978-981-95-1446-5_9},
	pages = {177--200},
	publisher = {Springer},
	title = {Aggregation-Diffusion Equations for Collective Behaviour in the Sciences},
	volume = {1},
	year = {2026}}

@article{CHVY19,
	author = {Carrillo, J. A. and Hittmeir, S. and Volzone, B. and Yao, Y.},
	doi = {10.1007/s00222-019-00898-x},
	journal = {Invent. Math.},
	number = {3},
	pages = {889--977},
	title = {Nonlinear aggregation-diffusion equations: radial symmetry and long time asymptotics},
	volume = {218},
	year = {2019}}

@article{BNL13,
	author = {Burton, G. R. and Nussenzveig Lopes, H. J. and Lopes Filho, M. C.},
	doi = {10.1007/s00220-013-1806-y},
	fjournal = {Communications in Mathematical Physics},
	issn = {0010-3616,1432-0916},
	journal = {Comm. Math. Phys.},
	mrclass = {35Q30 (35B35 49K10)},
	mrnumber = {3117517},
	mrreviewer = {John\ Albert},
	number = {2},
	pages = {445--463},
	title = {Nonlinear stability for steady vortex pairs},
	url = {https://doi.org/10.1007/s00220-013-1806-y},
	volume = {324},
	year = {2013}}

@article{Burton21,
	author = {Burton, G. R.},
	doi = {10.1016/j.jde.2020.08.009},
	fjournal = {Journal of Differential Equations},
	issn = {0022-0396,1090-2732},
	journal = {J. Differential Equations},
	mrclass = {76B47 (35Q35 76E07)},
	mrnumber = {4150383},
	mrreviewer = {Michael\ J.\ Carley},
	pages = {547--572},
	title = {Compactness and stability for planar vortex-pairs with prescribed impulse},
	url = {https://doi.org/10.1016/j.jde.2020.08.009},
	volume = {270},
	year = {2021}}

@article{Burton96,
	author = {Burton, G. R.},
	journal = {Proc. Roy. Soc. London Ser. A},
	pages = {2343--2350},
	title = {Uniqueness for the circular vortex-pair in a uniform flow},
	volume = {452},
	year = {1996}}

@article{Burton05b,
	author = {Burton, G. R.},
	doi = {10.1007/s00021-004-0126-6},
	journal = {J. Math. Fluid Mech.},
	pages = {S68--S80},
	title = {Isoperimetric properties of {L}amb's circular vortex-pair},
	volume = {7},
	year = {2005}}

@article{Burton88,
	author = {Burton, G. R.},
	journal = {Proc. Roy. Soc. Edinburgh Sect. A},
	pages = {269--290},
	title = {Steady symmetric vortex pairs and rearrangements},
	volume = {108},
	year = {1988}}

@article{Norbury75,
	author = {Norbury, J.},
	doi = {10.1002/cpa.3160280602},
	journal = {Comm. Pure Appl. Math.},
	pages = {679--700},
	title = {Steady planar vortex pairs in an ideal fluid},
	volume = {28},
	year = {1975}}

@unpublished{CJY,
	archiveprefix = {arXiv},
	author = {K. Choi and I.-J. Jeong and Y. Yao},
	eprint = {2409.19822},
	note = {\href{https://arxiv.org/abs/2409.19822}{arXiv:2409.19822}},
	primaryclass = {math.AP},
	title = {Stability of vortex quadrupoles with odd-odd symmetry},
	url = {https://arxiv.org/abs/2409.19822}}

@unpublished{ACJ,
	archiveprefix = {arXiv},
	author = {K. Abe and K. Choi and I.-J. Jeong},
	eprint = {2510.00539},
	note = {\href{https://arxiv.org/abs/2510.00539}{arXiv:2510.00539}},
	primaryclass = {math.AP},
	title = {Stability of {L}amb dipoles for odd-symmetric and non-negative initial disturbances without the finite mass condition},
	url = {https://arxiv.org/abs/2510.00539}}

@unpublished{JYZ,
	archiveprefix = {arXiv},
	author = {I.-J. Jeong and Y. Yao and T. Zhou},
	eprint = {2507.15739},
	note = {\href{https://arxiv.org/abs/2507.15739}{arXiv:2507.15739}},
	primaryclass = {math.AP},
	title = {Superlinear gradient growth for ${2D}$ {E}uler equation without boundary},
	url = {https://arxiv.org/abs/2507.15739}}

@unpublished{CSW,
	archiveprefix = {arXiv},
	author = {K. Choi and Y.-J. Sim and K. Woo},
	eprint = {2507.00910},
	note = {\href{https://arxiv.org/abs/2507.00910v1}{arXiv:2507.00910v1}},
	primaryclass = {math.AP},
	title = {Existence and stability of {Sadovskii} vortices: from vortex patches to regular vorticity}}

@unpublished{ACJSW,
	archiveprefix = {arXiv},
	author = {K. Abe and K. Choi and I.-J. Jeong and Y.-J. Sim and K. Woo},
	eprint = {2507.00910},
	note = {\href{https://arxiv.org/abs/2507.00910v2}{arXiv:2507.00910v2}},
	primaryclass = {math.AP},
	title = {Existence and stability of {Sadovskii} vortices: from patch to smooth vortices},
	url = {https://arxiv.org/abs/2507.00910v2}}

@article{VV98,
	author = {van Geffen, J. H. G. M. and van Heijst, G. J. F.},
	doi = {10.1016/S0169-5983(97)00033-6},
	journal = {Fluid Dynamics Research},
	number = {4},
	pages = {191--213},
	title = {Viscous evolution of {2D} dipolar vortices},
	volume = {22},
	year = {1998}}

@article{FV94,
	author = {Fl{\'o}r, J. B. and van Heijst, G. J. F.},
	journal = {J. Fluid Mech.},
	pages = {101--133},
	title = {An experimental study of dipolar vortex structures in a stratified fluid},
	volume = {279},
	year = {1994}}

@article{VF89,
	author = {van Heijst, G. J. F. and Fl{\'o}r, J. B.},
	journal = {Nature},
	pages = {212--215},
	title = {Dipole formation and collisions in a stratified fluid},
	volume = {340},
	year = {1989}}

@article{Afan,
	author = {Afanasyev, Y. D.},
	doi = {10.1063/1.2182006},
	eprint = {https://pubs.aip.org/aip/pof/article-pdf/doi/10.1063/1.2182006/14830962/037103_1_online.pdf},
	issn = {1070-6631},
	journal = {Physics of Fluids},
	month = {03},
	number = {3},
	pages = {037103},
	title = {Formation of vortex dipoles},
	url = {https://doi.org/10.1063/1.2182006},
	volume = {18},
	year = {2006}}

@article{VAF,
	author = {Voropayev, S. I. and Afanasyev, Y. D. and Filippov, I. A.},
	journal = {J. Fluid Mech.},
	pages = {543--566},
	title = {Horizontal jets and vortex dipoles in a stratified fluid},
	volume = {227},
	year = {1991}}

@book{Lamb,
	author = {Lamb, H.},
	edition = {Third},
	publisher = {Cambridge Univ. Press.},
	title = {Hydrodynamics},
	year = {1906}}

@article{Chap1903,
	author = {Chaplygin, S. A.},
	journal = {Trudy Otd. Fiz. Nauk Imper. Mosk. Obshch. Lyub. Estest.},
	number = {2},
	pages = {11--14},
	title = {One Case of Vortex Motion in Fluid},
	volume = {11},
	year = {1903}}

@article{MV94,
	author = {Meleshko, V. V. and van Heijst, G. J. F.},
	journal = {J. Fluid Mech.},
	pages = {157--182},
	title = {On {C}haplygin's investigations of two-dimensional vortex structures in an inviscid fluid},
	volume = {272},
	year = {1994}}

@article{CLZ21,
	author = {Cao, D. and Lai, S. and Zhan, W.},
	doi = {10.1007/s00526-021-02068-5},
	fjournal = {Calculus of Variations and Partial Differential Equations},
	issn = {0944-2669},
	journal = {Calc. Var. Partial Differential Equations},
	mrclass = {35J60 (35Q31 76B47)},
	mrnumber = {4295232},
	number = {5},
	pages = {Paper No. 190, 16},
	title = {Traveling vortex pairs for 2{D} incompressible {E}uler equations},
	url = {https://doi.org/10.1007/s00526-021-02068-5},
	volume = {60},
	year = {2021}}

@article{CQZZ23,
	author = {Cao, D. and Qin, G. and Zhan, W. and Zou, C.},
	doi = {10.1090/tran/8888},
	fjournal = {Transactions of the American Mathematical Society},
	issn = {0002-9947,1088-6850},
	journal = {Trans. Amer. Math. Soc.},
	mrclass = {76B47 (35A15 35J20)},
	mrnumber = {4577334},
	mrreviewer = {Tomasz\ Cie\'slak},
	number = {5},
	pages = {3377--3395},
	title = {Remarks on orbital stability of steady vortex rings},
	url = {https://doi.org/10.1090/tran/8888},
	volume = {376},
	year = {2023}}

@article{CQZZ25,
	author = {Cao, D. and Qin, G. and Zhan, W. and Zou, C.},
	doi = {10.1007/s40818-024-00191-y},
	fjournal = {Annals of PDE. Journal Dedicated to the Analysis of Problems from Physical Sciences},
	issn = {2524-5317},
	journal = {Ann. PDE},
	mrclass = {35J65 (35J20 35Q31 76B03 76B47)},
	mrnumber = {4842908},
	number = {1},
	pages = {Paper No. 1, 55},
	title = {Uniqueness and stability of traveling vortex pairs for the incompressible {E}uler equation},
	url = {https://doi.org/10.1007/s40818-024-00191-y},
	volume = {11},
	year = {2025}}

@article{Tu83a,
	author = {Turkington, B.},
	doi = {10.1080/03605308308820293},
	journal = {Comm. Partial Differential Equations},
	number = {9},
	pages = {999--1030},
	title = {On steady vortex flow in two dimensions. {I}},
	volume = {8},
	year = {1983}}

@article{Tu83b,
	author = {Turkington, B.},
	doi = {10.1080/03605308308820294},
	journal = {Comm. Partial Differential Equations},
	number = {9},
	pages = {1031--1071},
	title = {On steady vortex flow in two dimensions. {II}},
	volume = {8},
	year = {1983}}

@article{FB74,
	author = {Fraenkel, L. E. and Berger, M. S.},
	journal = {Acta Math.},
	pages = {13--51},
	title = {A global theory of steady vortex rings in an ideal fluid},
	volume = {132},
	year = {1974}}

@article{Wang.2024,
	author = {Wang, G.},
	doi = {10.1090/tran/9105},
	fjournal = {Transactions of the American Mathematical Society},
	issn = {0002-9947,1088-6850},
	journal = {Trans. Amer. Math. Soc.},
	mrclass = {35Q35 (35C07 76B47 76E30)},
	mrnumber = {4744767},
	number = {4},
	pages = {2635--2661},
	title = {On concentrated traveling vortex pairs with prescribed impulse},
	url = {https://doi.org/10.1090/tran/9105},
	volume = {377},
	year = {2024}}

@article{Car,
	author = {Carrillo, J. A. and Castorina, D. and Volzone, B.},
	doi = {10.1137/140951588},
	fjournal = {SIAM Journal on Mathematical Analysis},
	issn = {0036-1410,1095-7154},
	journal = {SIAM J. Math. Anal.},
	mrclass = {35K59 (35A15 35B40 82B05 92C17)},
	mrnumber = {3296600},
	mrreviewer = {Daniel\ Matthes},
	number = {1},
	pages = {1--25},
	title = {Ground states for diffusion dominated free energies with logarithmic interaction},
	url = {https://doi.org/10.1137/140951588},
	volume = {47},
	year = {2015}}

@article{AF86,
	author = {Amick, C. J. and Fraenkel, L. E.},
	journal = {Arch. Rational Mech. Anal.},
	pages = {91--119},
	title = {The uniqueness of {H}ill's spherical vortex},
	volume = {92},
	year = {1986}}

@article{AF88,
	author = {Amick, C. J. and Fraenkel, L. E.},
	journal = {Arch. Rational Mech. Anal.},
	pages = {207--241},
	title = {The uniqueness of a family of steady vortex rings},
	volume = {100},
	year = {1988}}

@article{DG26,
	author = {Dolce, M. and Gallay, T.},
	doi = {10.1007/s00205-026-02169-5},
	fjournal = {Archive for Rational Mechanics and Analysis},
	issn = {0003-9527,1432-0673},
	journal = {Arch. Ration. Mech. Anal.},
	mrclass = {76D17 (35B35 35Q30)},
	mrnumber = {5048483},
	number = {2},
	pages = {Paper No. 18, 63},
	title = {The long way of a viscous vortex dipole},
	url = {https://doi.org/10.1007/s00205-026-02169-5},
	volume = {250},
	year = {2026}}

@unpublished{LSZ26,
	archiveprefix = {arXiv},
	author = {Z. Li and P. Song and T. Zhou},
	eprint = {2605.01491},
	note = {\href{https://arxiv.org/abs/2605.01491}{arXiv:2605.01491}},
	primaryclass = {math.AP},
	title = {On the stability of the {Lamb--Chaplygin} dipole for the {2D Euler} equation},
	url = {https://arxiv.org/abs/2605.01491}}

@unpublished{PV,
	archiveprefix = {arXiv},
	author = {F. Pio Numero and P. Ventura},
	eprint = {2606.09775},
	note = {\href{https://arxiv.org/abs/2606.09775}{arXiv:2606.09775}},
	primaryclass = {math.AP},
	title = {Linear Stability of the {L}amb-{C}haplygin Dipole}}

@article{Kelvin1875,
	author = {Thomson, W.},
	doi = {10.1017/S0370164600031679},
	journal = {Proc. Roy. Soc. Edinburgh},
	pages = {59--73},
	title = {Vortex statics},
	volume = {9},
	year = {1878}}

@article{Arnold66,
	author = {Arnold, V. I.},
	journal = {Ann. Inst. Fourier (Grenoble)},
	pages = {319--361},
	title = {Sur la g\'{e}om\'{e}trie diff\'{e}rentielle des groupes de {L}ie de dimension infinie et ses applications \`a l'hydrodynamique des fluides parfaits},
	volume = {16},
	year = {1966}}

@article{HaHm21,
	author = {Hassainia, Z. and Hmidi, T.},
	doi = {10.3934/dcds.2020348},
	fjournal = {Discrete and Continuous Dynamical Systems. Series A},
	issn = {1078-0947},
	journal = {Discrete Contin. Dyn. Syst.},
	mrclass = {35Q35 (35Q31 76B47)},
	mrnumber = {4211209},
	number = {4},
	pages = {1939--1969},
	title = {Steady asymmetric vortex pairs for {E}uler equations},
	url = {https://doi.org/10.3934/dcds.2020348},
	volume = {41},
	year = {2021}}

@article{CLW14,
	author = {Cao, D. and Liu, Z. and Wei, J.},
	doi = {10.1007/s00205-013-0692-y},
	fjournal = {Archive for Rational Mechanics and Analysis},
	issn = {0003-9527},
	journal = {Arch. Ration. Mech. Anal.},
	mrclass = {35Q31 (35A09 35B25 35B65 35J25 76B03 76B47)},
	mrnumber = {3162476},
	mrreviewer = {Francesco Fanelli},
	number = {1},
	pages = {179--217},
	title = {Regularization of point vortices pairs for the {E}uler equation in dimension two},
	url = {https://doi.org/10.1007/s00205-013-0692-y},
	volume = {212},
	year = {2014}}

@article{SV10,
	author = {Smets, D. and Van Schaftingen, J.},
	doi = {10.1007/s00205-010-0293-y},
	fjournal = {Archive for Rational Mechanics and Analysis},
	issn = {0003-9527,1432-0673},
	journal = {Arch. Ration. Mech. Anal.},
	mrclass = {35Q31 (35B25 35J91 76B03 76B47)},
	mrnumber = {2729322},
	mrreviewer = {Stefano\ Bianchini},
	number = {3},
	pages = {869--925},
	title = {Desingularization of vortices for the {E}uler equation},
	url = {https://doi.org/10.1007/s00205-010-0293-y},
	volume = {198},
	year = {2010}}

@article{HT25,
	author = {Huang, D. and Tong, J.},
	doi = {10.1007/s00205-025-02113-z},
	fjournal = {Archive for Rational Mechanics and Analysis},
	issn = {0003-9527,1432-0673},
	journal = {Arch. Ration. Mech. Anal.},
	mrclass = {76B47},
	mrnumber = {4933909},
	number = {4},
	pages = {Paper No. 46, 52},
	title = {Steady contiguous vortex-patch dipole solutions of the 2{D} incompressible {E}uler equation},
	url = {https://doi.org/10.1007/s00205-025-02113-z},
	volume = {249},
	year = {2025}}

@incollection{Benjamin76,
	address = {Berlin},
	author = {Benjamin, T. B.},
	booktitle = {Applications of Methods of Functional Analysis to Problems in Mechanics},
	doi = {10.1007/BFb0088744},
	pages = {8--29},
	publisher = {Springer},
	series = {Lecture Notes in Mathematics},
	title = {The alliance of practical and analytical insights into the nonlinear problems of fluid mechanics},
	url = {https://doi.org/10.1007/BFb0088744},
	volume = {503},
	year = {1976}}

@article{Pierrehumbert80,
	author = {Pierrehumbert, R. T.},
	journal = {J. Fluid Mech.},
	number = {1},
	pages = {129--144},
	title = {A family of steady, translating vortex pairs with distributed vorticity},
	volume = {99},
	year = {1980}}

@article{Leweke16,
	author = {Leweke, T. and Le Diz\`es, S. and Williamson, C. H. K.},
	journal = {Annu. Rev. Fluid Mech.},
	pages = {507--541},
	title = {Dynamics and instabilities of vortex pairs},
	volume = {48},
	year = {2016}}

@article{GarciaHaziot23,
	author = {Garc\'ia, C. and Haziot, S. V.},
	doi = {10.1007/s00220-023-04741-6},
	journal = {Comm. Math. Phys.},
	number = {2},
	pages = {1167--1204},
	title = {Global bifurcation for corotating and counter-rotating vortex pairs},
	url = {https://doi.org/10.1007/s00220-023-04741-6},
	volume = {402},
	year = {2023}}

@article{JangSeok22,
	author = {Jang, J. and Seok, J.},
	doi = {10.1007/s00205-022-01766-4},
	journal = {Arch. Ration. Mech. Anal.},
	number = {2},
	pages = {443--499},
	title = {On uniformly rotating binary stars and galaxies},
	url = {https://doi.org/10.1007/s00205-022-01766-4},
	volume = {244},
	year = {2022}}

@book{Evans2010,
	address = {Providence, RI},
	author = {Evans, L. C.},
	edition = {2},
	publisher = {American Mathematical Society},
	series = {Graduate Studies in Mathematics},
	title = {Partial Differential Equations},
	volume = {19},
	year = {2010}}

@article{Tang,
	author = {Tang, Y.},
	doi = {10.2307/2000733},
	fjournal = {Transactions of the American Mathematical Society},
	issn = {0002-9947,1088-6850},
	journal = {Trans. Amer. Math. Soc.},
	mrclass = {76C05 (35B35 35Q10)},
	mrnumber = {911087},
	mrreviewer = {John\ Adam},
	number = {2},
	pages = {617--638},
	title = {Nonlinear stability of vortex patches},
	url = {https://doi.org/10.2307/2000733},
	volume = {304},
	year = {1987}}

@article{WP,
	author = {Wan, Y. H. and Pulvirenti, M.},
	fjournal = {Communications in Mathematical Physics},
	issn = {0010-3616,1432-0916},
	journal = {Comm. Math. Phys.},
	mrclass = {76E30 (35Q10 58E07 76C05)},
	mrnumber = {795112},
	mrreviewer = {Jacob\ Burbea},
	number = {3},
	pages = {435--450},
	title = {Nonlinear stability of circular vortex patches},
	url = {http://projecteuclid.org/euclid.cmp/1103942770},
	volume = {99},
	year = {1985}}

@article{AJY,
	archiveprefix = {arXiv},
	author = {K. Abe and I.-J. Jeong and Y. Yao},
	eprint = {2507.16474},
	journal = {Courant J. Pure Appl. Math.},
	note = {\href{https://arxiv.org/abs/2507.16474}{arXiv:2507.16474}},
	primaryclass = {math.AP},
	title = {Stability for multiple {L}amb dipoles},
	url = {https://arxiv.org/abs/2507.16474},
	year = {to appear}}

@article{AC2019,
	author = {Abe, K. and Choi, K.},
	doi = {10.1007/s00205-022-01782-4},
	fjournal = {Archive for Rational Mechanics and Analysis},
	issn = {0003-9527,1432-0673},
	journal = {Arch. Ration. Mech. Anal.},
	mrclass = {35Q31 (76B47)},
	mrnumber = {4419609},
	number = {3},
	pages = {877--917},
	title = {Stability of {L}amb dipoles},
	url = {https://doi.org/10.1007/s00205-022-01782-4},
	volume = {244},
	year = {2022}}

@article{Choi24,
	author = {Choi, K.},
	fjournal = {Communications on Pure and Applied Mathematics},
	issn = {0010-3640,1097-0312},
	journal = {Comm. Pure Appl. Math.},
	mrclass = {76B47},
	mrnumber = {4666623},
	number = {1},
	pages = {52--138},
	title = {Stability of {H}ill's spherical vortex},
	volume = {77},
	year = {2024}}

@article{CJ-Lamb,
	author = {Choi, K. and Jeong, I.-J.},
	doi = {10.1016/j.nonrwa.2021.103470},
	fjournal = {Nonlinear Analysis. Real World Applications. An International Multidisciplinary Journal},
	issn = {1468-1218,1878-5719},
	journal = {Nonlinear Anal. Real World Appl.},
	mrclass = {35Q31 (76B47)},
	mrnumber = {4350517},
	pages = {Paper No. 103470, 20},
	title = {Infinite growth in vorticity gradient of compactly supported planar vorticity near {L}amb dipole},
	url = {https://doi.org/10.1016/j.nonrwa.2021.103470},
	volume = {65},
	year = {2022}}

@article{CJS,
	author = {Choi, K. and Jeong, I.-J. and Sim, Y.-J.},
	doi = {10.1007/s40818-025-00212-4},
	fjournal = {Annals of PDE. Journal Dedicated to the Analysis of Problems from Physical Sciences},
	issn = {2524-5317,2199-2576},
	journal = {Ann. PDE},
	mrclass = {35C07 (35A01 35Q31)},
	mrnumber = {4927140},
	mrreviewer = {Da-Wen\ Deng},
	number = {2},
	pages = {Paper No. 18, 68},
	title = {On existence of {S}adovskii vortex patch: a touching pair of symmetric counter-rotating uniform vortices},
	url = {https://doi.org/10.1007/s40818-025-00212-4},
	volume = {11},
	year = {2025}}

\endgroup 

\end{document}